\definecolor{light-gray1}{gray}{0.90}
\definecolor{light-gray2}{gray}{0.80}
\definecolor{light-gray3}{gray}{0.60}
\definecolor{light-gray4}{gray}{0.70}
\title{Dispersive estimates for rational symbols and local well-posedness of the nonzero energy NV equation. II.}
\date{}%\nodate
\author{Anna Kazeykina\footnote{Laboratoire de Math\'ematiques d'{}Orsay UMR 8628, B\^at. 425 Facult\'e des Sciences d'{}Orsay,
Universit\'e Paris-Sud F-91405 Orsay Cedex France. E-mail: anna.kazeykina at math.u-psud.fr} \quad and \quad  Claudio Mu\~noz\footnote{CNRS and Departamento de Ingenier\'ia Matem\'atica DIM, Universidad de Chile, Chile. E-mail: claudio.munoz at math.u-psud.fr, cmunoz at dim.uchile.cl}}
\begin{document}

\maketitle
\markboth{A}{Anna Kazeykina and Claudio Mu\~noz}
\renewcommand{\sectionmark}[1]{}
\begin{abstract}
We continue our study on the Cauchy problem for the two-dimensional Novikov-Veselov (NV) equation, integrable via the inverse scattering transform for the {\color{black}two dimensional} Schr\"odinger operator at a fixed energy parameter. This work is concerned with the case of positive energy. For the solution of the linearized equation we derive smoothing and Strichartz estimates by combining two different frequency regimes. At non-low frequencies we also derive improved smoothing estimates with gain of almost one derivative. We combine the linear estimates with the Fourier decomposition method and $ X^{s,b} $ spaces to obtain local well-posedness of NV at positive energy in $H^s$, $s>\frac12$. Our result implies, in particular, that {\it at least} for $s>\frac12$, NV does not change its behavior from semilinear to quasilinear as energy changes sign, in contrast to the closely related Kadomtsev-Petviashvili equations.

As a supplement, we provide some new explicit solutions of NV at zero energy which exhibit an interesting behaviour at large times.
\end{abstract}

\newtheorem{lemma}{Lemma}
\newtheorem{thm}{Theorem}[section]
\newtheorem{cor}[thm]{Corollary}
\newtheorem{lem}[thm]{Lemma}
\newtheorem{prop}[thm]{Proposition}
\newtheorem{prope}[thm]{Property}
\newtheorem{ax}{Axiom}
\newtheorem*{thma}{{\bf Theorem A}}
\newtheorem*{thmap}{{\bf Theorem A'}}
\newtheorem*{thmb}{{\bf Theorem B}}
\newtheorem*{thmc}{{\bf Theorem C}}
\newtheorem*{cord}{{\bf Corollary D}}
\theoremstyle{definition}
\newtheorem{defn}{Definition}[section]

\theoremstyle{remark}
\newtheorem{rem}{Remark}[section]
\newtheorem*{notation}{Notation}
\newtheorem{Cl}{Claim}

\numberwithin{equation}{section}

\newcommand{\secref}[1]{\S\ref{#1}}
\newcommand{\lemref}[1]{Lemma~\ref{#1}}
\newcommand{\ds}[1]{\displaystyle{#1}}

\newcommand{\Com}{\mathbb{C}}
\newcommand{\const}{\mathrm{const}}
\newcommand{\dist}{\mathrm{dist}}
\newcommand{\sgn}{\mathrm{sgn}}
\renewcommand{\Re}{\mathrm{Re}}
\renewcommand{\Im}{\mathrm{Im}}
\newcommand{\re}{\operatorname{Re}}
\newcommand{\ima}{\operatorname{Im}}
\newcommand{\interior}{\mathrm{int}}
\providecommand{\abs}[1]{\left|#1 \right|}
\providecommand{\norm}[1]{\left\| #1 \right\|}

\newcommand{\meas}{\operatorname{meas}}
\newcommand{\Hess}{\operatorname{Hess}}
\newcommand{\R}{\mathbb{R}}
\newcommand{\Ss}{\mathbb{S}}
\newcommand{\N}{\mathbb{N}}
\newcommand{\Z}{\mathbb{Z}}
\newcommand{\T}{\mathbb{T}}
\newcommand{\Q}{\mathbb{Q}}
\newcommand{\la}{\lambda}
\newcommand{\pd}{\partial}
\newcommand{\al}{\alpha}
\newcommand{\ve}{\varepsilon}
\newcommand{\bt}{\beta}
\newcommand{\ga}{\gamma}
\newcommand{\de}{\delta}
\newcommand{\te}{\theta}

\newcommand{\be}{\begin{equation}}
\newcommand{\ee}{\end{equation}}
\newcommand{\ba}{\begin{equation*}}
\newcommand{\ea}{\begin{equation*}}
\newcommand{\bea}{\begin{eqnarray}}
\newcommand{\eea}{\end{eqnarray}}
\newcommand{\bee}{\begin{eqnarray*}}
\newcommand{\eee}{\end{eqnarray*}}
\newcommand{\ben}{\begin{enumerate}}
\newcommand{\een}{\end{enumerate}}
\newcommand{\nonu}{\nonumber}
\setlength\arraycolsep{1.5pt}

\newcommand{\arctanh}{\operatorname{arctanh}}
\newcommand{\fr}{\operatorname{Fr}}
\newcommand{\diam}{\operatorname{diam}}
\newcommand{\supp}{\operatorname{supp}}
\newcommand{\adh}{\operatorname{Adh}}
\newcommand{\pv}{\operatorname{pv}}
\newcommand{\vv}[1]{\partial_x^{-1}\partial_y{#1}}

\tableofcontents

\section{Introduction}  

\medskip

\subsection{Main result} In the present paper we continue our work on the Cauchy problem for the Novikov-Veselov (NV) equation, in two dimensions, with a fixed energy parameter $E\in \R$:
\be\label{NV}
\begin{aligned}
& \partial_t v  = 8 (\partial_z^3 + \partial^3_{\bar z}) v +2\partial_z (vw) +2 \partial_{\bar z}(v \bar w) -2E(\partial_z w + \partial_{\bar z} \bar w ),  \\
& \partial_{\bar z} w =  -3 \partial_z v,\\
& v|_{t=0} =v_0 ~ \hbox{given,}
\end{aligned}
\ee
where
\[
\begin{aligned}
& z =x+iy =(x,y) \in \Com,  \quad \bar z=x-iy,\\
& \partial_z = \frac12 (\partial_x - i \partial_y),  \quad \partial_{\bar z} = \frac12 (\partial_x + i \partial_y), \\
& v=v(t,x,y)  \in \R , \quad w=w(t,x,y)\in \Com.
\end{aligned}
\]

It was shown in \cite{A} (using techniques and results from Carbery, Kenig and Ziesler \cite{CKZ}, and Molinet and Pilod \cite{MP}) that if $ E = 0 $, then equation \eqref{NV} is locally well-posed in $ H^s =H^s(\R^2)$, $ s > \frac{1}{2} $. In this case, the symbol of the equation is a cubic polynomial in two dimensions. In \cite{KM}, we dealt with the case where $E$ is a fixed negative parameter and, consequently, the symbol of the linearized equation is no longer a polynomial, but a non-smooth rational function of two variables. In this case a reasonable decoupling of the stationary-phase type integral, related to the linearized equation, is not available. We were able to derive dispersive smoothing estimates for this symbol and to show local well-posedness of NV with $ E < 0 $ for initial data in the same Sobolev space $H^s$, $s>\frac12$. The purpose of this paper is to extend this result to the more involved case $E>0$, where the techniques used in our work \cite{KM} on the negative energy case do not apply directly and thus they have to be improved and generalized.

\medskip

Additionally, and complementing some previous results, we consider the global well-posedness problem for the case $E=0$. For this problem, we give some examples and results on the unusual behavior that NV equations may have at large time.

\medskip

The main result of the present work is the following theorem.
\begin{thm}\label{LWP_pos}
Assume $E>0$ in \eqref{NV}. The Novikov-Veselov equation \eqref{NV} is locally well-posed in $H^s(\R^2)$, for any $s>\frac 12$. Moreover, the lifespan of solution (if finite) is at least proportional to $E^{ \alpha }$ for some $ \alpha > 0 $.
%Moreover, the norm of the solution $v(t)\in H^s(\R^2)$ satisfies
%\[
%\|v(t)\|_{H^s} \lesssim C(t).
%\]
\end{thm}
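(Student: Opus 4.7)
The plan is to set up a standard Picard iteration for \eqref{NV} in an appropriate $X^{s,b}$ Bourgain space, with the functional analysis driven by the linear estimates (smoothing and Strichartz) derived for the linearized equation at positive energy. Writing \eqref{NV} in Duhamel form $v(t)=U_E(t)v_0+\int_0^t U_E(t-t')\mathcal{N}(v)(t')\,dt'$, where $U_E$ is the linear propagator with symbol $\phi_E(\xi,\eta)=8(\xi^3-3\xi\eta^2)+2E\,\xi(\xi^2+\eta^2)^{-1}\cdot(\text{numerator})$ and $\mathcal{N}(v)=2\partial_z(vw)+2\partial_{\bar z}(v\bar w)$ with $w=-3\partial_z\partial_{\bar z}^{-1}v$, the proof reduces to proving a bilinear estimate of the form
\begin{equation*}
\|\mathcal{N}(v_1,v_2)\|_{X^{s,b-1}}\ \lesssim\ C(E)\,\|v_1\|_{X^{s,b}}\|v_2\|_{X^{s,b}},\qquad s>\tfrac12,\ b=\tfrac12+,
\end{equation*}
and then contracting on a ball in $X^{s,b}$ on a short time interval. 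The $E$-dependence in the lifespan is then obtained by tracking how the constants in the linear and bilinear estimates blow up as $E\to 0^+$ and as frequencies approach the singular set of $\phi_E$.

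The core analytic work is in the linear theory. As announced in the abstract, I would split phase space into a low-frequency region $|\xi|^2+|\eta|^2\lesssim E^{\sigma}$ (where the rational correction dominates and $\phi_E$ degenerates along a curve) and a non-low regime $|\xi|^2+|\eta|^2\gtrsim E^{\sigma}$ (where the polynomial part dominates). On the high-frequency piece I would establish a local smoothing estimate with gain of almost one full derivative, in the spirit of Kenig--Ponce--Vega for KdV-type operators in two dimensions, by controlling the Hessian of $\phi_E$ away from its critical set together with Stein--Tomas/stationary-phase arguments tailored to the cubic structure of $\partial_z^3+\partial_{\bar z}^3$. From these one extracts a global Strichartz estimate by combining the smoothing with a Sobolev embedding along the non-dispersive direction, as in the companion paper \cite{KM}. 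On the low-frequency piece the gain of derivatives disappears; here one settles for a weaker Strichartz bound obtained by direct $TT^{\ast}$ arguments on the rational symbol $\phi_E$, paying care to integrability near the vanishing set of $\phi_E$ which, for $E>0$, is a full codimension-one curve rather than a collection of isolated points as in the case $E<0$.

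With the two regimes patched into a pair of global linear estimates, the next step is to transfer them to $X^{s,b}$ norms via the standard transference principle and derive from them the bilinear estimate. The dangerous term comes from the nonlocal part $w=-3\partial_z\partial_{\bar z}^{-1}v$, whose symbol $-3(\xi-i\eta)/(\xi+i\eta)$ is only bounded, so no derivative is lost but one must control resonances where $\phi_E(\xi_1,\eta_1)+\phi_E(\xi_2,\eta_2)-\phi_E(\xi_1+\xi_2,\eta_1+\eta_2)$ is small. I would handle these resonances by a Littlewood--Paley decomposition and a dyadic analysis of the resonant surface, using the almost-one-derivative smoothing to absorb the worst high-high$\to$low and high-low$\to$high interactions, exactly as in Molinet--Pilod \cite{MP} and in our earlier treatment \cite{KM}, but now with constants that degenerate polynomially in $E$. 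Once the bilinear estimate is proven, local well-posedness follows from the usual contraction scheme, uniqueness and continuous dependence arguments being entirely standard.

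The main obstacle, and the reason the negative-energy arguments of \cite{KM} do not transfer, is the geometry of the zero set of the symbol $\phi_E$ at positive energy: for $E>0$ the equation $\phi_E(\xi,\eta)=0$ cuts out a one-dimensional curve in Fourier space on which the propagator has no dispersion, whereas for $E<0$ this set is much smaller. Consequently, the two-regime decomposition must be performed with a threshold chosen as a negative power of $E$, carefully optimised so that the loss of smoothing in the low-frequency regime is compensated by the gain in the non-low regime when measured in $H^s$ with $s>1/2$; this optimisation is what produces the $E^{\alpha}$ dependence of the lifespan, and constitutes the technical heart of the paper.
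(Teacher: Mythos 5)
Your overall framework---Duhamel formulation, $X^{s,b}$ contraction, a bilinear estimate proved by dyadic decomposition, and a low/non-low frequency splitting with improved smoothing at high frequencies---is the same skeleton as the paper's. But the proposal leaves the technical core unproved and misidentifies the obstruction. The difficulty at positive energy is not that the zero set of the symbol is a codimension-one curve (it is a curve for every $E$, including $E<0$ and $E=0$); it is that the stationary points of the full phase $\tilde S(u,\xi)=(\xi^3+\bar\xi^3)(1-3E/|\xi|^2)+\tfrac12(\bar u\xi+u\bar\xi)$ can be degenerate up to third order as $u$ varies, and---unlike the case $E<0$---the weight $|\xi|^{\alpha}$ does \emph{not} vanish at these degenerate points, so no cancellation is available there. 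Your treatment of the low-frequency region (``direct $TT^{*}$ arguments \ldots paying care to integrability near the vanishing set'') does not address what is actually needed: a decay bound, uniform in $u$, for the oscillatory integral $\int_{|\xi|\le 2\sqrt{E}}|\xi|^{\alpha}e^{it\tilde S(u,\xi)}\,d\xi$. The paper obtains $\lesssim t^{-3/4}$ there only after a second change of variables $\xi=\lambda+\lambda'$ with $\lambda,\lambda'\in\mathbb S^{1}$ (the analogue of $\xi=\lambda+1/\bar\lambda$, which covers only $|\xi|>2\sqrt{E}$), followed by an exhaustive classification of the configurations of the six stationary points through the gap parameters $\omega_1,\omega_2$ and a case-by-case integration by parts (Sections \ref{Sect_4}--\ref{Sect_5}). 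Without a substitute for this analysis the linear estimates you postulate are unsupported; with them, the transfer to $X^{s,b}$ and the contraction are indeed routine.

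Two further corrections. The lifespan bound is not produced by optimising a cutoff at a negative power of $E$: one rescales to $E=1$, so the natural splitting frequency is $|\xi|\sim E^{1/2}$ (a \emph{positive} power, the scale where $1-3E/|\xi|^{2}$ degenerates), and the powers of $E$ in the dispersive, Strichartz and bilinear constants are then tracked through this scaling, giving the quantitative statement of Theorem \ref{Cauchy} with bilinear constant $E^{(7^- -8s)/16}$. Also, in the bilinear step the almost-one-derivative smoothing (which yields $1/4^{-}$ of a derivative in $L^4$, valid only at non-low frequencies) is what absorbs the high-high interactions; the low-high$\to$high interaction is closed instead by a bilinear Strichartz-type estimate coming from measure bounds on the resonance set, i.e.\ lower bounds on the derivatives of the resonance function $H$, as in Saut--Tzvetkov and Molinet--Pilod---the globally valid smoothing at positive energy (only $1/8$ of a derivative) is too weak to play the role you assign to it.
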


The precise, quantitative version of this Theorem is formulated as Thereom \ref{Cauchy} below. This new result, combined with \cite{A,KM}, states that NV equations are well-posed in $H^s$, $s>\frac12$. Note that a reasonable improvement of the $s>\frac 12$ regularity would require some new ideas, in particular, a deeper understanding of the Fourier-based resonance function of NV. 

\medskip

We should mention that we have very recently learned that a similar result is written in \cite{A}, but we have not been able to verify some of the estimates written in that paper. Moreover, we believe that at positive energies one cannot obtain estimates as good as those for negative energies \cite{KM}, because of some deep differences in the behavior of the NV symbol at the level of the degenerate stationary points. For more details, see e.g. the discussions after Proposition \ref{all_freq_prop} and equation \eqref{new_type}.
%Here we present briefly the ideas of the proof of this result.

\medskip

For the sake of completeness, we write equations \eqref{NV} in terms of real-valued coordinates $(x,y)$. If we identify $w=w_1+iw_2 $ with the vector field $w=(w_1,w_2)$, with $w_1,w_2$ real-valued, then the second equation in \eqref{NV} becomes
\[
\partial_y w_1+\partial_x w_2 =3~\partial_y v, \quad \partial_y w_2- \partial_x w_1 =3~ \partial_x v,
\]
and the first one in \eqref{NV} reads
\[
\partial_t v = 2\left[ \partial_x (\partial_x^2 v-3 \partial_y^2v) +\nabla .(vw) -E\nabla .w \right].
\]
See \cite[Appendix A]{KM} for a derivation of this fact. However, for most of this paper we will work with the $z$-$\bar z$ formulation \eqref{NV}, which presents several advantages for derivation of precise estimates.

\medskip

Note also that one has  $w = -3\partial_{\bar z}^{-1} \partial_z v$, where $\partial_{\bar z}^{-1} \partial_z$ can be defined via the Fourier transform $\mathcal F$, acting on $L^2(\R^2)$, $\xi=(\xi_1,\xi_2)\in\R^2\backslash\{(0,0)\}$:
\be\label{W_V}
\mathcal F[\partial_{\bar z}^{-1} \partial_z f] (\xi_1,\xi_2)= \Big(\frac{\xi_1 -i \xi_2}{\xi_1 + i \xi_2}\Big)\mathcal F[f](\xi_1,\xi_2).
\ee
so that the first equation can be written as follows:
\[
\partial_t v = 4 \, \Re \{ \partial_{z} (4 \, \partial_z^2 v - 3\, v\partial_{\bar z}^{-1} \partial_z v +3\, E\partial_{\bar z}^{-1} \partial_z v) \} .
\]

\subsection{The NV equation} The NV equation \eqref{NV} is a \emph{completely integrable} model in two dimensions \cite{NV, KBook}. The origin of NV is mathematical rather than physical \cite{NV}: it arises as the integrable equation obtained by assuming that the  associated scattering problem corresponds to the standard, stationary Schr\"odinger equation in two dimensions, with fixed energy parameter $E$:
\be\label{Schrodinger}
( -\Delta_{x,y} + v(t,x,y) -E ) \psi = 0.
\ee
Recall that $ \Delta_{x,y} =\partial_x^2 + \partial_y^2 = 4 \partial_{\bar z}\partial_z. $
In that sense, NV is the most natural (from the mathematical point of view) model that generalizes the Korteweg-de Vries (KdV) equation to the two dimensional case. 
%Moreover, if $v$ depends only on $x$, we have $z=\bar z=x$, $\partial_z =\partial_{\bar z} =\frac12 \partial_x$ and
%\[
%w = -3 v,  \quad \partial_t v = 2   \partial_{x} (\partial_x^2 v - 3 v^2 + 3 E v),
%\]
%which is a KdV equation.\footnote{Note that in this case $u(t,x):=-v(-t/2 , x - 3Et)$ solves the standard KdV equation $u_t + u_{xxx} +6uu_x=0.$}

\medskip

The Novikov-Veselov equation was first obtained in an implicit form by S.V. Manakov in \cite{Manakov}.
It has the following operator representation
\be\label{Manakov_triple_0}
\partial_t  L = [L,A] +BL,
\ee
where
\[%\be\label{Manakov_triple_1}
L := -\Delta_{x,y} + v(t,x,y)  -E, \quad A:= -8(\partial_z^3 +\partial_{\bar z}^3)  -2(w \partial_z +\bar w \partial_{\bar z}),
\]%\ee
and
\[%\be\label{Manakov_triple_2}
B:= 2(\partial_z w + \partial_{\bar z}\bar w),
\]%\ee
with $ w $ given by (\ref{NV}) and $[\cdot,\cdot]$ denoting the standard commutator. Equation (\ref{Manakov_triple_0}) corresponds to the compatibility condition for the system
\[%\be\label{Manakov_triple_4}
L \varphi =0, \quad \partial_t \varphi = A\varphi.
\]%\ee
Representation (\ref{Manakov_triple_0}) is called Manakov triple representation for (\ref{NV}) and can be considered as a generalization of the Lax pair representation for KdV (see \cite{Lax}), to the $ ( 2 + 1 ) $-dimensional case.

\medskip

Compared with other dispersive models coming from physics, NV equations lack signed conserved quantities which control the long time dynamics, at least at a suitable level of regularity. For this reason, a good understanding of some particular explicit solutions is essential. The behavior of solutions to NV seems to strongly depend on the sign of the energy parameter $ E $: we will see that NV at $E<0$ ($ NV_- $) corresponds to a sort of defocusing case, whilst NV with $E>0$ ($ NV_+ $) exhibits the focusing behavior. The case $E=0$ is different in several aspects that we explain below.

\medskip

The NV equations have several interesting connections with other integrable models. In particular, it was shown (see \cite{Bog,Perry2}) that there exists a two-dimensional generalization of the well-known Miura transformation which maps solutions of the modified Novikov-Veselov equation (a two-dimensional, integrable generalization of the modified KdV equation and a member of the integrable hierarchy of the Davey-Stewartson equations) towards solutions of NV. 

\medskip

It can also be formally shown (see \cite{Gr2}, for example, and Appendix B in \cite{KM} for more details) that when the parameter of energy $ E $ tends to $ \pm \infty $, the NV equations, rescaled appropriately, become in the limit KP equations. Moreover, there is a corresponding convergence of scattering problems for the KP and NV equations.

\medskip

In the case of negative energy, $NV_-$ is in some sense reminiscent of the KPII equation (see, for example, \cite{K2, K} for results on $ NV_- $ and \cite{BS1, BS2, BM, Kis} for related results on KPII). KPII was proved to be globally well-posed in $ L^2( \mathbb{R}^2 ) $ by Bourgain in \cite{B1}. We have shown in \cite{KM} the local well-posedness of $ NV_- $ in $ H^s $, $ s > \frac{1}{2} $. A better understanding of the complicated rational symbol of $NV_-$ could help to lower the index of regularity. The global well-posedness for initial data satisfying a small-norm assumption is implied by inverse scattering theory (see \cite{Nov}). However, the existence of global solutions for large initial data is still an open problem.

\medskip

At positive energy, $ NV_+ $ exhibits a regime similar in some aspects to that of KPI (see \cite{KN, KN2, K2} for results on $ NV_+ $ and \cite{MZBIM, MST, BS1, BS2} for related results on KPI). Note that from the point of view of behavior of the solution to the Cauchy problem, KPI is essentially different from KPII: KPI is essentially a quasilinear equation, while KPII is  semilinear. The quasilinear behavior of KPI is a consequence of the results in \cite{MST1}: KPI, in contrast to KPII, cannot be solved via Picard iterations on the Duhamel formulation. This fact implies, in particular, that the flow of KPI has no enough  smoothness to be solved using an iteration scheme.

\medskip

In order to obtain a rigorous proof of the fact that KP equations present an  asymptotic regime of NV equations at high energies $|E|$, a suitable well-posedness theory for NV equations is necessary, with explicit bounds of the solution and its lifespan depending on $E$ (see Fig. \ref{Dicotomy}).
In particular, KPI and KPII being, in some sense, ``limit cases'' of the NV equation, it is natural to ask whether a similar ``bifurcation'' of the local behavior occurs for NV when the sign of $E$ changes. The main result of this paper, Theorem \ref{LWP_pos}, gives a negative answer to this question. More precisely, we see that NV is always semilinear for any value of parameter $ E $, provided the initial data has $1/2$ derivatives in $L^2$.

\medskip

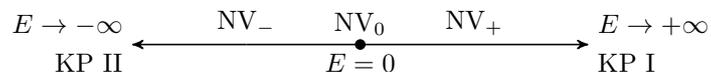
\begin{figure}[!h]
\begin{center}
\begin{tikzpicture}[
	>=stealth',
	axis/.style={semithick,->},
	coord/.style={dashed, semithick},
	yscale = 1,
	xscale = 1]
	\newcommand{\xmin}{-3};
	\newcommand{\xmax}{3};
	\newcommand{\ymin}{-3};
	\newcommand{\ymax}{3};
	\newcommand{\ta}{3};
	\newcommand{\fsp}{0.2};
	%\filldraw[color=light-gray1] (0,0) circle (2);
	%\filldraw[color=light-gray2] (0,0) circle (1);
	\draw [axis] (\xmin/2,0) -- (\xmax,0) node [above right] {$E\to +\infty$};
	\draw [axis] (\xmin/2,0) -- (\xmin,0) node [above left] {$E\to -\infty$};
%	\draw [axis] (0,\ymin) -- (0,\ymax) node [below left] {$\ima \la$};
	%\draw (2.2,-0.2) node [left] {$1$};
	%\draw [dashed] (1.4,1.4) -- (-1.4,-1.4);
	%\draw [dashed] (-1.4,1.4) -- (1.4,-1.4);
	%\draw [dashed] (1.9,0.5) -- (-1.9,-0.5);
%	\draw [dashed] (6,0) -- (6,5);
	%\draw (3.5,1.6) node [above] {$B(\la_0,\ve)$};
	%\draw (-3.5,-2.4) node [above] {$B(-\la_0,\ve)$};
	\draw (0,0) node [above] {NV${}_0$};
	\draw (3,0) node [below right] {KP I};
	\draw (-3,0) node [below left] {KP II};
	\draw (-1.5, 0) node [above] {NV${}_-$};
	\draw (1.5,0) node [above] {NV${}_+$};
	\draw (0,0) node [below] {$E=0$};
	%\draw [dashed] (-2,2) -- (0.4,0.2);
	%\draw [dashed] (-2,2) -- (-0.4,-0.2);
	%\draw [dashed] (2,2) -- (-0.72,0.72);
	%\draw [dashed] (2,2) -- (0.72,-0.72);
	\fill (0,0)  circle[radius=2pt];
%	\fill (-1.4,-1.4)  circle[radius=1pt];
%	\fill (-1.4,1.4)  circle[radius=1pt];
%	\fill (1.4,-1.4)  circle[radius=1pt];
%	\fill (1.9, 0.5)  circle[radius=1pt];
%	\fill (-1.9,-0.5)  circle[radius=1pt];
%	%\fill (3.5,1.5)  circle[radius=1pt];
	%\fill (-3.5,-1.5)  circle[radius=1pt];
	%\draw [dashed] (5,1) -- (5,2.5);
	%\draw [<->] (5,2.5) -- (8,2.5);
	%\draw (6.5,2.5) node [above] {$L$};
	%\draw [dashed] (8,1) -- (8,2.5);
	%\draw (0.5,0.5) arc (45:135:0.7);
	%\draw (1,0.25) arc (15:55:0.8);
\end{tikzpicture}
\end{center}
\caption{Schematic diagram of the behavior of NV equations as the energy $E$ tends to infinity, with the second variable properly rescaled (see \cite[Appendix B]{KM} for more details). Note that NV equations are semilinear in nature, but the limit to KPI as $E\to +\infty$, if rigorous, should destroy this property.}\label{Dicotomy}
\end{figure}

\subsection{Ideas of the proof}

In the proof of Theorem \ref{LWP_pos}  we follow the ideas of Bourgain, Saut and Tzvetkov \cite{ST}, and extended by Molinet and Pilod in \cite{MP} for the study of the Zakharov-Kuznetsov (ZK) equation and applied in \cite{A} for NV in the case $ E = 0 $. The proofs in the above mentioned works are essentially based on the use of a sharp $L^4$ Strichartz smoothing estimate, previously obtained by Carbery, Kenig and Ziesler \cite{CKZ}, in the case of linear dynamics arising from polynomial symbols. In the case of NV equation with $ E \neq 0 $, however, the symbol is not a polynomial, but a rational function, bounded but no longer smooth, much in the spirit of KP equations. 

\medskip

In \cite{KM}, we overcame the difficulty of showing a Carbery-Kenig-Ziesler type estimate by proving dispersive estimates with smoothing for the solution of the linearized equation, in the spirit of Kenig, Ponce and Vega \cite{KPV_Indiana}, and Saut \cite{Saut}. The dispersive estimate is proved via a stationary-phase type procedure for a two-dimensional phase depending on a complex parameter. This type of procedure has been previously used in \cite{KN,K} in the framework of the Inverse Scattering Transformation (IST) approach to NV. An important role in our estimations is played by certain changes of variables, arising naturally in the IST method for NV. The role of those changes of variables is to transform the equation for stationary points into an algebraic equation of one complex variable, simplifying significantly the subsequent manipulations with the phase.

\medskip

However, when dealing with the case $E>0$, such change of variable is only partially successful. The case $E>0$ presents a more involved problem because a similar change of variable is only valid for a particular region of the Fourier space, $|\xi|>2$ (see \eqref{ChVar} for more details). The remaining case, characterized as the low frequency regime, has to be treated using a different approach.

\medskip

From the point of view of IST, the existence of two different regimes in the case $ E > 0 $, as oppposed to a single regime in the case $ E < 0 $, can be explained by the following fact: at $ E < 0 $ the solution $ v $ of NV can be reconstructed via IST from one type of scattering data which correspond to $ \hat v $ (the Fourier transform of $ v $) in the small norm approximation, whereas at $ E > 0 $ the solution $ v $ is reconstructed from two types of scattering data: one of them gives $ \hat v $ for $ | \xi | > 2 $ in the small-norm approximation, and the other one gives $ \hat v $ for $ | \xi | \leq 2 $ (see \cite{Gr2} for more details).

\medskip

In order to show the dispersive estimates in the low frequency regime, we perform a detailed analysis of all possible configurations of stationary points of the phase that could appear as time evolves. An important problem here is the fact that when the complex paramter, arising in the stationary phase, belongs to a certain curve on the complex plane, then the corresponding points are degenerate (up to the third order of degeneracy, i.e. the stationary points can be third-order roots of the derivative of the phase). In addition, even in the case of absence of stationary points, the values of the complex parameter can be arbitrarily close to the ``degeneracy curve''.
Moreover, the symbol $|\xi|^\al$, representing the number of derivatives that we would like to obtain in our estimates, does not vanish in the degenerate points, in contrast to the case $E<0$, which leads to a slower decay of the stationary-phase type integral.

\medskip

In order to overcome the aforementioned difficulties, in the low frequency regime we first perform a new change of variables (see \eqref{ChVar2}) which allows, as in the high frequency case, to transform the equation for stationary points into an algebraic equation of one complex variable. Then we classify the stationary points of the new phase function according to how denerate they are, in terms of a parameter $\omega$ and some respective angles (see \eqref{Case1}-\eqref{Case4}).  As a third step we define two new parameters, $\omega_1$ and $\omega_2$ (see Definitions \ref{omega1} and \ref{omega2}), which take into account how close stationary points are to each other. The fourth step is to analyse these two quantities. Among the properties that we prove, we show that the values $\omega_1$ and $\omega_2$ can only be attained at some particular configurations of stationary points, and moreover, if a particular distance condition holds (implying that the stationary points are not too far from the unit circle), then both $\omega_1$ and $\omega_2$ must be bounded by the original parameter $\omega$. This simple but not less important fact allows us to estimate the degree of degeneracy of the stationary phase in the dispersive estimate.

\medskip

Although our final dispersive estimate is not as good as the original one in the negative energy case, it is enough to close the standard global Strichartz estimates. The dispersive estimates allow us to obtain an $ L^{ \infty } $ estimate for the linearized solution, at all frequencies and with decay slightly slower than $ 1/t^{3/4} $. However, if  we want to get sharp smoothing estimates, we need another new estimate, this time for large frequencies. Proposition \ref{Smoothing_Large} allows us to get the desired decay estimate at large frequencies. This dispersive estimate allows us to gain almost one derivative of the linearized solutions in $L^\infty$ with decay slightly slower than $1/t$.  Note that the estimates are weaker than in the case $ E < 0 $ (\cite{KM}), but are sufficient to close standard Bourgain's bilinear estimates for $s>\frac12$.

\medskip

In addition to linear estimates, we perform a standard Fourier localization between low and high frequencies. We recall that the resonance function that appears when dealing with the interaction of low-high to high frequencies is treated by estimating the zero level set via reasonable lower bounds on the partial derivatives of the resonance function (see \cite{ST}). Such estimates are simple to establish, but they carry a loss of accuracy (probably 1/2 of derivative) that could be avoided by dealing directly with the resonance function as Bourgain did in \cite{B1}; however, such a task is substantially more difficult given the complexity of the linear NV symbol (see \eqref{w0} for more details). However, it is worth to mention that the nonzero energy NV symbol has some useful boundedness properties near the origin, unlike standard KP equations. The low-low to low interaction is treated using a Strichartz estimate without smoothing valid for all frequencies. To treat the high-high to low/high interaction we use the smoothing estimate in $L^4$ with $1/4^-$ gain of derivative, valid for non-low frequencies.

\subsection{About the global well-posedness problem} 

We finish this introductory section with some words about the global existence problem. 
%Although the equation is in nature well-behaved for high regularity initial data, the problem of global existence is far from being trivial because of the lack of an evident sign in the real and imaginary part of the conservation laws, see \eqref{Mass} and \eqref{Energy}. Even worse, some nice initial data may have bad behavior in terms of conserved quantities: for example any smooth, rapidly decaying radial data has radial Fourier transform and consequently it mass \eqref{Mass} is identically zero:
%\[
%\int vw = \int \overline{\hat v}\hat w = \int_0^\infty\int_0^{2\pi} e^{-2i\theta} |\hat v(r)|^2 r\ dr d\theta =0.
%\]
%This implies that in principle the mass cannot be used to bound the $L^2$ norm of the solution, as usually performed in other physical models. A similar conclusion holds for the two first terms in the Hamiltonian \eqref{Energy}.
%
It turns out that in the case of NV equations, such a problem is deeply related to the behavior of scattering solutions of the associated Schr\"odinger operator \eqref{Schrodinger}. The results of Novikov \cite{Nov} imply, via the inverse scattering transform, the global existence of solutions to $ NV_- $, $ NV_+ $ for initial data with suitable spectral properties.  Those spectral properties are satisfied, in particular, under some small-norm assumptions on initial data.

\medskip

Very recently, Music and Perry \cite{MusicPerry} showed that in the case of zero energy, if the initial datum $ v_0$ has enough regularity in weighted Sobolev spaces and is such that the associated Schr\"odinger operator $ -\Delta_{x,y} + v_0 $ is critical or subcritical (i.e., nonnegative), then $ NV_0 $ ($E=0$) has a global solution. Schottdorf \cite{Schottdorf} showed in his Ph.D. thesis that the {\bf modified} NV equation has small global solutions in $L^2(\R^2)$, by making use of suitable Koch-Tataru spaces (see also a previous work of Perry \cite{Perry2}). These global solutions can only be translated, via the Miura transformation (see \cite{Perry2}), to potentials (and therefore solutions of the standard $NV_0$ equation), for which the associated Schr\"odinger operator is nonnegative.

\medskip

In the following we consider {\bf only the case $E=0$}. Recall that when the non-negativity assumption on the Schr\"odinger operator $ -\Delta_{x,y} + v_0 $ is not satisfied, the solution may develop a blow-up in finite time. An exemplifying case is the following one:
%%%%\subsection{The zero energy case, revisited}
%%%%
%%%%In the case $E=0$, it is possible to show the existence of global solutions to $ NV_0 $ for initial data satisfying some spectral assumptions (see \cite{Perry2, MusicPerry}).
%%%%Angelopoulous \cite{A} showed that the Cauchy problem for \eqref{NV} is locally well-posed for $v(t=0)\in H^s(\R^2)$, $s>\frac12$, by following the ideas of Molinet and Pilod \cite{MP} for the Zakharov-Kuznetsov equation. For spaces below $L^2(\R^2)$ he showed \cite{A} that the flow of $ NV_0 $ has some ill-posed behavior. One may ask if by using \eqref{Mass} or \eqref{Energy} the well-posedness result may be extended to a global one. The answer is surprisingly no (another fact that reveals that NV does not enjoy standard physical properties). 
for any $a,c,d\in \R$ such that $a+c (x^3+y^3) +d(x^2+y^2)^2>0$ everywhere, the function
\[
v(t,x,y)= -2 \Delta_{x,y} \log (a -24c t +c (x^3+y^3) +d(x^2+y^2)^2)
\]
solves $NV_0$, decays like $r^{-3}$ at infinity ($r=\sqrt{x^2+y^2}$), and it blows up at finite time (see also \cite{TS2008}).
The reader may also consult the work by Adilkhanov and Taimanov \cite{AT}, where the discrete spectrum of these solutions is numerically computed. Setting $a=d=1$ (scaling symmetry), we can define, for $|c|$ small,
\be\label{Q2c}
\begin{aligned}
Q_{2,c}(t,x,y) & := -2 \Delta_{x,y} \log (1 -24c t +c (x^3+y^3) + (x^2+y^2)^2) \\
& =\frac{-32(x^2 + y^2)}{
   (1 + (x^2 + y^2)^2 + c(-24t + x^3 + y^3))^2} \\
& \quad +4c \frac{ x(-3 + 192tx + x^4) - 3(1 + x^4)y - 2(-96t + x^3)y^2 - 2x^2y^3 - 3xy^4 + y^5}{
   (1 + (x^2 + y^2)^2 + c(-24t + x^3 + y^3))^2} \\
& \quad    +6c^2 \frac{x^4 - 2x^3y - 2xy^3 + y^4 + 48t(x + y)}{
   (1 + (x^2 + y^2)^2 + c(-24t + x^3 + y^3))^2} .
\end{aligned}
\ee
A simple computation (involving the computation of some critical points) shows that $ 1+c (x^3+y^3) + (x^2+y^2)^2 >0$ for all $(x,y)\in \R^2$ provided $c\in (-c_0 ,c_0)$, with $c_0:=\frac4{3^{3/4}}>1$. We have then the following simple 

\begin{prop}[\cite{TS2008,KM}]\label{thm1}
For any $c\in (-c_0,c_0)$, the smooth rational one-parameter family $Q_{2,c}$ defined in \eqref{Q2c} solves $NV_0$, it blows up in finite positive time if $c \in (0,c_0)$, and scatters to zero as $t\to +\infty$ if $c\in (-c_0,0)$. Exactly the opposite behavior holds for $t\to -\infty$.
\end{prop}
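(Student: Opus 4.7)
The statement decomposes into three independent claims (equation solved, blow-up for $c>0$, scattering for $c<0$), with the $t\to-\infty$ behavior following from the symmetry $Q_{2,c}(t,x,y)=Q_{2,-c}(-t,-x,-y)$ of the explicit formula. I would treat them in order. To verify the equation I would use the Hirota bilinear formalism: writing $F(t,x,y):=1-24ct+c(x^3+y^3)+(x^2+y^2)^2$ and $v=-8\partial_z\partial_{\bar z}\log F$, the constraint $\partial_{\bar z}w=-3\partial_z v$ is solved (modulo holomorphic additive terms forced to vanish by decay) by $w=24\partial_z^2\log F$; substituting into \eqref{NV} at $E=0$ then reduces the problem to a polynomial identity in $F$ and its $(x,y,t)$-derivatives of order up to four, which can be checked by direct symbolic expansion, or invoked from the rational tau-function construction of \cite{TS2008,AT}. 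To identify $c_0=4/3^{3/4}$, I would find the critical points of $F(0,\cdot,\cdot)$: solving $\nabla F(0,\cdot,\cdot)=0$ yields the origin (where $F=1$), the axis points $(-3c/4,0)$ and $(0,-3c/4)$ (where $F=1-27c^4/256$), and the diagonal point $(-3c/8,-3c/8)$ (where $F=1-27c^4/1024$); hence the global spatial minimum is $1-27c^4/256$, which is strictly positive precisely when $c^4<256/27$, i.e.\ $|c|<4/3^{3/4}$.

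For blow-up when $c\in(0,c_0)$, I would exploit that $F$ is affine in $t$, so the spatial critical points of $F(t,\cdot,\cdot)$ are time-independent and the spatial minimum equals $(1-27c^4/256)-24ct$, vanishing at the explicit time $t^*:=(1-27c^4/256)/(24c)>0$. At $(x_0,y_0)=(-3c/4,0)$ and $t=t^*$, one has $F=0$, $\nabla F=0$, and $\Hess F=(9c^2/4)\,\mathrm{Id}$. The identity $\Delta\log F=\Delta F/F-|\nabla F|^2/F^2$, combined with a Taylor expansion of $F$ near $(t^*,x_0,y_0)$, then yields $v(t,x_0,y_0)\sim -3c/(8(t^*-t))$, so $\|Q_{2,c}(t,\cdot,\cdot)\|_{L^\infty}$ blows up as $t\nearrow t^*$.

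For scattering when $c\in(-c_0,0)$ and $t\to+\infty$, the term $-24ct$ is positive and grows linearly in $t$, giving the uniform lower bound $F(t,x,y)\geq(1-27c^4/256)+24|c|t>0$ for every $(x,y)\in\R^2$, while $\Delta F$ and $|\nabla F|^2$ remain time-independent polynomials of degree $2$ and $6$ respectively. Splitting $(x,y)$-space into a bounded region (where $F\gtrsim t$, so both $\Delta F/F$ and $|\nabla F|^2/F^2$ decay like $1/t$) and a far-field region (where the leading $|x|^{-2}$ contributions of $\Delta F/F$ and $|\nabla F|^2/F^2$ cancel, so $|Q_{2,c}|\to 0$ as $x^2+y^2\to\infty$), one obtains $\|Q_{2,c}(t,\cdot,\cdot)\|_{L^\infty(\R^2)}=O(1/t)$, with corresponding $L^p$ decay for $1<p\leq\infty$ following from the spatial decay of $Q_{2,c}$. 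I expect this scattering step to be the main obstacle, since confirming the uniform-in-$(x,y)$ decay requires careful tracking of the cancellation between the numerator and denominator contributions at spatial infinity, whereas the other two steps are essentially algebraic and direct.
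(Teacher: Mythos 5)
Your overall route is the natural one, and it is essentially all the paper itself records: the paper only sketches the critical-point computation identifying $c_0=4/3^{3/4}$ and otherwise attributes the proposition to \cite{TS2008,KM}. Your verification of the equation through the ansatz $w=24\,\partial_z^2\log F$ (the same ansatz the paper uses in Section \ref{Sect_10} for $Q_{n,0}$), your list of critical points of $F(0,\cdot,\cdot)$ with minimum value $1-27c^4/256$, and your blow-up argument are correct; in fact at the spatial critical point $(x_0,y_0)=(-3c/4,0)$ one has $\nabla F\equiv 0$ for \emph{all} $t$, so $v(t,x_0,y_0)=-2\,\Delta F/F=-3c/\bigl(8(t^*-t)\bigr)$ holds exactly and no Taylor expansion is needed. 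The reduction of the $t\to-\infty$ claim to the symmetry $Q_{2,c}(t,x,y)=Q_{2,-c}(-t,-x,-y)$ is also fine.

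There is, however, a genuine error in the scattering step, precisely the one you flagged as the main obstacle. The claimed bound $\|Q_{2,c}(t)\|_{L^\infty(\R^2)}=O(1/t)$ is false, and the bounded/far-field dichotomy does not close the argument: in any time-independent far-field region the cancellation you invoke only gives $|Q_{2,c}|\to0$ as $x^2+y^2\to\infty$ \emph{at fixed} $t$, which yields no decay in $t$ there, and the dangerous contribution lives in the matching region $|z|\sim t^{1/4}$. Concretely, write $a:=24|c|t>0$ and $G(x,y):=1+c(x^3+y^3)+(x^2+y^2)^2$, so that
\[
Q_{2,c}=-2\,\frac{(G+a)\,\Delta G-|\nabla G|^2}{(G+a)^2},
\]
and for $r=|z|$ large the numerator equals $16\,a\,r^2+O(r^5+a\,r)$ while $G+a=r^4+a+O(r^3)$; maximizing $32\,a\,r^2/(r^4+a)^2$ at $r^4=a/3$ gives a value $6\sqrt{3}\,a^{-1/2}$. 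Hence the sharp uniform decay is $\|Q_{2,c}(t)\|_{L^\infty}\simeq C|c\,t|^{-1/2}$, attained at $|z|\sim t^{1/4}$, not $O(1/t)$. The conclusion of the proposition survives with this correction (the sup-norm still tends to zero, and the same three-regime estimate $r\lesssim1$, $r^4\sim a$, $r^4\gg a$ makes the bound uniform), but as written your far-field step would fail and the stated rate must be replaced by $t^{-1/2}$.
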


It is interesting to notice that the particular case $c=0$ describes a static \emph{lump} solution
\be\label{Q20}
Q_{2,0} := -2 \Delta_{x,y} \log (1 + (x^2+y^2)^2)  =\frac{-32(x^2+y^2)}{(1 + (x^2+y^2)^2)^2} =\frac{-32|z|^2}{(1 + |z|^4)^2}, 
\ee
which decays as $r^{-6}$. However, if $c\neq 0$, $Q_{2,c}$ decays as $r^{-3}$. Proposition \ref{thm1} can be recast as the instability of the lump solution $Q_{2,0}$ under well-localized perturbations. For a similar behavior the reader may confront this result with similar ones in Merle-Rapha\"el-Szeftel \cite{MRS} on the instability of pseudo-conformal nonlinear Schr\"odinger blow-up solution. A more general family of threshold solutions can be obtained by using the scaling of the equation:
\be\label{v_la}
v_\la(t,x,y):= \la^2 v(\la^3 t,\la x, \la y).
\ee

\medskip

It is also interesting to obtain a better understanding of the nature of blow up solutions  of the form $Q_{2,c}$, $c>0$ in terms of a critical ``norm''. Indeed, the quantity
\be\label{Integral_L1}
\int v(t,x,y)dxdy
\ee
is formally conserved by the flow, and {\bf scaling invariant}, in the sense that if $v$ is solution to \eqref{NV}, and $\la>0$,
$v_\la$ in \eqref{v_la} is also solution to \eqref{NV} and \eqref{Integral_L1} remains invariant. With this in mind, note that
\[
\int Q_{2,0} =-16\pi , \qquad  \int Q_{2,c} <  -16\pi  \quad \hbox{ if } \quad c \neq   0 \hbox{ small}. 
\]
The second fact above can be easily obtained by numerical integration. The lump $Q_{2,0}$ is not the simplest lump solution for $NV_0$. The function
\be\label{Q10}
Q_{1,0}(x,y):=  -2 ~\Delta_{x,y} \log(1+x^2+y^2)= \frac{-8}{(1+x^2+y^2)^2} = \frac{-8}{(1+|z|^2)^2},
\ee
is solution of $NV_0$ and
\[
\int Q_{1,0} = - 8\pi.
\]
Moreover, there is a continuous branch of perturbations of $Q_{1,0}$ that are stationary solutions of NV with zero energy:
\[
Q_{1,a,b} (x,y) := -2 ~\Delta_{x,y} \log(1+ ax+by +x^2+y^2) = \frac{-2(4-a^2-b^2)}{(1+ ax+by +x^2+y^2)^2},
\]
provided $1+ ax+by +x^2+y^2>0$, that is, $a^2+b^2<4$.

\medskip

Let us mention that in \cite{Simon}, B. Simon showed that, if $v_0$ satisfies 
\[
\int |v_0(x,y)|^{1+\ve} <\infty, \quad \int (1+x^2+y^2)^\ve |v_0(x,y)| <\infty, \quad \hbox{for some $\ve>0$},
\]
then $-\Delta + \la v_0$ has a bound state for all $\la>0$ small if and only if  $\displaystyle{ \int v_0 \leq 0}$.
%
%
%
%\begin{thm}
%$Q_{1,0}$ is a transparent potential, namely the scattering data associated to $Q_{1,0}$ is trivial.% $Q$ leads to {\bf trivial scattering data}; it is a \emph{transparent} potential.
%\end{thm}
%
%
%
%
In this paper, following the approach in \cite{Chang}, we prove the following 
\begin{thm}\label{Blow}
For each $n\geq 3$ there exists a negative solution $Q_{n,0} =Q_{n,0}(t,z,\bar z)$ of $NV_0$, decaying as $ | z |^{ - 2( n + 1 ) } $, such that $\displaystyle{\int Q_{n,0} =-8n\pi}$, and $Q_{n,0}$ is globally well-defined, but blows up in infinite time: $ \|Q_{n,0}(t)\|_{L^2(\R^2)} \to +\infty$ as $t\to \pm \infty$.
\end{thm}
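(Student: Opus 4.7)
The approach is to construct $Q_{n,0}$ via a rational, tau-function ansatz
\[
Q_{n,0}(t,z,\bar z) \;=\; -2\, \Delta_{x,y} \log \tau_n(t,z,\bar z),
\]
which is the natural generalization of the explicit solutions $Q_{1,0}$, $Q_{2,0}$ and $Q_{2,c}$ displayed above. Plugging this ansatz into $NV_0$ transforms the PDE into a Hirota-type bilinear equation for $\tau_n$. The problem is thereby reduced to producing, for each $n \geq 3$, a real, strictly positive polynomial $\tau_n(t,z,\bar z)$ of total degree $2n$ in $(z,\bar z)$, with polynomial-in-$t$ coefficients, normalized so that $\tau_n \sim |z|^{2n}$ at spatial infinity and solving the associated Hirota equation.

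\medskip

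The construction of such $\tau_n$ follows the approach of Chang \cite{Chang}. The static examples $\tau_1 = 1 + |z|^2$ and $\tau_2 = 1 + |z|^4$ exhaust the cases in which a purely time-independent polynomial of this form solves the bilinear equation; for $n \geq 3$, compatibility forces nontrivial time-dependent corrections of lower degree in $(z,\bar z)$, whose coefficients grow polynomially in $t$. Writing $\tau_n = |z|^{2n} + \sum_{j+k<2n} c_{j,k}(t)\, z^j \bar z^k$ and imposing the Hirota equation yields a triangular system of ODEs for the $c_{j,k}$, which can be solved inductively degree by degree. The strict positivity $\tau_n(t,z,\bar z) > 0$ on $\R \times \R^2$, which is indispensable for $Q_{n,0}$ to be smooth and globally defined, is then guaranteed by the dominance of the leading $|z|^{2n}$ term combined with explicit sign control of the lower-order monomials.

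\medskip

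Once $\tau_n$ is in hand, the remaining claims of the theorem reduce to standard computations. The decay $Q_{n,0} = O(|z|^{-2(n+1)})$ at spatial infinity follows from the expansion of $\log \tau_n$: the structure of $\tau_n$ ensures that the non-harmonic correction to $2n\log|z|$ is of order $|z|^{-2n}$, contributing, via $\Delta_{x,y}(|z|^{-2n}) = 4n^2 |z|^{-2(n+1)}$ on $\R^2 \setminus \{0\}$, the claimed decay rate. The integral identity is obtained from the divergence theorem applied to a large disc $B_R$:
\[
\int_{B_R} \Delta_{x,y} \log \tau_n \, dx\, dy \;=\; \oint_{\partial B_R} r\, \partial_r \log \tau_n \, d\theta \;\longrightarrow\; 4n\pi \qquad \text{as } R \to \infty,
\]
yielding $\int Q_{n,0} = -8n\pi$. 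The sign $Q_{n,0} < 0$ follows from the positivity of $\Delta_{x,y}\log\tau_n$, as in the model radial computation $\Delta_{x,y}\log(1+r^{2n}) = 4n^2 r^{2n-2}(1+r^{2n})^{-2} > 0$. Global existence on $\R$ is immediate from $\tau_n > 0$. Finally, the $L^2$ blow-up as $t \to \pm\infty$ is read off the explicit time-dependence of the coefficients $c_{j,k}(t)$: the polynomially growing corrections cause the effective concentration scale of $Q_{n,0}$ to spread in $t$, and a change of variables in $\|Q_{n,0}(t)\|_{L^2}^2$ exhibits a positive power of $|t|$ diverging at infinity.

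\medskip

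The main obstacle is the construction step itself: writing down a closed-form or recursive description of $\tau_n$ for general $n \geq 3$ and, above all, verifying its strict positivity on the whole spacetime $\R \times \R^2$. Once this is accomplished along the lines of \cite{Chang}, the decay, integral, negativity and $L^2$-growth assertions reduce to the computations sketched above.
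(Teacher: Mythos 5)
Your overall strategy (a tau-function ansatz in the spirit of Chang, followed by decay/integral/negativity/$L^2$ computations) points in the right direction, but the proposal leaves precisely the load-bearing parts unproved, and one of your shortcuts is actually false. The paper's construction is not a generic positive polynomial $\tau_n=|z|^{2n}+\sum_{j+k<2n}c_{j,k}(t)z^j\bar z^k$ obtained from a triangular ODE system; it is the very specific choice $\tau_n=1+|P_n(t,z)|^2$, where $P_n$ is the Gould--Hopper polynomial, holomorphic in $z$ and solving the linear equation $\partial_t P_n=8\partial_z^3 P_n$. This structure is what makes everything work: positivity of $\tau_n$ is automatic (one plus a modulus squared), and $Q_{n,0}=-8|\partial_z P_n|^2/(1+|P_n|^2)^2$ is manifestly negative and visibly decays like $|z|^{-2(n+1)}$. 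Your claim that strict positivity is ``guaranteed by the dominance of the leading $|z|^{2n}$ term combined with explicit sign control of the lower-order monomials'' cannot be salvaged as stated: the family $Q_{2,c}$ in this very paper has a tau function of exactly that form (leading term $(x^2+y^2)^2$ plus lower-order time-dependent terms), yet positivity is lost in finite time for $c>0$. Likewise, negativity of $Q_{n,0}$ is not a consequence of positivity of $\Delta\log\tau_n$ for a general positive $\tau_n$; it uses the identity $\Delta\log(1+|h|^2)=4|h'|^2/(1+|h|^2)^2\ge 0$ valid for $h$ holomorphic.

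The second genuine gap is the infinite-time blow-up. Your argument (``polynomially growing coefficients cause the concentration scale to spread, and a change of variables exhibits a positive power of $|t|$'') is not a proof and misidentifies the mechanism. In the paper the growth comes from concentration, not spreading: for $n\ge 3$ one has the self-similar form $P_n(t,z)=t^{n/3}\hat P_n(z/t^{1/3})$ with $\hat P_n$ an Appell polynomial, so $P_n(t,\cdot)$ has a nonzero root $z_0(t)$ with $|z_0(t)|\sim|t|^{1/3}$; a nontrivial lemma (proved by induction using $\partial_z P_n=nP_{n-1}$, the recursion $(z+24t\partial_z^2)P_{n-1}=P_n$, and the fact that $\hat P_3(z)=z^3+48$ has simple roots) guarantees that at least one such root has multiplicity at most two; and then local upper bounds on $|P_n|^2$ and lower bounds on $|\partial_z P_n|^2$ on the unit ball around $z_0(t)$ give $\|Q_{n,0}(t)\|_{L^2}^2\gtrsim |t|^{\frac23(n-1)}$ (or $|t|^{\frac13(n-2)}$ in the degenerate case). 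None of these ingredients (self-similarity, existence and multiplicity control of nonzero roots, quantitative local estimates) is present in your sketch, and without identifying the $1+|P_n(t,z)|^2$ structure there is no way to ``read off'' the divergence of the $L^2$ norm from coefficient growth alone.
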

This result can be recast as $(i)$ there exist infinite time blow up solutions, and $(ii)$ there exists a more complex zoology of simple solutions for $NV_0$, that are not present in standard, physically-motivated models. In particular, when asking for a suitable resolution into lumps of an initial datum, what are the lump components that should appear? Additionally, this theorem reveals that globally defined solutions do not necessarily have bounded $L^2$-norm. In some sense, this fact justifies the use of \eqref{Integral_L1} as a not-being-perfect, but reasonable tool to measure the size of solutions.

\medskip

By using the scaling invariance of the equation in  \eqref{v_la}, we can make any solution and any initial data arbitrarily small in the $L^\infty$ norm. Even if Simon's theorem is not directly applicable, because the potential $Q_{n,0}(0,z,\bar z)$ is $L^1$ scaling invariant, using a suitable ensemble of almost constant cutoff functions that approximates the constant 1, we conclude the following result:

\begin{cor}
The operator $-\Delta + Q_{n,0}(0,z,\bar z)$ has always a negative bound state.
\end{cor}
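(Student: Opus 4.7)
The plan is to prove the corollary by a direct variational argument, using the Rayleigh--Ritz (min-max) characterization of the infimum of the spectrum of $H := -\Delta_{x,y} + Q_{n,0}(0,z,\bar z)$, combined with Weyl's theorem on the essential spectrum. Concretely, I will exhibit a sequence of test functions $\chi_k\in H^1(\R^2)$ for which the quadratic-form value $\langle \chi_k, H\chi_k\rangle$ tends to the strictly negative number $-8n\pi$ as $k\to\infty$; this is enough to force $\inf\sigma(H)<0$, after which the decay of $Q_{n,0}(0,\cdot)$ at spatial infinity will let one identify this infimum as a genuine eigenvalue, i.e. a bound state.

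The ``almost constant cutoff functions that approximate the constant $1$'' announced just before the corollary are the standard two-dimensional \emph{logarithmic} cutoffs. Fix a smooth non-increasing $\phi:\R\to[0,1]$ with $\phi\equiv 1$ on $(-\infty,1]$ and $\phi\equiv 0$ on $[2,\infty)$, and set
\[
\chi_k(z):=\phi\!\left(\frac{\log |z|}{\log k}\right),\qquad k\geq 2,
\]
so that $\chi_k\equiv 1$ on $\{|z|\leq k\}$, $\chi_k\equiv 0$ on $\{|z|\geq k^2\}$, and $\nabla \chi_k$ is supported on the annulus $\{k\leq |z|\leq k^2\}$. A direct computation in polar coordinates, with the substitution $s=\log r/\log k$, gives
\[
\|\nabla \chi_k\|_{L^2(\R^2)}^2 \;=\; \frac{2\pi}{\log k}\int_1^2 |\phi'(s)|^2\,ds \;\longrightarrow\; 0 \quad\text{as } k\to\infty,
\]
which is the dimension-two manifestation of the fact that constants ``almost'' belong to $\dot H^1(\R^2)$ --- this is precisely what makes the argument work for a potential whose $L^1$-mass is preserved under the natural equation scaling. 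Since $Q_{n,0}(0,\cdot)$ decays as $|z|^{-2(n+1)}$ with $n\geq 3$, it lies in $L^1(\R^2)$, so dominated convergence gives $\int_{\R^2}Q_{n,0}(0,\cdot)\chi_k^2\,dz \to \int_{\R^2}Q_{n,0}(0,\cdot)\,dz=-8n\pi$. Adding the two estimates yields $\langle \chi_k,H\chi_k\rangle = \|\nabla \chi_k\|_{L^2}^2+\int Q_{n,0}(0,\cdot)\chi_k^2\to -8n\pi<0$, so this quantity is strictly negative for all sufficiently large $k$, and Rayleigh--Ritz forces $\inf\sigma(H)<0$.

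To upgrade the strict negativity of $\inf\sigma(H)$ to the existence of an eigenvalue, I would invoke Weyl's theorem on the stability of the essential spectrum: since $Q_{n,0}(0,\cdot)$ is a smooth bounded function decaying polynomially at infinity, multiplication by it is a relatively $(-\Delta)$-compact perturbation on $L^2(\R^2)$, so $\sigma_{\mathrm{ess}}(H)=\sigma_{\mathrm{ess}}(-\Delta)=[0,\infty)$. A strictly negative point of $\sigma(H)$ therefore lies below the essential spectrum and must be a discrete eigenvalue, which is precisely a negative bound state. The only genuinely non-routine ingredient is the logarithmic-cutoff estimate on $\|\nabla \chi_k\|_{L^2}^2$; everything else is standard spectral theory, and the hypothesis $\int Q_{n,0}=-8n\pi<0$ is used in exactly one place, namely to produce the strictly negative limit of the quadratic form.
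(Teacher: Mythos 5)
Your argument is correct and is essentially the proof the paper has in mind: the ``ensemble of almost constant cutoff functions that approximates the constant $1$'' mentioned before the corollary is exactly your logarithmic cutoffs, whose Dirichlet energy $\frac{2\pi}{\log k}\int_1^2|\phi'|^2\,ds$ vanishes as $k\to\infty$ while the potential term converges to $\int Q_{n,0}(0,\cdot)=-8n\pi<0$. The paper only sketches this step; your write-up fills in the details (variational negativity plus Weyl's theorem to identify the negative spectral point as a genuine eigenvalue) correctly.
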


In particular, the solutions $Q_{n,0}$ do not satisfy the conditions imposed in Music and Perry's  \cite{MusicPerry} work.

\medskip

We finish this introduction with some words about the case $E\neq 0$. The straightforward extension of the previous results to the nonzero case does not seem to work, and some different behavior may be expected. We believe that global well-posedness does hold for the case $E<0$, not depending on the size of the initial data. However, the panorama for $E>0$ could be even more complicated than the case $E=0$. 
%
%Note that $v_c$ is well-defined for $t\geq 0$ provided $c<0$
%A simple observation leads to the following claim: One has
%\[
%\int v_c  =-16\pi(1+ f(c)),
%\]
%independent of time and scaling $c$. Moreover, for $c>0$ $v_c$ blows up  in finite time, whereas $v_c$ scatters as $t\to+\infty$ if $c<0.$
%

\subsection{Organization of this paper} This paper is organized as follows. In Section \ref{disp_section}, we state some smoothing estimates for the linear NV equation. These bounds include not only global, but also some large frequency estimates needed for the proof of the main theorem. Section \ref{Sect_3} is devoted to the computation of some oscillatory integrals in a region of the Fourier space outside the ball $B(0,2)$, which represents the nonsingular regime. Section \ref{Sect_4} is the first part of a series of sections dealing with the estimates inside the ball $B(0,2)$ (low frequency regime). In particular, in this Section we prove several auxiliary lemmas on how stationary points are distributed. Section \ref{Sect_5} deals with the actual proof of the decay estimate with smoothing for the NV symbol. In Section \ref{large_freq} we prove the smoothing estimates leading to Proposition \ref{Smoothing_Large}. Section \ref{Sect_7} deals with the global and large-frequency Strichartz estimates needed in the subsequent section. Finally, Sections \ref{Sect_8}  and \ref{Sect_9} are devoted to the proof of bilinear estimates (for which we use the estimates obtained in the previous section), and the local well-posedness result. In Section \ref{Sect_10} we prove Theorem \ref{Blow}.

\medskip

{\bf Acknowledgments.} We would like to thank J.-C. Saut and H. Koch for stimulating discussions and insights. A. Kazeykina is grateful to the University of Chile for the kind hospitality during her stay in Santiago in July 2015 when a part of this work was done. C. Mu\~noz would like to thank the Laboratoire de Math\'ematiques d'Orsay where part of this work was done. He is partially funded by Fondecyt 1150202 Chile, Fondo Basal CMM (U. Chile), and Millennium Nucleus Center for Analysis of PDE NC130017.

\medskip

{\bf Notations.} In the following text the notation $ A \lesssim B $ means that there exists a constant $ c > 0 $ (depending on parameters specified in each context) such that $ A \leq c B $. Similarly, the notation $ A \gtrsim B $ means that there exists a constant $ c > 0 $ such that $ A \geq c B $. Finally, the notation $ A \eqsim B $ means that there exist constants $ c_1 > 0 $, $ c_2 > 0 $ such that $ c_2 B \leq A \leq c_2 B $.

\medskip

In the text of the paper we identify $ x =( x_1, x_2 ) \in \mathbb{R}^2 $ with $ x = x_1 + i x_2 \in \mathbb{C} $. 

\bigskip

\section{Smoothing estimates for positive energies}\label{disp_section}

\subsection{A global estimate}

The aim in the following sections is to estimate, for $ E > 0 $, the integral
\be\label{I}
I = \int\limits_{ \mathbb{C} } | \xi |^{ \alpha }  e^{ i t \tilde S( u, \xi ) } d \Re \xi d \Im \xi,
\ee
where $\displaystyle{ u = \frac xt }$ and
\begin{equation}
\label{st_phase}
\widetilde S( u, \xi ) = ( \xi^3 + \bar \xi^3 )\left( 1 - \frac{ 3 E }{ \xi \bar\xi } \right) + \frac{ 1 }{ 2 }( \bar u \xi + u \bar \xi ),
\end{equation}
is the symbol associated to the NV equation. In Sections \ref{disp_section}--\ref{large_freq} we assume for that $ E = 1 $.  A scaling argument allows to reduce the general case $ E > 0 $ to the particular case $ E = 1 $ (see Section \ref{Sect_7} for details).

We will prove the following result.
\begin{prop}
\label{all_freq_prop}
For any $0\leq\alpha\leq\frac{1}{4}$ and any $ \varepsilon > 0 $ small, one has
\be\label{xi_alpha}
|I| \lesssim \frac{ 1 }{ t^{ 3 / 4 - \varepsilon } }
\ee
uniformly in $u\in \R^2$.
\end{prop}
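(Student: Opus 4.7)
The plan is to decompose $I$ into a high-frequency piece and a low-frequency piece via a smooth cutoff separating $|\xi|>2$ from $|\xi|\leq 2$, and to run a careful two-dimensional stationary-phase analysis on each piece. In the regions where $|\nabla_\xi\tilde S|$ stays comparable to $t^{-1/2}$ (or larger), standard integration-by-parts in $\xi$ produces arbitrarily fast decay, so the entire contribution to $|I|$ comes from small neighborhoods of the critical set $\{\nabla_\xi\tilde S=0\}$, whose geometry depends delicately on the complex parameter $u=x/t$.

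For the high-frequency piece, I would use the change of variables mentioned in the introduction (referenced as \eqref{ChVar}), which converts the critical-point equation $\partial_\xi\tilde S=0$ into a polynomial equation in a single complex variable of bounded degree. This reduces the problem to locating the roots of a one-variable algebraic equation and estimating the Hessian at each root; in this regime the stationary points are non-degenerate or at worst quadratically degenerate, and a standard 2D stationary-phase bound yields $t^{-1}$ decay. The weight $|\xi|^\alpha$ with $\alpha\leq 1/4$ is harmless after localization by dyadic frequency cutoffs, since the extra growth in $|\xi|$ is compensated by additional integration-by-parts gains coming from the cubic part of the symbol.

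For the low-frequency piece $|\xi|\leq 2$ I would apply the second change of variables \eqref{ChVar2}, again reducing the critical-point equation to a single complex algebraic equation. Then I would classify the possible configurations of critical points according to their degeneracy: non-degenerate, double, or up to a triple root of $\partial_\xi\tilde S$, parametrized by the quantity $\omega$ (and certain angles) measuring closeness to the ``degeneracy curve'' in the $u$-plane. Introducing the parameters $\omega_1$, $\omega_2$ (from Definitions \ref{omega1}--\ref{omega2}) controlling the mutual separation of stationary points, I would show via case analysis that as long as the stationary points remain at distance $O(1)$ from the unit circle these parameters are bounded by $\omega$; this lets me quantify precisely how large the neighborhood of the degenerate set is on which the phase is slowly varying. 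A van der Corput-type bound applied to each cluster of nearby stationary points then yields a decay rate of $t^{-3/4}$ in the worst (most degenerate) configuration, and the loss $\varepsilon$ comes from logarithmic factors arising when patching together dyadic neighborhoods of the degenerate curve.

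The main obstacle is the low-frequency piece: unlike the $E<0$ case treated in \cite{KM}, the weight $|\xi|^\alpha$ does not vanish at the degenerate critical points, and the cubic degeneracy $\partial_\xi^k\tilde S=0$ for $k=1,2,3$ can occur in a nontrivial region of the parameter $u$. This forces $3/4$ to be essentially the optimal exponent obtainable by this method, rather than the stronger $1^-$ decay one has for $E<0$. The technical heart of the argument is therefore the uniform control of the three parameters $\omega,\omega_1,\omega_2$ and the verification that the loss caused by the non-vanishing of $|\xi|^\alpha$ does not exceed $\varepsilon$.
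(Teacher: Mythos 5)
Your overall strategy (split at $|\xi|=2$, the substitutions \eqref{ChVar} and \eqref{ChVar2}, classification of stationary points via $\omega,\omega_1,\omega_2$) is the paper's, but two points in your outline do not hold up. First, your treatment of the high-frequency piece is incorrect as stated: after the change of variables \eqref{ChVar} the stationary points of $S(u,\lambda)$ can be degenerate up to \emph{third} order (Lemma \ref{st_points_lemma}, item 1, when $u$ is a vertex of $\mathcal U$), and all degenerate points lie on the unit circle $|\lambda|=1$, i.e.\ on the boundary of the domain $\{|\lambda|>1\}$ of $I_{out}$, so they cannot be dismissed. Moreover, unlike the case $E<0$, the weight $|\xi|^{\alpha}=|\lambda\bar\lambda+1|^{\alpha}$ does \emph{not} vanish at these points, so there is no compensation; the outer integral only satisfies $|I_{out}|\lesssim |\ln t|/t^{3/4}$ (Lemma \ref{i_out_lemma}), not $1/t$. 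A bound of order $1/t^{1-\varepsilon}$ is available only when the frequency support is strictly separated from $|\xi|=2$ — that is the separate Proposition \ref{Smoothing_Large}, proved in Section \ref{large_freq} — and it is not true for the whole region $|\xi|>2$. (Your claimed $t^{-1}$ bound would of course imply the proposition, but the justification fails, and the correct bound is exactly where the $\varepsilon$-loss in \eqref{xi_alpha} comes from.)

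Second, for the low-frequency piece your text restates the strategy of the introduction without supplying the argument that actually yields $t^{-3/4}$. After \eqref{ChVar2} one integrates over $\mathbb{T}^2$ with the vanishing Jacobian $|\sin(\varphi_1-\varphi_2)|$, and for $-u\notin\mathbb U$ the phase has \emph{no} stationary points on the torus while its derivatives can be arbitrarily small; a van der Corput bound on ``clusters'' does not cover this two-dimensional, parameter-dependent situation, and integration by parts away from a neighborhood of size $t^{-1/2}$ does not give ``arbitrarily fast decay''. What is actually needed is (i) the structural lemmas $\dist(\lambda_j,\Ss^1)\le C\omega_1$ (Lemma \ref{dist_unit_circle_lemma}) and $|\lambda_j-\lambda_k^*|\le C\omega_2$ (Lemma \ref{dist_deg_points_lemma}) under the condition $\lambda_j\in B_K(0)$ — note your statement that ``$\omega_1,\omega_2$ are bounded by $\omega$ when the points stay at distance $O(1)$ from the unit circle'' garbles this: it is the distance to the circle and to the degenerate points that is controlled \emph{by} $\omega_1,\omega_2$ — and (ii) the quantitative case analysis \eqref{case1}--\eqref{case3}, comparing $\omega_1,\omega_2$ with $4t^{-1/4}$, choosing neighborhoods of radii $t^{-1/4}$, $t^{-1/2}\omega_2^{-1}$, $t^{-3/8}\omega_1^{-1/2}$ around the stationary points, and carrying out the double integration by parts with the corner and boundary terms. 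Without these steps the exponent $3/4$ is not justified; also, in the paper the inside estimate is exactly $t^{-3/4}$ with no $\varepsilon$-loss, so attributing the $\varepsilon$ to dyadic patching near the degeneracy curve misplaces where the loss occurs.
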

The constant involved in the above estimate depends on $\al$ and $ \varepsilon $ only; however, the decay does not improve as $\al$ increases as in our previous paper \cite{KM}. This is one of the main differences between the cases of positive and negative energies, and, as we will se later, this estimate will not be enough to close some Fourier decomposition estimates. 

\medskip

Similarly to the case of negative energy treated in \cite{KM}, we will start by making a change of variables that allows to treat \eqref{xi_alpha}. This change of variables is motivated by the form of the linearized ``inverse scattering solution'' of the NV equation. Recall that at $ E > 0 $ the inverse scattering solution is constructed from two types of scattering data: the classical scattering amplitude which in Born approximation represents the Fourier transform of the solution in the ball 
\[
 B_2 = \big\{  \xi \in \R^2 ~:~ | \xi | \leq 2 \big\} ,
\]
 and Faddeev's scattering data which in Born approximation represent the Fourier transform of the solution outside the ball $ B_2 $. Thus we will perform different changes of variables, one inside the ball $ B_2 $, and other outside this ball.

More precisely, we shall split \eqref{I} into two pieces:
\be\label{I_split}
\begin{aligned}
I & =  \int\limits_{|\xi| > 2 } | \xi |^{ \alpha }  e^{ i t \tilde S( u, \xi ) } d \Re \xi d \Im \xi + \int\limits_{|\xi| \leq 2 } | \xi |^{ \alpha }  e^{ i t \tilde S( u, \xi ) } d \Re \xi d \Im \xi \\
& =: I_{out} + I_{in}.
\end{aligned}
\ee
The purpose of Sections 3, 4 and 5 is to estimate both $ I_{in} $ and $ I_{out} $. A delicate analysis of the behaviour of stationary points of $\tilde S$ will be essential for getting these estimates.

\subsection{Improved smoothing estimate for large frequencies}

Estimate \eqref{xi_alpha} is not enough to close some key estimates in the iteration scheme; for this reason, we will need an additional ``localized'' estimate.

\medskip

In the following lines we prove an additional smoothing estimate for the linear dynamics in the case of large frequencies. 
Fix $ R > 2 $. Take $ \psi_R \in C^{ \infty }_0( \mathbb{R}^2, [ 0, 1 ] ) $ to be a bump function such that 
\be\label{Phi_R}
\psi_R( \xi ) = 0 \text{ for } | \xi | \leq R, \quad \psi_R( \xi ) = 1 \text{ for } | \xi | \geq R + 1. 
\ee
Now define
\begin{equation}\label{I_R}
I_R := \int\limits_{\mathbb{C} } | \xi |^{ \alpha } \psi_R( \xi )  e^{ i t \tilde S( u, \xi ) } d \Re \xi d \Im \xi.
\end{equation}
Then we have

\begin{prop}\label{Smoothing_Large}
For any $ 0 \leq \alpha < 1 $ and any $ \varepsilon > 0 $ small the following estimate is valid
\begin{equation}
\label{large_disp}
| I_R | \lesssim \frac{ 1 }{ t^{ 1 - \varepsilon } },
\end{equation}
uniformly on $ u \in \mathbb{C} $. The implicit constant depends on $ \alpha $, $ \varepsilon $ and $ R $ only.
\end{prop}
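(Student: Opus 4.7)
The plan is to carry out a two-dimensional stationary-phase analysis of $I_R$ in the non-singular regime $|\xi|\ge R>2$. On the support of $\psi_R$ the rational correction $3E/(\xi\bar\xi)$ is bounded in absolute value by $3/R^2<1$, so $\tilde S(u,\cdot)$ is a smooth perturbation of the ``zero-energy'' cubic phase $\Phi_0(u,\xi)=\xi^3+\bar\xi^3+\tfrac12(\bar u\xi+u\bar\xi)$. In particular the high-frequency change of variables \eqref{ChVar} from Section~\ref{Sect_3} is available and reduces the critical-point equation $\nabla_\xi\tilde S=0$ to a single algebraic equation in one complex variable. After this reduction one obtains, for each $u\in\Com$, at most finitely many critical points $\xi_*(u)$, whose Hessian determinant $H(u)$ vanishes along a real-algebraic curve $\Gamma\subset\Com$; off $\Gamma$ the critical points are non-degenerate.

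First I would split $I_R=I_{far}+I_{near}$, where $I_{near}$ is cut off to the union of $\delta$-disks around the critical points in $\{|\xi|\ge R\}$, with $\delta=t^{-\kappa}$ for a small $\kappa>0$. On $I_{far}$ the gradient $|\nabla_\xi\tilde S|$ is bounded below by a multiple of $\delta$ on the bounded portion and grows like $|\xi|^2$ at infinity, so iterated integration by parts against the adjoint vector field $L=(it|\nabla\tilde S|^2)^{-1}\,\overline{\nabla\tilde S}\cdot\nabla$ yields, after one or two steps, a bound of order $(t\delta)^{-N}$ on the compact part and a convergent tail controlled by $\int_A^{\infty}r^{\alpha-2N-1}\,dr$ at infinity; the hypothesis $\alpha<1$ is exactly what is needed to make this last integral finite after a single integration by parts. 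Choosing $\kappa$ small enough then produces $I_{far}=O(t^{-1+\varepsilon})$.

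For $I_{near}$, when $u$ is bounded away from $\Gamma$ the Hessian $|H(u)|\gtrsim 1$ and classical two-dimensional non-degenerate stationary phase yields a contribution of order $|\xi_*(u)|^\alpha/t$; using the algebraic relation between $u$ and $\xi_*$ one sees that $|\xi_*|\gg 1$ forces $|u|$ to grow correspondingly, so after an additional dyadic splitting in $|\xi_*|$ this case contributes $O(t^{-1})$. Near $\Gamma$ two critical points coalesce and $|H(u)|\to 0$; I would then perform a dyadic decomposition $|H(u)|\sim 2^{-j}$ in the spirit of the parameters $\omega_1,\omega_2$ introduced in Definitions~\ref{omega1}--\ref{omega2} for the low-frequency analysis. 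On each dyadic band a second-derivative Van~der~Corput lemma in the degenerating direction gives a $t^{-1}2^{j/2}$ bound, while the $\xi$-measure of the band shrinks like $2^{-j/2}$, so each band contributes $O(t^{-1})$; summing the $O(\log t)$ bands produces exactly the logarithmic, hence $t^\varepsilon$, loss. Uniformity in $u$ is automatic since the constants depend only on $R$, $\alpha$ and $\varepsilon$.

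The main obstacle is precisely this last, near-degenerate case. In contrast to the negative-energy situation of \cite{KM}, the amplitude $|\xi|^\alpha$ does \emph{not} vanish at the degenerate critical points (we have $|\xi|\ge R>2$ there), and therefore cannot absorb the degeneracy of $H$ to yield a clean $t^{-1}$ bound. This is exactly why the statement requires $\alpha$ strictly less than $1$, and why the $t^\varepsilon$ loss in the decay rate is unavoidable with this approach.
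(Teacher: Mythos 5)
Your plan misplaces the main difficulty, and the branch you call the ``main obstacle'' does not occur in this regime. After the change of variables \eqref{ChVar}, every degenerate stationary point of the phase (for every value of $u$) lies on the unit circle $|\lambda|=1$, i.e. on $|\xi|=2$ (Cases 1--2 of Lemma \ref{st_points_lemma}); since $\supp \psi_R(\lambda+1/\bar\lambda)\subset\{|\lambda|\ge\theta\}$ with $\theta=\frac{R+\sqrt{R^2-4}}{2}>1$, the only critical points that can enter the domain of integration are the nondegenerate points $\pm\lambda_0$ of Case 4, and these stay at distance at least of order $\theta-1$ from all the other roots, so $|S_{\lambda\lambda}(\lambda_0)|$ never becomes small except through the harmless factor $|\lambda_0|^{-3}$ as $|\lambda_0|\to\infty$. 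There is therefore no parameter curve $\Gamma$ along which two critical points inside $\{|\xi|\ge R\}$ coalesce: when $u$ approaches the degeneracy curve $\mathcal U$, the colliding pair sits at $|\xi|=2$, outside the support of $\psi_R$. This is precisely the point of the proposition: the cutoff removes the degeneracies responsible for the weaker bound $t^{-3/4}$ and the restriction $\alpha\le\frac14$ in Proposition \ref{all_freq_prop}. The hypothesis $\alpha<1$ is needed only to control the growth of the amplitude at spatial infinity (and for the small-time bound $t^{-(\alpha+2)/3}$), not to compensate a degenerate Hessian, and the $t^{\varepsilon}$ loss is a logarithm produced near the isolated nondegenerate points (the $S_{\lambda\lambda}/S_\lambda^2$ term yields $\int_\varepsilon \rho^{-1}\,d\rho$ with the optimized radius $\varepsilon=1/\sqrt{tr}$), not the result of summing dyadic Hessian bands.

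Moreover, if the coalescence you describe did take place inside the domain, your treatment would not reach the stated bound: with an amplitude that does not vanish at an (up to third-order) degenerate point, the innermost region, where only a measure bound is available, contributes at the $t^{-3/4}$ level — this is exactly why the global estimate of Proposition \ref{all_freq_prop} saturates there — so the dyadic Van der Corput summation cannot close at $t^{-1+\varepsilon}$. The paper's actual argument is simpler than your sketch: for small $t$ one compares $I_R$ with the unlocalized integral (bounded by $t^{-(\alpha+2)/3}$, the cutoff part being trivially bounded); for large $t$ one changes variables by \eqref{ChVar} and distinguishes whether $\pm\lambda_0$ lies inside $B_{1+\frac{\theta-1}{2}}(0)$ (no critical points in the support, pure integration by parts gives $1/t$) or outside (excise $\varepsilon$-disks around the isolated nondegenerate points, apply the Stokes formula with $|S_\lambda|\gtrsim\rho r$ and $|S_{\lambda\lambda}/S_\lambda^2|\lesssim(\rho r^2)^{-1}+(\rho^2 r)^{-1}$, and choose $\varepsilon=1/\sqrt{tr}$ to get $\ln t/t$). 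The sound parts of your proposal — integration by parts away from the critical points with $\alpha<1$ ensuring convergence at infinity, and nondegenerate stationary phase giving $|\xi_*|^{\alpha-1}/t$ — essentially reproduce this, but as organized around a nonexistent near-degenerate case the proposal has a genuine gap.
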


\medskip

Proposition \ref{Smoothing_Large} is proved in Section \ref{large_freq}. In Section \ref{Sect_3} we estimate $I_{out}$ of \eqref{I_split}. Sections \ref{Sect_4} and \ref{Sect_5} are devoted to the estimate of the integral $ I_{in} $ of \eqref{I_split}.

\medskip

In Sections \ref{Sect_3}--\ref{Sect_5} the implicit constants arising in the estimates depend on $ \alpha $ only.

\bigskip

\section{The integral outside the ball $B_2$}
\label{Sect_3}

The purpose of this Section is to estimate $I_{out}$ in \eqref{I_split}, in order to show estimate \eqref{xi_alpha}.

\subsection{Main ideas}
Outside the ball $ B_2 $ we perform the following change of variables :
\be\label{ChVar}
\xi = \lambda + \frac{ 1 }{ \overline{\lambda} }, \quad \bar \xi = \bar \lambda + \frac{ 1 }{ \lambda }.
\ee
Denote 
\be\label{f}
 f( \lambda ) = \lambda + \frac{ 1 }{ \overline{\lambda} } =\xi .
\ee 
Note that
\begin{equation*}
f \colon \{ | \lambda | > 1 \} \longrightarrow \{ | \xi | > 2 \},
\end{equation*}
is a bijective smooth map. Note also that 
\[
 \frac{ D( \xi, \bar \xi ) }{ D( \lambda, \bar \lambda ) } = 1 - \frac{ 1 }{ \lambda^2 \bar \lambda^2 } .
\]
We may remark that 
\be\label{simply}
\frac{ \bar \lambda }{ \lambda } = \frac{ \bar \xi }{ \xi },
\ee
so that from \eqref{st_phase} we have the following equality
\be\label{S_u}
\begin{aligned}
i t \tilde S( u, f( \lambda ) ) & = i t \left\{ \left( \lambda^3 + \frac{ 1 }{ \lambda^3 } + \bar \lambda^3 + \frac{ 1 }{ \bar \lambda^3 } \right) + \frac{ 1 }{ 2 } \left( \left( \lambda + \frac{ 1 }{ \bar \lambda } \right) \bar u + \left( \bar \lambda + \frac{ 1 }{ \lambda } \right) u \right) \right\} \\
& : = i t S( u, \lambda).
\end{aligned}
\ee
Thus, we need to estimate the following integral
\begin{equation}
\label{new_type}
I_{out} = \iint\limits_{ \mathbb{C} \backslash B_1( 0 ) } \frac{ | \lambda \bar \lambda + 1 |^{ \alpha } ( | \lambda |^4 - 1 ) }{ | \lambda |^{ \alpha + 4 } } e^{ i t S( u, \lambda ) } d \Re \lambda d \Im \lambda.
\end{equation}
We shall see that the main difference of the treated problem with the case of negative energy is that the term coming from $ | \xi |^{ \alpha } $, i.e. $ | \bar \lambda \lambda + 1 |^{ \alpha } $, does not vanish in the degenerate stationary points.

\medskip

Note that the fact that $ | \lambda \bar \lambda - 1 |^{ \alpha } $  vanishes in the degenerate stationary points was not used in \cite{KM} in the proof of the corresponding estimate for small $ t $ in the case of negative energy.
Therefore, repeating the reasonings that were carried out for the case of negative energy (see the proof of \cite[Lemma 2.1]{KM}), we will obtain the following

\begin{lemma}\label{i_out_lemma}
Consider the integral $I_{out}$ defined in \eqref{new_type}. For all $ 0 \leq \alpha < 1 $,
\begin{equation*}
| I_{out} | \lesssim \frac{ 1 }{ t^{ \frac{ \alpha + 2 }{ 3 } } }
\end{equation*}
uniformly on $ u \in \mathbb{C} $, $ 0 < t \leq r $, where $ r = e^{ - \frac{ 3 }{ 1 - \alpha } } $, and 
\begin{equation*}
| I_{out} | \lesssim \frac{ | \ln t | }{ t^{ 3 / 4 } }
\end{equation*}
uniformly on $ u \in \mathbb{C} $,  $ t > r  $.
\end{lemma}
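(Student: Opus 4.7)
The plan is to apply the argument of \cite[Lemma 2.1]{KM} almost verbatim. The crucial observation, already emphasized in the paragraph preceding the statement, is that the negative-energy weight $|\lambda\bar\lambda - 1|^\alpha$ was used in \cite{KM} only through the upper bound $|\lambda\bar\lambda - 1|^\alpha \lesssim (|\lambda|^2 + 1)^\alpha$ in the short-time portion of the proof; its vanishing on the unit circle was never actually invoked at that stage. Since $|\lambda\bar\lambda + 1|^\alpha \leq (|\lambda|^2 + 1)^\alpha$ likewise, every estimate in the short-time analysis of \cite{KM} carries over with the change of sign.

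First I would compute $\partial_\lambda S(u,\lambda) = 3\lambda^2 - 3/\lambda^4 + \bar u/2 - u/(2\lambda^2)$, so that the critical points are the zeros of
\[
6\lambda^6 + \bar u\lambda^4 - u\lambda^2 - 6 = 0,
\]
a cubic in $\lambda^2$ with at most six roots and maximal order of degeneracy equal to three. Writing $\lambda = re^{i\theta}$ and $u = |u|e^{i\phi}$, the phase takes the convenient form $S(u,\lambda) = 2(r^3 + r^{-3})\cos 3\theta + (r + r^{-1})|u|\cos(\theta - \phi)$, from which the geometry of the critical set and of the degeneracy curves in the $u$-plane can be read off directly, exactly as in the negative-energy case.

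Next, I would perform a dyadic decomposition of $\{|\lambda| > 1\}$ with respect to the scales $|\lambda| - 1$, $|\lambda|$ at infinity, and the distance to the nearest critical point. On each dyadic piece one uses either a non-stationary integration by parts (where $|\partial_\lambda S|$ is bounded below) or a van der Corput / Kenig-Ponce-Vega estimate adapted to the local order of degeneracy, balanced against the amplitude factor $|\lambda\bar\lambda + 1|^\alpha(|\lambda|^4 - 1)|\lambda|^{-\alpha-4}$. For $t \leq r$, the worst case is a triply-degenerate critical point, contributing two factors of $t^{-1/3}$ from the two principal directions after a normal-form reduction; the extra factor $t^{-\alpha/3}$ accounts for integrating the weight $(|\lambda|^2 + 1)^\alpha$ over a region of linear size $\sim t^{-1/3}$ near that point, and summation across dyadic annuli yields the bound $t^{-(\alpha+2)/3}$. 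The threshold $r = e^{-3/(1-\alpha)}$ is precisely where this weighted bookkeeping ceases to beat the slower decay coming from quadratically-degenerate critical points; for $t > r$ one switches to a finer decomposition in which non-degenerate critical points contribute $t^{-1}$, quadratically-degenerate ones contribute $t^{-3/4}$, and a logarithmic factor $|\ln t|$ is produced by summing dyadic annular contributions around the degeneracy curve.

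The main obstacle I anticipate is the uniform bookkeeping of the critical-point configurations as the parameter $u$ varies: critical points may approach the boundary $|\lambda| = 1$, escape to infinity as $|u| \to \infty$, or coalesce into degenerate ones on an exceptional curve in the $u$-plane. The merit of the argument in \cite[Lemma 2.1]{KM} is that all these regimes are controlled uniformly by a single dyadic scheme; the only new verification needed here is that the short-time analysis there genuinely never exploits the cancellation of $|\lambda\bar\lambda - 1|^\alpha$ on the unit circle, which is precisely the content of the observation recorded in the paragraph preceding the lemma. Granting this, the estimates transcribe to our setting without modification.
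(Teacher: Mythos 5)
Your treatment of the small-time regime is exactly the paper's: the vanishing of $|\lambda\bar\lambda-1|^\alpha$ on $\mathbb{S}^1$ is not used in the short-time part of \cite[Lemma 2.1]{KM}, so the bound $t^{-(\alpha+2)/3}$ transfers verbatim once the weight is only estimated from above. That half of your proposal is correct and matches the paper.

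The gap is in the large-time regime, where your bookkeeping of degeneracies is wrong and, more importantly, the mechanism that actually produces $|\ln t|/t^{3/4}$ is missing. The worst configurations for $I_{out}$ are not the ``quadratically-degenerate'' points: they are the third-order zeros of $S_\lambda$ (the points $\lambda_k^*\in\mathbb{S}^1$, reached when $u$ is at a vertex of $\mathcal U$, case (1) of Lemma \ref{st_points_lemma}). Since $S_\lambda(u,\cdot)$ is holomorphic on $\lambda\neq 0$, a triple zero means $S-S(\lambda_0)\sim \Re\bigl[c(\lambda-\lambda_0)^4\bigr]$, i.e.\ a \emph{quartic} normal form — not ``two cubic principal directions'', so the phase alone only yields $t^{-1/2}$ there, which is worse than the claimed $t^{-3/4}$. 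At negative energy \cite{KM} this is rescued because the amplitude vanishes at $\mathbb{S}^1$ to order $1+\alpha$ (from $(|\lambda|^4-1)\,|\lambda\bar\lambda-1|^\alpha$); at positive energy the factor $|\lambda\bar\lambda+1|^\alpha$ does \emph{not} vanish there, so one can only use the linear vanishing of $|\lambda|^4-1$. This is precisely what the paper does: it splits into the three cases according to the position of $\lambda_0$, treats $\{|\lambda|\ge 2\}$ exactly as in \cite{KM}, and on the bounded region $\{|\lambda|<2\}$ observes that $|\lambda\bar\lambda+1|^\alpha\simeq 1\simeq(|\lambda|^2-1)^0$ and reruns the negative-energy argument with $\alpha$ replaced by $0$, which is what produces $|\ln t|/t^{3/4}$ (and explains why no $\alpha$-improvement survives for $t>r$). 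Your proposed dyadic/van der Corput scheme never invokes the vanishing of $|\lambda|^4-1$ at the unit circle, and with your stated contributions ($t^{-1}$ at non-degenerate, $t^{-3/4}$ at second-order points) the quartic points are either misclassified or unaccounted for; as written the scheme cannot reach $|\ln t|/t^{3/4}$ uniformly in $u$. (The heuristic you give for the threshold $r=e^{-3/(1-\alpha)}$ is likewise not load-bearing; it is simply inherited from the small-time computation in \cite{KM}.)
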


\begin{cor} For all $ 0 \leq \alpha \leq \frac{1}{4} $ and any $ \varepsilon > 0 $ small
\begin{equation*}
| I_{out} | \lesssim \frac{ 1 }{ t^{ 3 / 4 - \varepsilon } }
\end{equation*}
uniformly on $ u \in \mathbb{C} $.
\end{cor}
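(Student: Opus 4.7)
My plan is to deduce this corollary directly from Lemma~\ref{i_out_lemma} by combining its two regimes and converting the logarithmic loss at large $t$ into a polynomial loss. For fixed $\alpha \in [0, 1/4]$ and $\varepsilon > 0$ small, I would treat separately the two ranges $0 < t \leq r$ (where $r = e^{-3/(1-\alpha)}$) and $t > r$.

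On the large-time range $t > r$, Lemma~\ref{i_out_lemma} gives $|I_{out}| \lesssim |\ln t|/t^{3/4}$. Since $r$ is a fixed positive constant depending only on $\alpha$, the elementary inequality $|\ln t| \leq C_\varepsilon\, t^\varepsilon$ for $t \geq r$ (with $C_\varepsilon$ depending on $\varepsilon$ and $r$) absorbs the logarithm and immediately yields $|I_{out}| \lesssim t^{-(3/4 - \varepsilon)}$ on this range, uniformly in $u$.

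On the small-time range $0 < t \leq r$, Lemma~\ref{i_out_lemma} gives $|I_{out}| \lesssim t^{-(\alpha+2)/3}$. The constraint $\alpha \leq 1/4$ forces $(\alpha+2)/3 \leq 3/4$, so for $\alpha$ strictly less than $1/4$ one may choose $\varepsilon$ small enough -- for instance $\varepsilon \leq 1/12 - \alpha/3$ -- that $(\alpha+2)/3 \leq 3/4 - \varepsilon$; this gives $t^{-(\alpha+2)/3} \leq t^{-(3/4 - \varepsilon)}$ on $(0,r]$. Combining the two ranges delivers the corollary, with implicit constant depending on $\alpha$ and $\varepsilon$.

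The main subtlety -- essentially a cosmetic obstacle -- arises at the endpoint $\alpha = 1/4$, where $(\alpha+2)/3$ equals $3/4$ exactly and no genuine $\varepsilon$ margin can be extracted from the small-time part of Lemma~\ref{i_out_lemma}. This is handled by taking $\alpha$ strictly below $1/4$ whenever the small-time regime is active (and absorbing the resulting $\varepsilon$ loss into the free parameter), which is sufficient for all the applications of the corollary later in the paper.
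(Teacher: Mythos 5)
Your deduction is correct and is essentially the paper's own (implicit) argument: the corollary is presented as an immediate consequence of Lemma~\ref{i_out_lemma}, obtained exactly as you do by absorbing $|\ln t|$ into $C_{\varepsilon}t^{\varepsilon}$ for $t>r$ and comparing exponents $(\alpha+2)/3\le 3/4$ on $0<t\le r$. Your caveat at the endpoint $\alpha=1/4$ (where the small-time bound $t^{-3/4}$ cannot be dominated by $t^{-(3/4-\varepsilon)}$ as $t\to 0$, so one needs $\varepsilon\le \tfrac1{12}-\tfrac{\alpha}{3}$ or $t$ bounded below) identifies a real gloss in the paper's statement rather than a flaw in your argument, and it is harmless for the later applications.
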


The proof of this result follows the ideas of the proof of Lemma 2.1 in \cite{KM}, with small differences (in the case of large $t$) coming from the fact that now the term $ | \bar \lambda \lambda + 1 |^{ \alpha } $ does not help to improve the estimates. For the sake of completeness, we include a sketch of proof of this result. For more details, the reader can consult \cite[Lemma 2.1]{KM}.

\subsection{Review on NV stationary points}

We recall the following parametrically defined sets of the complex plane  (see Fig. \ref{u_curve_figure}). Let
\be\label{mU}
\mathcal{U} := \{ \tilde u\in \Com \ : \  \tilde u = 6 ( 2 e^{ - i \varphi } + e^{ 2 i \varphi } ), \; \varphi \in [ 0, 2 \pi ) \},
\ee
and
\be\label{UU}
\mathbb{U} := \{ \tilde u\in \Com \ : \  \tilde u =  6t  (2 e^{ - i \varphi } + e^{ 2 i \varphi }), ~ t\in [0,1] , ~ \varphi \in [ 0, 2 \pi ) \},
\ee
be the (closed) region enclosed by the curve $ \mathcal{U} $. These sets will be essential to understand the stationary points of the phase function $S(u,\la)$.
\begin{figure}[!h]
\begin{center}
\includegraphics[width=100mm]{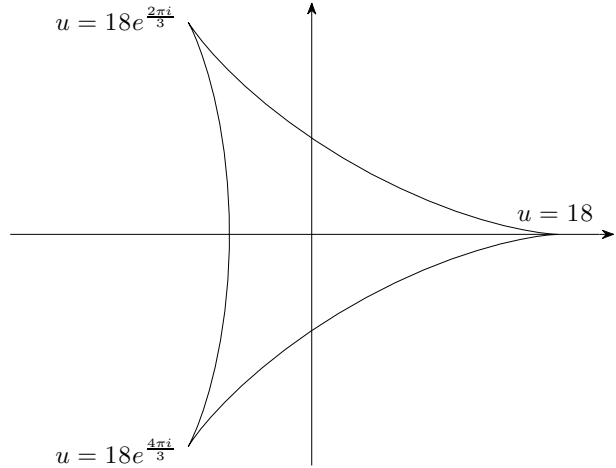}
\caption{The curve $ \mathcal{U} $ and its enclosed region $\mathbb U$ in the complex plane. Note that $\mathcal U$ (and its interior and exterior) is invariant with respect to the transformations $z \mapsto \bar z$ and $z \mapsto ze^{2ik\pi/3}$, $k=0,1,2$.}
\label{u_curve_figure}
\end{center}
\end{figure}

\medskip

With these definitions in mind, let us describe the properties of the stationary points of the function $ S( u, \lambda ) $ defined in \eqref{S_u}. These points satisfy the equation\footnote{Here the symbol $\overset{!}{=}$ means that equality to zero is satisfied for stationary points only.}
\begin{equation}
\label{asp_stationary}
S_{ \lambda } (u,\la)= \frac{ \bar u }{ 2 } - \frac{ u }{ 2 \lambda^2 } - 3 \lambda^2 + \frac{ 3 }{ \lambda^4 } \overset{!}{=}  0,
\end{equation}
where $S_\la$ stands for the partial derivative with respect to $\la$.

Additionally, the degenerate stationary points obey the equation
\begin{equation}
\label{asp_degenerate}
S_{ \lambda \lambda }(u,\la) = \frac{ u }{ \lambda^3 } - 6 \lambda - \frac{ 12 }{ \lambda^5 } \overset{!}{=} 0.
\end{equation}
We denote $ \zeta = \lambda^2 $, and define $Q( u, \zeta ) := S_{ \lambda }(u,\la)$, namely
\be\label{Q_u}
Q( u, \zeta ) = \frac{ \bar u }{ 2 } - \frac{ u }{ 2 \zeta } - 3 \zeta + \frac{ 3 }{ \zeta^2 }.
\ee
Clearly, for each $ \zeta =\zeta(u)$, root of the function $ Q( u, \zeta ) $, there are two corresponding stationary points of $ S( u, \lambda ) $, given by $ \lambda = \pm \sqrt{ \zeta } $.
Since $Q( u, \zeta ) $ has only three roots counting multiplicity (say $\la_0^2(u)$, $\la_1^2(u)$ and $\la_2^2(u)$), in terms of the variable $\la^2$, the function $ S_{ \lambda }( u, \lambda ) $ can be represented in the following compact form
\begin{equation}
\label{asp_sprime_representation}
S_{ \lambda }( u, \lambda ) = - \frac{ 3 }{ \lambda^4 } ( \lambda^2 - \lambda_0^2( u ) ) ( \lambda^2 - \lambda_1^2( u ) ) ( \lambda^2 - \lambda_2^2( u ) ).
\end{equation}
Concerning the behavior of the roots $\la_j(u)$, defined in \eqref{asp_sprime_representation}, we have the following result, see \cite{KN} for a proof.\footnote{Note that in \cite{KN} the parametrization of the set $\mathbb U$ was slightly different, here we give a more precise one.}

Indeed, the stationary points of the phase satisfy equation (3.7) of \cite{KM} and degenerate stationary points satisfy in addition equation (3.8) of \cite{KM}. Thus their description is given by the following Lemma.
\begin{lemma}[Description of stationary points, see also \cite{KM}]\label{st_points_lemma}
\rule{1pt}{0pt}
Assume that $\tilde u\in \Com$ is a fixed parameter. Then the following are satisfied.
\begin{enumerate}
\item\label{1} If $ \tilde u = 18 e^{ \frac{ 2 \pi i k }{ 3 } } $, $ k = 0, 1, 2 $ (see the vertices of $\mathcal U$ in Fig. \ref{asp_sprime_representation}), then
\begin{equation*}
\lambda_0( \tilde u ) = \lambda_1( \tilde u ) = \lambda_2( \tilde u ) = e^{ -\frac{ \pi i k }{ 3 } },
\end{equation*}
and $ S( \tilde u, \lambda ) $ has two degenerate stationary points, corresponding to a third-order root of the function $ Q( \tilde u, \zeta ) $, $ \zeta_0 = e^{ -\frac{ 2 \pi i k }{ 3 } } $.

\item\label{2} If $ \tilde u \in \mathcal{U} $ $($i.e. $\tilde  u = 6( 2 e^{ - i \varphi } + e^{ 2 i \varphi } )$ $)$ and $ \tilde u \neq 18 e^{ \frac{ 2 \pi i k }{ 3 } } $,  for $ k = 0, 1, 2 $, then\footnote{Note that here we enumerate the stationary points in a slightly different way in comparison with the same Lemma stated in \cite{KM}. This is done for convenience of presentation in Section \ref{Sect_4}.}
\begin{equation*}
\lambda_0( \tilde u ) = \lambda_2( \tilde u ) = e^{ i \varphi / 2 }, \quad \lambda_1( \tilde u ) = e^{ - i \varphi }.
\end{equation*}

Thus $ S( \tilde u, \lambda ) $ has two degenerate stationary points, corresponding to a second-order root of the function $ Q( \tilde u, \zeta ) $, $ \zeta_0 = e^{ i \varphi } $, and two non--degenerate stationary points corresponding to a first-order root, $ \zeta_1 = e^{ - 2 i \varphi } $.

\item\label{3} If $ \tilde u \in \interior  ~ \mathbb{U} $, then
\begin{equation*}
\lambda_i( \tilde u ) = e^{ i \varphi_i }, \quad \text{and} \quad \lambda_i( \tilde u ) \neq \lambda_j( \tilde u ) \quad \text{for} \quad i \neq j.
\end{equation*}
In this case the stationary points of $ S( \tilde u, \lambda ) $ are non-degenerate and correspond to the roots of the function $ Q( \tilde u, \zeta ) $ with absolute value equals 1.

\item\label{4} Finally, if $\tilde  u \in \mathbb{C} \backslash \mathbb{U} $, then
\be\label{la_j}
\lambda_0( \tilde u ) = ( 1 + \omega ) e^{ i \varphi / 2 }, \quad \lambda_1( \tilde u ) = e^{ - i \varphi }, \quad  \lambda_2(\tilde  u ) = ( 1 + \omega )^{ -1 } e^{ i \varphi / 2 },
\ee
for certain $ \varphi \in \R$ and $ \omega > 0 $.

In this case the stationary points of the function $ S(\tilde  u, \lambda ) $ are non-degenerate, and correspond to the roots of the function $ Q(\tilde  u, \zeta ) $ that can be expressed as $ \zeta_0 = ( 1 + \tau ) e^{ i \varphi } $, $ \zeta_1 = e^{ - 2 i \varphi } $, $ \zeta_2 = ( 1 + \tau )^{ - 1 } e^{ i \varphi } $, and $ ( 1 + \tau ) = ( 1 + \omega )^2 $.

\end{enumerate}
\end{lemma}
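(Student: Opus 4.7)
The plan is to reduce the characterization to a cubic in $\zeta = \lambda^2$ and exploit a natural involution on its root set. Clearing denominators in $Q(u,\zeta)=0$ by multiplying through by $-2\zeta^2$ yields
\[
P(\zeta) := 6\zeta^3 - \bar u\, \zeta^2 + u\,\zeta - 6 = 0,
\]
and Vieta's formulas then give the fundamental relations
\[
\zeta_0\zeta_1\zeta_2 = 1, \qquad \zeta_0+\zeta_1+\zeta_2 = \tfrac{\bar u}{6}, \qquad \zeta_0\zeta_1+\zeta_0\zeta_2+\zeta_1\zeta_2 = \tfrac{u}{6}.
\]
A direct computation shows $P(1/\bar\zeta) = -\overline{P(\zeta)}/\bar\zeta^{3}$, so the involution $\iota\colon \zeta \mapsto 1/\bar\zeta$ permutes the three roots of $P$. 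Since the fixed-point set of $\iota$ on $\mathbb{C}^{*}$ is the unit circle and $\iota$ has order $2$, a simple orbit count forces exactly two configurations: \emph{(A)} all three roots lie on $\{|\zeta|=1\}$, or \emph{(B)} exactly one does, with the remaining $\iota$-pair having the form $\{r e^{i\varphi}, r^{-1} e^{i\varphi}\}$ for some $r>0$, $r\neq 1$.

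In configuration (A), writing $\zeta_j = e^{i\alpha_j}$ with $\alpha_0+\alpha_1+\alpha_2 \in 2\pi\mathbb{Z}$, the first Vieta relation becomes
\[
\bar u \;=\; 6\bigl(e^{i\alpha_0} + e^{i\alpha_1} + e^{-i(\alpha_0+\alpha_1)}\bigr).
\]
A direct Jacobian computation shows that this map from $\mathbb{T}^2$ to $\mathbb{C}$ becomes singular precisely when two of the angles coincide. Setting $\alpha_0 = \alpha_1 = \varphi$ (so that $\alpha_2 = -2\varphi$) traces out the boundary curve $u = 6(2e^{-i\varphi} + e^{2i\varphi})$, which is exactly the parametrization \eqref{mU} of $\mathcal{U}$; three coinciding angles force $3\varphi \in 2\pi\mathbb{Z}$ and produce the vertices $u = 18 e^{2\pi i k/3}$, $k=0,1,2$, together with the triple-root statement of Case \eqref{1}. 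A degree/orientation argument will then identify the full image of this map with the closed region $\mathbb{U}$: interior points correspond to three strictly distinct angles, which is Case \eqref{3}, while non-vertex points of $\mathcal{U}$ correspond to a double root $\zeta_0 = \zeta_2 = e^{i\varphi}$ together with a simple root $\zeta_1 = e^{-2i\varphi}$, yielding Case \eqref{2} after a consistent choice of square roots.

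For configuration (B), the $\iota$-fixed root must equal $e^{-2i\varphi}$ in order for the triple product to be $1$, and the $\iota$-pair can then be written as $(1+\omega)^2 e^{i\varphi}$ and $(1+\omega)^{-2} e^{i\varphi}$ with $\omega>0$; extracting square roots with a coherent branch choice produces exactly \eqref{la_j}. Substituting into Vieta's sum yields the two-parameter family
\[
\bar u \;=\; 6\,e^{-2i\varphi} + 6\bigl((1+\omega)^{2} + (1+\omega)^{-2}\bigr) e^{i\varphi}, \qquad (\omega,\varphi) \in (0,\infty)\times[0,2\pi).
\]
The main technical obstacle is to verify that, up to the trivial relabelling $\zeta_0 \leftrightarrow \zeta_2$, this family covers $\mathbb{C}\setminus\mathbb{U}$ injectively and matches the case (A) parametrization on $\mathcal{U}$ continuously as $\omega \to 0^{+}$. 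I plan to handle this by a monotonicity argument in the radial direction: for fixed $\varphi$ the bracketed factor is strictly increasing in $\omega>0$ and diverges as $\omega\to\infty$, so the outward ray from the point of $\mathcal{U}$ at parameter $\varphi$ is swept exactly once. A Rouch\'e argument that precludes stray unit-modulus roots off $\mathbb{U}$, combined with the identification of the interior of $\mathbb{U}$ from the previous paragraph, will then exhaust all of $\mathbb{C}$ and conclude the proof.
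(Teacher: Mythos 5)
Your overall mechanism is sound and in fact more self-contained than the paper's treatment: the paper does not prove this lemma, it refers to \cite{KN} (and to equations (3.7)--(3.8) of \cite{KM}), while the product relation $\zeta_0\zeta_1\zeta_2=1$ that you extract from Vieta appears in the paper only later, in the proof of Lemma \ref{dist_deg_points_lemma}. The computations you actually carry out are correct: the cubic $P(\zeta)=6\zeta^3-\bar u\,\zeta^2+u\,\zeta-6$, the identity $P(1/\bar\zeta)=-\overline{P(\zeta)}/\bar\zeta^{3}$, the resulting dichotomy (all three roots on $\Ss^1$, or one unit-modulus root together with a reciprocal-conjugate pair), the fact that the unit product forces the $\iota$-fixed root to be $e^{-2i\varphi}$, and the identification of the coinciding-angle locus with the parametrization \eqref{mU} of $\mathcal U$. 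One link you should state explicitly is that, by the factorization \eqref{asp_sprime_representation}, a stationary point $\pm\sqrt{\zeta_j}$ of $S$ is degenerate exactly when $\zeta_j$ is a repeated root of $Q(\tilde u,\cdot)$ (at a simple root $\zeta_j\neq 0$ one has $S_{\lambda\lambda}\neq 0$ there); this is what converts your multiplicity statements into the degeneracy claims of items (1)--(4).

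The genuine gap is the region identification that actually distinguishes items (2), (3), (4): you need that all three roots lie on $\Ss^1$ precisely when $\tilde u\in\mathbb U$, with distinct roots for $\tilde u\in\interior\,\mathbb U$, a double root on $\mathcal U$ away from the vertices, and a triple root at the vertices. In your write-up this rests on an announced but unexecuted ``degree/orientation argument'' for the map $(\alpha_0,\alpha_1)\mapsto 6(e^{i\alpha_0}+e^{i\alpha_1}+e^{-i(\alpha_0+\alpha_1)})$; as written you only know that its critical values lie on $\mathcal U$, not that its image is $\mathbb U$. By contrast, the step you single out as the main obstacle --- covering $\mathbb C\setminus\mathbb U$ injectively by the configuration-(B) family via ray monotonicity and Rouch\'e --- is unnecessary: for each fixed $\tilde u$ the root set of $P$ is determined and falls into exactly one of your two configurations, so once configuration (A) is shown to occur exactly for $\tilde u\in\mathbb U$, item (4) follows for every $\tilde u\in\mathbb C\setminus\mathbb U$ simply because an off-circle reciprocal pair plus the forced unit root can always be written in the form \eqref{la_j}; no surjectivity or injectivity of the $(\omega,\varphi)$-parametrization is required. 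A quick way to close the real gap without degree theory: the set $A=\{\tilde u:\ \text{all roots on }\Ss^1\}$ is closed and bounded ($|\tilde u|\le 18$ on $A$); at a point of $A$ with three simple roots each root stays on $\Ss^1$ under small perturbation of $\tilde u$ (its $\iota$-image is a root near it, hence equal to it), so $\partial A$ consists of parameters with a repeated unit-modulus root, which your coinciding-angle computation places on $\mathcal U$; since $\tilde u=0$ lies in $A$ and in $\interior\,\mathbb U$, and $A$ is bounded, connectedness of the two components of $\mathbb C\setminus\mathcal U$ gives $A=\mathbb U$, after which the double/triple-root statements on $\mathcal U$ and at the vertices follow from your explicit configurations.
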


\subsection{Estimate of $I_{out}$}

For small $ t $ the integral $ I_{ out } $ is estimated as in the case of $ E < 0 $ (see \cite[Subsection 3.5]{KM}). In particular, we obtain that 
\[
 | I_{ out } | \lesssim \frac{ 1 }{ t^{ \frac{\alpha + 2 }{ 3 } } }.
\]  

For the case of large $ t $, similarly to the case of negative energy, we are brought to consider three different cases:
\begin{enumerate}
\item $ u \in \mathbb{C} \backslash \mathbb{U} $ and $ | \lambda_0 | = 1 + \omega \geq 2 $;
\item $ u \in \mathbb{U} $;
\item $ u \in \mathbb{C} \backslash \mathbb{U} $ and $ | \lambda_0 | = 1 + \omega < 2 $.
\end{enumerate}

Case 1 is treated absolutely in the same way as Case 1 for the negative energy. In the Cases 2 and 3 the integrals $ I_{ 2 }^{ j, + } $, $ I_3^+ $ (over $ \mathbb{C} \backslash \Omega = \{ \lambda \colon | \lambda | \geq 2 \} $) are also treated in the same way as for $ E < 0 $. For integrals $ I_1 $, $ I_{ 2, j }^- $, $ I_3^- $ the variable of integration belongs to $ \Omega = \{ \lambda \colon | \lambda | < 2 \} $ and on $ \Omega $ we have that
\begin{equation*}
| \lambda |^2 + 1 \simeq 1 \simeq ( | \lambda |^2 - 1 )^0.
\end{equation*}
Thus applying to integrals $ I_1 $, $ I_{ 2, j }^- $, $ I_3^- $ the reasoning of the negative energy with $ \alpha = 0 $ we obtain that
\begin{equation*}
| I_1 | + | I_{ 2, j }^- | + | I_3^- | \lesssim \frac{ | \ln t | }{ t^{ 3/4 } }.
\end{equation*}
Finally, we conclude that 
\[
 | I | \lesssim \frac{ | \ln t | }{ t^{ 3/4 } },
\]
as expected.

\bigskip

\section{The integral inside the ball $ B_2 $: auxiliary lemmas}\label{Sect_4}

In this Section we estimate the second part of the integral in \eqref{I_split}, only present in the case $E>0$.

\subsection{Preliminaries}

In this section our purpose is to estimate the integral
\be\label{J}
I_{in}= \int_{B_2}  | \xi |^{ \alpha }  e^{ i t \tilde S( u, \xi ) } d \Re \xi d \Im \xi.
\ee
Here $B_2$ is the ball of radius 2 centered at the origin. 

\medskip

Note that $ | I_{ in } | \leq \pi 2^{ \alpha + 2 } $. Thus, for small $ t $ (e.g. $ | t | \leq r $ for any $ r > 0 $) the estimate (\ref{xi_alpha}) holds. Consequently, we only need to consider the case of large $ t $ ($ | t | > r $ for some $ r > 0 $).

\medskip

Additionally, note that inside the ball $ B_2 $ variable $ \xi $ cannot be represented in the form \eqref{ChVar}. 
Thus we need a different approach in this case.

\medskip

In the ball $ B_2 $ we shall perform the following change of variables
\be\label{ChVar2}
\xi = \lambda + \lambda', \quad \lambda, \lambda' \in  \Ss^1.
\ee
Note that this change of variables is not a bijection unless we provide a better description of the domain for each $\la,\la'$. Let us write 
\begin{equation}
\label{lambda_phi}
\lambda = e^{ i \varphi_1 }, \quad \lambda' = e^{ i \varphi_2 }.
\end{equation}
Denote 
\be\label{f2}
 f( \varphi_1, \varphi_2 ) = e^{ i \varphi_1 } + e^{ i \varphi_2 } = (\cos \varphi_1 + \cos \varphi_2) + i(\sin \varphi_1 +\sin \varphi_2).
\ee
Now, note that
\be\label{D1}
f \colon D_1 = \left\{\begin{matrix} 0 < \varphi_1 < 2 \pi, \\ \varphi_1 < \varphi_2 <  \varphi_1 +\pi \end{matrix} \right\} \longrightarrow \{ 0 < | \xi | < 2 \},
\ee
and
\be\label{D2}
 f \colon D_2 = \left\{\begin{matrix} 0 < \varphi_1 < 2 \pi, \\  \varphi_1 - \pi < \varphi_2 < \varphi_1 \end{matrix} \right\}  \longrightarrow \{ 0 < | \xi | < 2 \}, 
\ee
are bijective smooth maps, see Figs. \ref{Biy1} and \ref{Biy2} for more details. 

\bigskip
%\blindtext

%\begin{figure}[!htb]
%\minipage{0.32\textwidth}
%  \includegraphics[width=\linewidth]{delete_gesture.png}
%  \caption{A really Awesome Image}\label{fig:awesome_image1}
%\endminipage\hfill
%\minipage{0.32\textwidth}
%  \includegraphics[width=\linewidth]{ok_gesture.png}
%  \caption{A really Awesome Image}\label{fig:awesome_image2}
%\endminipage\hfill
%\minipage{0.32\textwidth}%
%  \includegraphics[width=\linewidth]{settings_gesture.png}
%  \caption{A really Awesome Image}\label{fig:awesome_image3}
%\endminipage
%\end{figure}

\begin{figure}[!h]
\minipage{0.45\textwidth}
\begin{tikzpicture}[
	>=stealth',
	axis/.style={semithick,->},
	coord/.style={dashed, semithick},
	yscale = 0.7,
	xscale = 0.7]
	\newcommand{\xmin}{-4};
	\newcommand{\xmax}{4};
	\newcommand{\ymin}{-4};
	\newcommand{\ymax}{4};
	\newcommand{\ta}{3};
	\newcommand{\fsp}{0.2};
	\filldraw[color=light-gray2] (0,0) circle (4);
	\filldraw[color=light-gray1] (0,0) circle (2);
	%\filldraw[color=light-gray2] (0,0) circle (1);
	\draw [axis] (\xmin,0) -- (\xmax,0) node [right] {$\re \la$};
	\draw [axis] (0,\ymin) -- (0,\ymax) node [below left] {$\ima \la$};
	%\filldraw[color=light-gray3] (3.5,1.5) circle (0.2);
	%\filldraw[color=light-gray3] (-3.5,-1.5) circle (0.2);
	%\filldraw[color=light-gray3] (-0.72,0.72) circle (0.2);
	%\filldraw[color=light-gray3] (0.72,-0.72) circle (0.2);
	%\filldraw[color=light-gray2] (0.1,2) rectangle (9,5);
	%\draw [thick,->] (8,1) -- (9,1);
	%\draw [thick,->] (5,1) -- (5.6,1);
	%\draw [thick,-] (-0.2,2) -- (9,2);
	%\filldraw[color=light-gray1] (1,1) circle (0.6);
	%\draw [thick,->] (1,1) -- (1.4,1);
	%\draw [thick,->] (2,4) -- (3.5,4);
	%\draw [<->] (-0.2,1) -- (-0.2,4);
	%\draw (-0.9,-0.2) node [left] {$1$};
	\draw (2.2,-0.3) node [left] {$1$};
	\draw (4.2,-0.3) node [left] {$2$};
	\draw [thick] (1.4,1.4) -- (0,0);
	\draw [dashed] (1.4,1.4) -- (2.8,2.8);
	\draw [dashed] (1.4,1.4) -- (2.8,1.4);
	\draw [thick] (1.4,1.4) -- (1.95,3.05);
	%\draw [dashed] (1.9,0.5) -- (-1.9,-0.5);
%	\draw [dashed] (6,0) -- (6,5);
	%\draw (3.5,1.6) node [above] {$B(\la_0,\ve)$};
	%\draw (-3.5,-2.4) node [above] {$B(-\la_0,\ve)$};
	\draw (1.4,1.4) node [below] {$\la$};
	\draw (2.1,3.4) node [left] {$\la +\la'$};
	%\draw (1.5,-1.44) node [below] {$\la_5$};
	%\draw (1.9,0.5) node [right] {$\la_1$};
	%\draw (-1.9,-0.5) node [left] {$\la_4$};
	\draw (1.5,0.6) node [below] {$\varphi_1$};
	\draw (2.7,2.8) node [left] {$\varphi_2$};
	%\draw (-1.5,-1.4) node [below] {$\la_3$};
	%\draw [dashed] (-2,2) -- (0.4,0.2);
	%\draw [dashed] (-2,2) -- (-0.4,-0.2);
	%\draw [dashed] (2,2) -- (-0.72,0.72);
	%\draw [dashed] (2,2) -- (0.72,-0.72);
	\fill (1.4,1.4)  circle[radius=2pt];
	\fill (1.95,3.05)  circle[radius=2pt];
%	\fill (-1.4,-1.4)  circle[radius=1pt];
%	\fill (-1.4,1.4)  circle[radius=1pt];
%	\fill (1.4,-1.4)  circle[radius=1pt];
%	\fill (1.9, 0.5)  circle[radius=1pt];
%	\fill (-1.9,-0.5)  circle[radius=1pt];
%	%\fill (3.5,1.5)  circle[radius=1pt];
	%\fill (-3.5,-1.5)  circle[radius=1pt];
	%\draw [dashed] (5,1) -- (5,2.5);
	%\draw [<->] (5,2.5) -- (8,2.5);
	%\draw (6.5,2.5) node [above] {$L$};
	%\draw [dashed] (8,1) -- (8,2.5);
	\draw (2.5,1.44) arc (0:75:1);
	\draw (1,0) arc (0:45:1);
\end{tikzpicture}
\caption{The representation of the pair $\la +\la'$ described in \eqref{D1}. Note that the angle $\varphi_2$ varies from the value $\varphi_1$ up to $\varphi_1 +\pi$.}\label{Biy1}
\endminipage\hfill
\minipage{0.45\textwidth}
\begin{tikzpicture}[
	>=stealth',
	axis/.style={semithick,->},
	coord/.style={dashed, semithick},
	yscale = 0.7,
	xscale = 0.7]
	\newcommand{\xmin}{-4};
	\newcommand{\xmax}{4};
	\newcommand{\ymin}{-4};
	\newcommand{\ymax}{4};
	\newcommand{\ta}{3};
	\newcommand{\fsp}{0.2};
	\filldraw[color=light-gray2] (0,0) circle (4);
	\filldraw[color=light-gray1] (0,0) circle (2);
	%\filldraw[color=light-gray2] (0,0) circle (1);
	\draw [axis] (\xmin,0) -- (\xmax,0) node [below right] {$\re \la$};
	\draw [axis] (0,\ymin) -- (0,\ymax) node [below left] {$\ima \la$};
	%\filldraw[color=light-gray3] (3.5,1.5) circle (0.2);
	%\filldraw[color=light-gray3] (-3.5,-1.5) circle (0.2);
	%\filldraw[color=light-gray3] (-0.72,0.72) circle (0.2);
	%\filldraw[color=light-gray3] (0.72,-0.72) circle (0.2);
	%\filldraw[color=light-gray2] (0.1,2) rectangle (9,5);
	%\draw [thick,->] (8,1) -- (9,1);
	%\draw [thick,->] (5,1) -- (5.6,1);
	%\draw [thick,-] (-0.2,2) -- (9,2);
	%\filldraw[color=light-gray1] (1,1) circle (0.6);
	%\draw [thick,->] (1,1) -- (1.4,1);
	%\draw [thick,->] (2,4) -- (3.5,4);
	%\draw [<->] (-0.2,1) -- (-0.2,4);
	%\draw (-0.9,-0.2) node [left] {$1$};
	\draw (2.2,-0.3) node [left] {$1$};
	\draw (4.2,-0.3) node [left] {$2$};
	\draw [thick] (1.4,1.4) -- (0,0);
	\draw [dashed] (1.4,1.4) -- (2.8,2.8);
	\draw [dashed] (1.4,1.4) -- (2.6,1.4);
	\draw [thick] (1.4,1.4) -- (2.9,0.42);
	%\draw [dashed] (1.9,0.5) -- (-1.9,-0.5);
%	\draw [dashed] (6,0) -- (6,5);
	%\draw (3.5,1.6) node [above] {$B(\la_0,\ve)$};
	%\draw (-3.5,-2.4) node [above] {$B(-\la_0,\ve)$};
	\draw (1.4,1.4) node [below] {$\la$};
	\draw (2.9,0.42) node [right] {$\la +\la'$};
	%\draw (1.5,-1.44) node [below] {$\la_5$};
	%\draw (1.9,0.5) node [right] {$\la_1$};
	%\draw (-1.9,-0.5) node [left] {$\la_4$};
	\draw (1.5,0.6) node [below] {$\varphi_1$};
	\draw (3,0.7) node [above] {$\varphi_2$};
	%\draw (-1.5,-1.4) node [below] {$\la_3$};
	%\draw [dashed] (-2,2) -- (0.4,0.2);
	%\draw [dashed] (-2,2) -- (-0.4,-0.2);
	%\draw [dashed] (2,2) -- (-0.72,0.72);
	%\draw [dashed] (2,2) -- (0.72,-0.72);
	\fill (1.4,1.4)  circle[radius=2pt];
	\fill (2.9,0.42)  circle[radius=2pt];
%	\fill (-1.4,-1.4)  circle[radius=1pt];
%	\fill (-1.4,1.4)  circle[radius=1pt];
%	\fill (1.4,-1.4)  circle[radius=1pt];
%	\fill (1.9, 0.5)  circle[radius=1pt];
%	\fill (-1.9,-0.5)  circle[radius=1pt];
%	%\fill (3.5,1.5)  circle[radius=1pt];
	%\fill (-3.5,-1.5)  circle[radius=1pt];
	%\draw [dashed] (5,1) -- (5,2.5);
	%\draw [<->] (5,2.5) -- (8,2.5);
	%\draw (6.5,2.5) node [above] {$L$};
	%\draw [dashed] (8,1) -- (8,2.5);
	\draw (2.5,1.44) arc (0:-37:1);
	\draw (1,0) arc (0:45:1);
\end{tikzpicture}\caption{The pair $\la +\la'$ described in \eqref{D2}. Note that the angle $\varphi_2$ varies from the value $\varphi_1 -\pi$ up to $\varphi_1$.}\label{Biy2}
\endminipage
\end{figure}

Note also that a simple computation shows that the Jacobian obeys the relation
\[
 \frac{ D( \Re \xi, \Im \xi ) }{ D( \varphi_1, \varphi_2 ) } = \sin( \varphi_2 - \varphi_1 ) .
\]
One can also check that the function $\tilde S$ defined in \eqref{st_phase} has now the following representation
\begin{equation}\label{SS}
\begin{aligned}
i t \tilde S( u, \xi ) ) & = i t \left\{ \left( \lambda^3 + \bar \lambda^3 + \lambda'^3 + \bar \lambda'^3 \right) + \frac{ 1 }{ 2 } \left( \left( \lambda + \lambda' \right) \bar u + \left( \bar \lambda + \bar \lambda' \right) u \right) \right\}\\
&  : = i t S( u, \lambda, \lambda').
\end{aligned}
\end{equation}
Thus, using the bijective character of $f$ on each $D_j$, we can write that for each $j=1,2$,
\[
I_{in}= \int\limits_{ D_j } | \lambda + \lambda' |^{ \alpha } e^{ i t S( u, \lambda, \lambda' ) } | \sin( \varphi_1 - \varphi_2 ) | d \varphi_1 d \varphi_2  =: I_{in,j}.
\]
Therefore, adding these two identities we get
\be\label{J_new}
\begin{aligned}
I_{in} & = \frac{ 1 }{ 2 } ( I_{in,1} + I_{in,2} )\\
&  = \frac{ 1 }{ 2 } \int\limits_{ 0 }^{ 2 \pi } \int\limits_{ 0 }^{ 2 \pi } | \lambda + \lambda' |^{ \alpha } e^{ i t S( u, \lambda, \lambda' ) } | \sin( \varphi_1 - \varphi_2 ) | d \varphi_1 d \varphi_2.
\end{aligned}
\ee
In what follows we are going to use the above representation of $ I_{in} $.

Note that
\begin{equation*}
S_{ \varphi_1 } = i ( S_{ \lambda } \lambda - S_{ \bar \lambda } \bar \lambda ) = i \left( 3 \lambda^3 + \frac{ 1 }{ 2 } \bar u \lambda - \frac{ 3 }{ \lambda^3 } - \frac{ u }{ 2 \lambda } \right),
\end{equation*}
and
\[
S_{ \varphi_1,\varphi_1 } =i  \la S_{ \varphi_1,\la}=  -\la \left( 9 \lambda^2 + \frac{ 1 }{ 2 } \bar u  + \frac{ 9 }{ \lambda^4 } +  \frac{ u }{ 2 \lambda^2 } \right).
\]
Thus the stationary points of $ S $ are the roots of equation (3.7) in \cite{KM}, but with $ u $ replaced by $ -u $, and (additionally) they are in $ \Ss^1 $. For the sake of completeness, \cite[equation (3.7)]{KM} is given by
\[
 3 \lambda^2 - \frac{ \bar u }{ 2 } - \frac{ 3 }{ \lambda^4 } +\frac{ u }{ 2 \lambda^2 }   =  0.
\]
One can also show that the degenerate stationary points satisfy additionally equation (3.8) of the same paper (as in the case of negative energy), but with $u$ replaced by $-u$:
\[
\frac{ u }{ \lambda^3 } + 6 \lambda + \frac{ 12 }{ \lambda^5 } = 0.
\]
Therefore, from Lemma \ref{st_points_lemma} applied to $-u$, we have only three possibilities for critical points: cases 1, 2 and 3 of the lemma.

\medskip

Following the scheme similar to the one presented for the case of negative energies \cite{KM}, we can obtain the following result:
\begin{lem}\label{Lemma_I_in}
One has, uniformly in $u=x/t$,
\be\label{I_in}
| I_{in} | \lesssim \frac{ 1 }{ t^{ 3/4 } }.
\ee
\end{lem}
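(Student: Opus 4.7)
The plan is to exploit the separability of the phase on $\Ss^1\times\Ss^1$. Under the parametrization $\lambda = e^{i\varphi_1}$, $\lambda' = e^{i\varphi_2}$, the phase \eqref{SS} decouples as
\[
S(u,\lambda,\lambda') \;=\; g(\varphi_1) + g(\varphi_2), \qquad g(\varphi) := 2\cos(3\varphi) + \re(e^{-i\varphi}u),
\]
so that the Hessian of $S$ in $(\varphi_1,\varphi_2)$ is diagonal and $I_{in}$ reduces—modulo the coupling weights $|e^{i\varphi_1}+e^{i\varphi_2}|^\alpha$ and $|\sin(\varphi_1-\varphi_2)|$—to a product of one-dimensional oscillatory integrals. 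The critical points of $g$ on $\Ss^1$ correspond to the unit-modulus roots of the polynomial $Q(\cdot,\zeta)$ from \eqref{Q_u} applied to $-u$: by Lemma \ref{st_points_lemma}, each such root $\lambda_j$ of multiplicity $m_j$ produces two critical points $\pm\lambda_j$ of $g$, each of order $m_j$, whose arguments differ by $\pi$.

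Since the trivial bound $|I_{in}|\leq 4\pi^2\cdot 2^{\alpha}$ already handles bounded $t$, we may assume $t$ large. We then introduce a smooth partition of unity on $[0,2\pi)^2$ subordinate to cutoffs around each pair $(\varphi^{(j)},\varphi^{(k)})$ of critical points of $g$. Away from such pairs, either $|g'(\varphi_1)|$ or $|g'(\varphi_2)|$ is bounded below, and repeated integration by parts yields decay faster than $t^{-3/4}$. Near a stationary pair of orders $(m_j,m_k)$, we localize on a box of sides $\delta_j\simeq t^{-1/(m_j+1)}$, $\delta_k\simeq t^{-1/(m_k+1)}$ (the respective van der Corput scales), and invoke separability to reduce the local contribution to a product of $1$D estimates. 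The key supplementary ingredient is the elementary inequality $|\sin(\varphi_1-\varphi_2)|\leq|\varphi_1-\varphi^{(j)}|+|\varphi_2-\varphi^{(k)}|$, valid precisely when $\varphi^{(j)}-\varphi^{(k)}\in\{0,\pi\}$, i.e.\ when the pair lies on the zero set of $\sin(\varphi_1-\varphi_2)$.

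The worst case is Case 1 of Lemma \ref{st_points_lemma}: here $-u$ sits at a vertex of $\mathcal{U}$, $Q(-u,\cdot)$ admits a triple root, and $g$ possesses exactly two critical points, each of order $3$ (so $g-g(\varphi^{(j)})\sim c(\varphi-\varphi^{(j)})^4$) and separated by $\pi$. Consequently all four stationary pairs $(\varphi^{(j)},\varphi^{(k)})$ satisfy $\varphi_1-\varphi_2\in\{0,\pi\}$, the weight $|\sin(\varphi_1-\varphi_2)|$ vanishes linearly at each, and on the associated box of side $\delta\simeq t^{-1/4}$ the area is $O(t^{-1/2})$ while the weight is $O(t^{-1/4})$; their product reaches exactly $t^{-3/4}$, saturating \eqref{I_in}. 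Equivalently, the one-dimensional estimate $\int|\varphi|e^{itg(\varphi)}\,d\varphi\lesssim t^{-2/(m+1)}$ combines with the plain van der Corput bound $\int e^{itg(\varphi)}\,d\varphi\lesssim t^{-1/(m+1)}$ to produce the same $t^{-3/4}$. Cases 2, 3 and 4 of Lemma \ref{st_points_lemma} are strictly less degenerate: Case 2 yields at worst $t^{-5/6}$ (from the (cubic, quadratic) pair), while Case 3 and the off-$\mathbb{U}$ regime yield $t^{-1}$.

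The principal obstacle is uniformity in $u$ as $-u$ traverses $\mathcal{U}$ or approaches a vertex, where critical points of $g$ coalesce and their orders jump. Following the template of \cite[Subsection 3.5]{KM}, this is handled by adapting the scale of the partition of unity to the minimal pairwise separation of the $\lambda_j$'s (parametrized, in the transition regime, by the quantity $\omega$ of \eqref{la_j}) and by carefully tracking the dependence of the 1D van der Corput constants on this separation. Summing the finitely many stationary-pair contributions then produces the uniform bound \eqref{I_in}.
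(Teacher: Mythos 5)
Your reduction via separability of the phase is sound as far as it goes, and your worst-case arithmetic (box of side $t^{-1/4}$, area $t^{-1/2}$, weight $|\sin(\varphi_1-\varphi_2)|=O(t^{-1/4})$, product $t^{-3/4}$) is exactly the mechanism the paper uses in its Case I. But there is a genuine gap, and it sits precisely where the paper does its real work: the uniformity in $u$ in the transition regimes. Your key weight inequality $|\sin(\varphi_1-\varphi_2)|\leq|\varphi_1-\varphi^{(j)}|+|\varphi_2-\varphi^{(k)}|$ is, as you yourself note, only valid when $\varphi^{(j)}-\varphi^{(k)}\in\{0,\pi\}$, i.e.\ in the exactly degenerate configuration \eqref{Case1}. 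In the regimes that actually force the $t^{-3/4}$ rate — $-u$ near (but not at) a vertex of $\mathcal U$, or near $\mathcal U$, possibly with $-u\notin\mathbb U$ so that the roots $\lambda_0,\lambda_2$ leave the unit circle — the stationary pairs differ by neither $0$ nor $\pi$, and what one needs instead is that $|\sin(\varphi^{(j)}-\varphi^{(k)})|\lesssim\omega_2$ for \emph{every} relevant pair, in particular for mixed pairs with one index in $J_1$ and one in $J_2$. That is not elementary: it is the content of Lemmas \ref{aux_lemma}, \ref{cluster_lemma}, \ref{cluster_lemma_1} and Corollary \ref{cor_coinc} (every $-\lambda_i$ with $i\in J_2$ coincides with some $\lambda_j$, $j\in J_1$), and it can actually fail unless one imposes the restriction $\lambda_j\in B_K(0)$, $K=\sqrt{\sqrt2+1}$, which is why the paper needs a separate Case IV. Your proposal never establishes this, so the claimed gain from the vanishing weight is unjustified exactly in the near-degenerate configurations.

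Relatedly, your intermediate rates ("Case 2 yields $t^{-5/6}$, Case 3 and the off-$\mathbb U$ regime yield $t^{-1}$") are only pointwise in $u$; they are not uniform as the configurations degenerate, and the uniform bound in those regimes is again $t^{-3/4}$. Making the van der Corput constants quantitative as critical points coalesce or drift off $\Ss^1$ requires lower bounds of the type $|g'(\varphi)|\eqsim\prod_j|e^{i\varphi}-\lambda_j|\,|e^{i\varphi}+\lambda_j|$ together with the facts $\dist(\lambda_j,\Ss^1)\lesssim\omega_1$ and $|\lambda_j-\lambda_k^*|\lesssim\omega_2$ (Lemmas \ref{dist_unit_circle_lemma} and \ref{dist_deg_points_lemma}), and then localization scales that depend on $\omega_1,\omega_2$ themselves (the paper uses $\varepsilon_1=(t^{1/2}\omega_2)^{-1}$, $\varepsilon_2=t^{-1/4}$, $\varepsilon_3=t^{-3/8}\omega_1^{-1/2}$ in its Cases II--III), not a single van der Corput scale per critical order. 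Deferring all of this to ``carefully tracking the dependence of the 1D van der Corput constants on this separation'' is not a proof; it is the statement of the problem. To complete your argument you would need to reproduce, in your 1D$\times$1D framework, the full configuration analysis of Section \ref{Sect_4} and the case-by-case choice of scales of Section \ref{Sect_5}.
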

For the proof of this result, we proceed in several steps detailed in the following subsections.

\subsection{Setting of the problem}

Recall that the stationary points of the phase $ S $ of (\ref{SS}) are in $ \mathbb{S}^1 $. However, if $ -u \in \mathbb{C} \backslash \mathbb{U} $, and thus the phase $ S $ does not have stationary points, the derivatives $ S_{ \varphi_1 } $, $ S_{ \varphi_2 } $ can be very close to zero due to the existence of zeros of $ S_{ \varphi_1 } $, $ S_{ \varphi_2 } $ viewed as functions of $ \lambda \in \mathbb{C} $, $ \lambda' \in \mathbb{C} $. Thus we need to study, more generally, stationary points of $ S $ on $ \mathbb{C} $.

Let 
 \be\label{deg_points}
  \lambda_k^* = e^{ - \frac{ i \pi k }{ 3 } } , \quad  k \in N_5 = \{  0, 1, \ldots, 5 \},
 \ee
 i.e. $ \lambda_k^* $ are the degenerate stationary points of $ S $ (see \eqref{SS}) corresponding to the particular case $ u_k^* = {\color{red}-}18 e^{ \frac{ 2 \pi i k }{ 3 } } $.
Let $ \lambda_j=\la_j(-u) $, $ j \in N_5 $ denote the stationary points of the phase $ S $ for a given $ u $ with $ \lambda_j $, $ j = 0, 1, 2 $ given by Lemma \ref{st_points_lemma}, and  $ \lambda_{ j + 3 } = - \lambda_j $, $ j = 0, 1, 2 $. In other words:
\[
\begin{cases}
\begin{matrix}
~\la_0 ~ ~& \la_1 ~~ & \framebox(13,14){$\la_2$}~ ~ & \la_3 ~ & \la_4 ~&  \framebox(13,14){$\la_5$} \\
~\downarrow  ~ & \downarrow  ~ & \downarrow  ~ & \downarrow  ~ & \downarrow  ~& \downarrow \\
~\la_0 ~ ~& \la_1 ~~ &  \framebox(13,14){$\la_2$} ~~ & -\la_0 ~ & -\la_1 ~ &  \framebox(19,14){$-\la_2$}
\end{matrix}
\end{cases}
\]
(The pair (2,5) is inside a box for later reasons.) From this arrangement we see that the size of the difference between any two points is at most equals 2 if the $\la_j$ are on the unit circle.  For the record, we have four different scenarios for the set of $(\la_j)_{j=0,1,2}$:
\ben
\item Case 1. $\la_0=\la_1=\la_2 = e^{-i\pi k/3}$, for some $k=0,1,2$. In this case,
\be\label{Case1}
(\la_0,\la_1,\la_2 ~ ; ~ \la_3,\la_4,\la_5) = e^{-i\pi k/3}(1,1,1~ ; ~ -1,-1,-1).
\ee
\item Case 2. $\la_0=\la_2 = e^{i\varphi/2}$, $\la_1= e^{-i\varphi} \neq \la_0,\la_2$.  Here,
\be\label{Case2}
(\la_0,\la_1,\la_2 ~ ; ~ \la_3,\la_4,\la_5) = (e^{i\varphi/2},e^{-i\varphi},e^{i\varphi/2} ~ ; ~  -e^{i\varphi/2},-e^{-i\varphi},-e^{i\varphi/2}).
\ee
\item Case 3. $\la_j=e^{i \varphi_j}$, $j=0,1,2$, $\la_j\neq \la_k$ if $j\neq k$. In this case:
\be\label{Case3}
(\la_0,\la_1,\la_2 ~ ; ~ \la_3,\la_4,\la_5) = (e^{i\varphi_0},e^{i\varphi_1},e^{i\varphi_2} ~ ; ~  -e^{i\varphi_0},-e^{i\varphi_1},-e^{i\varphi_2}).
\ee
\item Case 4. $\la_0= (1+\omega)e^{i \varphi/2}$, $\la_2= (1+\omega)^{-1}e^{i \varphi/2}$, and $\la_1= e^{-i \varphi}$, for some $\varphi \in \R$. Here we have
\be\label{Case4}
(\la_0,\la_1,\la_2 ~ ; ~  \la_3,\la_4,\la_5) = \Big( (1+\omega)e^{i \varphi/2}, e^{-i\varphi}, \frac{e^{i \varphi/2}}{1+\omega} ~ ; ~  -(1+\omega)e^{i \varphi/2} ,-e^{-i\varphi},-\frac{e^{i \varphi/2}}{1+\omega} \Big).
\ee
\een

\begin{defn}\label{omega1}
Let $ \omega_1 $ denote the minimal distance between two stationary points, \emph{excluding} the pair $ ( \lambda_2, - \lambda_2 ) = ( \lambda_2, \lambda_5 ) $:
\begin{equation*}
\omega_1 = \min_{ P_1 } | \lambda_i - \lambda_j  |, \quad P_1 = \Big\{ (i, j  ) \in N_5^2, \quad i < j,  \quad ( i, j ) \neq ( 2, 5 ) \Big\}.
\end{equation*}
\end{defn}
Let $ i^m $, $ j^m $ be such that $ \omega_1 = | \lambda_{i^m} - \lambda_{j^m} | $, $ i^m<j^m $. Note that then for $ k^m = ( i^m+3 )\mod 6 $, $ l^m = ( j^m + 3) \mod 6 $ we also have that
\begin{equation*}
\omega_1 = | \lambda_{k^m} - \lambda_{l^m} |.
\end{equation*} 
This is just the fact that if $ \lambda_{i^m}$ and  $\lambda_{j^m}$ realize the value $\omega_1$, then $- \lambda_{i^m}$ and  $-\lambda_{j^m}$ also realize the same value.

\medskip 

\begin{defn}\label{omega2}
We define $ \omega_2 $ to be the second minimal distance between two stationary points (excluding pairs $ ( \lambda_{i^m}, \lambda_{j^m}  ) $, $ ( \lambda_{k^m}, \lambda_{l^m}) $, $ ( \lambda_2, \lambda_5 ) $):
\begin{align*}
& \omega_2 = \min_{ P_2 } | \lambda_i - \lambda_j |, \quad \hbox{where} \\
& P_2=\Big\{ ( i, j  ) \in N_5^2 \;  | \;  i < j, \; ( i, j ) \neq ( 2, 5 ),  ( i^m, j^m ),  ( k^m, l^m ),  ( l^m, k^m ) \Big\}.
\end{align*}
\end{defn}

(Actually, only one of the pairs $( k^m, l^m )$ and $ ( l^m, k^m )$ is needed, the other does not satisfy the condition ``$i<j$''.)  

\subsection{Preliminary lemmas}

\begin{lemma}
\label{elem_lemma}
One has that 
$ \omega_1 < 2 $, $ \omega_2 < 2 $, and $ \omega_1 \leq \omega_2 $.  
\end{lemma}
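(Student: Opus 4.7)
The plan is as follows. First, $\omega_1 \leq \omega_2$ is immediate from the definitions, since by construction $P_2 \subsetneq P_1$ (one removes from $P_1$ the minimizer $(i^m,j^m)$ and its $\mathbb{Z}/6$-partner $(k^m,l^m)$, the pair $(l^m,k^m)$ being automatically excluded by the constraint $i<j$, together with $(2,5)$), so the minimum over the smaller set is at least the minimum over the larger one. Granting this, it suffices to prove $\omega_2 < 2$, and I would argue by a case analysis along the four configurations \eqref{Case1}--\eqref{Case4} of the stationary points.

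In Cases 1 and 2 the coincidence $\lambda_0 = \lambda_2$ puts the pair $(0,2) \in P_1$ at distance $0$, so $\omega_1 = 0$ is realized with minimizer $(0,2)$ and partner $(3,5)$. I would then observe that the pair $(0,1)$ still sits in $P_2$; its distance equals $2|\sin(3\varphi/4)|$ in Case 2 (and $0$ in Case 1), and the only $\varphi \in [0,2\pi)$ making this equal to $2$ are $\varphi \in \{2\pi/3, 4\pi/3\}$, which correspond to the vertices of $\mathcal{U}$ and hence to Case 1, and are therefore excluded from Case 2. This gives $\omega_2 < 2$ in both cases.

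In Case 3 the three roots $\zeta_0, \zeta_1, \zeta_2$ of $Q(-u,\zeta)$ are pairwise distinct, so $\lambda_i^2 \neq \lambda_j^2$ and hence $\lambda_i \neq \pm \lambda_j$ for $i \neq j \in \{0,1,2\}$; the six points $\pm \lambda_0, \pm \lambda_1, \pm \lambda_2$ are thus pairwise distinct on $\mathbb{S}^1$. The key geometric input is that a chord of $\mathbb{S}^1$ has length $2$ iff its endpoints are antipodal, so the only index pairs achieving distance $2$ are $(0,3), (1,4), (2,5)$. Removing $(2,5)$ and the two pairs $(i^m,j^m), (k^m,l^m)$ from the $14$-element set $P_1$, we still have at least $10$ non-antipodal pairs in $P_2$, each at distance $<2$.

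For Case 4 the triangle inequality with $|\lambda_1| = 1$ and $|\lambda_2| = (1+\omega)^{-1} < 1$ gives
\[
|\lambda_1 \pm \lambda_2| \leq 1 + \frac{1}{1+\omega} < 2,
\]
so the four pairs $(1,2), (1,5), (2,4), (4,5)$ all lie in $P_1$ at distance strictly less than $2$. These form two $\mathbb{Z}/6$-symmetric pairs of pairs, and since $P_2$ removes only one such symmetric pair (the minimizer and its partner), at least two remain, giving $\omega_2 < 2$. The main obstacle is only a careful bookkeeping of index pairs; the genuine geometric content reduces to the antipodal-chord characterization on $\mathbb{S}^1$ together with the one triangle inequality used in Case 4.
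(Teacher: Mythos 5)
Your proposal is correct, and it follows the paper's overall strategy: reduce everything to $\omega_2<2$ (with $\omega_1\le\omega_2$ immediate from $P_2\subset P_1$) and run a case analysis over the configurations \eqref{Case1}--\eqref{Case4}. The differences are local. In Case \eqref{Case2} the paper argues by a dichotomy: either $|\lambda_0-\lambda_1|<2$, or $|\lambda_0-\lambda_1|=2$ forces $\re(\lambda_0\overline{\lambda_1})=-1$, hence $|\lambda_0-\lambda_4|=|\lambda_0+\lambda_1|=0$ and $\omega_2=0$; you instead rule out the equality case by noting that $|\lambda_0-\lambda_1|=2|\sin(3\varphi/4)|$ equals $2$ only at parameter values corresponding to the vertices of $\mathcal{U}$, which are excluded from Case \eqref{Case2} by Lemma \ref{st_points_lemma}. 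Both work; note only that your list $\{2\pi/3,4\pi/3\}$ overcounts the solutions, since at $\varphi=4\pi/3$ one has $\sin(3\varphi/4)=0$ and the distance is $0$, not $2$ --- harmless, because that value is a vertex and excluded anyway. In Case \eqref{Case3} your argument is genuinely different and arguably cleaner: since the six points are pairwise distinct on $\mathbb{S}^1$ and a chord has length $2$ exactly when its endpoints are antipodal, the only pairs of $P_1$ at distance $2$ are $(0,3)$ and $(1,4)$, and deleting the minimizer and its partner still leaves non-antipodal pairs in $P_2$; the paper instead uses a sector/pigeonhole argument (some two adjacent points span an angle at most $60^{\circ}$, so $\omega_1$ is attained by adjacent, hence non-antipodal, points, and a surviving non-antipodal pair then bounds $\omega_2$). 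Finally, in Case \eqref{Case4} you use the same triangle inequality $|\lambda_1\pm\lambda_2|\le 1+(1+\omega)^{-1}<2$ as the paper, but you make explicit the bookkeeping (at least one of the symmetric pairs among $(1,2),(4,5)$ and $(1,5),(2,4)$ survives in $P_2$) that the paper leaves implicit when it asserts $\omega_1\le\omega_2<2$; this is a worthwhile clarification rather than a deviation.
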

%The proof of Lemma \ref{elem_lemma} is elementary. 

\begin{proof}
It is clear that we only have to prove that $ \omega_2 < 2 $. We will treat cases \eqref{Case1}--\eqref{Case4} separately.

\medskip

In the case represented in \eqref{Case1}, it is also direct that $\omega_1=\omega_2=0.$  We also have $ \omega_1 =|\la_0-\la_1| = \omega_2=|\la_0 -\la_2|$. 

\medskip

Let us now consider the case in \eqref{Case2}. Readily we have $ \omega_1=0$, obtained with the difference $|\la_0-\la_2|$, repeated by $|\la_3-\la_5|$.
Therefore, the set of points $(i,j) \in P_2$ for which  the difference $|\la_i-\la_j|$ is different from 2 is reduced to the array
\[
i ~\overbrace{  \begin{cases} \begin{matrix}
(0,1) & {\bf X } & {\bf X} & (0,4) & {\bf X} \\
& (1,2) & (1,3) & {\bf X} & (1,5)\\
& & {\bf X} & (2,4) & {\bf X} \\
& & & (3,4) & {\bf X }\\
& & & & (4,5)
\end{matrix}
\end{cases}}^{\ds j}
\]
Now we claim that $|\la_0-\la_1|<2$ or $|\la_0-\la_4|<2$ (which implies $ \omega_2<2$). Indeed, if  we have $|\la_0-\la_1| =2$, then $|\la_0-\la_4| =|\la_0+ \la_1|=0$, leading to $ \omega_2 =0$. In order to prove this fact, notice that 
\[
4 =|\la_0-\la_1|^2 =2 -2\re (\la_0\overline{\la_1}),  
\]
which implies $\re (\la_0\overline{\la_1}) =-1$. Therefore,
\[
|\la_0-\la_4|^2 = |\la_0+ \la_1 |^2 = 2 + 2 \re (\la_0\overline{\la_1})  =0.
\]
In conclusion, we have proved $ \omega_2<2$ for the second case \eqref{Case2}.

\medskip

Now, we deal with the case \eqref{Case3}. 
%First of all, let us display the set $P_1$:
%\[
%P_1 = \begin{cases}
%\begin{matrix}
%(0,1) & (0,2) & (0,3) & (0,4) & (0,5) \\
%& (1,2) & (1,3) & (1,4) & (1,5)\\
%& & (2,3) & (2,4) & {\bf X} \\
%& & & (3,4) & (3,5)\\
%& & & & (4,5).
%\end{matrix}
%\end{cases}
%\]
Here, the argument is more geometric. Seen as points on the circle $\Ss^1$, the adjacent points $\la_j$'s define sectors of this circle (see Fig. \ref{Angles}).

\begin{figure}[!h]
\begin{center}
\begin{tikzpicture}[
	>=stealth',
	axis/.style={semithick,->},
	coord/.style={dashed, semithick},
	yscale = 1,
	xscale = 1]
	\newcommand{\xmin}{-4};
	\newcommand{\xmax}{4};
	\newcommand{\ymin}{-3};
	\newcommand{\ymax}{3};
	\newcommand{\ta}{3};
	\newcommand{\fsp}{0.2};
	\filldraw[color=light-gray1] (0,0) circle (2);
	%\filldraw[color=light-gray2] (0,0) circle (1);
	\draw [axis] (\xmin,0) -- (\xmax,0) node [right] {$\re \la$};
	\draw [axis] (0,\ymin) -- (0,\ymax) node [below left] {$\ima \la$};
	%\filldraw[color=light-gray3] (3.5,1.5) circle (0.2);
	%\filldraw[color=light-gray3] (-3.5,-1.5) circle (0.2);
	%\filldraw[color=light-gray3] (-0.72,0.72) circle (0.2);
	%\filldraw[color=light-gray3] (0.72,-0.72) circle (0.2);
	%\filldraw[color=light-gray2] (0.1,2) rectangle (9,5);
	%\draw [thick,->] (8,1) -- (9,1);
	%\draw [thick,->] (5,1) -- (5.6,1);
	%\draw [thick,-] (-0.2,2) -- (9,2);
	%\filldraw[color=light-gray1] (1,1) circle (0.6);
	%\draw [thick,->] (1,1) -- (1.4,1);
	%\draw [thick,->] (2,4) -- (3.5,4);
	%\draw [<->] (-0.2,1) -- (-0.2,4);
	%\draw (-0.9,-0.2) node [left] {$1$};
	\draw (2.2,-0.2) node [left] {$1$};
	\draw [dashed] (1.4,1.4) -- (-1.4,-1.4);
	\draw [dashed] (-1.4,1.4) -- (1.4,-1.4);
	\draw [dashed] (1.9,0.5) -- (-1.9,-0.5);
%	\draw [dashed] (6,0) -- (6,5);
	%\draw (3.5,1.6) node [above] {$B(\la_0,\ve)$};
	%\draw (-3.5,-2.4) node [above] {$B(-\la_0,\ve)$};
	\draw (-1.5,1.44) node [above] {$\la_2$};
	\draw (1.5,-1.44) node [below] {$\la_5$};
	\draw (1.9,0.5) node [right] {$\la_1$};
	\draw (-1.9,-0.5) node [left] {$\la_4$};
	\draw (1.5,1.4) node [above] {$\la_0$};
	\draw (-1.5,-1.4) node [below] {$\la_3$};
	%\draw [dashed] (-2,2) -- (0.4,0.2);
	%\draw [dashed] (-2,2) -- (-0.4,-0.2);
	%\draw [dashed] (2,2) -- (-0.72,0.72);
	%\draw [dashed] (2,2) -- (0.72,-0.72);
	\fill (1.4,1.4)  circle[radius=1pt];
	\fill (-1.4,-1.4)  circle[radius=1pt];
	\fill (-1.4,1.4)  circle[radius=1pt];
	\fill (1.4,-1.4)  circle[radius=1pt];
	\fill (1.9, 0.5)  circle[radius=1pt];
	\fill (-1.9,-0.5)  circle[radius=1pt];
	%\fill (3.5,1.5)  circle[radius=1pt];
	%\fill (-3.5,-1.5)  circle[radius=1pt];
	%\draw [dashed] (5,1) -- (5,2.5);
	%\draw [<->] (5,2.5) -- (8,2.5);
	%\draw (6.5,2.5) node [above] {$L$};
	%\draw [dashed] (8,1) -- (8,2.5);
	\draw (0.5,0.5) arc (45:135:0.7);
	\draw (1,0.25) arc (15:55:0.8);
\end{tikzpicture}
\end{center}
\caption{The sectors defined by the points $\la_j$ in the Case described in \eqref{Case3}.}\label{Angles}
\end{figure}
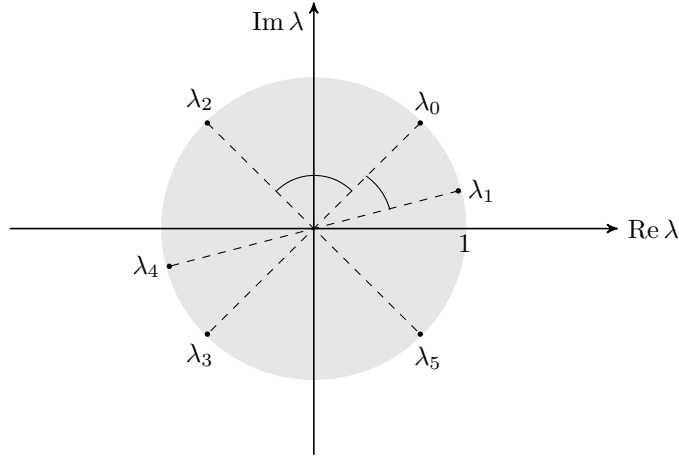

We first claim that there are at least two adjacent points that define a sector of an angle less than 60 degrees. Otherwise, the sum of all sector angles will give more than 360 degrees. The distance between two adjacent points on the circle forming a sector of the angle of 60 degrees is 1. Since $\la_2$ and $\la_5$ are not adjacent points, we conclude that  $ \omega_1<2$, attained by two adjacent points (this choice is maybe not unique). Secondly, after discarding the distance between the opposite points, equal to the same value $\omega_1$, we see that the remaining points satisfy $ \omega_2<2$, because there is at least one pair that  is not composed of two opposed points.
%among the remaining four points, there is one angle of separation below 90 degrees, otherwise the sum is once again greater than 360 degrees. For one particular pair of points for which this happens, one has necessarily $ \omega_2<2$.    

\medskip

Finally, we consider the Case 4 in \eqref{Case4}. Since we are considering mutual distances, after a rotation by $-\varphi/2$ angles, we can assume that (see Fig. \ref{4b})
\be\label{Case4p}
(\la_0,\la_1,\la_2 ~ ; ~  \la_3,\la_4,\la_5) = \Big( (1+\omega), e^{i\varphi_0}, \frac1{1+\omega} ~ ; ~  -(1+\omega) ,-e^{i\varphi_0},-\frac{1}{1+\omega} \Big),
\ee
for some $\varphi_0 \in \R$. Therefore we have
\[
\min_{\pm} \Big|e^{i\varphi_0}  \pm \frac1{1+\omega} \Big| \leq  \max_{\pm} \Big|e^{i\varphi_0}  \pm \frac1{1+\omega} \Big| \leq 1+ \frac1{1+\omega} <2,
\]
which implies that $ \omega_1 \leq  \omega_2 <2$.

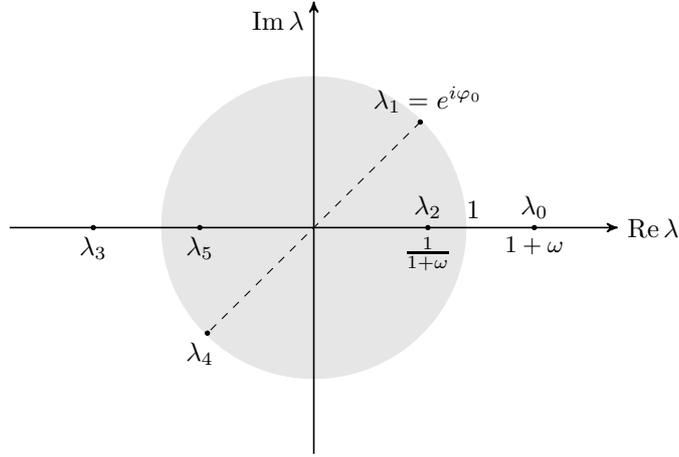
\begin{figure}[!h]
\begin{center}
\begin{tikzpicture}[
	>=stealth',
	axis/.style={semithick,->},
	coord/.style={dashed, semithick},
	yscale = 1,
	xscale = 1]
	\newcommand{\xmin}{-4};
	\newcommand{\xmax}{4};
	\newcommand{\ymin}{-3};
	\newcommand{\ymax}{3};
	\newcommand{\ta}{3};
	\newcommand{\fsp}{0.2};
	\filldraw[color=light-gray1] (0,0) circle (2);
	%\filldraw[color=light-gray2] (0,0) circle (1);
	\draw [axis] (\xmin,0) -- (\xmax,0) node [right] {$\re \la$};
	\draw [axis] (0,\ymin) -- (0,\ymax) node [below left] {$\ima \la$};
	%\filldraw[color=light-gray3] (3.5,1.5) circle (0.2);
	%\filldraw[color=light-gray3] (-3.5,-1.5) circle (0.2);
	%\filldraw[color=light-gray3] (-0.72,0.72) circle (0.2);
	%\filldraw[color=light-gray3] (0.72,-0.72) circle (0.2);
	%\filldraw[color=light-gray2] (0.1,2) rectangle (9,5);
	%\draw [thick,->] (8,1) -- (9,1);
	%\draw [thick,->] (5,1) -- (5.6,1);
	%\draw [thick,-] (-0.2,2) -- (9,2);
	%\filldraw[color=light-gray1] (1,1) circle (0.6);
	%\draw [thick,->] (1,1) -- (1.4,1);
	%\draw [thick,->] (2,4) -- (3.5,4);
	%\draw [<->] (-0.2,1) -- (-0.2,4);
	%\draw (-0.9,-0.2) node [left] {$1$};
	\draw (2.1,0) node [above] {$1$};
	\draw [dashed] (1.4,1.4) -- (-1.4,-1.4);
	%\draw [dashed] (-1.4,1.4) -- (1.4,-1.4);
	%\draw [dashed] (1.9,0.5) -- (-1.9,-0.5);
%	\draw [dashed] (6,0) -- (6,5);
	%\draw (3.5,1.6) node [above] {$B(\la_0,\ve)$};
	%\draw (-3.5,-2.4) node [above] {$B(-\la_0,\ve)$};
	\draw (1.5,0) node [above] {$\la_2$};
	\draw (-1.5, 0) node [below] {$\la_5$};
	\draw (2.9,0) node [above] {$\la_0$};
	\draw (-2.9,0) node [below] {$\la_3$};
	\draw (1.5,1.4) node [above] {$\la_1 =e^{i\varphi_0}$};
	\draw (-1.5,-1.4) node [below] {$\la_4$};
	%\draw (-2.4,0) node [below] {$\omega$};
	\draw (1.5,0) node [below] {$\frac 1{1+\omega}$};
	\draw (2.9,0) node [below] {\small $1+\omega$};
	%\draw [dashed] (-2,2) -- (0.4,0.2);
	%\draw [dashed] (-2,2) -- (-0.4,-0.2);
	%\draw [dashed] (2,2) -- (-0.72,0.72);
	%\draw [dashed] (2,2) -- (0.72,-0.72);
	\fill (1.4,1.4)  circle[radius=1pt];
	\fill (-1.4,-1.4)  circle[radius=1pt];
	\fill (-1.5,0)  circle[radius=1pt];
	\fill (1.5,0)  circle[radius=1pt];
	\fill (2.9, 0)  circle[radius=1pt];
	\fill (-2.9,-0)  circle[radius=1pt];
	%\fill (3.5,1.5)  circle[radius=1pt];
	%\fill (-3.5,-1.5)  circle[radius=1pt];
	%\draw [dashed] (5,1) -- (5,2.5);
	%\draw [<->] (5,2.5) -- (8,2.5);
	%\draw (6.5,2.5) node [above] {$L$};
	%\draw [dashed] (8,1) -- (8,2.5);
	%\draw (0.5,0.5) arc (45:135:0.7);
	%\draw (1,0.25) arc (15:55:0.8);
\end{tikzpicture}
\end{center}
\caption{The roots described in \eqref{Case4p} and \eqref{Case4pq}.}\label{4b}
\end{figure}

%In the general case, the set of points $(i,j)$ in $P_1$ for which the difference $|\la_i-\la_j|$ is different from 2 is reduced to
%\[
%\begin{cases}
%\begin{matrix}
%(0,1) & (0,2) & {\bf X} & (0,4) & (0,5) \\
%& (1,2) & (1,3) & {\bf X} & (1,5)\\
%& & (2,3) & (2,4) & {\bf X} \\
%& & & (3,4) & (3,5)\\
%& & & & (4,5).
%\end{matrix}
%\end{cases}
%\]
\end{proof}

The next lemma shows that we can always choose $ \omega_1$ and $ \omega_2$ with ``the same base point''.

\begin{lemma}[See Figs. \ref{2cases} and \ref{2cases1}]
\label{aux_lemma}
There exist $ j_0, j_1, j_2 \in N_5 $ such that 
\be\label{trivote}
 \omega_1 = | \lambda_{ j_0 } - \lambda_{ j_1 } | \quad  \hbox{and} \quad   \omega_2 = | \lambda_{ j_0 } - \lambda_{ j_2 } | .
 \ee
\end{lemma}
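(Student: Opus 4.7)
The plan is to exploit the antipodal symmetry $\lambda_j \leftrightarrow \lambda_{(j+3) \bmod 6} = -\lambda_j$, which preserves every distance $|\lambda_i - \lambda_j|$ when both indices are shifted by $3$ modulo $6$. Consequently the pair $(i^m, j^m)$ attaining $\omega_1$ automatically comes with an ``antipodal twin'' $(k^m, l^m)$ realizing the same value, and if $\omega_2 = |\lambda_a - \lambda_b|$ one also has $\omega_2 = |\lambda_{a^*} - \lambda_{b^*}|$ for $a^* = (a+3) \bmod 6$, $b^* = (b+3) \bmod 6$. Setting $S_1 = \{i^m, j^m, k^m, l^m\}$ and $S_2 = \{a, b, a^*, b^*\}$, I would reduce the conclusion to showing $S_1 \cap S_2 \neq \emptyset$: any index $j_0$ in this intersection lies in one of $\{i^m, j^m\}$ or $\{k^m, l^m\}$ (yielding $j_1$ with $\omega_1 = |\lambda_{j_0} - \lambda_{j_1}|$), and in one of $\{a, b\}$ or $\{a^*, b^*\}$ (yielding $j_2$ with $\omega_2 = |\lambda_{j_0} - \lambda_{j_2}|$), so \eqref{trivote} follows.

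The first step, and the main obstacle, will be to prove $|S_1| = |S_2| = 4$, i.e.\ that neither $\omega_1$ nor $\omega_2$ is realized by an antipodal pair $(j, (j+3) \bmod 6)$. The pair $(2,5)$ is already excluded by the definition of $P_1$, so I only need to rule out $(0,3)$ and $(1,4)$. Their distances equal $2|\lambda_0|$ and $2|\lambda_1|$ respectively, and Lemma~\ref{st_points_lemma} gives $|\lambda_0|, |\lambda_1| \geq 1$ throughout all four cases \eqref{Case1}--\eqref{Case4} (with strict inequality for $\lambda_0$ in Case~\eqref{Case4}). Combined with Lemma~\ref{elem_lemma}, which provides the strict bound $\omega_1 \leq \omega_2 < 2$, this forces $\{i^m, j^m\} \cap \{k^m, l^m\} = \emptyset$, and analogously for the pair realizing $\omega_2$, yielding the cardinality claim.

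For the second step, I would observe that $S_1$ and $S_2$ are, by construction, invariant under the involution $\sigma \colon j \mapsto (j+3) \bmod 6$. Their complements in $N_5$ are therefore subsets of cardinality $2$ also fixed by $\sigma$, and the only such subsets are $\{0,3\}$, $\{1,4\}$ and $\{2,5\}$, which partition $N_5$. Hence either $S_1^c = S_2^c$ (whence $S_1 = S_2$), or $S_1^c$ and $S_2^c$ are distinct elements of this partition and thus disjoint, giving $|S_1 \cap S_2| = 6 - |S_1^c| - |S_2^c| = 2$. In either situation $S_1 \cap S_2$ is nonempty, which combined with the reduction of the first paragraph completes the proof.
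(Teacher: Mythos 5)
Your argument is correct, and it is genuinely different from the paper's proof. The paper proceeds by case analysis over the configurations \eqref{Case1}--\eqref{Case4}: Case 1 is trivial, Case 2 is settled by listing the explicit distances \eqref{subcase2a}--\eqref{subcase2b}, Case 3 by a pigeonhole argument on the eight points $\lambda_{i_k},\lambda_{j_k}$ (two pairs must coincide, and a coincidence within the $\omega_1$-group or within the $\omega_2$-group would force a distance $2$, contradicting Lemma \ref{elem_lemma}), and Case 4 by a geometric triangle argument as in Figs. \ref{2cases}--\ref{2cases1}. Your proof replaces all of this by one uniform combinatorial argument at the level of indices: since $|\lambda_0|\geq 1$ and $|\lambda_1|\geq 1$ in every configuration, the antipodal index pairs $(0,3)$ and $(1,4)$ have distance $\geq 2$, so by $\omega_1\leq\omega_2<2$ (Lemma \ref{elem_lemma}) and the exclusion of $(2,5)$ no minimizing pair is antipodal; hence $S_1$ and $S_2$ are $4$-element sets invariant under the fixed-point-free involution $\sigma(j)=(j+3)\bmod 6$, their complements are blocks of the partition $\{0,3\},\{1,4\},\{2,5\}$, and $S_1\cap S_2\neq\varnothing$ follows. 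This is cleaner than the paper's route (no case splitting, and coinciding \emph{points} cause no trouble because you work with indices), while the paper's Case 3 argument is essentially your pigeonhole phrased on points. One small addition worth making explicit: your construction in fact produces three \emph{distinct} indices $j_0,j_1,j_2$ — the unordered pair $\{a,b\}$ and its twin $\{a^*,b^*\}$ cannot coincide with $\{i^m,j^m\}$ or $\{k^m,l^m\}$ because $P_2$ excludes the latter two and $\sigma$ exchanges them — which is what the subsequent lemmas implicitly use when forming $J_1=\{j_0,j_1,j_2\}$ and $J_2=N_5\backslash J_1$.
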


%\subsection{Proof of Lemma \ref{aux_lemma}}
\begin{proof}
For the case in \eqref{Case1} the result is obvious. The case \eqref{Case2} requires more care. One has $ \omega_1=0=|\la_0-\la_2| =|\la_3-\la_5|$,
\[
|\la_0-\la_3|=|\la_0-\la_5|=|\la_1-\la_4|=|\la_2-\la_3|=2,
\]
\be\label{subcase2a}
|\la_0-\la_1|=|\la_1-\la_2|=|\la_3-\la_4|=|\la_4-\la_5| = |e^{i\varphi/2} - e^{-i\varphi}|<2,
\ee
and
\be\label{subcase2b}
|\la_0-\la_4|=|\la_1-\la_3|=|\la_1-\la_5|=|\la_2-\la_4|=|e^{i\varphi/2} + e^{-i\varphi}|<2.
\ee
One of the last two values must be $ \omega_2$ (which one, it will depend on $\varphi\in [0,2\pi)$), a value which is attained either by $|\la_0-\la_1|$ or by $|\la_0-\la_4|$. This proves the result in this case.

\medskip

Now we deal with the more general case \eqref{Case3}. Recall that each $\la_j$, $j=0,1,2$ is different, and recall that $ \omega_1>0$. Let $ i_1, i_2, i_3, i_4 \in N_5 $ be such that $ \omega_1 = | \lambda_{ i_1 } - \lambda_{ i_2 } | $ and $ \omega_2 = | \lambda_{ i_3 } - \lambda_{ i_4 } | $. Then for 
\[
j_1 = ( i_1 + 3 ) \mod 6 ,  \quad  j_2 = ( i_2 + 3 ) \mod 6,
\]
\[
j_3 = ( i_3 + 3 ) \mod 6 , \quad j_4 = ( i_4 + 3 ) \mod 6 ,
\]
we also have that $ \omega_1 = | \lambda_{ j_1 } - \lambda_{ j_2 } | $ and $ \omega_2 = | \lambda_{ j_3 } - \lambda_{ j_4 } | $, because
\[ 
\lambda_{ j_1 } =-\lambda_{ i_1 },\quad \lambda_{ j_2 } =-\lambda_{ i_2 },\quad  \lambda_{ j_3 } = -\lambda_{ i_3}, \quad \hbox{and}\quad\lambda_{ j_4 } = -\lambda_{ i_4}.
\]
Note that on the whole we have 6 complex numbers $\la_i$, $ i \in N_5 $. Among the $ 8 $ numbers $ \lambda_{ i_k } $, $ \lambda_{ j_k } $, $ k =1, 2, 3, 4 $, there are at least two pairs of coinciding numbers. Moreover, a number from the group $ A =\{ \lambda_{ i_1 }, \lambda_{ i_2 }, \lambda_{ j_1 }, \lambda_{ j_2 } \} $ (concerning the computation of $\omega_1$) can only coincide with a number from the group $ B = \{ \lambda_{ i_3 }, \lambda_{ i_4 }, \lambda_{ j_3 }, \lambda_{ j_4 } \} $ (related to the computation of $\omega_2$) and vice versa. Indeed, take for example $ \lambda_{ i_1 } $. It is clear that it cannot coincide with $ \lambda_{ i_2 } $ ($w_1>0$), and with $ \lambda_{ j_1 } $. If it coincides with $ \lambda_{ j_2 } $, then $ \omega_1 = 2 | \lambda_{ i_1 } | = 2 $, which contradicts Lemma \ref{elem_lemma}. The remaining cases are simpler or similar to the previous example.

\medskip

Finally, we consider the Case 4  in \eqref{Case4p}. Without loss of generality, we consider the case where $e^{i\varphi_0}$ is in the closure of the first quadrant of the plane, see Fig. \ref{4b}. Then $\omega_1$ and $\omega_2$ are exactly equal to the lengths of two sides of the triangle with base points  $(1+\omega)$, $e^{i\varphi_0}$, and $\frac1{1+\omega}$, or $\frac1{1+\omega }$, $e^{i\varphi_0}$, and $-\frac1{1+ \omega }$;  from which among them there are always  $ \lambda_{ j_0 }, \lambda_{ j_1 }$, and $ \lambda_{ j_2 } $ such that $ \omega_1 = | \lambda_{ j_0 } - \lambda_{ j_1 } | $, $ \omega_2 = | \lambda_{ j_0 } - \lambda_{ j_2 } | $.

\begin{figure}[!h]
\minipage{0.45\textwidth}
\begin{tikzpicture}[
	>=stealth',
	axis/.style={semithick,->},
	coord/.style={dashed, semithick},
	yscale = 0.75,
	xscale = 0.75]
	\newcommand{\xmin}{-4};
	\newcommand{\xmax}{4};
	\newcommand{\ymin}{-3};
	\newcommand{\ymax}{3};
	\newcommand{\ta}{3};
	\newcommand{\fsp}{0.2};
	\filldraw[color=light-gray1] (0,0) circle (2);
	%\filldraw[color=light-gray2] (0,0) circle (1);
	\draw [axis] (\xmin,0) -- (\xmax,0) node [right] {$\re \la$};
	\draw [axis] (0,\ymin) -- (0,\ymax) node [below left] {$\ima \la$};
	%\filldraw[color=light-gray3] (3.5,1.5) circle (0.2);
	%\filldraw[color=light-gray3] (-3.5,-1.5) circle (0.2);
	%\filldraw[color=light-gray3] (-0.72,0.72) circle (0.2);
	%\filldraw[color=light-gray3] (0.72,-0.72) circle (0.2);
	%\filldraw[color=light-gray2] (0.1,2) rectangle (9,5);
	%\draw [thick,->] (8,1) -- (9,1);
	%\draw [thick,->] (5,1) -- (5.6,1);
	%\draw [thick,-] (-0.2,2) -- (9,2);
	%\filldraw[color=light-gray1] (1,1) circle (0.6);
	%\draw [thick,->] (1,1) -- (1.4,1);
	%\draw [thick,->] (2,4) -- (3.5,4);
	%\draw [<->] (-0.2,1) -- (-0.2,4);
	%\draw (-0.9,-0.2) node [left] {$1$};
	\draw (2.2,-0.0) node [above] {$\omega_1$};
	\draw [dashed] (1.4,1.4) -- (-1.4,-1.4);
	\draw [thick,-] (1.5,0) -- (2.9,0);
	%\draw [thick] (1.5,0.03) -- (2.9,0.03);
	\draw [thick,-] (1.5,0) -- (1.4,1.4);
	%\draw [thick] (1.5,0.03) -- (1.4,1.43);
	%\draw [dashed] (1.9,0.5) -- (-1.9,-0.5);
%	\draw [dashed] (6,0) -- (6,5);
	%\draw (3.5,1.6) node [above] {$B(\la_0,\ve)$};
	%\draw (-3.5,-2.4) node [above] {$B(-\la_0,\ve)$};
	\draw (1.0,0) node [below] {$\frac 1{1+\omega}=\la_2$};
	\draw (-1.5, 0) node [below] {$\la_5$};
	\draw (2.9,0) node [above right] {$\la_0$};
	\draw (-2.9,0) node [below] {$\la_3$};
	\draw (1.5,1.4) node [above] {$\la_1 =e^{i\varphi_0}$};
	\draw (-1.5,-1.4) node [below] {$\la_4$};
	%\draw (-2.4,0) node [below] {$\omega$};
	\draw (1.5,0.2) node [above left] {$\omega_2$};
	%\draw (0.6,0) node [below] {$\frac 1{1+\omega}=\lambda_2$};
	\draw (2.9,0) node [below] {\small $1+\omega$};
	%\draw [dashed] (-2,2) -- (0.4,0.2);
	%\draw [dashed] (-2,2) -- (-0.4,-0.2);
	%\draw [dashed] (2,2) -- (-0.72,0.72);
	%\draw [dashed] (2,2) -- (0.72,-0.72);
	\fill (1.4,1.4)  circle[radius=2pt];
	\fill (-1.4,-1.4)  circle[radius=2pt];
	\fill (-1.5,0)  circle[radius=2pt];
	\fill (1.5,0)  circle[radius=2pt];
	\fill (2.9, 0)  circle[radius=2pt];
	\fill (-2.9,-0)  circle[radius=2pt];
	%\fill (3.5,1.5)  circle[radius=1pt];
	%\fill (-3.5,-1.5)  circle[radius=1pt];
	%\draw [dashed] (5,1) -- (5,2.5);
	%\draw [<->] (5,2.5) -- (8,2.5);
	%\draw (6.5,2.5) node [above] {$L$};
	%\draw [dashed] (8,1) -- (8,2.5);
	%\draw (0.5,0.5) arc (45:135:0.7);
	%\draw (1,0.25) arc (15:55:0.8);
\end{tikzpicture}
\caption{In this case, $-u\not\in \mathbb U$ and $\omega_1 =|\la_2-\la_0|$ and $\omega_2=|\la_2-\la_1|$.}\label{2cases}
\endminipage\hfill
\minipage{0.47\textwidth}
\begin{tikzpicture}[
	>=stealth',
	axis/.style={semithick,->},
	coord/.style={dashed, semithick},
	yscale = 0.75,
	xscale = 0.75]
	\newcommand{\xmin}{-5};
	\newcommand{\xmax}{5};
	\newcommand{\ymin}{-3};
	\newcommand{\ymax}{3};
	\newcommand{\ta}{3};
	\newcommand{\fsp}{0.2};
	\filldraw[color=light-gray1] (0,0) circle (2);
	%\filldraw[color=light-gray2] (0,0) circle (1);
	\draw [axis] (\xmin,0) -- (\xmax,0) node [below] {$\re \la$};
	\draw [axis] (0,\ymin) -- (0,\ymax) node [below left] {$\ima \la$};
	%\filldraw[color=light-gray3] (3.5,1.5) circle (0.2);
	%\filldraw[color=light-gray3] (-3.5,-1.5) circle (0.2);
	%\filldraw[color=light-gray3] (-0.72,0.72) circle (0.2);
	%\filldraw[color=light-gray3] (0.72,-0.72) circle (0.2);
	%\filldraw[color=light-gray2] (0.1,2) rectangle (9,5);
	%\draw [thick,->] (8,1) -- (9,1);
	\draw [thick,-] (-0.5,0) -- (1.4,1.4);
	\draw [thick,-] (0.5,0) -- (1.4,1.4);
	%\filldraw[color=light-gray1] (1,1) circle (0.6);
	%\draw [thick,->] (1,1) -- (1.4,1);
	%\draw [thick,->] (2,4) -- (3.5,4);
	%\draw [<->] (-0.2,1) -- (-0.2,4);
	%\draw (-0.9,-0.2) node [left] {$1$};
	\draw (2.2,0.0) node [above] {$1$};
	\draw [dashed] (1.4,1.4) -- (-1.4,-1.4);
	%\draw [dashed] (-1.4,1.4) -- (1.4,-1.4);
	%\draw [dashed] (1.9,0.5) -- (-1.9,-0.5);
%	\draw [dashed] (6,0) -- (6,5);
	%\draw (3.5,1.6) node [above] {$B(\la_0,\ve)$};
	%\draw (-3.5,-2.4) node [above] {$B(-\la_0,\ve)$};
	%\draw (0.5,0) node [above] {$\la_2$};
	\draw (-0.5, 0) node [below left] {$\la_5$};
	\draw (4.3,0) node [above] {$\lambda_0 = 1+\omega$};
	\draw (-4.5,0) node [below] {$\la_3$};
	\draw (1.5,1.4) node [above] {$\la_1 =e^{i\varphi_0}$};
	\draw (-1.5,-1.4) node [below] {$\la_4$};
	\draw (1,0.2) node [above right] {$\omega_1$};
	\draw (1.0,0) node [below] {\small$\lambda_2=\frac 1{1+\omega}$};
	\draw (-0.1,0.7) node [above right] {$\omega_2$};
	%\draw [dashed] (-2,2) -- (0.4,0.2);
	%\draw [dashed] (-2,2) -- (-0.4,-0.2);
	%\draw [dashed] (2,2) -- (-0.72,0.72);
	%\draw [dashed] (2,2) -- (0.72,-0.72);
	\fill (1.4,1.4)  circle[radius=2pt];
	\fill (-1.4,-1.4)  circle[radius=2pt];
	\fill (-0.5,0)  circle[radius=2pt];
	\fill (0.5,0)  circle[radius=2pt];
	\fill (4.5, 0)  circle[radius=2pt];
	\fill (-4.5,-0)  circle[radius=2pt];
	%\fill (3.5,1.5)  circle[radius=1pt];
	%\fill (-3.5,-1.5)  circle[radius=1pt];
	%\draw [dashed] (5,1) -- (5,2.5);
	%\draw [<->] (5,2.5) -- (8,2.5);
	%\draw (6.5,2.5) node [above] {$L$};
	%\draw [dashed] (8,1) -- (8,2.5);
	%\draw (0.5,0.5) arc (45:135:0.7);
	%\draw (1,0.25) arc (15:55:0.8);
\end{tikzpicture}
\caption{In this case, $-u\not\in \mathbb U$ and $\omega_1=|\la_1-\la_2|$ and $\omega_2=|\la_1-\la_5|$.}\label{2cases1}
\endminipage
\end{figure}

\medskip

Thus we can always choose $ \lambda_{ j_0 }, \lambda_{ j_1 }, \lambda_{ j_2 } $ such that $ \omega_1 = | \lambda_{ j_0 } - \lambda_{ j_1 } | $, $ \omega_2 = | \lambda_{ j_0 } - \lambda_{ j_2 } | $.
\end{proof}

\bigskip

Now we need some sharp estimates on the positions of the stationary points $\la_j$.

\subsection{Advanced lemmas}

The first lemma of this subsection measures how far the stationary points are from each other in the simple case where the parameter $-u\in \mathbb U$.

\begin{lemma}
\label{cluster_lemma}
Set $ J_1 = \{ j_0, j_1, j_2 \} $ with $ j_0, j_1, j_2  $ of Lemma \ref{aux_lemma}, and $ J_2 = N_5 \backslash J_1 $.  Then
\begin{enumerate}
\item One has the trivial bound
\begin{equation}
\label{triv_bound}
| \lambda_i - \lambda_j | \leq 2 \omega_2 \quad \forall i, j \in J_1.  
\end{equation}
\item If $ -u \in \mathbb{U} $, then 
\begin{equation}
\label{no_coinc_1}
\forall k \in \{ 0, 1, 2 \} \text{ the point } - \lambda_{ j_k } \text{ does not coincide  with any of } \{ \lambda_{ j_0 }, \lambda_{ j_1 }, \lambda_{ j_2 } \}
\end{equation}
and
\begin{equation}
\label{other_bound}
| \lambda_i - \lambda_j | \leq 2 \omega_2 \quad \forall i, j \in J_2.  
\end{equation}
\end{enumerate}
\end{lemma}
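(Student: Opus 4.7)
The plan is to handle the three assertions in order, using Lemma \ref{aux_lemma} to fix a common base index $j_0$ for both $\omega_1$ and $\omega_2$.

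For the trivial bound \eqref{triv_bound}, I would invoke Lemma \ref{aux_lemma} together with the triangle inequality: the three pairwise distances inside $J_1$ are $|\lambda_{j_0}-\lambda_{j_1}|=\omega_1$, $|\lambda_{j_0}-\lambda_{j_2}|=\omega_2$ and $|\lambda_{j_1}-\lambda_{j_2}|\le \omega_1+\omega_2\le 2\omega_2$, where the last step uses $\omega_1\le\omega_2$ from Lemma \ref{elem_lemma}.

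For \eqref{no_coinc_1}, my strategy is proof by contradiction. Since $-u\in\mathbb U$, Lemma \ref{st_points_lemma} places all $\lambda_j$ on the unit circle, so any coincidence $-\lambda_{j_k}=\lambda_{j_l}$ yields $|\lambda_{j_k}-\lambda_{j_l}|=2$. The case $k=l$ is trivial since no point vanishes; the cases $\{k,l\}=\{0,1\}$ and $\{k,l\}=\{0,2\}$ force $\omega_1=2$ and $\omega_2=2$ respectively, contradicting Lemma \ref{elem_lemma}. The remaining case $\{k,l\}=\{1,2\}$ is the main obstacle. In the degenerate situations where $\omega_1=0$ (so $\lambda_{j_0}=\lambda_{j_1}$), one immediately gets $\omega_2=|\lambda_{j_0}-\lambda_{j_2}|=|\lambda_{j_1}-\lambda_{j_2}|=2$, again contradicting Lemma \ref{elem_lemma}. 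In the generic case, $\lambda_{j_1}$ and $\lambda_{j_2}$ are diametrically opposite, so together with $\lambda_{j_0}$ and $-\lambda_{j_0}$ they form a rectangle inscribed in $\Ss^1$ with sides $\omega_1,\omega_2$ satisfying $\omega_1^2+\omega_2^2=4$. Denoting by $\lambda_a,-\lambda_a$ the remaining diametric pair of stationary points, a pigeonhole argument on the four arcs of $\Ss^1$ cut off by the rectangle's vertices shows that one of $\lambda_a,-\lambda_a$ must lie at Euclidean distance strictly less than $\omega_2$ from some rectangle vertex; one then checks that the resulting pair avoids $(2,5)$ and both pairs attaining $\omega_1$, hence belongs to $P_2$, violating the minimality of $\omega_2$.

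For \eqref{other_bound}, I would observe that \eqref{no_coinc_1}, combined with the indexing convention $\lambda_{k+3}=-\lambda_k$, forces $\{j_0,j_1,j_2\}$ to contain no index pair of the form $(k,k+3)$: otherwise one would have $-\lambda_{j_a}=\lambda_{j_b}$ for some $a,b\in\{0,1,2\}$. Consequently $J_2$ is exactly $\{(j_k+3)\bmod 6:k=0,1,2\}$, and for any $i=(j_a+3)\bmod 6$ and $j=(j_b+3)\bmod 6$ in $J_2$ one has $|\lambda_i-\lambda_j|=|(-\lambda_{j_a})-(-\lambda_{j_b})|=|\lambda_{j_a}-\lambda_{j_b}|$, which is already bounded by $2\omega_2$ thanks to \eqref{triv_bound}. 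Thus the crux of the lemma is the pigeonhole step in the $\{k,l\}=\{1,2\}$ case of \eqref{no_coinc_1}, where the geometry of the inscribed rectangle must be balanced against the exclusion rules defining $P_2$.
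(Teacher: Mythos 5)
Your argument is correct, and parts (1) and (3) coincide with the paper's: the triangle inequality plus $\omega_1\le\omega_2$ for \eqref{triv_bound}, and the observation that \eqref{no_coinc_1} forces $\{\lambda_i:i\in J_2\}=\{-\lambda_{j}:j\in J_1\}$ for \eqref{other_bound}. Where you diverge is in the organization of \eqref{no_coinc_1}. The paper splits according to the three configurations of Lemma \ref{st_points_lemma} (Cases \eqref{Case1}--\eqref{Case3}); in Case \eqref{Case2} it argues through the explicit parametrization ($e^{i\varphi/2}=\pm e^{-i\varphi}$ would force $-u$ to be a vertex $18e^{2\pi i k/3}$, impossible), and in Case \eqref{Case3} it reduces to the single possible coincidence $-\lambda_{j_1}=\lambda_{j_2}$ and finds a third stationary point on the semicircle through $\lambda_{j_0}$ bounded by $\pm\lambda_{j_1}$ at distance $<\omega_2$ from $\lambda_{j_0}$. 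You instead split by $\omega_1=0$ versus $\omega_1>0$: the degenerate branch ($\lambda_{j_0}=\lambda_{j_1}$, hence $\omega_2=|\lambda_{j_1}-\lambda_{j_2}|=2$, contradicting Lemma \ref{elem_lemma}) absorbs the paper's Cases \eqref{Case1}--\eqref{Case2} without touching the parametrization, which is a genuine simplification; the generic branch replaces the semicircle by the inscribed rectangle with vertices $\pm\lambda_{j_0},\pm\lambda_{j_1}$ and its four arcs, which is the same geometric mechanism (a too-close pair violating the minimality defining $\omega_2$). Two small remarks: no pigeonhole is actually needed, since whichever arc contains $\lambda_a$, the distance from $\lambda_a$ to \emph{both} endpoints of that arc is strictly less than the arc's chord, hence $<\omega_2$; and your final sentence glosses the check that the pair you produce is admissible in $P_2$ --- a priori it could be the designated $\omega_1$-attaining pair $(i^m,j^m)$ or its antipode excluded in Definition \ref{omega2}. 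This is harmless: the two candidate pairs $(\lambda_a,e_1)$, $(\lambda_a,e_2)$ given by the two arc endpoints lie in different antipodal classes, so at most one can be excluded and the other yields the contradiction (the paper's own semicircle step glosses the analogous point), but it is worth a line in a written-out version.
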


\begin{proof}

%\subsection{Proof of Lemma \ref{cluster_lemma}}

Estimate (\ref{triv_bound}) is a consequence of the definition of $ \omega_1 $, $ \omega_2 $, the triangle inequality, and Lemmas \ref{elem_lemma}, \ref{aux_lemma}.

\medskip

Note that property \eqref{no_coinc_1} implies that the set $ \{ \lambda_k, \lambda_l, \lambda_m \} $ with $ \{ k, l, m \} = J_2 $ coincides with the set $ \{ - \lambda_{ j_0 }, - \lambda_{ j_1 }, - \lambda_{ j_2 } \} $, and thus, if \eqref{no_coinc_1} holds, then \eqref{other_bound} is just a consequence of \eqref{triv_bound} and of the invariance of the absolute value under the minus sign change. 

\medskip

Let us prove (\ref{no_coinc_1}). Assuming that $ -u \in \mathbb{U} $, we are in the setting of Cases 1, 2 and 3 described in \eqref{Case1}, \eqref{Case2} and \eqref{Case3}, respectively. 

\medskip

Assume \eqref{Case1}. In this case $ J_1 = \{ 0, 1, 2 \} $ with $ \lambda_0 = \lambda_1 = \lambda_2 $ (or $ J_1 = \{ 3, 4, 5 \} $ with $ \lambda_3 = \lambda_4 = \lambda_5 $) and $ \omega_1 = \omega_2 = 0 $.  Thus  \eqref{no_coinc_1} holds trivially. 

\medskip

Now assume \eqref{Case2}. Here we have  $J_1=\{0,1,2\}$ or $J_1=\{0,2,4\}$ with $ \lambda_0 = \lambda_2 = e^{ i \varphi / 2 } $, $ \lambda_1 = e^{ i \varphi } $, $ \lambda_4 = - e^{ i \varphi } $. (There are other choices for $J_1$, but they are symmetric to the considered cases.)  Therefore $J_2=\{3,4,5\}$ or $J_2=\{1,3,5\}$. Note also that $ \omega_1 = 0 $ in this case.

\medskip

If for some $ k \in \{ 0, 1, 2 \} $ the point $ - \lambda_{ j_k } $ coincides with one of $ \{ \lambda_{ j_0 }, \lambda_{ j_1 }, \lambda_{ j_2 } \} $, then it is only possible if $ e^{ i \varphi / 2 } = e^{ i \varphi } $ or $ e^{ i \varphi  /2 } = - e^{ i \varphi } $ (due to Lemma \ref{elem_lemma} stating that $ \omega_2 < 2 $). 

\medskip

In the first case $ \varphi = \frac{ 4 \pi k }{ 3 } $, in the second case $ \varphi = \frac{ 4 \pi k }{ 3 } + \frac{ 2 \pi }{ 3 } $, $ k = 0, 1, 2 $. In both cases we see that the corresponding value of parameter $ u $ is $ - u = 18 e^{ \frac{ 2 \pi i n }{ 3 } } $, $ n \in \mathbb{N} $, which cannot be possible in the case described by \eqref{Case2} (see Lemma \ref{st_points_lemma}).

\medskip

%From \eqref{subcase2a}-\eqref{subcase2b}, we have
%\begin{itemize}
%\item If $J_1=\{0,1,2\}$, $\omega_1=0=|\la_0-\la_2|$, $\omega_2=|\la_0-\la_1| = |e^{i\varphi/2} - e^{-i\varphi}|$, and
%\[
%|\la_3-\la_4| =\omega_2, \quad |\la_3-\la_5|=0, \quad |\la_4-\la_5| =\omega_2.
%\]
%In each case \eqref{other_bound} holds.
%\item  If now $J_1=\{0,2,4\}$, $\omega_1=0=|\la_0-\la_2|$, $\omega_2=|\la_0-\la_4| = |e^{i\varphi/2} + e^{-i\varphi}|$, and
%\be\label{KKK}
%|\la_1-\la_3| =  |\la_1-\la_5|=\omega_2, \quad |\la_3-\la_5| =0.
%\ee
%Once again, in each case \eqref{other_bound} holds.
%\end{itemize}

Now we deal with the general case, where \eqref{Case3} holds. 
Recall that $ \omega_ 1= | \lambda_{ j_0 } - \lambda_{ j_1 } | $, $ \omega_2 = | \lambda_{ j_0 } - \lambda_{ j_2 } | $. Due to Lemma \ref{elem_lemma}, if for some $ k \in \{ 0, 1, 2 \} $ the point $ - \lambda_{ j_k } $ coincides with one of $ \{ \lambda_{ j_0 }, \lambda_{ j_1 }, \lambda_{ j_2 } \} $, then it is only possible that $ - \lambda_{ j_1 } = - \lambda_{ j_2 } $. Points $ ( \lambda_{ j_0 }, -\lambda_{ j_0 }, \lambda_{ j_1 }, - \lambda_{ j_1 } ) $ are four different stationary points. There exists also another pair $ ( \lambda_k, -\lambda_k ) $ of stationary points, different from all the above four points. 

\medskip

Denote by $ \mathbb{S}^1_+ $ the semicircle defined by points $ \lambda_{ j_0 } $, $ \lambda_{ j_1 } $, $ \lambda_{ j_2 } = - \lambda_{ j_1 } $. One of the points $ ( \lambda_k, - \lambda_k ) $, say $ \lambda_k $ for definiteness, necessarily belongs to $ \mathbb{S}^1_+ $. Then we evidently have that 
\begin{equation*}
| \lambda_{ j_0 } - \lambda_k | < | \lambda_{ j_0 } - \lambda_{ j_2 } | = | \lambda_{ j_0 } + \lambda_{ j_1 } | = \omega_2,
\end{equation*} 
which contradicts the definition of $ \omega_2 $ (see also Fig. \ref{semicircle}).

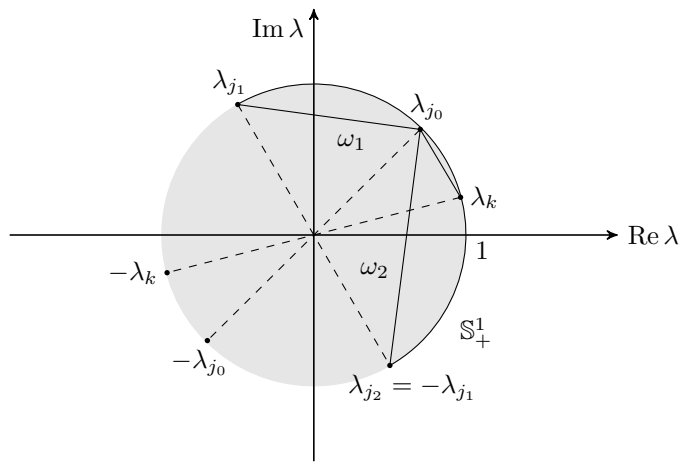
\begin{figure}[!h]
\begin{center}
\begin{tikzpicture}[
	>=stealth',
	axis/.style={semithick,->},
	coord/.style={dashed, semithick},
	yscale = 1,
	xscale = 1]
	\newcommand{\xmin}{-4};
	\newcommand{\xmax}{4};
	\newcommand{\ymin}{-3};
	\newcommand{\ymax}{3};
	\newcommand{\ta}{3};
	\newcommand{\fsp}{0.2};
	\filldraw[color=light-gray1] (0,0) circle (2);
	%\filldraw[color=light-gray2] (0,0) circle (1);
	\draw [axis] (\xmin,0) -- (\xmax,0) node [right] {$\re \la$};
	\draw [axis] (0,\ymin) -- (0,\ymax) node [below left] {$\ima \la$};
	\draw (2,-0.2) node [right] {$1$};
	\draw [dashed] (1.4,1.4) -- (-1.4,-1.4);
	\draw [dashed] (-1,1.73) -- (1,-1.73);
	\draw [dashed] (1.93,0.5) -- (-1.93,-0.5);
	\draw (1.4,1.4) -- (-1,1.73);
	\draw (1.4,1.4) -- (1,-1.73);
	\draw (1.4,1.4) -- (1.93,0.5);
	\draw (1.5,1.4) node [above] {$\la_{j_0}$};
	\draw (-1.5,-1.4) node [below] {$-\la_{j_0}$};
	\draw (-1.1,1.73) node [above] {$\la_{j_1}$};
	\draw (1.3,-1.73) node [below] {$\la_{j_2} = -\la_{j_1}$};
	\draw (1.93,0.5) node [right] {$\la_{k}$};
	\draw (-1.93,-0.5) node [left] {$-\la_{k}$};
	\draw (0.5,1) node [above] {$\omega_1$};
	\draw (0.8,-0.7) node [above] {$\omega_2$};
	\draw (1.8,-1.3) node [right] {$\mathbb{S}^1_+$};
	\fill (1.4,1.4)  circle[radius=1pt];
	\fill (-1.4,-1.4)  circle[radius=1pt];
	\fill (-1,1.73)  circle[radius=1pt];
	\fill (1,-1.73)  circle[radius=1pt];
	\fill (1.93, 0.5)  circle[radius=1pt];
	\fill (-1.93,-0.5)  circle[radius=1pt];
	\draw (1,-1.73) arc (-60:120:2);
	%\draw (1,0.25) arc (15:55:0.8);
\end{tikzpicture}
\end{center}
\caption{The configuration of roots in the case \eqref{Case3}. It is impossible that $ -\lambda_{j_1} = \lambda_{j_2} $, since in that case there is always another stationary point $ \lambda_k $ on the semicircle $ \mathbb{S}^1_+ $ defined by the points $ \lambda_{j_0}, \lambda_{j_1}, \lambda_{j_2} = - \lambda_{j_1} $ such that the distance from $ \lambda_{ j_0 } $ to $ \lambda_k $ is smaller than the distance from $ \lambda_{j_0} $ to $ \lambda_{j_2} $.}\label{semicircle}
\end{figure}

%In order to prove \eqref{other_bound}, using the symmetry of the zeroes and Lemma \ref{aux_lemma}, we are lead to estimate
%\[
%| \la_{j_1} -\la_{j_2}|,
%\]
%which is bounded by
%\[
%| \la_{j_0} -\la_{j_1}|+| \la_{j_0}-\la_{j_2}| \leq \omega_1+\omega_2 \leq 2\omega_2.
%\]
\end{proof}

\begin{lemma}\label{cluster_lemma_1}
Consider, as in the previous lemma, $ J_1 = \{ j_0, j_1, j_2 \} $ with $ j_0, j_1, j_2  $ exactly as in Lemma \ref{aux_lemma}, and $ J_2 = N_5 \backslash J_1 $.  If now $ -u \in \mathbb{C} \backslash \mathbb{U} $ and for all $j\in N_5$ we have that $ \lambda_j \in B_K( 0 ) $, with $ K = \sqrt{ \sqrt{2} + 1 } $, then
\eqref{no_coinc_1} and  (\ref{other_bound}) hold.

\end{lemma}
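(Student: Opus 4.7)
The plan is to exploit the explicit form of Case 4 in Lemma \ref{st_points_lemma} (which applies since $-u \in \Com \setminus \mathbb{U}$) and to use the bound $r := 1+\omega \leq K$ to rule out the three \emph{avoidance} configurations in which no valid choice of $(j_0,j_1,j_2)$ is antipodal-free. After rotating by $e^{-i\varphi/2}$ (which preserves all mutual distances) one may assume
\[
\lambda_0 = r,\qquad \lambda_2 = \tfrac{1}{r},\qquad \lambda_1 = e^{i\varphi_0},\qquad \lambda_{j+3} = -\lambda_j\ \ (j=0,1,2),
\]
for some $r > 1$ and $\varphi_0 \in \R$; the hypothesis $\lambda_j \in B_K(0)$ becomes $r^2 \leq K^2 = \sqrt{2}+1 < 3$. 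Group the six stationary points into the three antipodal pairs $P = \{0,3\}$, $Q = \{1,4\}$, $R = \{2,5\}$; then \eqref{no_coinc_1} is equivalent to saying that $J_1$ meets each of $P$, $Q$, $R$ in exactly one index.

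The core input is a short list of elementary identities: $|\lambda_0 - \lambda_2|^2 = r^2 - 2 + 1/r^2$, $|\lambda_1 \mp \lambda_2|^2 = 1 + 1/r^2 \mp 2\cos\varphi_0/r$, and $|\lambda_0 - \lambda_4|^2 = |\lambda_1 - \lambda_3|^2 = 1 + r^2 + 2r\cos\varphi_0$. Under $r^2 < 3$ these yield the key inequality
\begin{equation}\label{keyproposal}
|\lambda_0 - \lambda_2|^2 \;<\; \max\bigl\{|\lambda_1 - \lambda_2|^2,\, |\lambda_1 - \lambda_5|^2\bigr\},
\end{equation}
and, for every $r > 1$, the bounds $|\lambda_0 - \lambda_4|, |\lambda_1 - \lambda_3| > |\lambda_0 - \lambda_2|$, $r + 1/r > 2$, and $2r > 2$; the first of these requires only the positivity of $2r^3 - 3r^2 + 1$ on $r \geq 1$.

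We then run through the three avoidance scenarios. Should $J_1$ avoid $Q$, the only admissible distance strictly less than $2$ inside $\{0,2,3,5\}$ is $r - 1/r$, realised by $(0,2)$ and $(3,5)$; both get excluded from $P_2$ once one of them realises $\omega_1$, leaving only distances $\geq 2$, which contradicts $\omega_2 < 2$. Should $J_1$ avoid $R$, then $(0,2) \in P_2$ yields $\omega_2 \leq |\lambda_0 - \lambda_2|$, yet the only candidates for $\omega_2$ in this scenario, namely $|\lambda_0 - \lambda_4|$ and $|\lambda_1 - \lambda_3|$, strictly exceed $|\lambda_0 - \lambda_2|$; contradiction. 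Should $J_1$ avoid $P$, again $(0,2) \in P_2$ forces $\omega_2 \leq |\lambda_0 - \lambda_2|$; both $\omega_1, \omega_2$ would lie in $\{|\lambda_1 - \lambda_2|,\,|\lambda_1 - \lambda_5|\}$, but the two pairs realising the smaller of these two values are exactly $(i^m,j^m)$ and $(k^m,l^m)$ and are both excluded from $P_2$, so $\omega_2$ must equal the \emph{larger} of the two, which by \eqref{keyproposal} strictly exceeds $|\lambda_0 - \lambda_2|$; contradiction.

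Having excluded all three avoidance scenarios, Lemma \ref{aux_lemma} furnishes $(j_0,j_1,j_2)$ with $J_1$ meeting each of $P$, $Q$, $R$, which is precisely \eqref{no_coinc_1}. For \eqref{other_bound}: \eqref{no_coinc_1} forces $J_2$ to be exactly the set of indices antipodal to those of $J_1$, so for $i,j \in J_2$ we have $\lambda_i = -\lambda_{m_i}$ and $\lambda_j = -\lambda_{m_j}$ with $m_i, m_j \in J_1$, and $|\lambda_i - \lambda_j| = |\lambda_{m_i} - \lambda_{m_j}| \leq 2\omega_2$ by \eqref{triv_bound} in Lemma \ref{cluster_lemma}. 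The main obstacle is the $P$-avoidance case, and it is precisely there that the specific value $K = \sqrt{\sqrt{2}+1}$ enters via $K^2 < 3$; the $Q$- and $R$-avoidance cases only use $r > 1$, and the threshold $r < \sqrt{3}$ is sharp for the $P$-avoidance argument, so $K$ cannot be enlarged beyond $\sqrt{3}$ with this strategy.
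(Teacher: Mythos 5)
Your overall strategy is sound and close in spirit to the paper's (reduce to the rotated Case 4 frame \eqref{Case4pq}, argue by contradiction through the definitions of $\omega_1,\omega_2$, with the hypothesis on $K$ entering as $r^2\le\sqrt2+1<3$), and your key inequality for the $P$-avoidance case as well as the $Q$-avoidance case are correct. But the $R$-avoidance step contains a genuine error: the claimed bound $|\lambda_0-\lambda_4|=|\lambda_1-\lambda_3|>|\lambda_0-\lambda_2|$ ``for every $r>1$'' is false. Indeed $|\lambda_0-\lambda_4|^2-|\lambda_0-\lambda_2|^2=3+2r\cos\varphi_0-r^{-2}$, which is negative as soon as $\cos\varphi_0<(r^{-2}-3)/(2r)$; at $\varphi_0=\pi$ one has $|\lambda_0-\lambda_4|=r-1<r-r^{-1}=|\lambda_0-\lambda_2|$. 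Your own cubic certifies this failure rather than the bound: $2r^3-3r^2+1=r^2\bigl[(r-r^{-1})^2-(r-1)^2\bigr]\ge0$, so its positivity says precisely that at $\varphi_0=\pi$ the inequality goes the wrong way. Moreover, when $\cos\varphi_0<0$ the minimum inside $\{0,1,3,4\}$ is attained by $(0,4),(1,3)$, not $(0,1),(3,4)$, so your list of candidates for $\omega_2$ in that scenario is also incomplete. The branch is repairable: whatever the sign of $\cos\varphi_0$, within $\{0,1,3,4\}$ the value $\omega_2$ must be the \emph{larger} of $\sqrt{1+r^2\pm2r\cos\varphi_0}$, hence $\omega_2\ge\sqrt{1+r^2}>r-r^{-1}=|\lambda_0-\lambda_2|$, which restores the contradiction with $\omega_2\le|\lambda_0-\lambda_2|$ and in fact needs no restriction on $r$ at all.

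Two further points. First, your repeated use of ``$(0,2)\in P_2$'' and of ``the excluded pairs are exactly the in-scenario minimal ones'' tacitly assumes that the pair $(i^m,j^m)$ fixed in Definitions \ref{omega1}--\ref{omega2} lies inside the scenario under consideration; when $\omega_1$ is attained by several pairs (ties such as $r-r^{-1}=\sqrt{1+r^{-2}-2\cos\varphi_0/r}$) this deserves a sentence. Second, for comparison: the paper proceeds differently, first proving (Claim \ref{Claim00}) that any coincidence forces $-\lambda_{j_1}=\lambda_{j_2}$, hence $\omega_1=|\lambda_{j_0}-\lambda_{j_1}|$ and $\omega_2=|\lambda_{j_0}+\lambda_{j_1}|$, and then showing that each of the three maxima $d_{01},d_{12},d_{02}$ strictly exceeds $\max(d_1,d_2)\ge\omega_2$ while one of them must equal $\omega_2$; your case split over which antipodal pair $J_1$ avoids is a legitimate alternative organization (and, once the $R$-branch is fixed as above, it even suggests the constant could be pushed toward $r<\sqrt3$), but as written the $R$-branch does not stand.
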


\begin{proof}
Let us assume that $ J_2$ is given by the set $J_2 = \{ k_0, k_1, k_2 \} $. Note that by the symmetry $ \lambda \mapsto - \lambda $ of the roots, the property \eqref{no_coinc_1} can be restated as follows: 
\begin{equation}
\label{no_coinc_2}
\forall i \in\{ 0, 1, 2 \} \text{ the point } - \lambda_{ k_i } \text{ does not coincide with any of } \lambda_{ k_0 }, \lambda_{ k_1 }, \lambda_{ k_2 }.
\end{equation}
If \eqref{no_coinc_2} holds, then it means that the set $ \{ - \lambda_{ k_0 }, - \lambda_{ k_1 }, - \lambda_{ k_2 }  \} $ coincides with the set $ \{ \lambda_{ j_0 }, \lambda_{ j_1 }, \lambda_{ j_2 }  \} $ and thus, in this case, (\ref{other_bound}) is just a consequence of (\ref{triv_bound}) and the invariance of the absolute value under the minus sign change.

\medskip

Suppose now the general case. Although $ - \lambda_{ k_i } $ may coincide with one of the roots $ \{ \lambda_{ k_0 }, \lambda_{ k_1 }, \lambda_{ k_2 } \} $ if $\omega$ is large enough (see Fig. \ref{weird_case}), we will see that this does not happen for $\omega<K-1$.

\begin{figure}[!h]
\begin{center}
\begin{tikzpicture}[
	>=stealth',
	axis/.style={semithick,->},
	coord/.style={dashed, semithick},
	yscale = 1,
	xscale = 1]
	\newcommand{\xmin}{-5};
	\newcommand{\xmax}{5};
	\newcommand{\ymin}{-3};
	\newcommand{\ymax}{3};
	\newcommand{\ta}{3};
	\newcommand{\fsp}{0.2};
	\filldraw[color=light-gray1] (0,0) circle (2);
	%\filldraw[color=light-gray2] (0,0) circle (1);
	\draw [axis] (\xmin,0) -- (\xmax,0) node [right] {$\re \la$};
	\draw [axis] (0,\ymin) -- (0,\ymax) node [below left] {$\ima \la$};
	%\filldraw[color=light-gray3] (3.5,1.5) circle (0.2);
	%\filldraw[color=light-gray3] (-3.5,-1.5) circle (0.2);
	%\filldraw[color=light-gray3] (-0.72,0.72) circle (0.2);
	%\filldraw[color=light-gray3] (0.72,-0.72) circle (0.2);
	%\filldraw[color=light-gray2] (0.1,2) rectangle (9,5);
	%\draw [thick,->] (8,1) -- (9,1);
	%\draw [thick,->] (5,1) -- (5.6,1);
	%\draw [thick,-] (-0.2,2) -- (9,2);
	%\filldraw[color=light-gray1] (1,1) circle (0.6);
	%\draw [thick,->] (1,1) -- (1.4,1);
	%\draw [thick,->] (2,4) -- (3.5,4);
	%\draw [<->] (-0.2,1) -- (-0.2,4);
	%\draw (-0.9,-0.2) node [left] {$1$};
	\draw (2.2,-0.2) node [left] {$1$};
	\draw [dashed] (1.4,1.4) -- (-1.4,-1.4);
	%\draw [dashed] (-1.4,1.4) -- (1.4,-1.4);
	%\draw [dashed] (1.9,0.5) -- (-1.9,-0.5);
%	\draw [dashed] (6,0) -- (6,5);
	%\draw (3.5,1.6) node [above] {$B(\la_0,\ve)$};
	%\draw (-3.5,-2.4) node [above] {$B(-\la_0,\ve)$};
	\draw (0.5,0) node [above] {$\la_2$};
	\draw (-0.5, 0) node [below] {$\la_5$};
	\draw (4.5,0) node [above] {$\la_0$};
	\draw (-4.5,0) node [below] {$\la_3$};
	\draw (1.5,1.4) node [above] {$\la_1 =e^{i\varphi_0}$};
	\draw (-1.5,-1.4) node [below] {$\la_4$};
	%\draw (-2.4,0) node [below] {$\omega$};
	\draw (0.5,0) node [below] {\small$\frac 1{1+\omega}$};
	\draw (4.5,0) node [below] {\small $1+\omega$};
	%\draw [dashed] (-2,2) -- (0.4,0.2);
	%\draw [dashed] (-2,2) -- (-0.4,-0.2);
	%\draw [dashed] (2,2) -- (-0.72,0.72);
	%\draw [dashed] (2,2) -- (0.72,-0.72);
	\fill (1.4,1.4)  circle[radius=1pt];
	\fill (-1.4,-1.4)  circle[radius=1pt];
	\fill (-0.5,0)  circle[radius=1pt];
	\fill (0.5,0)  circle[radius=1pt];
	\fill (4.5, 0)  circle[radius=1pt];
	\fill (-4.5,-0)  circle[radius=1pt];
	%\fill (3.5,1.5)  circle[radius=1pt];
	%\fill (-3.5,-1.5)  circle[radius=1pt];
	%\draw [dashed] (5,1) -- (5,2.5);
	%\draw [<->] (5,2.5) -- (8,2.5);
	%\draw (6.5,2.5) node [above] {$L$};
	%\draw [dashed] (8,1) -- (8,2.5);
	%\draw (0.5,0.5) arc (45:135:0.7);
	%\draw (1,0.25) arc (15:55:0.8);
\end{tikzpicture}
\end{center}
\caption{The configuration of roots in the case where $\omega$ is very large. In this case $\omega_1 = |\la_1-\la_2|$ and $\omega_2= |\la_1-\la_5|$, so that we can choose $J_1=\{1,2,5\}$ and $J_2=\{0,3,4\}$. In particular, $-\la_3=\la_0$, meaning that the opposite point to $\la_3$ coincide with $\la_0$, with $0\in J_2$. Note also that $-\la_{5}=\la_2$, with both $2$ and $5$ indexes in $J_1$. This phenomenon is avoided if we choose $\omega$ not too large.}\label{weird_case}
\end{figure}
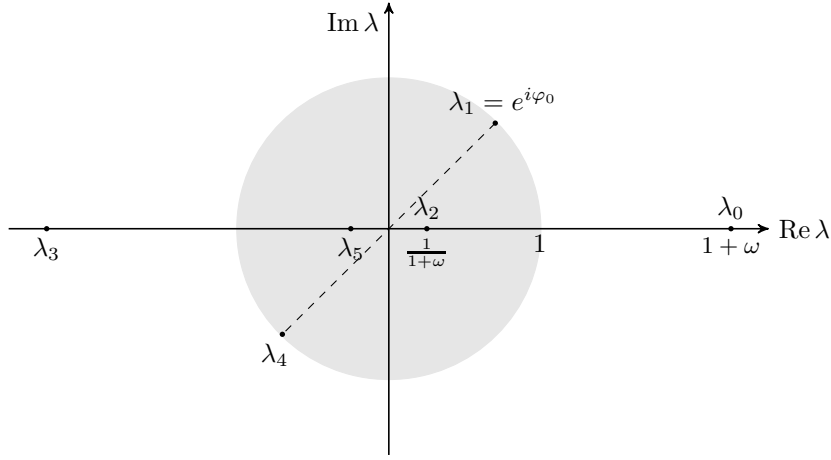

\medskip

Suppose now that 
\begin{center}
($\star$) \quad There exists some $ i \in \{ 0, 1, 2 \} $ such that $ - \lambda_{ k_i } $ coincides with one of the points $ \{ \lambda_{ k_0 }, \lambda_{ k_1 }, \lambda_{ k_2 } \} $.
\end{center}
(This is the situation of Fig. \ref{2cases1}, for example.) Recall that from \eqref{Case4p} we can assume that the points are of the form
\be\label{Case4pq}
(\la_0,\la_1,\la_2 ~ ; ~ \la_3,\la_4,\la_5) = \Big( (1+\omega), e^{i\varphi_0}, \frac{1}{1+\omega}  ~ ; ~ -(1+\omega) ,-e^{i\varphi_0},-\frac{1}{1+\omega} \Big),
\ee
for some $\varphi_0\in \R$, see Fig. \ref{4b}.  Also, with no loss of generality, we may assume $\varphi_0 \in [-\frac\pi2,\frac\pi2]$.

\medskip

In view of ($\star$), by the symmetry $\la \mapsto -\la$ of the roots there exists also $   r \in \{ 0, 1, 2 \} $ such that $- \lambda_{ j_r } $ coincides with one of the points $ \{ \lambda_{ j_0 }, \lambda_{ j_1 }, \lambda_{ j_2 } \} $ (In Fig. \ref{2cases1} for example, $j_0=1$, $j_1=2$ and $j_2=5$, and $-\la_{j_2} = -\la_5 = \la_2 = \la_{j_1}$). 
%Note that from Lemma \ref{aux_lemma} we have
%\[
%\omega_1 = |\la_{j_0} -\la_{j_1}|, \quad \omega_2 = |\la_{j_0} -\la_{j_2}|.
%\]
%If $r=0$, then $-\la_{j_0} $ is either $ \lambda_{ j_1 }$ or  $\lambda_{ j_2 }$. In the first case we have
%\[
%\omega_1  = 2|\la_{j_1}|, \quad \omega_2 = |\la_{j_1} +\la_{j_2}|
%\]
\begin{Cl}\label{Claim00}
We can only have that $ - \lambda_{ j_1 } = \lambda_{ j_2 } $ (or, equivalently, $ - \lambda_{ j_2 } = \lambda_{ j_1 } $). In consequence, 
\begin{equation}
\label{coinc_relation}
\omega_1 = | \lambda_{ j_0 } - \lambda_{ j_1 } |, \quad \omega_2 = | \lambda_{ j_0 } + \lambda_{ j_1 } |. 
\end{equation} 
\end{Cl}
\begin{proof}
Assume that $r=0$. If $-\la_{j_0} = \la_{j_1}$, then from Lemma \ref{aux_lemma} one has $\omega_1 =|\la_{j_0} + \la_{j_0}| =2$, a contradiction of Lemma \ref{elem_lemma}. If $-\la_{j_0} = \la_{j_2}$, using Lemma \ref{aux_lemma} we also obtain a contradiction to Lemma \ref{elem_lemma}. In conclusion, $-\la_{j_1} = \la_{j_2}$.
\end{proof}

%
%Suppose now that $ -u \in \mathbb{U} $. Then we claim the following.
%
%\begin{Cl}
% $ \omega_2 < \sqrt{2} $.
%\end{Cl} 
%
%\begin{proof}
%In this case all stationary points lie on the unit circle, leading to 6 (not necessarily different) $\la_j$, $j=0,\ldots, 5$. In the worst case (see \eqref{Case2}), four different points are arranged in the unit circle, therefore the minimum angle of separation must be less than 90 degrees. Therefore, using the Pythagorean theorem, it is not possible that the second minimal distance among them is greater or equal than $ \sqrt{2} $.
%\end{proof}
Recall that $ -u \in \mathbb{C} \backslash \mathbb{U} $ and for all $j\in N_5$ we have that $ \lambda_j \in B_K( 0 ) $, with $ K = \sqrt{ \sqrt{2} + 1 } $. Then, for $ \omega $ given by Lemma \ref{st_points_lemma}, item 4, we have
\be\label{w_K}
\begin{aligned}
 \omega & < K -1=\sqrt{ \sqrt{2} + 1 } - 1>0, \\
\omega^2 +2\omega & <  ( \sqrt{2} + 1 -2\sqrt{ \sqrt{2} + 1 } +1 )+2\sqrt{ \sqrt{2} + 1 } -2 =\sqrt{2}.
\end{aligned}
\ee
Let 
\be\label{d1_d2}
 d_1 := | \lambda_0 - \lambda_2 | , \quad \hbox{and} \quad d_2 := \min( | \lambda_2 - \lambda_1 |, | \lambda_2 + \lambda_1 | ).
\ee
We also define
\be\label{d12_d01_d02}
\begin{aligned}
& d_{ 12 } := \max( | \lambda_2 - \lambda_1 |, | \lambda_2 + \lambda_1 | ) , \\
& d_{ 01 } := \max( | \lambda_1 - \lambda_0 |, | \lambda_1 + \lambda_0 |  ) , \\
& d_{ 02 } := \max(  | \lambda_0 - \lambda_2 |, | \lambda_0 + \lambda_2 |  ) .
\end{aligned}
\ee
We prove now two simple claims.

\begin{Cl}\label{Claim0}
One has
\[
\max( d_1, d_2 ) \geq \omega_2.
\]

\begin{proof}
We have $d_2 \geq \omega_1$, $d_1 \geq \omega_1$, so that  $\max( d_1, d_2 ) \geq \omega_2$, since $\omega_2$ is the second minimal distance (maybe equal to $\omega_1$) between pairs of points which are not of the form $(\la_2,-\la_2)$. %If $d_1\neq d_2$, then $\max(d_1,d_2) > \min (d_1,d_2) \geq \omega_1$. Therefore, $\max(d_1,d_2)\geq \omega_2$. If now $d_1=d_2$, 
\end{proof}

\end{Cl}
\begin{Cl}\label{Claim1}
One of the values $d_{01}, d_{12}, d_{02}$ above {\bf must coincide} with $\omega_2$. 
\end{Cl}

\begin{proof}
This is a consequence of \eqref{coinc_relation} and the definitions in \eqref{d12_d01_d02}.
\end{proof}

Clearly
\[
d_{ 12 }  ~{\color{black} \geq  }~ d_2.
\]
However, we can have a better estimate: 
\be\label{d12_d2}
d_{12}>d_2.
\ee
Indeed, if $d_{12} =d_2$, then $| \lambda_2 - \lambda_1 | =| \lambda_2 + \lambda_1 |$, which implies that 
\be\label{square0}
\re (\la_1 \overline{\la}_2) =0.
\ee
Since $\la_2$ is real-valued and nonzero, we get $\re (\la_1) =0$, which implies that $\la_1$ is either $\pm i$, see Fig. \ref{square}.
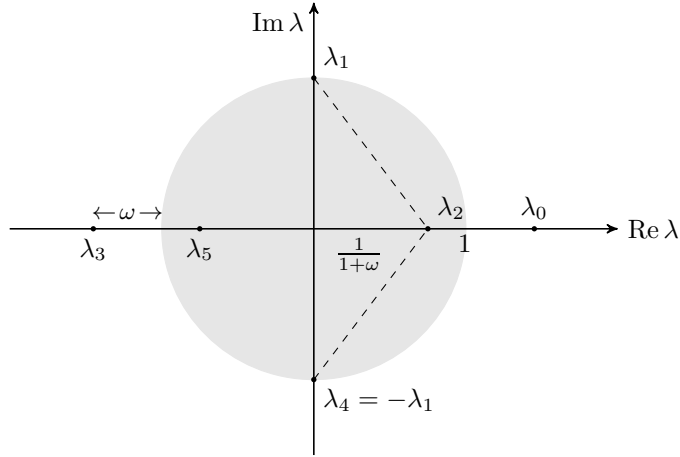
\begin{figure}[!h]
\begin{center}
\begin{tikzpicture}[
	>=stealth',
	axis/.style={semithick,->},
	coord/.style={dashed, semithick},
	yscale = 1,
	xscale = 1]
	\newcommand{\xmin}{-4};
	\newcommand{\xmax}{4};
	\newcommand{\ymin}{-3};
	\newcommand{\ymax}{3};
	\newcommand{\ta}{3};
	\newcommand{\fsp}{0.2};
	\filldraw[color=light-gray1] (0,0) circle (2);
	%\filldraw[color=light-gray2] (0,0) circle (1);
	\draw [axis] (\xmin,0) -- (\xmax,0) node [right] {$\re \la$};
	\draw [axis] (0,\ymin) -- (0,\ymax) node [below left] {$\ima \la$};
	%\filldraw[color=light-gray3] (3.5,1.5) circle (0.2);
	%\filldraw[color=light-gray3] (-3.5,-1.5) circle (0.2);
	%\filldraw[color=light-gray3] (-0.72,0.72) circle (0.2);
	%\filldraw[color=light-gray3] (0.72,-0.72) circle (0.2);
	%\filldraw[color=light-gray2] (0.1,2) rectangle (9,5);
	%\draw [thick,->] (8,1) -- (9,1);
	%\draw [thick,->] (5,1) -- (5.6,1);
	%\draw [thick,-] (-0.2,2) -- (9,2);
	%\filldraw[color=light-gray1] (1,1) circle (0.6);
	%\draw [thick,->] (1,1) -- (1.4,1);
	%\draw [thick,->] (2,4) -- (3.5,4);
	%\draw [<->] (-0.2,1) -- (-0.2,4);
	%\draw (-0.9,-0.2) node [left] {$1$};
	\draw (2.2,-0.2) node [left] {$1$};
	\draw [dashed] (0,2) -- (1.5,0);
	\draw [dashed] (0,-2) -- (1.5,0);
	%\draw [dashed] (1.9,0.5) -- (-1.9,-0.5);
%	\draw [dashed] (6,0) -- (6,5);
	%\draw (3.5,1.6) node [above] {$B(\la_0,\ve)$};
	%\draw (-3.5,-2.4) node [above] {$B(-\la_0,\ve)$};
	\draw (1.5,0) node [above right] {$\la_2$};
	\draw (-1.5, 0) node [below] {$\la_5$};
	\draw (2.9,0) node [above] {$\la_0$};
	\draw (-2.9,0) node [below] {$\la_3$};
	\draw (0,2) node [above right] {$\la_1$};
	\draw (0,-2) node [below right] {$\la_4 =-\la_1$};
	\draw (-2.45,0) node [above] {\small $\leftarrow\!\omega\!\rightarrow$};
	\draw (0.6,0) node [below] {$\frac1{1+\omega}$};
	%\draw [dashed] (-2,2) -- (0.4,0.2);
	%\draw [dashed] (-2,2) -- (-0.4,-0.2);
	%\draw [dashed] (2,2) -- (-0.72,0.72);
	%\draw [dashed] (2,2) -- (0.72,-0.72);
	\fill (0,2)  circle[radius=1pt];
	\fill (0,-2)  circle[radius=1pt];
	\fill (-1.5,0)  circle[radius=1pt];
	\fill (1.5,0)  circle[radius=1pt];
	\fill (2.9, 0)  circle[radius=1pt];
	\fill (-2.9,-0)  circle[radius=1pt];
	%\fill (3.5,1.5)  circle[radius=1pt];
	%\fill (-3.5,-1.5)  circle[radius=1pt];
	%\draw [dashed] (5,1) -- (5,2.5);
	%\draw [<->] (5,2.5) -- (8,2.5);
	%\draw (6.5,2.5) node [above] {$L$};
	%\draw [dashed] (8,1) -- (8,2.5);
	%\draw (0.5,0.5) arc (45:135:0.7);
	%\draw (1,0.25) arc (15:55:0.8);
\end{tikzpicture}
\end{center}
\caption{The roots described in the case where \eqref{square0} holds.}\label{square}
\end{figure}

In this case,
\[
| \lambda_0 - \lambda_2 | = 1+\omega -\frac1{1+\omega},  %,  \quad  | \lambda_0 + \lambda_2 | = 1+\omega + \frac1{1+\omega},
\]
and
\be\label{la_1la_2}
|\la_1-\la_2| = \sqrt{1+ \frac1{(1+\omega)^2}} = |\la_1+ \la_2| .
\ee
However, from \eqref{w_K},
\be\label{Aux_11}
1 + \omega - \frac{ 1 }{ 1 + \omega } = \frac{ \omega^2 + 2 \omega }{ 1 + \omega } = \frac{ \omega( \omega + 2 ) }{ 1 + \omega }  < \frac{ \sqrt{2} }{ 1 + \omega },
\ee
but on the other hand, using \eqref{la_1la_2} and the fact that $\omega >0$,
\be\label{Aux_22}
|\la_1-\la_2|  = \sqrt{ 1 + \frac{ 1 }{ ( 1 + \omega )^2 } } = \frac{\sqrt{1+(1+\omega )^2}}{1+\omega}> \frac{ \sqrt{2} }{ 1 + \omega },
\ee
which implies that 
\[
| \lambda_0 - \lambda_2 | < | \lambda_1- \lambda_2 | ,
\]
but also (recall that both $\omega_1$ and $\omega_2$ do not take into account the difference $|\la_2-\la_5| =\frac 2{1+\omega}$),
\[
\omega_1=| \lambda_0 - \lambda_2 | , \qquad  | \lambda_1- \lambda_2 | =\omega_2<2.
\]
Consequently, $ J_1 = \{ 0, 1, 2 \} $, $J_2 =\{3,4,5\}$, and $\{-\la_3,-\la_4,-\la_5\} = \{\la_0,\la_1,\la_2 \} = J_1$, so they do not coincide with  $\{\la_3,\la_4,\la_5\}$. This is a contradiction to our main assumption, and \eqref{d12_d2} holds.

Additionally, from \eqref{Aux_11} and \eqref{Aux_22},
\[
d_1  < \frac{ \sqrt{2} }{ 1 + \omega }, \quad d_{ 12 }  > d_1.
\]
From this point and \eqref{d12_d2} we conclude that 
\[
d_{12} >\max(d_1,d_2).
\]

\medskip

Now we claim that 
\[
 d_{ 01 } ~ {\color{black}>}~ \max( d_1, d_2 ).
\]

Indeed, this fact follows from the following chains of inequalities:
\begin{gather*}
d_{01} \geq \sqrt{ 1 + ( 1 + \omega )^2 } > 1 + \omega > 1 + \omega - \frac{ 1 }{ 1 + \omega } = d_1, \\
d_{01} \geq \sqrt{ 1 + ( 1 + \omega )^2 } > \sqrt{2} > d_2
\end{gather*}
%In order to prove this fact, first note that it is clear that 
%\[
%d_{01} \geq d_1,
%\]
%and moreover, we cannot have equality because, after a simple geometric analysis, $d_{01} = d_1$ would imply that $d_1$ is at least equal or greater than 
%\[
%\sqrt{1 + (1+\omega)^2},  \qquad \hbox{(equivalent to the case where $\la_1$ is on the pure imaginary axis),}
%\]
%which is impossible, since $d_1 = 1+\omega - \frac1{1+\omega} <1+\omega$. Therefore, $d_{01} >d_1$. On the other hand,
%\[
%d_{01} > d_2,
%\]
%by a similar analysis.
Thus, we obtain that
\be\label{d01d1d2}
 d_{ 01 } ~ {\color{black}>}~ \max( d_1, d_2 ).
\ee

A similar computation as in the previous steps, leads to the following result. We have
\[
%\begin{aligned}
% | \lambda_2 - \lambda_1 | = \left| 1- \frac{e^{3i\varphi/2}}{1+w} \right|, & \quad  | \lambda_2 + \lambda_1 | =\left| 1+ \frac{e^{3i\varphi/2}}{1+w} \right|,\\
| \lambda_0 - \lambda_2 | = 1+w -\frac1{1+w} ,  \quad  | \lambda_0 + \lambda_2 | = 1+w + \frac1{1+w}, 
%\end{aligned}
\]
so that 
\be\label{d02_d1}
d_{ 02 }= | \lambda_0 + \lambda_2 | > d_1.
\ee

On the other hand,
\[
\begin{aligned}
d_{ 02 } & = 1 + \omega + \frac{ 1 }{ 1 + \omega } \\
& = ( 1 + \omega ) \left( 1 + \frac{ 1 }{ ( 1 + \omega )^2 } \right) \\
& > \sqrt{ 1 + \frac{ 1 }{ ( 1 + \omega )^2 } }.
\end{aligned}
\]
Now we use the fact that $d_2$ in \eqref{d1_d2} is always bounded above by $\sqrt{ 1 + \frac{ 1 }{ ( 1 + \omega )^2 } }$ (the case where $\la_1=-\la_4 =e^{i\pi/2}$) to conclude that 
\[
d_2 < d_{02}.
\]
Due to \eqref{d02_d1}, we conclude that
\[
d_{02}>\max(d_1,d_2).
\]
Thus we have that $d_{12}, d_{01}$ and $d_{02}$, defined in \eqref{d12_d01_d02}, satisfy the inequalities
\[
d_{ 12 } > \max( d_1, d_2 ),\quad d_{ 01 } > \max( d_1, d_2 ), \quad \hbox{and} \quad d_{ 02 } > \max( d_1, d_2 ),
\]
However, from Claims \ref{Claim0} and \ref{Claim1} one of the values $d_{01}, d_{12}, d_{02}$ must coincide with $\omega_2$, which is $\max( d_1, d_2 )$, a contradiction. 
\end{proof}

\begin{cor}
\label{cor_coinc}
If one of the following holds:
\begin{enumerate}
\item $ - u \in \mathbb{U} $,
\item $ - u \in \mathbb{C} \backslash \mathbb{U} $ and $ \lambda_j \in B_K( 0 ) $ with $ K = \sqrt{ \sqrt{2} + 1 } $ for all $ j \in N_5 $, 
\end{enumerate}
then $ \forall i \in J_2 $ the point $ - \lambda_i $ coincides with a point $ \lambda_j $, where $ j \in J_1  $.
\end{cor}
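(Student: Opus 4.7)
The plan is to extract the corollary as a direct counting consequence of the non-coincidence property \eqref{no_coinc_1} established in Lemmas \ref{cluster_lemma} and \ref{cluster_lemma_1}, together with the $\lambda \mapsto -\lambda$ symmetry of the six stationary points $\{\lambda_i\}_{i \in N_5}$.

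First, recall that by construction $\lambda_{j+3} = -\lambda_j$ for $j = 0, 1, 2$, so the full set of stationary points splits as
\[
\{\lambda_i : i \in N_5\} = \{\lambda_{j_0}, \lambda_{j_1}, \lambda_{j_2}\} \cup \{-\lambda_{j_0}, -\lambda_{j_1}, -\lambda_{j_2}\},
\]
and each of these two subsets has exactly three elements (the second one is just a relabelling of the first under $\lambda \mapsto -\lambda$). Under hypothesis (1), Lemma \ref{cluster_lemma} applies directly and gives \eqref{no_coinc_1}; under hypothesis (2), the same conclusion is provided by Lemma \ref{cluster_lemma_1}. In both cases, the two triples $\{\lambda_{j_0}, \lambda_{j_1}, \lambda_{j_2}\}$ and $\{-\lambda_{j_0}, -\lambda_{j_1}, -\lambda_{j_2}\}$ are therefore disjoint as sets.

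Next, since $|J_1| = 3$, we have $|J_2| = 3$, and by definition $\{\lambda_i : i \in J_1\} = \{\lambda_{j_0}, \lambda_{j_1}, \lambda_{j_2}\}$. Because the six stationary points are exhausted by the union above and the two triples are disjoint, we must have the set equality
\[
\{\lambda_i : i \in J_2\} = \{-\lambda_{j_0}, -\lambda_{j_1}, -\lambda_{j_2}\}.
\]
Consequently, for every $i \in J_2$ there exists $k \in \{0, 1, 2\}$ such that $\lambda_i = -\lambda_{j_k}$, i.e. $-\lambda_i = \lambda_{j_k}$ with $j_k \in J_1$. This is precisely the claim of the corollary. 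The only non-trivial input is \eqref{no_coinc_1}, which has already been verified in the preceding lemmas, so no further obstacle should arise.
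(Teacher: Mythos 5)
Your overall strategy (reduce everything to property \eqref{no_coinc_1} plus the antipodal symmetry of the six points) is the same as the paper's, but as written your argument has a genuine gap: the opening claim that ``by construction'' the full set of stationary points splits as $\{\lambda_{j_0},\lambda_{j_1},\lambda_{j_2}\}\cup\{-\lambda_{j_0},-\lambda_{j_1},-\lambda_{j_2}\}$ is not a consequence of the construction. The construction only gives $\lambda_{j+3}=-\lambda_j$, i.e.\ the splitting with the indices $0,1,2$; the indices $j_0,j_1,j_2$ of Lemma \ref{aux_lemma} are arbitrary elements of $N_5$, and your splitting is equivalent to the statement that $J_1$ contains no antipodal pair $\{i,i+3\}$ -- which is false in general. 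In the configuration of Fig.~\ref{weird_case} one has $J_1=\{1,2,5\}$, so the union you write down equals $\{\pm\lambda_1,\pm\lambda_2\}$ and misses $\pm\lambda_0$, and indeed the conclusion of the corollary fails there ($-\lambda_0=\lambda_3$ with $3\in J_2$). Ruling this out is precisely the content of the hypotheses and of \eqref{no_coinc_1}, so by asserting the splitting before invoking \eqref{no_coinc_1} you are essentially assuming the point at issue. A secondary (smaller) weakness: even granting exhaustion and disjointness of the two value sets, the set equality $\{\lambda_i:i\in J_2\}=\{-\lambda_{j_0},-\lambda_{j_1},-\lambda_{j_2}\}$ does not follow by pure counting, since values may coincide and a $J_2$-indexed point could a priori share its value with a $J_1$-indexed one; arguing with value sets alone leaves this loose end.

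The repair is short and is what the paper's one-line proof implicitly does: work with indices. By \eqref{no_coinc_1} (supplied by Lemma \ref{cluster_lemma} under hypothesis (1) and by Lemma \ref{cluster_lemma_1} under hypothesis (2)), $J_1$ cannot contain an antipodal pair: if $j_k+3\equiv j_l \pmod 6$ for some $k,l$, then $-\lambda_{j_k}=\lambda_{j_k+3}=\lambda_{j_l}$, contradicting \eqref{no_coinc_1}. Hence the three distinct indices $j_0+3,j_1+3,j_2+3 \pmod 6$ all lie in $J_2$, and by cardinality $J_2=\{j_0+3,j_1+3,j_2+3\}$. Then for any $i\in J_2$ write $i=j_k+3$, so that $-\lambda_i=\lambda_{j_k}$ with $j_k\in J_1$, which is the assertion of the corollary; your splitting statement also follows at this point, but as a consequence of \eqref{no_coinc_1} rather than as a preliminary fact.
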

This corollary is just a restatement of property \eqref{no_coinc_1} of Lemmas \ref{cluster_lemma} and \ref{cluster_lemma_1}.

\medskip
%
%{\color{red} REVIEW
%So we conclude that (\ref{coinc_relation}) is not possible in this case, since $ \omega_1 $ and $ \omega_2 $ are the first two minimal distances between the stationary points (excluding the pair $ ( \lambda_2, - \lambda_2 ) $), and $ d_{12}, d_{ 01 }, d_{02} $ (one of which should be equal to $ \omega_2 $ by (\ref{coinc_relation})) are all greater than $ \max( d_1, d_2 ) $.
%}
%

Now we derive an estimate on the distance between the stationary points and the unit circle. Recall that $\Ss$ is the unit circle in the complex plane, i.e $ \Ss^1 =\{ \lambda \in \mathbb{C}: | \lambda | = 1 \} $, and $\mathbb U$ is the set defined in \eqref{UU}.

\begin{lemma}
\label{dist_unit_circle_lemma} 
Suppose that $ \lambda_j \in B_K( 0 ) $, for all $ j \in N_5 $, and with $K$ as in Lemma \ref{cluster_lemma_1}. Then there exists  $C = C( K )>0 $ such that 
\begin{equation*}
\dist( \lambda_j, \Ss^1 ) \leq C \omega_1, \quad \forall j \in N_5.
\end{equation*}
\end{lemma}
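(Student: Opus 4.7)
The plan is to reduce to the only non-trivial configuration in Lemma \ref{st_points_lemma} and run a short one-variable minimization. Since the stationary points of $S$ in \eqref{SS} correspond to the roots described by Lemma \ref{st_points_lemma} applied with parameter $-u$, I would first dispose of Cases \ref{1}--\ref{3}: in each of them every $\la_j$ lies on $\Ss^1$, so $\dist(\la_j,\Ss^1)=0$ and the inequality holds trivially. Hence only Case \ref{4} needs attention.

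In Case \ref{4}, using the parameterization \eqref{la_j} together with $\la_{j+3}=-\la_j$ for $j=0,1,2$, the assumption $\la_0\in B_K(0)$ forces $1+\omega\leq K$. A direct check gives
\[
\dist(\la_0,\Ss^1)=\dist(\la_3,\Ss^1)=\omega,\qquad
\dist(\la_2,\Ss^1)=\dist(\la_5,\Ss^1)=\frac{\omega}{1+\omega},\qquad
\dist(\la_1,\Ss^1)=\dist(\la_4,\Ss^1)=0,
\]
so that $\dist(\la_j,\Ss^1)\leq\omega$ for every $j\in N_5$. The problem is therefore reduced to a lower bound of the form $\omega\leq C(K)\,\omega_1$.

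The heart of the argument is then to check that every pair $(\la_i,\la_j)\in P_1$ satisfies $|\la_i-\la_j|\geq c(K)\,\omega$. Using $\la_{j+3}=-\la_j$, the fourteen pairs in $P_1$ collapse to a handful of representative computations. The radial pair gives $|\la_0-\la_2|=\omega(2+\omega)/(1+\omega)\geq\omega$, while antipodal-type differences such as $|\la_0-\la_3|=2(1+\omega)$ and $|\la_0+\la_2|=(1+\omega)+(1+\omega)^{-1}$ are bounded below by $2$. The two remaining families of mixed pairs satisfy
\[
|\la_0\pm\la_1|^2=(1+\omega)^2+1\pm 2(1+\omega)\cos(3\varphi/2),
\]
\[
|\la_2\pm\la_1|^2=(1+\omega)^{-2}+1\pm 2(1+\omega)^{-1}\cos(3\varphi/2),
\]
and minimizing each right-hand side over $\cos(3\varphi/2)\in[-1,1]$ produces the perfect squares $\omega^2$ and $(\omega/(1+\omega))^2$ respectively, giving $|\la_0\pm\la_1|\geq\omega$ and $|\la_2\pm\la_1|\geq\omega/(1+\omega)\geq\omega/K$.

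Taking the minimum over all pairs yields $\omega_1\geq\omega/(1+\omega)\geq\omega/K$, and combined with the upper bound $\dist(\la_j,\Ss^1)\leq\omega$ already established, this proves the lemma with $C=K$. The (rather mild) main obstacle is purely organisational: one must keep track of how each of the fourteen pairs in $P_1$ is mapped by the involution $\la_k\mapsto-\la_k$ so that they all reduce to the four canonical estimates above. The decisive feature that makes the argument sharp is the appearance of perfect squares $\omega^2$ and $(\omega/(1+\omega))^2$ at the minima, guaranteeing that the lower bound on $\omega_1$ is uniform in the angular parameter $\varphi$.
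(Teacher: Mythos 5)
Your proof is correct, and it buys the same constant $C=K$ as the paper, but the finishing step is genuinely different. The reduction is identical: for $-u\in\mathbb{U}$ (Cases 1--3) all stationary points lie on $\Ss^1$ and there is nothing to prove, and in Case 4 one works with the parametrization \eqref{la_j}, notes $\dist(\lambda_0,\Ss^1)=\dist(\lambda_3,\Ss^1)=\omega$, $\dist(\lambda_2,\Ss^1)=\dist(\lambda_5,\Ss^1)=\omega/(1+\omega)$, $\dist(\lambda_1,\Ss^1)=\dist(\lambda_4,\Ss^1)=0$, and uses $1+\omega\leq K$ from the hypothesis $\lambda_j\in B_K(0)$. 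From there the paper proves the comparison inequalities $|\lambda_0\mp\lambda_1|\geq|\lambda_2\mp\lambda_1|$ in order to conclude that $\omega_1$ can only be realized by $|\lambda_0-\lambda_2|$, $|\lambda_1-\lambda_2|$ or $|\lambda_2-\lambda_4|$, and then argues case by case: since $\lambda_1\in\Ss^1$, in the latter two cases $\dist(\lambda_2,\Ss^1)\leq\omega_1$, and the bound for $\lambda_0,\lambda_3$ follows by multiplying by $K$ via $\omega\leq K\omega/(1+\omega)$. You bypass the identification of the minimizing pair altogether and instead prove the uniform lower bound $\omega_1\geq\omega/(1+\omega)\geq\omega/K$ by bounding all fourteen admissible distances from below, the mixed pairs via the one-variable minimization in $\cos(3\varphi/2)$ whose minima are the perfect squares $\omega^2$ and $\bigl(\omega/(1+\omega)\bigr)^2$, and the antipodal/radial pairs by $2$ and $\omega$ respectively; combined with $\dist(\lambda_j,\Ss^1)\leq\omega$ this closes the argument. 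Your route is slightly more computational but more systematic (no case analysis on which pair attains $\omega_1$), while the paper's comparison inequalities shortcut the bookkeeping; both are complete proofs.
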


%\subsection{Proof of Lemma \ref{dist_unit_circle_lemma}}
\begin{proof}
We proceed by splitting the proof into different cases. 

\medskip

Fist of all, if $ -u \in\mathbb{U} $, then from Lemma \ref{st_points_lemma}, for each $j \in N_5 $ one has $ \lambda_j \in \Ss^1 $. Hence, the statement of the Lemma is trivially true.

\medskip

Let now $ -u \in \mathbb{C} \backslash \mathbb{U} $. From \eqref{la_j} with $\widetilde u=-u$, and the simplification made in  \eqref{Case4p}, $ \lambda_1$ and $\lambda_4 =-\la_1$ are in $ \Ss^1 $. Moreover,
\[
 | \lambda_0 - \lambda_1 |  \geq    | \lambda_2 - \lambda_1 |,
\]
and
\[
 | \lambda_0 - \lambda_4 |    \geq | \lambda_2 - \lambda_4 |.
\]
Indeed, using \eqref{Case4p}, we are led to show that 
\[
 | 1+\omega \mp e^{i\varphi_0} |^2  \geq    \Big|\frac1{1+\omega} \mp e^{i\varphi_0} \Big|^2.
\] 
Expanding the squares, we get
\[
(1+\omega \mp \cos\varphi_0)^2 + \sin^2 \varphi_0 \geq  \Big(\frac1{1+\omega} \mp \cos \varphi_0\Big)^2 + \sin\varphi_0^2,
\] 
which leads to 
\[
(1+\omega)^2 \mp 2 (1+\omega) \cos \varphi_0 \geq \frac1{(1+\omega)^2} \mp 2\frac{\cos\varphi_0}{1+\omega}.
\]
We have then
\[
\Big(1+\omega +\frac1{1+\omega} \Big) \Big(1+\omega -\frac1{1+\omega} \Big)  \geq \pm 2 \Big(1+\omega - \frac1{1+\omega} \Big)  \cos\varphi_0,
\]
or
\[
1+\omega +\frac1{1+\omega} \geq \pm 2\cos\varphi_0,
\]
which is evidently true. Moreover, the left hand side above is always greater than 2, because $\omega>0$. Therefore,
\[
 | \lambda_0 - \lambda_1 | > | \lambda_2 - \lambda_1 | , \qquad \hbox{and}\qquad   | \lambda_0 - \lambda_4 | > | \lambda_2 - \lambda_4 |. 
\]
Now it is not difficult to check (by using symmetry and previous estimates) that $\omega_1$ is among the following distances:
\[
|\la_0 -\la_2|, \quad |\la_1-\la_2|,  \quad |\la_2-\la_4|.
\]
%\begin{equation*}
%\omega_1 = | \lambda_0 - \lambda_2 |,~ \text{ or }~ \omega_1 = | \lambda_2 - \lambda_1 |,~ \text{ or }~ \omega_1 = | \lambda_2 - \lambda_4 |.
%\end{equation*}
In the first case it is clear that $ \dist( \lambda_j, \Ss^1 ) \leq \omega_1 $, $ j = 0, 2, 3$ and $5 $, enough to conclude. 

\medskip

In the other two cases we only have that $ \dist( \lambda_j, \Ss^1 ) \leq \omega_1 $, for $ j = 2, 5 $. However, for $i=0$ or $i=3$, 
\[
\begin{aligned}
\dist( \lambda_i, \Ss^1 ) & = \omega\\
&  \leq   \frac{ K\omega }{ 1 + \omega } \qquad \hbox{(see \eqref{w_K}),} \\
& = K \dist( \lambda_j, \Ss^1 ), \qquad  j=2,5, \\
&  \leq K \omega_1,
\end{aligned}
\]
where $ \omega $ is defined in Lemma \ref{st_points_lemma}, item 4. The proof is complete.
\end{proof}

\begin{lemma}
\label{dist_deg_points_lemma}
Suppose that $ \lambda_j \in B_K( 0 ) $ for all $ j \in N_5 $ and for $K$ given in Lemma \ref{cluster_lemma_1}. Then there exists $ C = C( K ) $ such that the following holds: for all $j\in N_5$  one can find $k=k(j)\in N_5$ satisfying
\begin{equation*}
| \lambda_j - \lambda_k^* | \leq C \omega_2,
\end{equation*}
where $\la_k^*$ are the degenerate stationary points defined in \eqref{deg_points}.
\end{lemma}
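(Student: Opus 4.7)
The plan is to project everything onto the unit circle, use the cluster structure from the previous lemmas to show that the squares of all six stationary points concentrate near a single value $\mu^2$, and then invoke Vieta's product identity for the cubic satisfied by $\zeta = \lambda^2$ to force $\mu^6$ to be approximately $1$. This places $\mu$ near a sixth root of unity, which is exactly the set $\{\lambda_k^*\}_{k \in N_5}$.

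First, by Lemma \ref{dist_unit_circle_lemma}, $|\lambda_j| = 1 + O(\omega_1)$ for every $j \in N_5$, so the projection $\mu_j := \lambda_j / |\lambda_j| \in \Ss^1$ satisfies $|\lambda_j - \mu_j| \leq C \omega_1 \leq C \omega_2$. Let $j_0, j_1, j_2$ be as in Lemma \ref{aux_lemma}, and set $\mu := \mu_{j_0}$. From $|\lambda_{j_0} - \lambda_{j_k}| \leq \omega_2$ combined with the projection error, the triangle inequality yields $|\mu_{j_k} - \mu| \leq C \omega_2$ for $k = 0, 1, 2$. For $i \in J_2$, Corollary \ref{cor_coinc} furnishes some $j \in J_1$ with $\lambda_i = -\lambda_j$ exactly, so that $\mu_i = -\mu_j$ and $|\mu_i - (-\mu)| \leq C \omega_2$. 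In particular, since squaring eliminates the sign, $|\mu_j^2 - \mu^2| \leq C\omega_2$ for every $j \in N_5$.

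Next, I would invoke Vieta's identity. The stationary point equation for the phase $S$ in \eqref{SS} coincides with equation $(3.7)$ of \cite{KM} after the substitution $u \mapsto -u$; multiplying by $\lambda^4$ and setting $\zeta = \lambda^2$ yields
\begin{equation*}
\zeta^3 + \tfrac{\bar u}{6}\zeta^2 - \tfrac{u}{6}\zeta - 1 = 0.
\end{equation*}
Its three roots $\zeta_0, \zeta_1, \zeta_2$ (one per antipodal pair $\{\lambda_j, -\lambda_j\}$, $j = 0, 1, 2$) satisfy the Vieta relation $\zeta_0 \zeta_1 \zeta_2 = 1$. Since $\zeta_j = \lambda_j^2 = \mu_j^2 |\lambda_j|^2 = \mu_j^2 + O(\omega_1)$, the cluster estimate above gives $|\zeta_j - \mu^2| \leq C \omega_2$ for $j = 0, 1, 2$. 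Expanding the product (using $|\mu| = 1$ and that $\omega_2$ is bounded in terms of $K$) then yields $|\mu^6 - 1| \leq C \omega_2$.

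Finally, writing $\mu = e^{i\theta} \in \Ss^1$, one has $|\mu^6 - 1| = 2|\sin 3\theta|$. For $\omega_2$ below a threshold $\omega_0 = \omega_0(K)$, this forces $3\theta$ into a neighbourhood of some $k\pi$ in which $|\sin \alpha| \geq |\alpha|/2$, yielding $|\mu - e^{ik\pi/3}| \leq C\omega_2$; for $\omega_2 \geq \omega_0$ the conclusion is automatic from $\lambda_j \in B_K(0)$. Since $\{e^{ik\pi/3}\}_{k=0}^{5} = \{\lambda_k^*\}_{k \in N_5}$ and every $\lambda_j$ lies within $C\omega_2$ of $\pm \mu$ by the cluster step, we conclude that for each $j \in N_5$ there is $k = k(j) \in N_5$ with $|\lambda_j - \lambda_k^*| \leq C \omega_2$. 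The main obstacle is keeping the Vieta step linear in $\omega_2$ (rather than the naive $\omega_2^{1/6}$ one would get just from $|\mu^6 - 1| \leq C \omega_2$ via a direct root extraction on a generic analytic function); this is secured by the fact that all three roots $\zeta_j$ cluster around the \emph{same} value $\mu^2$, which is where Corollary \ref{cor_coinc} and the bound $\lambda_j \in B_K(0)$ are essential.
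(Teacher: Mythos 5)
Your proof is correct, and it rests on the same two pillars as the paper's argument: the Vieta identity $\zeta_0\zeta_1\zeta_2=1$ for the cubic satisfied by $\zeta=\lambda^2$, and the clustering information from Lemmas \ref{aux_lemma}, \ref{dist_unit_circle_lemma} and Corollary \ref{cor_coinc} (which indeed applies here, since the hypothesis $\lambda_j\in B_K(0)$ covers both alternatives $-u\in\mathbb U$ and $-u\in\mathbb C\backslash\mathbb U$). The execution, however, is genuinely cleaner. The paper works at the level of the $\lambda_j$'s: it fixes square roots with $\lambda_0\lambda_1\lambda_2=1$ and must then distinguish whether $j_0,j_1,j_2$ all lie in the same antipodal group or not, with a separate argument (showing $\omega_2\ge 1$) in the coincidence case $-\lambda_{j_1}=\lambda_{j_2}$, where the product of the three $J_1$-points is no longer controlled by Vieta. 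By passing to the squares you eliminate the sign ambiguity altogether: the bound $|\mu_j^2-\mu^2|\lesssim\omega_2$ holds for every $j\in N_5$ (closeness to $\mu$ on $J_1$, to $-\mu$ on $J_2$ via Corollary \ref{cor_coinc}), so the three distinct $\zeta$-roots cluster around the single value $\mu^2$ and $|\mu^6-1|\lesssim\omega_2$ follows with no case analysis; the configurations the paper treats separately (e.g. Fig.~\ref{2cases1}) are absorbed automatically, since there they merely force $\omega_2\gtrsim1$, which your threshold/trivial-bound step already handles. Your closing step, $|\mu^6-1|=2|\sin 3\theta|$ plus a local linear lower bound for the sine, is the exact analogue of the paper's factorization of $e^{3i\varphi}-1$ with two factors bounded below by $1$, and it correctly lands on the sixth roots of unity $\{\lambda_k^*\}$, a set closed under $z\mapsto-z$, so both clusters $\pm\mu$ are covered. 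Two cosmetic remarks: the middle coefficients of your cubic carry the opposite signs from the paper's ($\zeta^3-\frac{\bar u}{6}\zeta^2+\frac{u}{6}\zeta-1=0$ for the relevant parameter), which is immaterial since only the constant term (the product of the roots) is used; and in the large-$\omega_2$ regime the trivial bound should read $|\lambda_j-\lambda_k^*|\le K+1\le\frac{K+1}{\omega_0}\,\omega_2$, which is all that is needed.
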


%\subsection{Proof of Lemma \ref{dist_deg_points_lemma}}
\begin{proof}

We start by noticing that the roots $ \zeta_0 $, $ \zeta_1 $, $ \zeta_2 $ of $ Q( u, \zeta ) $ in \eqref{Q_u} are the corresponding roots of equation
\begin{equation*}
\zeta^3 - \frac{ \bar u }{ 6 } \zeta^2 + \frac{ u }{ 6 } \zeta - 1 = 0,
\end{equation*}
thus we have that $ \zeta_0 \zeta_1 \zeta_2 = 1 $. We will suppose that the complex-valued roots $ \lambda_0 = \sqrt{ \zeta_0 } $, $ \lambda_1 =\sqrt{ \zeta_1 } $ and $ \lambda_2 = \sqrt{ \zeta_2 } $ are taken in such a way that
\begin{equation}
\label{one_equality}
\lambda_0 \lambda_1 \lambda_2 = 1.
\end{equation}
Choose $ j_0, j_1, j_2 $ as in Lemma \ref{aux_lemma}. Define $ J_1 = \{ 0, 1, 2 \} $, $ J_2 = \{ 3, 4, 5 \} $. Note that at least two out of three indexes $ j_0, j_1, j_2 $ belong to the same group $ J_k $, $ k =1 $ or $ k =2 $. 

\medskip

We start by considering the case when all of the three values belong to the same group $ J_k $. Without loss of generality we can suppose that they belong to $ J_1 $. Then, due to (\ref{one_equality}), we have that
\begin{equation}
\label{new_one_eq}
\lambda_{ j_0 } \lambda_{ j_1 } \lambda_{ j_2 } = 1.
\end{equation}
Let $ \lambda_{ j_0 } = e^{ i \varphi } + \eta $, for some $\eta\in \Com$. Due to Lemma \ref{dist_unit_circle_lemma} we have that there exists $ C =  C( K )>0 $ such that  $ | \eta | \leq C  \omega_1  $. If we put $ \lambda_{ j_1 } = e^{ i \varphi } + \eta + \xi_1 $ , $ \lambda_{ j_2 } = e^{ i \varphi } + \eta + \xi_2 $, then 
\[
\lambda_{ j_1 } = \lambda_{ j_0 } +\xi_1, \quad \la_{j_2} = \la_{j_0} +\xi_2,
\]
and from Lemma \ref{aux_lemma}, $ | \xi_1 | = \omega_1 $, $ | \xi_2 | = \omega_2 $. Now, from (\ref{new_one_eq}) we have that there is a constant $  \tilde C( K )>0 $ such that
\[
| e^{ 3 i \varphi } - 1|  \leq  \tilde C \omega_2.
\]
Indeed,
\begin{equation*}
\begin{aligned}
| e^{ 3 i \varphi } - 1 | & = | e^{ 3 i \varphi } -\lambda_{ j_0 } \lambda_{ j_1 } \lambda_{ j_2 } |\\
& =| e^{ 3 i \varphi } -\lambda_{ j_0 }( \lambda_{ j_0 } +\xi_1) (  \lambda_{ j_0 } +\xi_2 ) | \\
& =| e^{ 3 i \varphi } -\lambda_{ j_0 }^3 -(\xi_1+\xi_2) \la_{j_0}^2 -\xi_1\xi_2 \la_{j_0} | \\
& = | e^{ 3 i \varphi } -(e^{ i \varphi } + \eta)^3 -(\xi_1+\xi_2) (e^{ i \varphi } + \eta)^2 -\xi_1\xi_2 (e^{ i \varphi } + \eta) |    \\
& = | e^{ 3 i \varphi } - e^{ 3i \varphi } -3e^{2i\varphi} \eta - 3e^{i\varphi} \eta^2  -\eta^3 -(\xi_1+\xi_2) (e^{ 2i \varphi }+ 2e^{i\varphi} \eta + \eta^2) -\xi_1\xi_2 (e^{ i \varphi } + \eta) |    \\
& \leq  3|\eta| + 3 \eta^2  +|\eta|^3 +(|\xi_1|+|\xi_2|) (1+ 2|\eta| + \eta^2) +|\xi_1||\xi_2| (1 + |\eta|)     \\
& \leq  C\omega_1 + C \omega_1^2  +C\omega_1^3 +(\omega_1+\omega_2) (1+ C\omega_1 + C\omega_1^2) +\omega_1\omega_2 (1 + C\omega_1)     \\
& \leq C_1\omega_2 + C_1\omega_2^2 + C _1\omega_2^3   \\
%& \leq ( 3 C + 2 ) \omega_2 + ( C^2 + 5C +2 ) \omega_2^2 + C ( C + 1 )^2 \omega_2^3 
&\leq \tilde C \omega_2 \qquad \hbox{(using Lemma \ref{elem_lemma})},
\end{aligned}
\end{equation*}
where $ C_1 $ is some intermediate constant depending only on $  K$.

On the other hand, 
\begin{equation*}
| e^{ 3 i \varphi } - 1 | = | ( e^{ i \varphi } - 1 ) ( e^{ i \varphi } - e^{ \frac{ 2 i \pi }{ 3 } } ) ( e^{ i \varphi } - e^{ \frac{ 4 i \pi }{ 3 } } ) |.
\end{equation*}
Denote by $ d $ the minimum of distances between $  e^{ i \varphi }$ and $\{ 1,  e^{ \frac{ 2 i \pi }{ 3 } }, e^{ \frac{ 4 i \pi }{ 3 } } \}$. Without loss of generality, we can assume that $d= |e^{ i \varphi } - 1|$, and $\varphi \in [-\frac\pi3,\frac\pi3]$, see Fig. \ref{d_min}. 

\begin{figure}[!h]
\begin{center}
\begin{tikzpicture}[
	>=stealth',
	axis/.style={semithick,->},
	coord/.style={dashed, semithick},
	yscale = 1,
	xscale = 1]
	\newcommand{\xmin}{-4};
	\newcommand{\xmax}{4};
	\newcommand{\ymin}{-3};
	\newcommand{\ymax}{3};
	\newcommand{\ta}{3};
	\newcommand{\fsp}{0.2};
	\filldraw[color=light-gray1] (0,0) circle (2);
	%\filldraw[color=light-gray2] (0,0) circle (1);
	\draw [axis] (\xmin,0) -- (\xmax,0) node [right] {$\re \la$};
	\draw [axis] (0,\ymin) -- (0,\ymax) node [below left] {$\ima \la$};
	%\filldraw[color=light-gray3] (3.5,1.5) circle (0.2);
	%\filldraw[color=light-gray3] (-3.5,-1.5) circle (0.2);
	%\filldraw[color=light-gray3] (-0.72,0.72) circle (0.2);
	%\filldraw[color=light-gray3] (0.72,-0.72) circle (0.2);
	%\filldraw[color=light-gray2] (0.1,2) rectangle (9,5);
	%\draw [thick,->] (8,1) -- (9,1);
	%\draw [thick,->] (5,1) -- (5.6,1);
	%\draw [thick,-] (-0.2,2) -- (9,2);
	%\filldraw[color=light-gray1] (1,1) circle (0.6);
	%\draw [thick,->] (1,1) -- (1.4,1);
	%\draw [thick,->] (2,4) -- (3.5,4);
	%\draw [<->] (-0.2,1) -- (-0.2,4);
	%\draw (-0.9,-0.2) node [left] {$1$};
	\draw (2.2,-0.2) node [below left] {$1$};
	\draw [dashed] (2,0) -- (-1,-1.73);
	\draw [dashed] (2,0) -- (-1,1.73);
	\draw [dashed] (-1,1.73) -- (-1,-1.73);
%	\draw [dashed] (-1,1.73) -- (-1,-1.73);
	\draw [dashed] (0,0) -- (1,1.73);
	\draw [dashed] (0,0) -- (1,-1.73);
	\draw [thick] (1.4,1.4) -- (2,0);
	%\draw (3.5,1.6) node [above] {$B(\la_0,\ve)$};
	%\draw (-3.5,-2.4) node [above] {$B(-\la_0,\ve)$};
	\draw (-1.2,1.44) node [above left] {$e^{2i\pi/3}$};
	\draw (-1.2,-1.44) node [below left] {$e^{4i\pi/3}$};
	%\draw (1.9,0.5) node [right] {$\la_1$};
	\draw (0.2,0.05) node [above right] {\small$\frac\pi3$};
	\draw (1.5,1.4) node [above right] {$e^{i\varphi}$};
	\draw (1.7,0.7) node [right] {$d$};
	%\draw [dashed] (-2,2) -- (0.4,0.2);
	%\draw [dashed] (-2,2) -- (-0.4,-0.2);
	%\draw [dashed] (2,2) -- (-0.72,0.72);
	%\draw [dashed] (2,2) -- (0.72,-0.72);
	\fill (1.4,1.4)  circle[radius=2pt];
%	\fill (-1.4,-1.4)  circle[radius=1pt];
%	\fill (-1.4,1.4)  circle[radius=1pt];
%	\fill (1.4,-1.4)  circle[radius=1pt];
	\fill (-1, -1.73)  circle[radius=2pt];
	\fill (-1, 1.73)  circle[radius=2pt];
	\fill (2,0)  circle[radius=2pt];
	%\fill (3.5,1.5)  circle[radius=1pt];
	%\fill (-3.5,-1.5)  circle[radius=1pt];
	%\draw [dashed] (5,1) -- (5,2.5);
	%\draw [<->] (5,2.5) -- (8,2.5);
	%\draw (6.5,2.5) node [above] {$L$};
	%\draw [dashed] (8,1) -- (8,2.5);
	%\draw (0.5,0.5) arc (45:135:0.7);
	%\draw (1,0.25) arc (15:55:0.8);
\end{tikzpicture}
\end{center}
\caption{In this figure, $d$ is the minimal distance between $  e^{ i \varphi }$ and $\{ 1,  e^{ \frac{ 2 i \pi }{ 3 } }, e^{ \frac{ 4 i \pi }{ 3 } } \}$.}\label{d_min}
\end{figure}
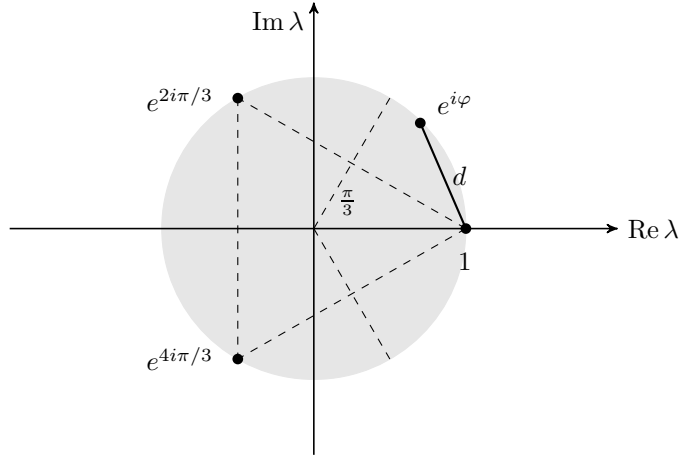
Then
\[
|e^{ i \varphi } - e^{ \frac{ 2 i \pi }{ 3 } }| \geq 1, \qquad |e^{ i \varphi } - e^{ \frac{ 4 i \pi }{ 3 } }|\geq 1,
\]
with equality if either $\varphi =\pm \frac\pi3$. Consequently,
\begin{equation*}
| ( e^{ i \varphi } - 1 ) ( e^{ i \varphi } - e^{ \frac{ 2 i \pi }{ 3 } } ) ( e^{ i \varphi } - e^{ \frac{ 4 i \pi }{ 3 } } ) | \geq d
\end{equation*}
and thus $ d \leq C( K ) \omega_2 $. From this property the statement of the Lemma easily follows. 

\medskip

We now consider the case when two out of three indexes $ j_0, j_1, j_2 $ (call them $ m, n $) belong to the same group and the third one (call it $ l $) belongs to the other group. Without loss of generality we can assume that $ l \in J_1 $, $ m, n \in J_2 $. If $ - \lambda_l $ does not coincide with $ \lambda_m $, $ \lambda_n $, then we have that $ \lambda_m ,  \lambda_n$ are two elements in $\{-\la_0,-\la_1,-\la_2\}$, and $-\la_l $ is the third element in this list. Therefore, 
\begin{equation*}
\lambda_l ( -\lambda_m ) ( -\lambda_n ) = 1,
\end{equation*}
and the reasoning of the previous case applies. If finally $ -\lambda_l $ coincides with either $ \lambda_m $ or $ \lambda_n $, then it is easy to see, due to Claim \ref{Claim00} in the proof of Lemma \ref{cluster_lemma_1}, that it can only be $ \lambda_{ j_1 } $ coinciding with $ - \lambda_{ j_2 } $. Thus we have that $ \omega_1 = | \lambda_{ j_0 } - \lambda_{ j_1 } | $, $ \omega_2 = | \lambda_{ j_0 } + \lambda_{ j_1 } | $. It is not difficult to check that in this case one of the points $ \lambda_{ j_0 } $, $ \lambda_{ j_1 } $ necessarily lies inside the unit circle (more precisely, in the set $ D = \{ \lambda \in \mathbb{C} \colon | \lambda | \leq 1 \} $) and the other one is necessarily on the unit circle (i.e. on $ \mathbb{S}^1 $). It follows then that $ \omega_2 \geq 1 $ and thus
\begin{equation*}
\forall j \in N_5, \quad \forall k \in N_5, \quad | \lambda_j - \lambda_k^* | \leq K \leq K \omega_2.
\end{equation*}
\end{proof}

%The proofs of Lemmas \ref{aux_lemma}--\ref{dist_deg_points_lemma} can be found the Appendix \ref{A}.

\section{The integral inside the ball $B_2$: estimate of $I_{in}$}\label{Sect_5}

This section is devoted to the estimate of $I_{in}$ in \eqref{I_split}, which is required in order to finish the proof of estimate \eqref{xi_alpha}.

\subsection{Scheme}
Let $ \lambda_0 = \lambda_0( -u ) $, where $ \lambda_0( -u ) $ is given by Lemma \ref{st_points_lemma}. Let $\omega_1=\omega_1(-u)$, $\omega_2=\omega_2(-u)$ be defined by Definitions \ref{omega1}, \ref{omega2} correspondingly. Also let $ K $ be defined as in Lemma \ref{cluster_lemma_1}.

\medskip
   
For each $ t $ big enough we will consider the following four cases for the values of parameter $ u $.
\begin{itemize}
\item[\bf Case I] The set of values of $ u $ such that $\omega_1$, $\omega_2$ and $\lambda_0$ satisfy
\be\label{case1}
 \omega_1 \leq \frac{ 4 }{ t^{ 1/4 } }, \quad  \omega_2 \leq \frac{ 4 }{ t^{ 1/4 } }\quad \hbox{and}\quad  | \lambda_0 | < K.
 \ee
This is the set of values of $ u $ for which the stationary points of $ S $ are close to each other (and thus close to one of the degenerate points $ \lambda_k^* $ from \eqref{deg_points}). It is the case when $ u $ lies in a small neighborhood of the points $ u_k^* =- 18 e^{ \frac{ 2 \pi i k }{ 3 } } $.

\item[\bf Case II] The set of values of $ u $ such that 
\be\label{case2}
 \omega_1 \leq \frac{ 4 }{ t^{ 1/4 } } , \quad \omega_2 > \frac{ 4 }{ t^{ 1/4 } } \quad\hbox{and}\quad | \lambda_0 | < K.
\ee
This is the case when two out of three stationary points are close to each other, but far enough from the third one. In this case $ u $ belongs to a neighborhood of the curve $ \mathcal{U} $.

\item[\bf Case III] The set of values of $ u $ such that 
\be\label{case3}
\omega_1 > \frac{ 4 }{ t^{ 1/4 } } , \quad \omega_2 > \frac{ 4 }{ t^{ 1/4 } } \quad \hbox{and}\quad | \lambda_0| < K.
\ee
This is the case when the three stationary points are sufficiently separated from each other, but all belong to $ B_2( 0 ) $. In this case $ u $ lies in a neighborhood of the set $ \mathbb{U} $.

\item[\bf Case IV] The set of values of $ u $ such that $ | \lambda_0 | \geq K $. This is the case when parameter $ u $ lies outside a neighborhood of the set $ \mathbb{U} $.
\end{itemize}

Now we prove the estimate separately for each case.

\subsection{Case I} For this case, we prove the estimates in several steps.
\begin{enumerate}
\item[{\bf Step 1}] Set $ \varepsilon = \frac{ 1 }{ t^{ 1/4 } } $. Note that from \eqref{case1}, $ \omega_1 \leq 4 \varepsilon $, $ \omega_2 \leq 4 \varepsilon $.

Let $ j_0, j_1, j_2 $ be the indices from Lemma \ref{aux_lemma}. Set $ J_1 = \{ j_0, j_1, j_2 \} $, $ J_2 = N_5 \backslash J_1 $. 
Denote
\begin{gather*}
B_{ \varepsilon }^1 := \bigcup\limits_{ j \in J_1 } \{ \lambda \in \mathbb S^1 \colon | \lambda - \lambda_j | \leq \varepsilon \}, \\ B_{ \varepsilon }^2 := \bigcup\limits_{ j \in J_2 } \{ \lambda \in \mathbb S^1 \colon | \lambda - \lambda_j | \leq \varepsilon \}.
\end{gather*}
and
\begin{gather*}
B_{ \varepsilon } := B_{ \varepsilon }^1 \cup B_{ \varepsilon }^2, \quad D_{ \varepsilon } := B_{ \varepsilon } \times B_{ \varepsilon }.
\end{gather*}
In other words, $B_\ve$ represents the ensemble of points in the complex plane that are at $\ve$ distance from a stationary point $\la_j$, with the index $j$ belonging either to $J_1$ or to $J_2$. On the other hand, $D_\ve$ is a subset of $\Com^2$ where both coordinates are close to (maybe different) stationary points $(\la_j,\la_k)$. 

Note that $ B_{ \varepsilon } $ is a union of arcs on the unit circle. Note also that, if one uses the angular representation of the points on the unit circle $ \lambda = e^{ i \varphi } $, then $ B_{ \varepsilon } $ can be viewed as a union of segments of $ \mathbb{R} / 2 \pi \mathbb{Z} $ (see Fig. \ref{arcs_segms}). We will write
\begin{equation*}
B_{ \varepsilon } = \bigcup\limits_{ i } I_{ a_i, b_i },
\end{equation*}
where $ I_{ a_i, b_i } $ are the segments forming $ B_{ \varepsilon } $.  

Note also that $ D_{ \varepsilon } $ can be viewed as a union of rectangles on $(\mathbb{R} / 2 \pi \mathbb{Z}) \times (\mathbb{R} / 2 \pi \mathbb{Z})$ (see Fig. \ref{arcs_segms}). We will write 
\begin{equation}
\label{rectangles}
D_{ \varepsilon } = \bigcup\limits_{ i,j } \Pi_{ a_i, b_i }^{ a_j, b_j },
\end{equation}
where $ \Pi_{ a_i, b_i }^{ a_j, b_j } $ are the rectangles forming $ D_{ \varepsilon } $.  

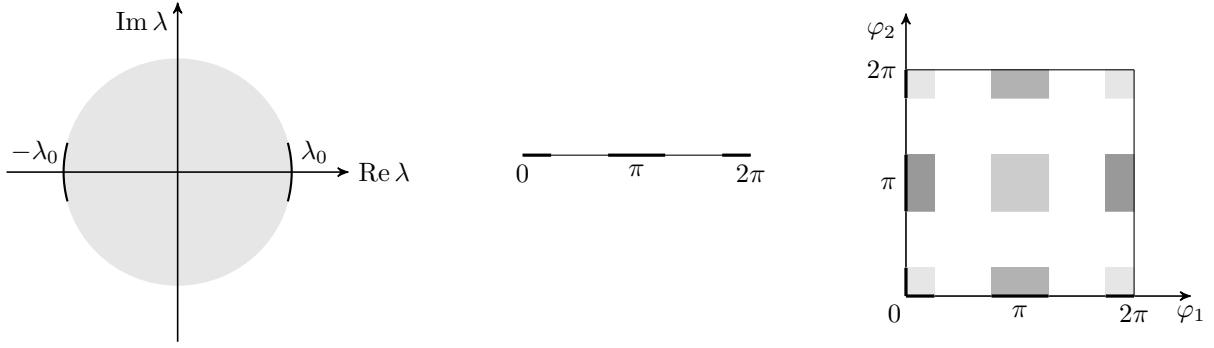
\begin{figure}[!h]
\minipage{0.35\textwidth}
\begin{tikzpicture}[
	>=stealth',
	axis/.style={semithick,->},
	coord/.style={dashed, semithick},
	yscale = 0.75,
	xscale = 0.75]
	\newcommand{\xmin}{-3};
	\newcommand{\xmax}{3};
	\newcommand{\ymin}{-3};
	\newcommand{\ymax}{3};
	\newcommand{\ta}{3};
	\newcommand{\fsp}{0.2};
	\filldraw[color=light-gray1] (0,0) circle (2);
	\draw [axis] (\xmin,0) -- (\xmax,0) node [right] {$\re \la$};
	\draw [axis] (0,\ymin) -- (0,\ymax) node [below left] {$\ima \la$};
	\draw (2.4,0.0) node [above] {$\lambda_0$};
	\draw (-2.5,0.0) node [above] {$-\lambda_0$};
	\draw[thick] (2,0) arc (0:15:2);
	\draw[thick] (2,0) arc (0:-15:2);
	\draw[thick] (-2,0) arc (180:195:2);
	\draw[thick] (-2,0) arc (180:165:2);
\end{tikzpicture}
\endminipage\hfill
\minipage{0.30\textwidth}
\centering
\begin{tikzpicture}[
	>=stealth',
	axis/.style={semithick,->},
	coord/.style={dashed, semithick},
	yscale = 0.75,
	xscale = 0.75]
	\newcommand{\xmin}{-3};
	\newcommand{\xmax}{3};
	\newcommand{\ymin}{-3};
	\newcommand{\ymax}{3};
	\newcommand{\ta}{3};
	\newcommand{\fsp}{0.2};
	\draw (-2,0) -- (2,0);
	\draw [very thick,-] (-2,0) -- (-1.5,0);
	\draw [very thick,-] (-0.5,0) -- (0.5,0);
	\draw [very thick,-] (1.5,0) -- (2,0);
	\draw (-2,0) node [below] {$0$};
	\draw (0,0) node [below] {$\pi$};
	\draw (2,0) node [below] {$2 \pi$};
\end{tikzpicture}
\endminipage
\hfill
\minipage{0.30\textwidth}
\centering
\begin{tikzpicture}[
	>=stealth',
	axis/.style={semithick,->},
	coord/.style={dashed, semithick},
	yscale = 0.75,
	xscale = 0.75]
	\newcommand{\xmin}{-3};
	\newcommand{\xmax}{3};
	\newcommand{\ymin}{-3};
	\newcommand{\ymax}{3};
	\newcommand{\ta}{3};
	\newcommand{\fsp}{0.2};
	\draw [axis] (-2,-2) -- (3,-2) node [below] {$\varphi_1$};
	\draw [axis] (-2,-2) -- (-2,3) node [below left] {$\varphi_2$};
	\draw (-2.2,-2) node [below] {$0$};
	\draw (0,-2) node [below] {$\pi$};
	\draw (2,-2) node [below] {$2 \pi$};
	\draw (-2,0) node [left] {$\pi$};
	\draw (-2,2) node [left] {$2 \pi$};
	\draw [fill=light-gray1, light-gray1] (-2,-2) rectangle (-1.5,-1.5);
	\draw [fill=light-gray1, light-gray1] (-2,1.5) rectangle (-1.5,2);
	\draw [fill=light-gray1, light-gray1] (1.5,-2) rectangle (2,-1.5);
	\draw [fill=light-gray1, light-gray1] (1.5,1.5) rectangle (2,2);
	\draw [fill=light-gray2, light-gray2] (-0.5,-0.5) rectangle (0.5,0.5);
	\draw [fill=light-gray3, light-gray3] (-2,-0.5) rectangle (-1.5,0.5);
	\draw [fill=light-gray3, light-gray3] (1.5,-0.5) rectangle (2,0.5);
	\draw [fill=light-gray4, light-gray4] (-0.5,-2) rectangle (0.5,-1.5);
	\draw [fill=light-gray4, light-gray4] (-0.5,1.5) rectangle (0.5,2);
	\draw (2,-2) -- (2,2);
	\draw (-2,2) -- (2,2);
	\draw [very thick,-] (-2,-2) -- (-1.5,-2);
	\draw [very thick,-] (-0.5,-2) -- (0.5,-2);
	\draw [very thick,-] (1.5,-2) -- (2,-2);
	\draw [very thick,-] (-2,-2) -- (-2,-1.5);
	\draw [very thick,-] (-2,-0.5) -- (-2,0.5);
	\draw [very thick,-] (-2,1.5) -- (-2,2);
\end{tikzpicture}
\endminipage
\caption{On the left: $ B_{ \varepsilon } $ viewed as a union of arcs on the unit circle; in the middle: $ B_{ \varepsilon } $ viewed as a union of segments of $ \mathbb{R} / 2 \pi \mathbb{Z} $; on the right: $ D_{ \varepsilon } $ viewed as a union of rectangles on $(\mathbb{R} / 2 \pi \mathbb{Z}) \times (\mathbb{R} / 2 \pi \mathbb{Z})$}\label{arcs_segms}
\end{figure}

\item[{\bf Step 2}]  Represent $ I_{in}$ as the sum of integrals over $ D_{ \varepsilon } $ and over $ \mathbb{T}^2 \backslash D_{ \varepsilon } $, where $ \mathbb{T} = \Ss^1 $:
\begin{align*}
& I_{in} = I_{in}^{ int } + I_{in}^{ ext }, \\
& I_{in}^{ int } = \int\limits_{ D_{ \varepsilon } } f( \varphi_1, \varphi_2 ) e^{ i t S( u, e^{ i \varphi_1 }, e^{ i \varphi_2 } ) } d \varphi_1 d \varphi_2 \text{ and } \\
& I_{in}^{ ext } = \int\limits_{ \mathbb{T}^2 \backslash D_{ \varepsilon } } f( \varphi_1, \varphi_2 ) e^{ i t S( u, e^{ i \varphi_1 }, e^{ i \varphi_2 } ) } d \varphi_1 d \varphi_2,
\end{align*}
where $ f( \varphi_1, \varphi_2 ) = | e^{ i \varphi_1 } + e^{ i \varphi_2 } |^{ \alpha } | \sin( \varphi_1 - \varphi_2 ) | $ (see (see \eqref{J_new}) and \eqref{lambda_phi}).

\medskip

\item[{\bf Step 3}]  Note that $ \mu( D_{ \varepsilon } ) \eqsim \varepsilon^2 $, where $ \mu $ denotes the Lebesgue measure.

Now we estimate $ f $ on $ D_{ \varepsilon } $. Note that for $ ( \lambda, \lambda' ) \in B_{ \varepsilon }^1 \times B_{ \varepsilon }^1 $ we have that there exist $ i, j \in J_1 $ such that $ | \lambda - \lambda_i | \leq \varepsilon $, $ | \lambda' - \lambda_j | \leq \varepsilon $. 

\medskip

By Lemma \ref{cluster_lemma} we also have that $ | \lambda_i - \lambda_j | \lesssim \omega_2 \leq 4 \varepsilon  $. Thus we get
\begin{equation*}
\begin{aligned}
| \sin( \varphi_1 - \varphi_2 ) | &  \lesssim | \varphi_1 - \varphi_2 |  \\
& \lesssim | \lambda - \lambda_i | + | \lambda_i - \lambda_j | + | \lambda' - \lambda_j |\\
&  \lesssim \varepsilon.
\end{aligned}
\end{equation*}

Reasoning similarly (and using Lemma \ref{cluster_lemma} and Lemma \ref{cluster_lemma_1} to estimate the difference between $ \lambda_i $ and $ \lambda_j $ for $ i, j \in J_2 $), we obtain the same estimate for $ | \sin( \varphi_1 - \varphi_2 ) | $ on $ B_{ \varepsilon }^2 \times B_{ \varepsilon }^2 $.

Consider now $ ( \lambda, \lambda' ) \in B_{ \varepsilon }^1 \times B_{ \varepsilon }^2 $. There exist $ i \in J_1, j \in J_2 $ such that $ | \lambda - \lambda_i | \leq \varepsilon $, $ | \lambda' - \lambda_j | \leq \varepsilon $. Now note that for $ \forall j \in J_2 $ we have that $ - \lambda_j = \lambda_k $ with $ k \in J_1 $ (see Corollary \ref{cor_coinc}). Thus on $ B_{ \varepsilon }^1 \times B_{ \varepsilon }^2 $ we have
\[
\begin{aligned}
| \sin( \varphi_1 - \varphi_2 ) | & = | \sin( \pi + \varphi_1 -  ( \varphi_2 + \pi ) ) | \\
& = | \sin( \varphi_1 -  ( \varphi_2 + \pi ) ) | \\
& \lesssim | \varphi_1 -  ( \varphi_2 + \pi ) | \\
& \lesssim | \lambda - \lambda_i | + | \lambda_i + \lambda_j | + | \lambda' - \lambda_j | \\
& \lesssim \varepsilon + | \lambda_i - \lambda_k | \\
&  \lesssim \varepsilon.
\end{aligned}
\]
A similar reasoning gives the same estimate for $ | \sin( \varphi_1 - \varphi_2 ) | $ on $ B_{ \varepsilon }^2 \times B_{ \varepsilon }^1 $.

Thus
\begin{equation}
\label{f_estimate}
| f | \lesssim \varepsilon \text{ on }  D_{ \varepsilon }
\end{equation}
and we obtain the following estimate for $ I_{in}^{ int } $:
\begin{equation*}
| I_{in}^{ int } | \lesssim \varepsilon^3.
\end{equation*}

\item[{\bf Step 4}] 
\begin{align*}
& D_{ \varepsilon }( \varphi_1 ) = \{ \varphi_2 \in [ 0, 2 \pi ): ( \varphi_1, \varphi_2 ) \in D_{ \varepsilon } \}, \\
& D_{ \varepsilon }( \varphi_2 ) = \{ \varphi_1 \in [ 0, 2 \pi ): ( \varphi_1, \varphi_2 ) \in D_{ \varepsilon } \}.
\end{align*}

If for a certain $ \varphi_1 $ the set $ D_{ \varepsilon }( \varphi_1 ) $ is emplty, then we set $ \left. g \right|_{ \varphi_2 \in D_{ \varepsilon }( \varphi_1 )  } = 0 $ for any function $ g $. Similarly, $ \left. g \right|_{ \varphi_1 \in D_{ \varepsilon }( \varphi_2 )  } = 0 $ for any $ g $, if $ D_{ \varepsilon }( \varphi_2 ) = \varnothing $ for a certain $ \varphi_2 $.

Let $ C $ denote the ``corner points'' of the rectangles forming $  D_{ \varepsilon }$ (see representation \ref{rectangles}). We introduce the following notation
\begin{equation*}
\left. f \right|_{ ( \varphi_1, \varphi_2 ) \in C } = \sum\limits_{ i, j } f( a_i, a_j ) - f( a_i, b_j ) - f( b_i, a_j ) + f( b_i, b_j )
\end{equation*}
(see also Fig. \ref{point_orientation}).

\begin{figure}[!h]
\centering
\begin{tikzpicture}[
	>=stealth',
	axis/.style={semithick,->},
	coord/.style={dashed, semithick},
	yscale = 0.75,
	xscale = 0.75]
	\newcommand{\xmin}{-3};
	\newcommand{\xmax}{3};
	\newcommand{\ymin}{-3};
	\newcommand{\ymax}{3};
	\newcommand{\ta}{3};
	\newcommand{\fsp}{0.2};
	\draw [axis] (-2,-2) -- (3,-2) node [below] {$\varphi_1$};
	\draw [axis] (-2,-2) -- (-2,3) node [below left] {$\varphi_2$};
	\draw (-2.2,-2) node [below] {$0$};
	\draw (0,-2) node [below] {$\pi$};
	\draw (2,-2) node [below] {$2 \pi$};
	\draw (-2,0) node [left] {$\pi$};
	\draw (-2,2) node [left] {$2 \pi$};
	\draw [fill=light-gray1, light-gray1] (-2,-2) rectangle (-1.5,-1.5);
	\draw [fill=light-gray1, light-gray1] (-2,1.5) rectangle (-1.5,2);
	\draw [fill=light-gray1, light-gray1] (1.5,-2) rectangle (2,-1.5);
	\draw [fill=light-gray1, light-gray1] (1.5,1.5) rectangle (2,2);
	\draw [fill=light-gray2, light-gray2] (-0.5,-0.5) rectangle (0.5,0.5);
	\draw [fill=light-gray3, light-gray3] (-2,-0.5) rectangle (-1.5,0.5);
	\draw [fill=light-gray3, light-gray3] (1.5,-0.5) rectangle (2,0.5);
	\draw [fill=light-gray4, light-gray4] (-0.5,-2) rectangle (0.5,-1.5);
	\draw [fill=light-gray4, light-gray4] (-0.5,1.5) rectangle (0.5,2);
	\draw (2,-2) -- (2,2);
	\draw (-2,2) -- (2,2);
	\draw [very thick,-] (-2,-2) -- (-1.5,-2);
	\draw [very thick,-] (-0.5,-2) -- (0.5,-2);
	\draw [very thick,-] (1.5,-2) -- (2,-2);
	\draw [very thick,-] (-2,-2) -- (-2,-1.5);
	\draw [very thick,-] (-2,-0.5) -- (-2,0.5);
	\draw [very thick,-] (-2,1.5) -- (-2,2);
	\draw (-1.4,-1.7) node [above] {$+$};
	\draw (-0.6,-1.7) node [above] {$-$};
	\draw (0.6,-1.7) node [above] {$+$};
	\draw (1.4,-1.7) node [above] {$-$};
	\draw (-1.4,0.3) node [above] {$+$};
	\draw (-0.6,0.3) node [above] {$-$};
	\draw (0.6,0.3) node [above] {$+$};
	\draw (1.4,0.3) node [above] {$-$};
	\draw (-1.4,-0.3) node [below] {$-$};
	\draw (-0.6,-0.3) node [below] {$+$};
	\draw (0.6,-0.3) node [below] {$-$};
	\draw (1.4,-0.3) node [below] {$+$};
	\draw (-1.4,1.7) node [below] {$-$};
	\draw (-0.6,1.7) node [below] {$+$};
	\draw (0.6,1.7) node [below] {$-$};
	\draw (1.4,1.7) node [below] {$+$};
\end{tikzpicture}
\caption{The signs in front of the values $ f( \varphi_1, \varphi_2 ) $ in the expression $ \left. f \right|_{ ( \varphi_1, \varphi_2 ) \in C } $, where $ C $ is the set of the ``corner points'' of $ D_{ \varepsilon } $.}\label{point_orientation}
\end{figure}
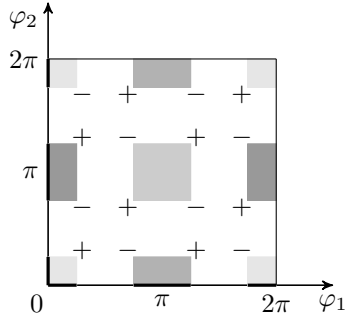

Integrating $ I_{in}^{ ext } $ by parts we obtain the following representation for $ I_{in}^{ ext } $:
\begin{multline*}
I_{in}^{ ext } = - \left. \frac{ 1 }{ t^2 } \frac{ e^{ i t S } f }{ S_{ \varphi_1 } S_{ \varphi_2 } } \right|_{ ( \varphi_1, \varphi_2 ) \in C } + \frac{ 1 }{ t^2 } \int\limits_{ 0 }^{ 2 \pi } \left.\frac{ e^{ i t S } }{ S_{ \varphi_1 } } \partial_{ \varphi_2 } \left( \frac{ f }{ S_{ \varphi_2 } } \right) \right|_{ \varphi_1 \in \partial D_{ \varepsilon }( \varphi_2 ) } d \varphi_2 + \\
+ \frac{ 1 }{ t^2 } \int\limits_{ 0 }^{ 2 \pi } \left. \frac{ e^{ i t S } }{ S_{ \varphi_2 } } \partial_{ \varphi_1 } \left( \frac{ f }{ S_{ \varphi_1 } } \right) \right|_{ \varphi_2 \in \partial D_{ \varepsilon }( \varphi_1 ) } d \varphi_1 - \frac{ 1 }{ t^2 } \iint\limits_{ \mathbb{T}^2 \backslash D_{ \varepsilon } } e^{ i t S } \partial_{ \varphi_1 } \partial_{ \varphi_2 } \left( \frac{ f }{ S_{ \varphi_1 } S_{ \varphi_2 } } \right) d \varphi_1 d \varphi_2 = \\
= :I_{in}^{ ext ,1} + I_{in}^{ ext ,2} +I_{in}^{ ext ,3} + I_{in}^{ ext ,4}.
\end{multline*}
%Here $ C $ denotes the ``corner points'' of rectangles forming $ D_{ \varepsilon } $. \emph{Also, the notation $ ``p \in S$'' means a point with its ``orientation'', i.e. the value of the function is this point can be taken either with $ + $ or with $ - $ sign depending on what ``orientation'' is assigned to the point.}

\item[{\bf Step 5}]  In this step our goal is to find an estimate for each term $ I_{in}^{ ext ,k} $.

Note that for $ \varphi_1 \in \partial B_{ \rho } $, with $ \rho $ sufficiently small and $ \rho \geq \varepsilon $, we have
\begin{equation}
\label{s_estimate}
| S_{ \varphi_1 } | \eqsim \prod\limits_{ j = 0 }^{ 2 }| \lambda - \lambda_j | | \lambda + \lambda_j | \geq \rho^3.
\end{equation}

In the same manner we obtain that for $ \varphi_2 \in \partial B_{ \rho } $ the following estimate holds: $ | S_{ \varphi_2 } | \geq \rho^3 $.

Finally, using (\ref{f_estimate}) and (\ref{s_estimate}), we get
\begin{equation*}
| I_{in}^{ext,1} | \leq \abs{ \left. \frac{ 1 }{ t^2 } \frac{ e^{ i t S } f }{ S_{ \varphi_1 } S_{ \varphi_2 } } \right|_{ ( \varphi_1, \varphi_2 ) \in C }}  \lesssim \frac{ \varepsilon }{ t^{ 2 } \varepsilon^3 \cdot \varepsilon^3 } = \frac{ 1 }{ t^{ 2 } \varepsilon^5 }.
\end{equation*}

In order to estimate $ J_2 $, $ J_3 $, $ J_4 $ we divide $ \mathbb{T} \backslash D_{ \varepsilon } $ into the following regions ``around'' the points $ (\lambda_k, \lambda_l $):
\[
\begin{aligned}
\mathbb{T} \backslash D_{ \varepsilon } & = \bigcup_{ k, l \in N_5 } \Omega_{ k,l }, \qquad \hbox{where}  \\
\Omega_{ k,l } & = \Big\{ (\lambda, \lambda') \in \mathbb{T} \backslash D_{ \varepsilon }  \colon ~ | \lambda - \lambda_k | \leq | \lambda - \lambda_j | \quad \forall j \in N_5 \backslash \{ k \}, \\
& \qquad\qquad    | \lambda' - \lambda_l | \leq | \lambda - \lambda_j | \quad \forall j \in N_5 \backslash \{ l \} \Big\}.
\end{aligned}
\]
On $ \Omega_{ k,l } $, we perform the following change of variables
\begin{equation*}
\varphi_1 = \varphi_k + \rho_1, \quad \varphi_2 = \varphi_l + \rho_2.
\end{equation*}

We also note that on $ \partial B_{ \rho } $ we have that
\begin{equation}
\label{ss_estimate}
\left| \frac{ S_{ \varphi_k \varphi_k } }{ ( S_{ \varphi_k } )^2 } \right| \lesssim \frac{ 1 }{ \rho^4 }, \quad k = 1, 2.
\end{equation}
Using (\ref{f_estimate}), (\ref{s_estimate}) and (\ref{ss_estimate}), we obtain
\[
\begin{aligned}
 | I_{in}^{ext,2} |  & \leq \abs{\frac{ 1 }{ t^2 } \int\limits_{ 0 }^{ 2 \pi } \left.\frac{ e^{ i t S } }{ S_{ \varphi_1 } } \partial_{ \varphi_2 } \left( \frac{ f }{ S_{ \varphi_2 } } \right) \right|_{ \varphi_1 \in \partial D_{ \varepsilon }( \varphi_2 ) } d \varphi_2} \\
 & \lesssim \frac{ 1 }{ t^2 \varepsilon^3 } \int\limits_{ \varepsilon }^{ \varepsilon_0 } \left( \frac{ 1 }{ \rho_2^3 } + \frac{ \rho_2 + \varepsilon }{ \rho_2^4 } \right) d \rho_2\\
 &  \lesssim \frac{ 1 }{ t^{ 2 } \varepsilon^5 }, 
\end{aligned}
\]
\[
\begin{aligned}
 |  I_{in}^{ext,3} |  & \leq \abs{ \frac{ 1 }{ t^2 } \int\limits_{ 0 }^{ 2 \pi } \left. \frac{ e^{ i t S } }{ S_{ \varphi_2 } } \partial_{ \varphi_1 } \left( \frac{ f }{ S_{ \varphi_1 } } \right) \right|_{ \varphi_2 \in \partial D_{ \varepsilon }( \varphi_1 ) } d \varphi_1} \\
&  \lesssim \frac{ 1 }{ t^2 \varepsilon^3 } \int\limits_{ \varepsilon }^{ \varepsilon_0 } \left( \frac{ 1 }{ \rho_1^3 } + \frac{ \rho_1 + \varepsilon }{ \rho_1^4 } \right) d \rho_1 \\
&  \lesssim \frac{ 1 }{ t^{ 2 } \varepsilon^5 },
\end{aligned}
\]
and
\[
\begin{aligned}
 |  I_{in}^{ext,4} |  &  \leq \abs{\frac{ 1 }{ t^2 } \iint\limits_{ \mathbb{T}^2 \backslash D_{ \varepsilon } } e^{ i t S } \partial_{ \varphi_1 } \partial_{ \varphi_2 } \left( \frac{ f }{ S_{ \varphi_1 } S_{ \varphi_2 } } \right) d \varphi_1 d \varphi_2} \\
&  \lesssim \frac{ 1 }{ t^2 } \int\limits_{ \varepsilon }^{ \varepsilon_0 } \int\limits_{ \varepsilon }^{ \varepsilon_0 } \left( \frac{ 1 }{ \rho_1^3 \rho_2^3 } +  \frac{ 1 }{ \rho_1^3 \rho_2^4 } +  \frac{ 1 }{ \rho_1^4 \rho_2^3 } + \frac{ \rho_1 + \rho_2 }{ \rho_1^4 \rho_2^4 } \right) d \rho_1 d \rho_2 \\
&  \lesssim \frac{ 1 }{ t^{ 2 } \varepsilon^5 }.
\end{aligned}
\]
Thus
\begin{equation*}
| I_{in}^{ext} | \lesssim \frac{ 1 }{ t^{ 2 } \varepsilon^5 }.
\end{equation*}

\item[{\bf Step 6}]  Finally, since $ \varepsilon = \frac{ 1 }{ t^{ 1/4 } } $, we obtain that
\begin{equation*}
| I_{in}^{ext}| \lesssim \frac{ 1 }{ t^{ 3/4 } }.
\end{equation*}

\end{enumerate}

\subsection{Case II} We remind that, by \eqref{case2}, Case II corresponds to the conditions
\[
 \omega_1 \leq \frac{ 4 }{ t^{ 1/4 } } , \quad \omega_2 > \frac{ 4 }{ t^{ 1/4 } } \quad\hbox{and}\quad | \lambda_0 | < K.
\]
We set $ \displaystyle{\varepsilon_1 = \frac{ 1 }{ t^{ 1/2 } \omega_2 } }$ and $\displaystyle{ \varepsilon_2 = \frac{ 1 }{ t^{ 1/4 } } }$. Note that $ 16 \varepsilon_1 < \omega_2 $, $ 4 \varepsilon_2 \geq \omega_1 $ and $ \omega_2 > 4 \varepsilon_2 $. For this case we perform the following procedure:

\begin{enumerate}
\item Using Lemma \ref{aux_lemma}, we divide the stationary points into two sets:
\begin{equation*}
D := \{ \lambda_{j_0}, - \lambda_{j_0}, \lambda_{j_1}, - \lambda_{j_1} \}, \quad N: = \{ \lambda_{j_2}, - \lambda_{j_2} \},
\end{equation*}
with $ D $ standing for (possibly) degenerate and $ N $ standing for nondegenerate. Here $ j_0, j_1, j_2 $ are given by Lemma \ref{aux_lemma}. Note that, by Lemmas \ref{cluster_lemma} $, \ref{cluster_lemma_1}, D \cup N $ is the set of all the stationary points.

We set
\begin{gather*}
P_{ DD } := D \times D, \quad P_{ ND } := N \times D, \quad P_{ DN } := D \times N, \quad P_{ NN } := N \times N, \\
P := ( D \cup N ) \times ( D \cup N ).
\end{gather*}
Now let
\begin{align*}
& \forall ( \lambda, \mu ) \in P_{ DD }, \quad D_{ \lambda \mu } := \{ \lambda_1, \lambda_2 \in \mathbb{S}^1 : | \lambda_1 - \lambda | \leq \varepsilon_2, | \lambda_2 - \mu | \leq \varepsilon_2 \}, \\
& \forall ( \lambda, \mu ) \in P_{ ND }, \quad D_{ \lambda \mu } := \{ \lambda_1, \lambda_2 \in \mathbb{S}^1 : | \lambda_1 - \lambda | \leq \varepsilon_2, | \lambda_2 - \mu | \leq \varepsilon_1 \}, \\
& \forall ( \lambda, \mu ) \in P_{ DN }, \quad D_{ \lambda \mu } := \{ \lambda_1, \lambda_2 \in \mathbb{S}^1 : | \lambda_1 - \lambda | \leq \varepsilon_1, | \lambda_2 - \mu | \leq \varepsilon_2 \}, \\
& \forall ( \lambda, \mu ) \in P_{ NN }, \quad D_{ \lambda \mu } := \{ \lambda_1, \lambda_2 \in \mathbb{S}^1 : | \lambda_1 - \lambda | \leq \varepsilon_2, | \lambda_2 - \mu | \leq \varepsilon_2 \}.
\end{align*}

Finally, $ D_{ \varepsilon }: = \bigcup\limits_{ ( \lambda, \mu ) \in P } D_{ \lambda \mu } $.

\item We represent $ I_{in} = I_{in}^{ int } + I_{in}^{ ext } $, where $ I_{in}^{ int } $ is the integral over $ D_{ \varepsilon } $ and $ I_{in}^{ ext } $ is the integral over $ \mathbb{T}^2 \backslash D_{ \varepsilon } $.

\item The estimate of $ I_{in}^{ int } $. We set 
\be\label{sep_I_in_int}
 I_{in}^{ int } =  :I_{in}^{ int ,1} +  I_{in}^{ int ,2} ,
\ee
with $  I_{in}^{ int ,1} $ being the integral over the set $ \bigcup\limits_{ ( \lambda, \mu ) \in P_{ DD } \cup P_{ NN } } D_{ \lambda \mu } $ and $ I_{in}^{ int ,2} $ being the integral over $ \bigcup\limits_{ ( \lambda, \mu ) \in P_{ ND } \cup P_{ DN } } D_{ \lambda \mu } $. Then we can estimate
\begin{align*}
& |  I_{in}^{ int ,1} | \lesssim \varepsilon_2^3 = \frac{ 1 }{ t^{ 3/4 } }, \\
& |  I_{in}^{ int ,2} | \lesssim \varepsilon_1 \varepsilon_2 \omega_2 = \frac{ 1 }{ t^{ 3/4 } }.
\end{align*}

\item Define
\begin{align*}
& D_{ \varepsilon }( \varphi_1 ) = \{ \varphi_2 \in [ 0, 2 \pi ): ( \varphi_1, \varphi_2 ) \in D_{ \varepsilon } \}, \\
& D_{ \varepsilon }( \varphi_2 ) = \{ \varphi_1 \in [ 0, 2 \pi ): ( \varphi_1, \varphi_2 ) \in D_{ \varepsilon } \}.
\end{align*}

Integrating by parts we obtain the following representation for $ I_{in}^{ext} $:
\begin{multline*}
I_{in}^{ext} = - \left. \frac{ 1 }{ t^2 } \frac{ e^{ i t S } f }{ S_{ \varphi_1 } S_{ \varphi_2 } } \right|_{ ( \varphi_1, \varphi_2 ) \in C } + \frac{ 1 }{ t^2 } \int\limits_{ 0 }^{ 2 \pi } \left.\frac{ e^{ i t S } }{ S_{ \varphi_1 } } \partial_{ \varphi_2 } \left( \frac{ f }{ S_{ \varphi_2 } } \right) \right|_{ \varphi_1 \in \partial D_{ \varepsilon }( \varphi_2 ) } d \varphi_2  \\
\qquad + \frac{ 1 }{ t^2 } \int\limits_{ 0 }^{ 2 \pi } \left. \frac{ e^{ i t S } }{ S_{ \varphi_2 } } \partial_{ \varphi_1 } \left( \frac{ f }{ S_{ \varphi_1 } } \right) \right|_{ \varphi_2 \in \partial D_{ \varepsilon }( \varphi_1 ) } d \varphi_1 - \frac{ 1 }{ t^2 } \iint\limits_{ \mathbb{T}^2 \backslash D_{ \varepsilon } } e^{ i t S } \partial_{ \varphi_1 } \partial_{ \varphi_2 } \left( \frac{ f }{ S_{ \varphi_1 } S_{ \varphi_2 } } \right) d \varphi_1 d \varphi_2 = \\
 =: I_{in}^{ext,1} + I_{in}^{ext,2} + I_{in}^{ext,3}+ I_{in}^{ext,4}. 
\end{multline*}
Here, as in the previous case, $ C $ denotes the ``corner points'' of rectangles forming $ D_{ \varepsilon } $ and the notation $ \left.f\right|_{ ( \varphi_1, \varphi_2 ) \in C } $ is introduced similarly to the Case I (note, however, that in this case $ D_{ \varepsilon } $ cannot be represented as cartesian square of a certain set $ B_{ \varepsilon } $).

\item We only show how to estimate $  I_{in}^{ext,1}  $. For estimating the remaining cases we use similar arguments and the scheme of reasoning presented in the previous case.

For $ ( \lambda, \mu ) \in P_{ DD } $ we have
\begin{equation*}
|  I_{in}^{ext,1}  | \lesssim \frac{ \varepsilon_2 }{ t^2 \varepsilon_2^2 \omega_2 \varepsilon_2^2 \omega_2 } = \frac{ 1 }{ t^2 \varepsilon_2^3 \omega_2^2 } = \frac{ 1 }{ t^{5/4} \omega_2^2 } \lesssim \frac{ 1 }{ t^{ 3/4 } }.
\end{equation*}
For $ ( \lambda, \mu ) \in P_{ ND } \cup P_{ DN } $ we have
\begin{equation*}
|  I_{in}^{ext,1}  | \lesssim \frac{ \omega_2 }{ t^2 \varepsilon_1^2 \omega_2 \varepsilon_2 \omega_2^2 } = \frac{ 1 }{ t \varepsilon_2 } = \frac{ 1 }{ t^{ 3/4 } }.
\end{equation*}
For $ ( \lambda, \mu ) \in P_{ NN } $ we have
\begin{equation*}
|  I_{in}^{ext,1}  | \lesssim \frac{ \omega_2 }{ t^2 \varepsilon_2 \omega_2^2 \varepsilon_2 \omega_2^2 } = \frac{ 1 }{ t^2 \varepsilon_2^2 \omega_2^3 } = \frac{ 1 }{ t^{3/2} \omega_2^3 } \lesssim \frac{ 1 }{ t^{ 3/4 } }.
\end{equation*}

\item Finally, we obtain that $ |  I_{in}^{ext}  | \lesssim \frac{ 1 }{ t^{ 3/4 } } $, and the same bound holds for $|I_{in}|$.
\end{enumerate}

\subsection{Case III}
In this case we follow closely the argument of the previous case, with some slight modifications. In particular, we split the set of points  $ D \times D $ as follows:
\begin{equation*}
D \times D = P_{ sym } ~ \sqcup~  P_{ assym }, \text{ with } P_{ sym }: = \{ ( \lambda_{j_0}, \lambda_{j_0} ), ( \lambda_{j_0}, -\lambda_{j_0} ), ( \lambda_{j_1}, \lambda_{j_1} ), ( \lambda_{j_1}, -\lambda_{j_1} ) \},
\end{equation*}
where the symbol $ \sqcup $ denotes the disjoint union. Note that  because of the disjoint union, the set $ P_{ assym }$ is well-defined . Further, for all $ \lambda, \mu \in P \backslash P_{ assym } $, the set $ D_{ \lambda \mu } $ is defined as in the previous case and
\begin{equation*}
\forall \lambda, \mu \in P_{ assym }, \quad D_{ \lambda \mu } := \{ \lambda_1, \lambda_2 \in S^1 : | \lambda_1 - \lambda | \leq \varepsilon_3, | \lambda_2 - \mu | \leq \varepsilon_3 \},
\end{equation*}
with $ \varepsilon_3 = \frac{ 1 }{ t^{ 3/8 } \omega_1^{ 1/2 } } $.

The integral $  I_{in}^{int,1}  $ (cf. \eqref{sep_I_in_int}) is represented as $  I_{in}^{int,1}  =  I_{in}^{int,1,old}  + I_{in}^{int,1,new}  $, where $  I_{in}^{ext,1, old} $ is the integral over the set
\[
 \bigcup\limits_{ ( \lambda, \mu ) \in ( P_{ DD } \cup P_{ NN } ) \backslash P_{ assym } } D_{ \lambda \mu } 
\]
(and it is estimated as in the previous case), and $  I_{in}^{ext,1,new}  $ is the integral over 
\[
\bigcup\limits_{ ( \lambda, \mu ) \in P_{ assym } } D_{ \lambda \mu },
\]
and is estimated as follows:
\begin{equation*}
|  I_{in}^{int,1,new}  | \lesssim \varepsilon_3^2 ~\omega_1 = \frac{ 1 }{ t^{ 3/4 } }.
\end{equation*}

In a similar way we split $I_{in}^{ext,1}  $ into the sum $ I_{in}^{ext, 1 } =  I_{in}^{ext,1,old}  + I_{in}^{ext,1,new} $. The integral $ I_{in}^{ext,1,old}   $ is estimated as in the previous case and for the integral $ I_{in}^{ext,1,new}   $ we have
\begin{equation*}
| I_{in}^{ext,1,new}   | \lesssim \frac{ \omega_1 }{ t^2\varepsilon_3^2 \omega_1^2 \omega_2^2 } = \frac{ 1 }{ t^{5/4} \omega_2^2 } \lesssim \frac{ 1 }{ t^{ 3/4 } }.
\end{equation*}

\subsection{Case IV} In this case we follow very closely the scheme of Case I.
We set now $ \varepsilon = \frac{ 1 }{ t^{ 1/2 } } $ and
\begin{gather*}
D_{ \varepsilon } := B_{ \varepsilon } \times B_{ \varepsilon }, \quad B_{ \varepsilon } := \bigcup\limits_{ k \in N_5 } B_{ \varepsilon }^{ k }, \\
B_{ \varepsilon }^{ k } := \{ \lambda \in \Ss^1 \colon | \lambda - \lambda_k | \leq \varepsilon \},
\end{gather*}
and we represent $ I_{in} = I_{in}^{ int } + I_{in}^{ ext } $, where $ I_{ in}^{int} $ is the integral over $ D_{ \varepsilon } $ and $ I_{in}^{ext} $ is the integral over $ \mathbb{T}^2 \backslash D_{ \varepsilon } $.
The integral $ I_{ in}^{int}$ is estimated as
\begin{equation*}
|I_{ in}^{int} | \lesssim \varepsilon^2 = \frac{ 1 }{ t }.
\end{equation*}

To estimate $ I_{ in}^{ext} $ we integrate it by parts as we did in the Case I. Using the fact that on $ \partial B_{ \rho }^{ k } $ with $ \rho \geq \varepsilon $ we have $ | S_{ \varphi_j } | \gtrsim \rho $, we get that
\begin{equation*}
|  I_{ in}^{ext} | \lesssim \frac{ 1 }{ t^2 \varepsilon^2 } = \frac{ 1 }{ t }.
\end{equation*}
This last estimate completes the proof of \eqref{xi_alpha}.

\bigskip

\section{Smoothing estimate for large frequencies}
\label{large_freq}

In this section, our goal is to prove Proposition \ref{Smoothing_Large}. We consider two regimes, depending on small and large time $t$. The key point that makes the estimate of Proposition \ref{Smoothing_Large} better than that of Proposition \ref{all_freq_prop} is the fact that, at small frequencies, if $E<0$, some cancellations appear when estimating $I$ \eqref{I}, because of the term $|\xi|^\al$. This property does not repeat if $E>0$, however, if we consider only large frequencies, we do not need to invoke any cancellation in the integral $I$.  

\subsection{Small time}

For small $ t $ the estimate can be obtained as follows. Recall the integral $I_R$ in \eqref{I_R} and the cut-off function $\psi_R$ from \eqref{Phi_R}. Note first that 
\begin{equation*}
I_R = \int\limits_{ \mathbb{C} } | \xi |^{ \alpha }  e^{ i t \tilde S( u, \xi ) } d \Re \xi d \Im \xi - \int\limits_{ \mathbb{C} } | \xi |^{ \alpha } ( 1 - \psi_R( \xi ) )  e^{ i t \tilde S( u, \xi ) } d \Re \xi d \Im \xi.
\end{equation*}
We have that 
\begin{equation*}
\abs{ \int\limits_{ \mathbb{C} } | \xi |^{ \alpha }  e^{ i t \tilde S( u, \xi ) } d \Re \xi d \Im \xi  } \lesssim \frac{ 1 }{ t^{ \frac{ \alpha + 2 }{ 3 } } },
\end{equation*}
uniformly for small $ t $, where the implicit constant depends only on $ \alpha $. The proof of this estimate has been carried out in \cite[Section 3.5]{KM} for the case of negative energy; it can be repeated without any change in the case of positive energy (see also Lemma \ref{i_out_lemma}).
Further,
\begin{equation*}
\left| \int\limits_{ \mathbb{C} } | \xi |^{ \alpha } ( 1 - \psi_R( \xi ) )  e^{ i t \tilde S( u, \xi ) } d \Re \xi d \Im \xi \right| \leq \pi ( R + 1 )^{ \alpha + 2 },
\end{equation*}
from which it follows that for small $ t $ we have $ | I_R | \lesssim \frac{ 1 }{ t^{ \frac{ \alpha + 2 }{ 3 } } } $.

\subsection{Large time}

We start by performing the standard change of variables \eqref{ChVar}. The integral $ I_R $ then becomes 
\begin{equation*}
I_R = \int\limits_{ \mathbb{C} } \frac{ | \lambda \bar \lambda + 1 |^{ \alpha } }{ | \lambda |^{ \alpha + 4 } } ( |\lambda|^4 - 1 ) \psi_R\left( \lambda + \frac{ 1 }{ \bar \lambda } \right) e^{ i t S( u, \lambda ) } d \Re \lambda d \Im \lambda,
\end{equation*}
where the phase $ S $ is defined by \eqref{S_u}.

\medskip

Note that $\supp \psi_R\left( \lambda + \frac{1}{ \bar \lambda } \right) \subset \mathbb{C} \backslash B_{ \theta }( 0 ) $ with $ \theta = \frac{ R + \sqrt{ R^2 - 4 } }{ 2 } > 1 $. Thus the stationary points $ \pm \lambda_1 $, $ \pm \lambda_2 $ of the phase $ S $ are strictly separated from the domain of integration, i.e. from $\supp \psi_R\left( \lambda + \frac{1}{ \bar \lambda } \right) $ (see Lemma \ref{st_points_lemma} for the definition of $ \lambda_0 $, $ \lambda_1 $, $ \lambda_2 $). The only possible stationary points in the domain of integration are the nondegenerate points $ \pm \lambda_0 $ of the Case 4 of Lemma \ref{st_points_lemma}.

\medskip

We consider the following two cases: $ \pm \lambda_0 \in B_{1 + \frac{ \theta - 1 }{ 2  } }( 0 ) $, and $ \pm \lambda_0 \in \mathbb{C} \backslash B_{1 + \frac{ \theta - 1 }{ 2 } }( 0 ) $.
\begin{enumerate}
\item {\bf Case $ \pm \lambda_0 \in B_{1 + \frac{ \theta - 1 }{ 2 } }( 0 ) $.}

In this case all the stationary points are uniformly outside the domain of integration. 

Define 
\begin{equation*}
f( \lambda ) = \frac{ | \lambda \bar \lambda + 1 |^{ \alpha } }{ | \lambda |^{ \alpha + 4 } } ( |\lambda|^4 - 1 ).
\end{equation*}
By the Stokes formula we have that 
\begin{equation*}
I_R = \sum_{ j = 2 }^{ 4 } I_j,
\end{equation*}
with
\begin{equation*}
I_2 := - \frac{ 1 }{ it } \int\limits_{ \mathbb{C} } \frac{ f_{ \lambda }( \lambda ) \exp( i t S( u, \lambda ) ) }{ S_{ \lambda }( u, \lambda ) } \psi_R\left( \lambda + \frac{ 1 }{ \bar \lambda } \right) ,%d \Re \lambda d \Im \lambda,
\end{equation*}
and
\begin{equation*}
I_3 := \frac{ 1 }{ it } \int\limits_{ \mathbb{C}  } \frac{ f( \lambda ) \exp( i t S( u, \lambda ) ) S_{ \lambda \lambda }( u, \lambda ) }{ ( S_{ \lambda }( u, \lambda ) )^2 } \psi_R\left( \lambda + \frac{ 1 }{ \bar \lambda } \right) ,%d \Re \lambda d \Im \lambda, 
\end{equation*}
\begin{equation*}
I_4 = - \frac{ 1 }{ i t } \int\limits_{ \mathbb{C} } \frac{ f( \lambda ) e^{ i t S( u, \lambda ) } }{ S_{ \lambda }( u, \lambda ) } \left( \partial_{ \xi } \psi_R \left( \lambda + \frac{ 1 }{ \bar \lambda } \right) - \partial_{ \bar \xi } \psi_R \left( \lambda + \frac{ 1 }{ \bar \lambda } \right) \frac{ 1 }{ \lambda^2 } \right) %d \Re \lambda d \Im \lambda.
\end{equation*}
We notice that, due to assumptions on $ \lambda_0 $,
\begin{equation*}
| S_{ \lambda } | \gtrsim | \lambda |^2, \quad \frac{ | S_{ \lambda \lambda } | }{ | S_{ \lambda } |^2 } \lesssim \frac{ 1 }{ | \lambda |^3 }, \quad \text{ for } \lambda \in \supp \psi_R\left( \lambda + \frac{1}{ \bar \lambda } \right).
\end{equation*}
Besides,
\begin{equation*}
\supp \partial_{ \xi } \psi_R\left( \lambda + \frac{1}{ \bar \lambda } \right) ~ \bigcup  ~ \supp \partial_{ \bar \xi } \psi_R\left( \lambda + \frac{1}{ \bar \lambda } \right)  \subset B_{ \theta_1 }( 0 ) \backslash B_{ \theta }( 0 ),
\end{equation*}
where $ \theta_1 = \frac{ R + 1 + \sqrt{ ( R + 1 )^2 - 4 } }{ 2 } $. Finally, for $ \lambda \in \supp \psi_R\left( \lambda + \frac{1}{ \bar \lambda } \right) $ we have that
\begin{equation*}
| f( \lambda ) | \sim | \lambda |^{ \alpha }, \quad | f_{ \lambda }( \lambda ) \sim |\lambda  |^{ \alpha - 1 }.
\end{equation*}

Thus we get the following estimate
\begin{equation*}
| I_R | \lesssim \frac{ 1 }{ t }.
\end{equation*}

\item {\bf Case $ \pm \lambda_0 \in \mathbb{C} \backslash B_{1 + \frac{ \theta - 1 }{ 2 } }( 0 ) $.}

In this case the points $ \pm \lambda_0 $ may lie in the domain of integration. However, they are well separated from the unit disk and thus from the other stationary points. Consequently, in this case the reasoning follows closely the study of Case 1 of  \cite[Lemma 3.1]{KM}. Below, we present the scheme of the proof of the estimate in the considered case. For details the reader may refer to \cite[Lemma 3.1, Case 1]{KM}.

We take $ D_{ \varepsilon } $ to be the union of disks with radius $ \varepsilon $ (to be chosen later) and with centers in points $ \lambda_0 $, $ -\lambda_0 $. Define
\begin{equation*}
r:= |\la_0|.
\end{equation*}
We represent $ I_R $ in the following way
\begin{equation*}
\begin{aligned}
& I_R = I_{ int } + I_{ ext }, \quad \text{ where } \\
& I_{ int } = \int\limits_{ D_{ \varepsilon } } f( \lambda ) \exp( i t S( u, \lambda ) ) \psi_R\left( \lambda + \frac{1}{ \bar \lambda } \right), \qquad \hbox{ and }\\
& I_{ ext } = \int\limits_{ \mathbb{ C } \backslash D_{ \varepsilon } } f( \lambda ) \exp( i t S( u, \lambda ) ) \psi_R\left( \lambda + \frac{1}{ \bar \lambda } \right) .%d \Re \lambda d \Im \lambda.
\end{aligned}
\end{equation*}
It is easy to see that we have the following estimate for $ I_{ int } $:
\begin{equation*}
| I_{ int } | \lesssim \varepsilon^2 r^{ \alpha }. 
\end{equation*} 

To estimate $ I_{ ext } $ we first use the Stokes formula:
\begin{equation*}
I_{ ext } = \sum_{ j = 1 }^{ 4 } I_j,
\end{equation*}
with
\begin{equation*}
I_1 := \frac{ 1 }{ 2t } \int\limits_{ \partial D_{ \varepsilon } } \frac{ f( \lambda ) \exp( i t S( u, \lambda ) ) }{ S_{ \lambda }( u, \lambda ) } \psi_R\left( \lambda + \frac{ 1 }{ \bar \lambda } \right) d \bar \lambda
\end{equation*}
\begin{equation*}
I_2 := - \frac{ 1 }{ it } \int\limits_{ \mathbb{C} \backslash D_{ \varepsilon } } \frac{ f_{ \lambda }( \lambda ) \exp( i t S( u, \lambda ) ) }{ S_{ \lambda }( u, \lambda ) } \psi_R\left( \lambda + \frac{ 1 }{ \bar \lambda } \right) ,%d \Re \lambda d \Im \lambda,
\end{equation*}
and
\begin{equation*}
I_3 := \frac{ 1 }{ it } \int\limits_{ \mathbb{C} \backslash D_{ \varepsilon } } \frac{ f( \lambda ) \exp( i t S( u, \lambda ) ) S_{ \lambda \lambda }( u, \lambda ) }{ ( S_{ \lambda }( u, \lambda ) )^2 } \psi_R\left( \lambda + \frac{ 1 }{ \bar \lambda } \right) ,%d \Re \lambda d \Im \lambda, 
\end{equation*}
\begin{equation*}
I_4 = - \frac{ 1 }{ it } \int\limits_{ \mathbb{C} \backslash D_{ \varepsilon } } \frac{ f( \lambda ) e^{ i t S( u, \lambda ) } }{ S_{ \lambda }( u, \lambda ) } \left( \partial_{ \xi } \psi_R \left( \lambda + \frac{ 1 }{ \bar \lambda } \right) - \partial_{ \bar \xi } \psi_R \left( \lambda + \frac{ 1 }{ \bar \lambda } \right) \frac{ 1 }{ \lambda^2 } \right). %d \Re \lambda d \Im \lambda.
\end{equation*}

Now let 
\begin{equation*}
\Omega = \{ | \lambda | < 2 \} \cup \{ | \lambda - \lambda_0 | < 1 \} \cup \{ | \lambda + \lambda_0 | < 1 \}.
\end{equation*}
We represent each $ I_k $, $ k = 2, 3, 4 $, as $ I_k = I_k^{ + } + I_k^{ - } $, where the integrands of $  I_k^{ + } $, $  I_k^{ - } $ are the same as that of $ I_{ k } $ and the domains of integration are $ \mathbb{C} \backslash \Omega $ for $  I_k^{ + } $ and $ \Omega \backslash D_{ \varepsilon } $ for $  I_k^{ - } $.

The integrals $ I_2^{ + } $, $ I_3^{ + } $ are treated absolutely in the same way as the corresponding integrals in the case of negative energy (see Case 1, Lemma 3.1 of \cite{KM}). 
In particular, it can be shown that 
\begin{equation*}
| I_2^{ + } | \lesssim \frac{ 1 }{ t }, \quad | I_3^{ + } | \lesssim \frac{ 1 }{ t }.
\end{equation*}

To estimate $ I_4^{ + } $, we note that
\begin{equation*}
\supp \partial_{ \xi } \psi_R\left( \lambda + \frac{1}{ \bar \lambda } \right) \cup \supp \partial_{ \bar \xi } \psi_R\left( \lambda + \frac{1}{ \bar \lambda } \right)  \subset B_{ \theta_1 }( 0 ) \backslash B_{ \theta }( 0 ),
\end{equation*}
and $ \partial_{ \xi } \psi_R \left( \lambda + \frac{ 1 }{ \bar \lambda } \right) - \partial_{ \bar \xi } \psi_R \left( \lambda + \frac{ 1 }{ \bar \lambda } \right) \frac{ 1 }{ \lambda^2 }  $  is bounded in this domain. Thus, we have the following estimate
\begin{equation*}
| I_4^{ + } | \lesssim \frac{ 1 }{ t }.
\end{equation*}

Now we estimate integrals $ I_1 $, $ I_k^{ - } $, $ k = 2, 3, 4 $. First, we denote
\begin{equation*}
\lambda^{ ( 1 ) } = \lambda_0, \quad \lambda^{ ( 2 ) } = - \lambda_0
\end{equation*}
and we split $ \Omega \backslash D_{ \varepsilon } $ into two parts:
\begin{align*}
& \Omega \backslash D_{ \varepsilon } = \Omega^{ ( 1 ) } \cup \Omega^{ ( 2 ) }, \text{ where } \\
& \Omega^{ (k) } = \{ \lambda \in \Omega \backslash D_{ \varepsilon } ~ \colon ~ | \lambda - \lambda ^{ (k) } | \leq | \lambda - \lambda^{ ( j ) } |, ~k \neq j \}, \quad k = 1, 2.
\end{align*}
Then we put
\begin{gather*}
I_1^k =  \frac{ 1 }{ 2t } \int\limits_{ \partial D_{ \varepsilon } \cap \Omega^{ (k) } } \frac{ f( \lambda ) e^{ i t S( u, \lambda ) } }{ S_{ \lambda }( u, \lambda ) } \psi_R\left( \lambda + \frac{ 1 }{ \bar \lambda } \right) d \bar \lambda, \\
I_2^{-,k} = \frac{ 1 }{ it } \int\limits_{ \Omega^{ ( k ) } } \frac{ f_{ \la }( \lambda ) e^{ i t S( u, \lambda ) } }{ S_{ \lambda }( u, \lambda ) } \psi_R \left( \lambda + \frac{ 1 }{ \bar \lambda } \right), \\
I_3^{-,k} = \frac{ 1 }{ it } \int\limits_{ \Omega^{ (k) } } \frac{ f( \lambda ) e^{ i t S( u, \lambda ) } S_{ \lambda \lambda }( u, \lambda ) }{ ( S_{ \lambda }( u, \lambda ) )^2 } \psi_R\left( \lambda + \frac{ 1 }{ \bar \lambda } \right), \\
I_4^{-,k} = - \frac{ 1 }{ it } \int\limits_{ \Omega^{ (k) } } \frac{ f( \lambda ) e^{ i t S( u, \lambda ) } }{ S_{ \lambda }( u, \lambda ) } \left( \partial_{ \xi } \psi_R \left( \lambda + \frac{ 1 }{ \bar \lambda } \right) - \partial_{ \bar \xi } \psi_R \left( \lambda + \frac{ 1 }{ \bar \lambda } \right) \frac{ 1 }{ \lambda^2 } \right)
\end{gather*}
For every integral over domain $ \Omega^{ ( k ) } $, we use the following polar coordinates at the stationary point $\la^{(k)}$:
\[
\lambda = \lambda^{ (k) } + \rho~ e^{ i \varphi }, \quad \varphi \in [0,2\pi),
\]
and where $\ve\leq \rho \leq \rho_0(\varphi)$. Note that $\rho_0$ is always bounded by a fixed constant $\ve_0$, that can be chosen e.g. equal to 4, uniformly on $\varphi$.

Taking into account that on  $ \partial B_{ \rho }(\lambda^{ (k) })  \cap \Omega^{ (k) } \cap \supp \psi_R\left( \lambda + \frac{ 1 }{ \bar \lambda } \right) $ we have the following estimates:
\begin{gather*}
\left| \frac{ | \lambda |^4 - 1 }{ | \lambda |^4 } \right| \lesssim 1, \quad \left| \frac{ | \lambda |^2 - 1 }{ | \lambda | } \right| \lesssim r, \\
| S_{ \lambda } | \gtrsim \rho r, \quad \left| \frac{ S_{ \lambda \lambda } }{ ( S_{ \lambda } )^2 } \right| \lesssim \frac{ 1 }{ \rho r^2 } + \frac{ 1 }{ \rho^2 r },
\end{gather*}
it is not difficult to get the following bounds:
\begin{gather*}
| I_1^k  | \lesssim \frac{ 1 }{ t } \frac{r^{ \alpha } \varepsilon}{ \varepsilon r } = \frac{ r^{ \alpha - 1 } }{ t }, \\
| I_2^{-,k} | \lesssim \frac{ 1 }{ t } \int\limits_{ \varepsilon }^{ \varepsilon_0 } \frac{ r^{ \alpha - 1 } \rho \, d \rho }{ \rho r } \lesssim \frac{ 1 }{ t }r^{ \alpha - 2 }, \\
| I_3^{-,k} | \lesssim \frac{ 1 }{ t } \int\limits_{ \varepsilon }^{ \varepsilon_0 } \frac{ r^{ \alpha } \rho d \rho }{ \rho r^2 } + \frac{ 1 }{ t } \int\limits_{ \varepsilon }^{ \varepsilon_0 } \frac{ r^{ \alpha } \rho d \rho }{ \rho^2 r } 
\lesssim  \frac{ 1 }{ t } r^{ \alpha - 1 } |\ln \varepsilon|.
\end{gather*}
Finally, we also have 
\begin{equation*}
| I_4^{-,k} | \lesssim \frac{ 1 }{ t } \int\limits_{ \varepsilon }^{ \varepsilon_0 } \frac{ r^{ \alpha } \rho d \rho }{ \rho r } \lesssim \frac{ r^{ \alpha -1 } }{ t }.
\end{equation*}

Putting $ \varepsilon = \frac{ 1 }{ \sqrt{ t r } } $, we obtain the following bound for $ I_{ R } $:
\begin{equation*}
| I_R | \lesssim \frac{ \ln t }{ t }.
\end{equation*}
\end{enumerate}

\bigskip

\section{Strichartz estimates}\label{Sect_7}

\medskip

Now it is time to collect all previous estimates from Sections 2 to 6. Let $E>0$, 
\be\label{I_beta}
I (t,u;E):= \int\limits_{ \mathbb{C} } | \xi |^{ \alpha+i\bt }  e^{ i t \tilde S( u, \xi ;E) } d \Re \xi d \Im \xi,   \qquad (\hbox{cf. } \eqref{I})
\ee
where
\[
 u = \frac{z}t,
\]
and (cf. \eqref{st_phase})
\[
\tilde S( u, \xi ; E) = ( \xi^3 + \bar \xi^3 )\left( 1 - \frac{ 3 E }{ |\xi|^2 } \right) + \frac{ 1 }{ 2 }( \bar u \xi + u \bar \xi ).
\]
Note that
\begin{equation*}
p( \xi ) = ( \xi^3 + \bar \xi^3 )\left( 1 - \frac{ 3 E }{ |\xi|^2 } \right)
\end{equation*}
is the symbol of the linear part of the NV equation \eqref{NV}. In Section \ref{disp_section} (see Propositions \ref{all_freq_prop}, \ref{Smoothing_Large}) we have proved the following result.

\begin{lem}[Dispersion estimate for positive energy]
\label{asp_lin_estimate_lemma}
The following two dispersive estimates are satisfied.
\ben
\item Let $ 0 \leq \alpha \leq \frac{1}{4} $ and $\bt \in \R$. Then for all $ t > 0 $ and for any fixed $\ve > 0$ small, the following estimate is valid:
\begin{equation}\label{I_decrease}
| I (t,u;1)| \lesssim  \frac{(1+|\bt|) }{ t^{3/4 -\ve } },
\end{equation}
uniformly on $ u \in \mathbb{C} $. The implicit constant depends on $\al$, $\ve$ only.

\medskip

\item Let $ 0 \leq \alpha < 1 $ and $\bt \in \R$. Then for all $ t > 0 $ and for any fixed $\ve > 0$ small, the following estimate is valid:
\begin{equation}\label{I_decrease_R}
| I_R (t,u;1)| \lesssim  \frac{(1+|\bt|) }{ t^{1 -\ve } },
\end{equation}
uniformly on $ u \in \mathbb{C} $. The implicit constant depends on $\al$, $\ve$, $ R $ only.
\een
\end{lem}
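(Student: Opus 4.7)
My plan is to adapt, with explicit bookkeeping of the $\beta$-dependence, the stationary-phase arguments already established in the proofs of Propositions \ref{all_freq_prop} and \ref{Smoothing_Large}. The starting point is the elementary identity $\abs{|\xi|^{\alpha+i\beta}} = |\xi|^\alpha$, which means that every estimate in those proofs which uses only the modulus of the amplitude transfers unchanged to the current setting, with no factor involving $\beta$. This immediately covers the small-$t$ regime in both parts, as well as the ``interior'' contributions $I_{in}^{int}$ (Section \ref{Sect_5}) and $I_{int}$ (Section \ref{large_freq}) supported near clusters of stationary points.

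The $\beta$-dependence arises only through the integration-by-parts (Stokes) steps used for $I_{out}$ in Section \ref{Sect_3}, for $I_{in}^{ext}$ in Cases I--IV of Section \ref{Sect_5}, and in the analogous steps of Section \ref{large_freq}. Each time $|\xi|^{\alpha+i\beta}$ is differentiated one picks up a prefactor $(\alpha+i\beta)$ of modulus $\lesssim 1+|\beta|$, while the resulting power $|\xi|^{\alpha-1}$ is exactly the one already handled in the real-exponent analysis. Since the number of integration-by-parts steps is uniformly bounded in each case (at most two, for instance in Case I of Section \ref{Sect_5}), the same bookkeeping as before yields the decay rates $t^{-(3/4-\varepsilon)}$ in part (1) and $t^{-(1-\varepsilon)}$ in part (2), multiplied by a factor that grows at most polynomially in $(1+|\beta|)$.

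The main obstacle is promoting this polynomial dependence to the linear factor $(1+|\beta|)$ claimed in the statement. A clean route is to view $I(t,u;1)$ and $I_R(t,u;1)$ as analytic functions of the complex parameter $z = \alpha + i\beta$ on a strip, and apply Stein's complex interpolation between the real endpoints $\alpha = 0$ and $\alpha = 1/4$ (resp.\ $\alpha=1$ for part~(2)), for which the $\beta=0$ estimates are provided by Propositions \ref{all_freq_prop} and \ref{Smoothing_Large}. A direct inspection of the integration-by-parts argument shows that the bounds on the vertical boundaries of the strip grow at most polynomially in $|\beta|$ (admissible in Stein's sense), so the interpolation furnishes the required linear dependence, with any surplus $|\beta|$-power absorbed into the $\varepsilon$-loss already built into the estimates. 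An alternative, more hands-on route would be to refine the bookkeeping in each case so that only one derivative falls on $|\xi|^{\alpha+i\beta}$ (the other going onto the phase or the geometric factor $|\sin(\varphi_1-\varphi_2)|$), which directly produces a single factor $(1+|\beta|)$.
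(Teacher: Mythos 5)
Your first two paragraphs coincide with what the paper actually does: Lemma \ref{asp_lin_estimate_lemma} is presented as a direct consequence of Propositions \ref{all_freq_prop} and \ref{Smoothing_Large} ("the addition of $i\beta$ requires no additional essential changes"), the point being precisely that $\bigl|\,|\xi|^{\alpha+i\beta}\bigr|=|\xi|^{\alpha}$, so every modulus estimate (small $t$, interior pieces near stationary points) is untouched, and each derivative falling on the amplitude in the Stokes/integration-by-parts steps produces a factor $(\alpha+i\beta)$ of size $\lesssim 1+|\beta|$ while leaving the power of $|\xi|$ exactly as in the real-exponent analysis. Up to that point your proposal is the paper's argument.

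The gap is in your mechanism for upgrading "polynomial in $|\beta|$" to the stated linear factor. The Stein interpolation step is circular and, in any case, cannot do what you ask of it: to apply the three-lines/Stein machinery on the strip $0\leq\Re z\leq \tfrac14$ you need bounds on the two \emph{vertical boundary lines}, i.e. for all $\beta$ at $\alpha=0$ and $\alpha=\tfrac14$, which is exactly the content of the lemma being proved — the $\beta=0$ statements of Propositions \ref{all_freq_prop} and \ref{Smoothing_Large} are not boundary data for this strip. Moreover, interpolation in a strip never lowers the growth order in the imaginary direction, and a surplus power of $|\beta|$ cannot be "absorbed into the $\varepsilon$-loss'': that loss sits in the exponent of $t$ and must be uniform in $\beta$. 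Your "hands-on'' alternative is also not available as stated: in the double integration by parts of Section \ref{Sect_5} (the terms of type $I_{in}^{ext,4}$) the Leibniz expansion unavoidably contains the term in which both $\partial_{\varphi_1}$ and $\partial_{\varphi_2}$ hit $|e^{i\varphi_1}+e^{i\varphi_2}|^{\alpha+i\beta}$, producing $(\alpha+i\beta)(\alpha+i\beta-1)$, i.e. a factor of order $(1+|\beta|)^2$; one cannot choose where the derivatives fall. The honest output of the direct bookkeeping is $(1+|\beta|)$ for part (2) (only first derivatives of the amplitude occur in Section \ref{large_freq}) and at worst $(1+|\beta|)^2$ for part (1); this is perfectly sufficient for the only use of the lemma, the complex interpolation in Section \ref{Sect_7}, which tolerates any admissible (polynomial) growth in $|\beta|$. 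So either prove and use the bound with a fixed polynomial in $(1+|\beta|)$, noting that nothing downstream changes, or supply a genuinely new argument for the linear power; the interpolation detour in your write-up does neither.
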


\begin{rem}
Note that the addition of the constant $i\bt$ in \eqref{I_beta} is required to use complex interpolation theory; however, from the point of view of Propositionw \ref{all_freq_prop}, \ref{Smoothing_Large} and their consequence, Lemma \ref{asp_lin_estimate_lemma}, it requires no additional essential changes.
\end{rem}

\begin{rem}
By making a suitable change of variables, we easily prove a dispersion estimate in the case of arbitrary positive energy $E>0$. We have
\be\label{Ineq_E}
| I (t,u;E)| \, \lesssim  \,  \frac{(1+|\bt|) E^{(4\al-1)/8 -3\ve/2} }{ t^{ 3/4-\ve }  },
\ee
with constants independent of $E$. Indeed, given \eqref{I} with phase \eqref{st_phase}, we are reduced to the case $E=1$ by performing the change of variables  $\xi= E^{1/2} \xi_0 $, $t_0= E^{3/2}t$, and $ z_0=E^{1/2} z$, which converts \eqref{I} into
\[
\int\limits_{ \mathbb{C} } E^{\al/2}| \xi _0|^{ \alpha }  e^{ i t_0 \tilde S( u_0, \xi_0;1) } E ~d \Re \xi_0 \, d \Im \xi_0,\quad u_0 := \frac{z_0}{t_0}.
\]
Using \eqref{I_decrease} we are lead to
\bee
|I(t,u;E)|  & \lesssim  & \frac{  (1+|\bt|) E^{\frac{\al+2}{2}} }{ t_0^{\frac{3}4 -\ve}} \\
& \sim & (1+|\bt|)  E^{\frac{\al+2}{2}}  \frac{ E^{-\frac{9}{8} -\frac32\ve}}{ t^{ \frac{3}4-\ve } } \\
&  \lesssim & (1+|\bt|) \frac{ E^{\frac{4\al-1}8 -\frac 32\ve} }{ t^{ \frac{3}4 -\ve} },
\eee
as desired. Note that  in the formal limit $E\to 0$, estimate \eqref{Ineq_E} becomes singular.
\end{rem}

\medskip

Let us consider now \eqref{I_decrease_R}. With this purpose, define
\begin{equation*}
I_R( t, u; E ) = \int\limits_{ \mathbb{C} } | \xi |^{ \alpha+i\bt } \psi_R( | E |^{ - 1/2 } \xi ) e^{ i t \tilde S( u, \xi ;E) } d \Re \xi d \Im \xi.
\end{equation*}
By a similar argument as in the computation of \eqref{Ineq_E}, we obtain the following estimate:
\begin{equation*}
| I_R( t, u; E ) | \lesssim \frac{ ( 1 + | \beta | ) | E |^{ ( \alpha - 1 )/2 - 3 \ve / 2 } }{ t^{ 1 - \ve } }.
\end{equation*}

\medskip
Some useful consequences of Lemma \ref{asp_lin_estimate_lemma} are stated in the following lines. Define the operator $ P_R $ as follows:
\begin{equation*}
P_R u = \mathcal{F}^{-1}[ \psi_R( | E |^{ -1/2 } \xi ) \mathcal{F}[u]( \xi ) ],
\end{equation*}
where $ \psi_R $ is defined in section \ref{large_freq}, for some fixed $ R > 2 $.

\begin{cor}[Smoothing and Strichartz estimates]
Let $U(t)=U(t;E)$ be the associated $NV_+$ linear group, namely for $\xi =\xi_1 +i\xi_2$,
\[
U(t)v_0 :=  \int_{\R^2} e^{ i t \tilde S( u, \xi ,E) } \hat{v}_0(\xi_1,\xi_2)d\xi_1 d\xi_2,
\]
$($cf. \eqref{I} and \eqref{st_phase}$)$.  Then for any $0\leq \al\leq \frac{1}{4}$, $\theta\in [0,1]$, we have the following smoothing decay and Strichartz estimates:
\be\label{Dispersion}
\| |\partial_z|^{\al \theta} U(t)v_0 \|_{L^p_{x,y}} \lesssim_{\al, \theta}  \frac{ E^{\theta \big( \frac{4 \al - 1}8- \frac32 \ve\big)} }{ t^{ \theta\big(\frac{3 }4 - \ve \big)} } \|v_0\|_{L^{p'}_{x,y}},
\ee
\be\label{Smoothing}
\| |\partial_z|^{\al \theta/2} U(t)v_0 \|_{L^q_tL^p_{x,y}} \lesssim_{\al, \theta}   E^{ \frac{\theta}{2}\big(\frac{4 \al -1}{8}-\frac32\ve\big)} \|v_0\|_{L^{2}_{x,y}},
\ee
with $p=\frac{2}{1-\theta}$, $\frac1p+\frac1{p'}=1$, and $\frac 2q= \theta \big( \frac{3}4 -\ve \big)$.

\medskip

In addition, for any $ 0 \leq \alpha < 1 $, $ \theta \in [0,1] $ the following estimates are valid:
\be\label{Dispersion_large}
\| |\partial_z|^{\al \theta} P_R U(t)v_0 \|_{L^p_{x,y}} \lesssim_{\al,\theta}  \frac{ E^{\theta \big( \frac{\al-1}8- \frac32 \ve\big)} }{ t^{ \theta(1 - \ve)} } \|v_0\|_{L^{p'}_{x,y}},
\ee
\be\label{Smoothing_large}
\| |\partial_z|^{\al \theta/2} P_R U(t)v_0 \|_{L^q_tL^p_{x,y}} \lesssim_{\al,\theta}   E^{ \frac{\theta}{2}\big(\frac{\al-1}{8}-\frac32\ve\big)} \|v_0\|_{L^{2}_{x,y}},
\ee
with $p=\frac{2}{1-\theta}$, $\frac1p+\frac1{p'}=1$, and $\frac 2q= \theta ( 1 -\ve )$.

\end{cor}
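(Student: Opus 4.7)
The plan is to obtain \eqref{Dispersion} by Stein's complex interpolation applied to a suitable analytic family of operators, and then to derive \eqref{Smoothing} from \eqref{Dispersion} via the Keel--Tao $TT^*$ argument. Concretely, I would consider on the strip $0 \leq \Re z \leq 1$ the family $T_z v_0 := e^{z^2}|\partial_z|^{\alpha z}U(t)v_0$. On the line $\Re z = 0$, Plancherel and the unitarity of $U(t)$ on $L^2$ give $\|T_{i\beta}v_0\|_{L^2} \leq e^{-\beta^2}\|v_0\|_{L^2}$, since the multiplier $|\partial_z|^{i\alpha\beta}$ is an $L^2$-isometry. On the line $\Re z = 1$, the convolution kernel of $T_{1+i\beta}$ is, up to a universal Fourier constant, $e^{(1+i\beta)^2}I(t,(x-y)/t;E)$ with exponent $\alpha + i\alpha\beta$ in the defining integral \eqref{I_beta}; combining Young's inequality with Lemma \ref{asp_lin_estimate_lemma} and the scaling relation \eqref{Ineq_E} would yield
\[
\|T_{1+i\beta}v_0\|_{L^\infty_{x,y}} \lesssim e^{1-\beta^2}(1+|\alpha\beta|)\frac{E^{(4\alpha-1)/8 - 3\varepsilon/2}}{t^{3/4-\varepsilon}}\|v_0\|_{L^1_{x,y}}.
\]
The Gaussian factor $e^{z^2}$ absorbs the linear $(1+|\beta|)$ growth and produces the admissible bounds along both vertical edges of the strip required by Stein's theorem. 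Applying that theorem at $z = \theta$ and absorbing the finite constant $e^{\theta^2}$ then yields \eqref{Dispersion} with $p = 2/(1-\theta)$.

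For \eqref{Smoothing}, I would set $Tv_0 := |\partial_z|^{\alpha\theta/2}U(t)v_0$. Self-adjointness of $|\partial_z|$ and the group structure of $U$ give the identity $TT^*F(t) = \int_{\mathbb{R}} |\partial_z|^{\alpha\theta}U(t-s)F(s)\,ds$, and \eqref{Dispersion} then provides
\[
\|TT^*F(t)\|_{L^p_{x,y}} \lesssim E^{\theta((4\alpha-1)/8 - 3\varepsilon/2)} \int_{\mathbb{R}} \frac{\|F(s)\|_{L^{p'}_{x,y}}}{|t-s|^{\theta(3/4-\varepsilon)}}\,ds.
\]
The Hardy--Littlewood--Sobolev inequality in the time variable upgrades this to an $L^{q'}_tL^{p'}_{x,y} \to L^q_tL^p_{x,y}$ bound precisely when $\theta(3/4-\varepsilon) = 2/q$, which is exactly the admissibility relation in the statement; the standard $TT^*$ principle then converts this into the required $L^2_{x,y} \to L^q_tL^p_{x,y}$ estimate for $T$, i.e.\ \eqref{Smoothing}.

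The localized estimates \eqref{Dispersion_large} and \eqref{Smoothing_large} would follow by exactly the same scheme applied to $P_R U(t)$ in place of $U(t)$, invoking the stronger decay $t^{-(1-\varepsilon)}$ and the larger range $0 \leq \alpha < 1$ supplied by part (2) of Lemma \ref{asp_lin_estimate_lemma}; the new dispersive exponent feeds directly into both the complex interpolation at $z = \theta$ and the Hardy--Littlewood--Sobolev step, producing the admissibility $\theta(1-\varepsilon) = 2/q$ stated there. The only real subtlety in either case is verifying the admissible-growth hypothesis of Stein's theorem along $\Re z = 1$, but the linear $(1+|\beta|)$ factor from the oscillatory integral bound is far below the exponential type permitted, and is in any case tamed by the convergence factor $e^{z^2}$.
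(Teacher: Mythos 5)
Your proposal is correct and follows essentially the same route as the paper: complex (Stein) interpolation between the $|\partial_z|^{\alpha+i\beta}$ dispersive bound of Lemma \ref{asp_lin_estimate_lemma} (via \eqref{Ineq_E} and Young's inequality) and the trivial $L^2$ bound for $|\partial_z|^{i\beta}U(t)$, followed by the $TT^*$/duality argument with Hardy--Littlewood--Sobolev in time, and the same scheme with the large-frequency estimate and the commutation of $P_R$ with $U(t)$ for \eqref{Dispersion_large}--\eqref{Smoothing_large}. The only cosmetic differences are your explicit Gaussian factor $e^{z^2}$ (the paper simply treats the $(1+|\beta|)$ growth as admissible in Stein's theorem) and your Keel--Tao phrasing of the $TT^*$ step, which the paper carries out by hand through the dual pairing.
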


\begin{rem}
Note that, in comparison with analogous estimates for Zakharov-Kuznetsov equation in \cite{LP}, Proposition 2.2, we have less smoothness in general ($1/8$ of derivative instead of $1/4$ for ZK), but more smoothness if we exclude low frequencies (almost $1/2$ of derivative).
\end{rem}

\begin{proof}
The proofs  are by now standard, but we present them for the sake of completeness. See e.g. \cite{SW} Theorem 4.1, \cite{KPV_Indiana} and references therein for detailed proofs.

\medskip

First,  we prove \eqref{Dispersion}. This is just a consequence of the standard complex interpolation theorem. We have from \eqref{Ineq_E} and Young's inequality,
\[
\| |\partial_z|^{\al +i\bt} U(t)v_0 \|_{L^\infty}  \lesssim  \, \frac{ (1+|\bt|) E^{\frac{4\al-1}8 -\frac32\ve} }{ t^{ \frac{3}4 -\ve } } \|v_0\|_{L^1},
\]
for any $0\leq \al \leq \frac{1}{4}$ and $\bt\in \mathbb{R}$. We interpolate against the trivial estimate
\[
\| |\partial_z|^{i\bt} U(t)v_0 \|_{L^2} = \|v_0\|_{L^2},
\]
to get \eqref{Dispersion}.

The estimate \eqref{Dispersion_large} is obtained similarly by interpolating 
\begin{equation*}
\| |\partial_z|^{\al +i\bt} P_R U(t)v_0 \|_{L^\infty}  \lesssim  \, \frac{ (1+|\bt|) E^{\frac{\al-1}2 -\frac32\ve} }{ t^{ 1 -\ve } } \|v_0\|_{L^1},
\end{equation*}
and 
\begin{equation*}
\| |\partial_z|^{i\bt} P_R U(t)v_0 \|_{L^2} \leq \|v_0\|_{L^2}.
\end{equation*}
\medskip

Now we prove \eqref{Smoothing}. By duality, we are lead to prove that
\[
\int {\phi}(t,x,y)  |\partial_z|^{\al \theta/2} U(t)v_0 \, dtdxdy \lesssim  E^{\frac{\theta}{2} \big(\frac{\al-1}{8} -\frac32 \ve\big)} \|v_0\|_{L^2} \|\phi\|_{L^{q'}_t L_{x,y}^{p'}},
\]
for all $\phi\in C^\infty_0(\R^3)$ and with $ \frac1q + \frac1{q'} = 1 $. However,
\bee
\int {\phi}(t,x,y)  |\partial_z|^{\al \theta/2} U(t)v_0  \, dtdxdy & =&  \int v_0 \Big[ \int {  |\partial_z|^{\al \theta/2}  U(-t)\phi(t,x,y)}  dt \Big]dxdy\\
&\leq &  \|v_0\|_{L^2} \norm{ \int {  |\partial_z|^{\al \theta/2}  U(-t) {\phi(t)}}  dt}_{L^2}.
\eee
Now,
\bee
 \norm{ \int {  |\partial_z|^{\al \theta/2}  U(-t)\phi(t)}  dt}_{L^2}^2 &=& \int_t \!\int_{t'} \!\int_{x,y} {  |\partial_z|^{\al \theta/2}  U(-t')\phi(t',x,y)}    |\partial_z|^{\al \theta/2}  U(-t)\phi(t,x,y) dxdy dt' dt  \\
 & =&  \int_{x,y} \!\int_{t} \!\int_{t'}   |\partial_z|^{\al \theta}  U(t'-t)\phi(t',x,y) \phi(t,x,y)  dt' dt dxdy \\
 & \leq & \norm{ \int_{t'}   |\partial_z|^{\al \theta}  U(t'-\cdot )\phi(t')dt' }_{L^q_t L^p_{x,y}} \|\phi\|_{L^{q'}_t L^{p'}_{x,y}}.
\eee
On the other hand, we have from \eqref{Dispersion} and the Hardy-Littlewood-Sobolev's inequality,
\bee
 \norm{ \int_{t'}   |\partial_z|^{\al \theta}  U(t'-\cdot )\phi(t')dt' }_{L^q_t L^p_{x,y}} & \leq &   \norm{ \int_{t'} \norm{  |\partial_z|^{\al \theta}  U(t' - \cdot)\phi(t') }_{L^p_{x,y}} dt' }_{L^q_t}\\
 & \lesssim &  E^{\theta\big(\frac{4\al-1}{8} -\frac32\ve\big)} \norm{ \int_{ t' } \frac{  \norm{ \phi(t') }_{L^{p'}_{x,y}} }{ |t-t'|^{ \theta \big(\frac{3}4 -  \ve\big)} } dt' }_{L^q_t}\\
& \lesssim &   E^{\theta \big(\frac{4\al-1}{8}-\frac32\ve\big)} \norm{ \phi }_{L^{q'}_t L^{p'}_{x,y} }.
\eee
Estimate \eqref{Smoothing_large} is obtained in a similar way using the fact that $ U(t) $ and $ P_R $ commute.
\end{proof}

Another set of standard but useful estimates is the following (see  \eqref{Xsb} for a definition  of $X_E^{s,b}$ spaces)
\begin{cor}
We have
\be\label{Smoothing1}
\| f \|_{L^{4}_{t,x,y}} \lesssim  E^{-1/32^+} \| f \|_{X_E^{0,\frac7{16}+}}, \quad \forall f \in X_E^{0,\frac7{16}+},
\ee
and
\be\label{Smoothing2}
\| |\partial_z|^{1/4^-}P_R f \|_{L^{4}_{t,x,y}} \lesssim E^{-0^+} \| f \|_{X_E^{0,\frac12^-}}, \quad \forall f \in X_E^{0,\frac12^-}.
\ee
The constant in the last inequality becomes singular as $\frac14^-$ approaches $\frac14.$
\end{cor}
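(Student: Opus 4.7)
The plan is to derive both bounds from the linear Strichartz estimates of the preceding corollary in two standard moves: (i) transfer each Strichartz estimate from linear solutions to arbitrary $X_E^{s,b}$-functions via the standard Bourgain transference principle, which rests on the decomposition $f(t,x,y) = \int_{\mathbb R} e^{it\tau} U(t) g_\tau(x,y)\,d\tau$ with $g_\tau := \mathcal F_{x,y}^{-1}[\hat f(\tau + p(\xi),\xi)]$, followed by Minkowski's inequality and Cauchy-Schwarz in $\tau$ (which requires $b > 1/2$); and (ii) complex-interpolate the resulting $L^q_tL^p_{x,y}$ bound against Plancherel's identity $\|f\|_{L^2_{t,x,y}} = \|f\|_{X_E^{0,0}}$ so as to land precisely on the $L^4_{t,x,y}$ endpoint with the claimed exponents.

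For \eqref{Smoothing1}, I first apply \eqref{Smoothing} at $\alpha = 0$ and the diagonal parameter $\theta = 4/7$ (the value that forces $q_\theta = p_\theta = 14/3$ modulo $O(\varepsilon)$), producing $\|U(t)v_0\|_{L^{14/3}_{t,x,y}} \lesssim E^{-1/28^+}\|v_0\|_{L^2}$. Transference upgrades this to $\|f\|_{L^{14/3}_{t,x,y}} \lesssim E^{-1/28^+}\|f\|_{X_E^{0,1/2+}}$. Interpolating this complex-analytically against $\|f\|_{L^2_{t,x,y}} = \|f\|_{X_E^{0,0}}$ with parameter $\eta = 7/8$, the Lebesgue exponent $p_\eta$ satisfies $\frac{1}{p_\eta} = \frac{1-\eta}{2} + \frac{3\eta}{14} = \frac14$, so $p_\eta = 4$; the Bourgain exponent is $b_\eta = \eta(1/2+) = 7/16+$; and the constant becomes $E^{-\eta/28 + O(\varepsilon)} = E^{-1/32^+}$, yielding \eqref{Smoothing1}.

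For \eqref{Smoothing2} I instead use the sharper large-frequency estimate \eqref{Smoothing_large}, which has decay $t^{-(1-\varepsilon)}$ coming from Proposition \ref{Smoothing_Large} and allows $\alpha$ arbitrarily close to $1$. The diagonal parameter $\theta = 1/(2-\varepsilon)$ yields $p_\theta = q_\theta = 2(2-\varepsilon)/(1-\varepsilon)$, slightly greater than $4$, and after transference $\||\partial_z|^{\alpha/(2(2-\varepsilon))} P_R f\|_{L^{p_\theta}_{t,x,y}} \lesssim E^{-0^+}\|f\|_{X_E^{0,1/2+}}$. I then interpolate by Stein's analytic interpolation theorem applied to the analytic family of Fourier multipliers $T_z := |\partial_z|^{z} P_R$ (whose imaginary-axis bounds $\|T_{i\beta}\|_{L^p\to L^p} \lesssim_p (1+|\beta|)^N$ for some $N$ meet Stein's admissibility), against the trivial $\|P_R f\|_{L^2_{t,x,y}} \leq \|f\|_{X_E^{0,0}}$. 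Choosing the parameter $\rho = (2-\varepsilon)/2$ so that $p_\rho = 4$ exactly, the derivative exponent becomes $\rho\alpha/(2(2-\varepsilon)) = \alpha/4$, the Bourgain exponent becomes $\rho(1/2+) = 1/2^-$, and the $E$-constant stays $E^{-0^+}$; letting $\alpha\uparrow 1$ delivers $\||\partial_z|^{1/4^-} P_R f\|_{L^4_{t,x,y}}$ and \eqref{Smoothing2}.

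The main technical point is the Stein analytic interpolation in the proof of \eqref{Smoothing2}, because the derivative weight $|\partial_z|^z$ varies between the two endpoints; the remaining steps reduce to the standard (real-variable) complex interpolation of $L^p$ together with the fact that complex interpolation of the Hilbert-type spaces $X_E^{s_0, b_0}$ and $X_E^{s_1, b_1}$ produces the geometric-mean space $X_E^{(1-\eta)s_0 + \eta s_1,\, (1-\eta)b_0 + \eta b_1}$. Everything else is bookkeeping of Strichartz exponents, and the singular behaviour of the constant in \eqref{Smoothing2} as the derivative gain approaches $1/4$ is faithfully tracked by the blow-up of the prefactor in Stein's theorem at the boundary case $\rho = 1$, i.e.\ $\varepsilon \to 0^+$.
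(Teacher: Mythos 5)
Your proposal is correct and follows essentially the same route as the paper: estimate \eqref{Smoothing} with $\alpha=0$, $\theta=\tfrac47^+$ plus the transfer principle and interpolation with the trivial $L^2=X_E^{0,0}$ bound at weight $\tfrac78$ gives \eqref{Smoothing1}, while \eqref{Smoothing_large} with $\alpha\to1^-$, $\theta=\tfrac12^+$, transfer, and interpolation with weight close to $1$ gives \eqref{Smoothing2}, with the same exponents $E^{-1/32^+}$, $b=\tfrac7{16}+$ and $\tfrac12^-$ as in the paper. Your explicit appeal to Stein's analytic interpolation for the family $|\partial_z|^{z}P_R$ only spells out what the paper leaves implicit (this is precisely why the $i\beta$ factors were tracked in Lemma \ref{asp_lin_estimate_lemma}).
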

\begin{proof}
Put $\al=0$, $\theta= \big( \frac74-\ve\big)^{-1} =\frac47^+$ in \eqref{Smoothing}, we get
\[
\| U(t)v_0 \|_{L^{r}_{t,x,y}} \lesssim E^{ \gamma}  \|v_0\|_{L^{2}_{x,y}},
\]
with $ r = \frac{ 14 }{ 3 } \left( \frac{ 1 - \frac{ 4 }{ 7 } \ve }{ 1 - \frac{ 4 }{ 3 } \ve } \right) = \frac{ 28 }{ 3 }^+ $ and $ \gamma = - \frac{ 1 }{ 28 } \left(\frac{ 1- 12 \ve }{ 1 - \frac{ 4 }{ 7 } \ve } \right) =  -\frac{ 1 }{ 28 }^+ $.

From the transfer principle (see e.g. \cite[Lemma 3.3]{G}), we have
\[
\| f \|_{L^{r}_{t,x,y}} \lesssim  E^{\gamma}  \| f \|_{X_E^{0,\frac12+}}, \quad \forall f \in X_E^{0,\frac12+}.
\]
By interpolation with the trivial estimate for $\| f \|_{L^{2}_{t,x,y}}$, using the interpolation parameter $\lambda = \frac{ 7 }{ 8 }( 1 - \frac{ 4 }{ 7 } \ve ) = \frac{ 7 }{ 8 }^-$, so that $ \lambda \frac{ 1 }{ r } + ( 1 - \lambda ) \frac{1}{2} = \frac{1}{4} $, we obtain \eqref{Smoothing1}.

\medskip

Now we deal with \eqref{Smoothing2}. Taking $\al=1- 16\ve$ and $\theta=\frac{1}{2-5\ve}=\frac12^+$ in \eqref{Smoothing_large} we have $p=q =4 \frac{1-\frac52 \ve}{1-5\ve} =4^+$. We also have $\frac12\al\theta = \frac{1-16\ve}{4(1-\frac52\ve)} =\frac14^-$ and
\[
\| |\partial_z|^{\frac14^-} U(t)v_0 \|_{L^{r}_{t,x,y}} \lesssim E^{-0^+} \|v_0\|_{L^{2}_{x,y}},
\]
with $ r = 4 \frac{1-\frac52 \ve}{1-5\ve} $, from where
\[
\| |\partial_z|^{\frac14^-} f \|_{L^{r}_{t,x,y}} \lesssim  E^{-0^+}   \| f \|_{X_E^{0,\frac12+}}.
\]
We interpolate with $L^2_{t,x,y}$-estimate with the interpolation parameter $\lambda= 1 - \frac52 \ve $ to obtain
\[
\| |\partial_z|^{1/4^-} f \|_{L^{4}_{t,x,y}} \lesssim  E^{-0^+} \| f \|_{X_E^{0, \frac12^-}}, \quad \forall f \in X_E^{0, \frac12^-}.
\]
\end{proof}

\section{Bilinear Estimates}\label{Sect_8}

%The purpose of this section is to show bilinear estimates for the NV equation for negative energy, always taking into account the size of the fixed energy.
%
%\medskip

\subsection{Some notation}

We use the following notations. We denote by $ \langle f\rangle $ the japanese bracket:
\[
\langle f\rangle := (1+|f|^2)^{1/2};
\]
we also have $ A \land B : = \min( A, B ) $ and $ A \lor B : = \max( A, B ) $. Variables $ N, \tilde N, \check N, L, \tilde L, \check L $ of this section are \emph{dyadic}, i.e. their range is $ \{ 2^k, k \in \mathbb{N} \} $. \emph{In this section only and unless otherwise specified} $\mathcal F[ u ]$ and $ \hat u $ both denote the Fourier transform of $ u $ in $(t,x,y)$.

\medskip

Now we introduce the associated $X^{s,b}$ spaces \cite{B1} for the NV dynamics.  Let $\tilde\varphi \in C_0^\infty(\R;[0,1])$ be a cutoff function such that
\[
\tilde\varphi(s) =0 \quad \hbox{ for } \quad |s| \geq 1, \qquad \tilde\varphi(s) = 1 \quad \hbox{ for } \quad |s|\leq \frac12.
\]
We define
\[
\varphi(s):= \tilde\varphi(s) -\tilde\varphi(2s).
\]
We introduce the frequency projection operators at a dyadic frequency $N >1$, as follows:
\[
\varphi_N (s):= \varphi(N^{-1} s),
\]
and for $N=1$,
\[
\varphi_1 (s):= \tilde\varphi(s).
\]
Using these multipliers, we have for the Fourier transform in the $\xi$ variable,
\[
P_N u := \mathcal F^{-1}\Big[\varphi_N (E^{-1/2} |\xi|) \mathcal F[u](\xi)\Big].
\]
Recall the definition of the phase $\tilde S(u,\xi)$ in \eqref{st_phase}. We define (recall that $E>0$)
\[
w(\xi,\bar\xi;E) := ( \xi^3 + \bar \xi^3 )\left( 1 - \frac{ 3 E }{ |\xi|^2 } \right).
\]
Note that the above expression is real-valued. In order to perform some Fourier analysis, we need $w$ in terms of real-valued coordinates. Put $\xi=\xi_1+i\xi_2$. We have the rational symbol
\be\label{w0}
w (\xi, \bar \xi) := w(\xi,\bar\xi; E) = 2( \xi_1^3 -3\xi_1\xi_2^2)\left( 1 - \frac{ 3E}{ \xi_1^2+\xi_2^2 } \right).% =: w_0(\xi_1,\xi_2).
\ee
We also define
\[
\sigma(\tau,\xi_1,\xi_2):= \tau -w(\xi_1,\xi_2).
\]
We introduce then
\[
Q_L u := \mathcal F^{-1}[ \varphi_L(E^{-3/2}|\sigma|) \mathcal F[u](\tau,\xi)].
\]
Finally, for a fixed energy $E$, we say that $u=u(t,x,y) \in X_E^{s,b}$ for $s,b\in \R$ if $u\in L^2(\R^3)$ and its Fourier transform $\hat u$ satisfies the integral condition
\be\label{Xsb}
\|u\|^2_{X_E^{s,b}} := \int_{\R^3} \langle \sigma\rangle^{2b} \langle |\xi| \rangle^{2s} |\hat u(\tau,\xi_1,\xi_2)|^2 d\tau d\xi_1d\xi_2 <+\infty.
\ee

\subsection{Statement of the result and proof}

\begin{prop}
Assume that $E>0$ is a fixed level of energy. Then we have for $\ve>0$ small and $s>\frac12$,
\be\label{Bilinear}
\|\partial_z(vw)\|_{X_E^{s,-1/2 + 2\ve}} \lesssim  E^{(7^--8s)/16}  ~\|v\|_{X_E^{s,1/2+ \ve}} \|w\|_{X_E^{s,1/2+ \ve}},
\ee
for all $v,w$ such that the right hand side makes sense.
\end{prop}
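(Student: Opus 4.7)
The plan is to establish \eqref{Bilinear} by duality, reducing it to a trilinear $L^2$-based estimate on the spatial-time Fourier side. Testing $\partial_z(vw)$ against $h\in X_E^{-s,\,1/2-2\ve}$, \eqref{Bilinear} becomes
\[
\Big|\iiint \xi^{*}\, \hat{v}(\zeta_1)\,\hat{w}(\zeta_2)\,\hat{h}(\zeta_3)\,d\zeta_1\,d\zeta_2\Big| \lesssim E^{(7^- - 8s)/16}\,\|v\|_{X_E^{s,1/2+\ve}}\|w\|_{X_E^{s,1/2+\ve}}\|h\|_{X_E^{-s,1/2-2\ve}},
\]
where $\zeta_j = (\tau_j,\xi_j)$, $\zeta_1+\zeta_2+\zeta_3=0$, and $\xi^{*}$ is the symbol of $\partial_z$. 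First I would perform a Littlewood--Paley decomposition in both spatial frequency (scales $N_1,N_2,N_3$) and modulation (scales $L_1,L_2,L_3$). Since $\xi_1+\xi_2+\xi_3=0$ forces the two largest $N_j$ to be comparable, only three configurations survive: \emph{low--low to low} ($N_1,N_2,N_3\lesssim 1$), \emph{high--high to low} ($N_1\sim N_2\gg N_3$), and \emph{low--high to high / high--high to high} ($N_3\sim N_{\max}$).

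The key algebraic input is the resonance identity
\[
\sigma_1+\sigma_2+\sigma_3 = w(\xi_1)+w(\xi_2)+w(\xi_3) =: \Omega(\xi_1,\xi_2),
\]
with $w$ as in \eqref{w0}, which yields $L_{\max}:=\max(L_1,L_2,L_3)\gtrsim |\Omega|$. Following \cite{ST,MP}, I would obtain a lower bound on a well-chosen partial derivative of $\Omega$ in one of the frequency variables and convert it, via the coarea formula, into an upper bound on the measure of the sublevel set $\{|\Omega|\leq L_{\max}\}$. For the NV symbol this works because away from the origin $w$ agrees, up to a bounded rational correction $3E(\xi^3+\bar\xi^3)/|\xi|^2$, with the cubic symbol treated in \cite{A}; this boundedness near $\xi=0$ (noted in the introduction) avoids the singular behaviour of KP-type symbols. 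Keeping track of the $E$-dependence of the derivative bound on $\Omega$ is what produces the $E$-power in \eqref{Bilinear}.

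In the low--low--to--low regime, derivatives are harmless and the estimate is closed by placing two of the three factors in $L^{4}_{t,x,y}$ via the Strichartz-type bound \eqref{Smoothing1} and the third in $L^{2}_{t,x,y}$; the $b=\tfrac{7}{16}^{+}$ coming from \eqref{Smoothing1} is compatible with $\tfrac{1}{2}+\ve$, and cubing the factor $E^{-1/32^{+}}$ contributes to the overall $E$-prefactor. In the two non-low regimes I would instead use the sharp $L^{4}$ smoothing \eqref{Smoothing2}, which provides an almost $\tfrac14$ derivative of gain on the high-frequency factors and thus absorbs the $\partial_z$ loss at the cost of a factor $N_{\max}^{3/4^{+}-s}$ from the Sobolev weights on the worst-placed piece; together with the $(L_{\max})^{-(1/2-2\ve)}$ gain coming from the resonance bound and the summation over $L_j$, this closes provided $s>\tfrac12$.

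The main obstacle is the high-low-to-high / high-high-to-high case, where $N_{\max}$ is large and the derivative $\partial_z$ combines with the Sobolev weights to produce a loss that is only barely absorbed. The delicate point is the resonance analysis: one must dyadically decompose the set $\{|\Omega|\lesssim L_{\max}\}$ using the smoothness and cubic growth of $w$ away from $\xi=0$, together with a careful lower bound of the form $|\partial_{\xi_j}\Omega|\gtrsim E^{\gamma}N_{\max}^{2}$ on a suitable slice, which via the coarea formula yields the additional gain of (almost) $\tfrac12$ derivative needed beyond what \eqref{Smoothing2} provides. The tracking of the $E$-dependence is done through the normalisations $\xi\mapsto E^{1/2}\xi$ and $\sigma\mapsto E^{3/2}\sigma$ that are built into the $X_E^{s,b}$ norm, so that each application of \eqref{Smoothing1}, \eqref{Smoothing2}, or of the resonance bound on $\Omega$ contributes an explicit power of $E$; collecting these powers case by case gives the global prefactor $E^{(7^{-}-8s)/16}$.
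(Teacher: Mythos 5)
Your overall architecture (duality, dyadic decomposition in frequency, the $L^4$ Strichartz bound \eqref{Smoothing1} for the low--low interaction, the $1/4^-$ smoothing \eqref{Smoothing2} at high frequencies, and Saut--Tzvetkov/Molinet--Pilod type sublevel-set bounds for the resonance function) matches the paper's, but your treatment of the low--high to high interaction has a genuine gap. In that regime the Sobolev weights in the kernel \eqref{KernelK} give $\langle|\tilde\xi|\rangle^{s}\langle|\check\xi|\rangle^{-s}\sim 1$ and $\langle|\xi|\rangle^{-s}\sim N^{-s}$ with $N=N_{\min}\ll N_{\max}$, so there is no $N_{\max}^{-s}$ decay at all; your claimed cost $N_{\max}^{3/4^{+}-s}$ is only correct when all three frequencies are comparable. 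After applying \eqref{Smoothing2} to the (at most two) high-frequency factors you are still left with $N_{\max}^{1/2^{+}}N^{-s}$, which cannot be summed over $N_{\max}$, and the extra ``almost $1/2$ derivative'' you propose to extract from the coarea/sublevel-set bound cannot simply be multiplied on top: that gain is obtained by Cauchy--Schwarz on modulation-localized pieces placed in $L^2$, i.e.\ it consumes the same $X_E^{s,b}$ norm budget that you have already spent putting those factors in $L^4$ via \eqref{Smoothing2}. Note also that a pointwise bound $L_{\max}\gtrsim|\Omega|$ yields nothing here, since the resonance function vanishes on a large set in this regime; only the measure estimate survives.

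What the paper does in this case is to decompose dyadically in the modulation variables and prove the modulation-localized bilinear estimate \eqref{Strichartz_Mod}; its proof is exactly the sublevel-set argument you sketch (lower bounds $|\partial_{\xi_1}H|$ or $|\partial_{\xi_2}H|\sim E\check N^{2}$, then the measure of the set $B_E$), but, used alone and bilinearly, it yields a gain of a \emph{full} derivative at high frequency, namely $N^{1/2}\check N^{-1}L^{1/2}\check L^{1/2}$, which cancels the loss $|\tilde\xi|\sim E^{1/2}\tilde N$ exactly and leaves $N^{1/2-s}$, summable for $s>\frac12$, together with summable powers of the modulations. Your low--low case and your high--high (to high or low) cases are essentially the paper's argument, as is the $E$-tracking through the scalings built into $X_E^{s,b}$; the fix needed is to replace the ``smoothing plus resonance'' combination in the low--high to high case by the modulation-localized bilinear $L^2$ estimate \eqref{Strichartz_Mod} (or an equivalent bound), used without the $L^4$ smoothing there.
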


\begin{rem}
The exponent $(7^--8s)/16$ is an artefact of the proof, and we believe that better exponents can be obtained by estimating in a different form some crucial terms below.
\end{rem}

\begin{proof}
We follow closely the ideas from \cite{MP}, using  a modified version of the original ideas by Bourgain \cite{B1} and Kenig, Ponce and Vega \cite{KPV1}. As usual, by duality we are lead to prove that
\[
J:= \int_{\R^6} K[\tau,\tilde\tau,\xi,\tilde\xi]\hat u(\tau,\xi_1,\xi_2)\hat v(\check\tau,\check\xi_1,\check\xi_2)\hat w(\tilde\tau,\tilde\xi_1,\tilde\xi_2)\, d\xi_1 \, d\tilde\xi_1\, d\xi_2 \, d\tilde\xi_2 \, d\tau \, d\tilde\tau
\]
satisfies
\[
|J| \lesssim \|u\|_{L^2} \|v\|_{L^2} \|w\|_{L^2},
\]
for any $u,v,w\in L^2(\R^3)$, and where $\tilde\sigma := \sigma( \tilde\tau,\tilde \xi)$,
\be\label{hat_tilde}
\check\tau := \tilde\tau -\tau, \quad \check \xi: =\tilde \xi-\xi,\quad \check\sigma:= \tilde\sigma -\sigma.
\ee
Here the kernel $K=K[\tau,\tilde\tau,\xi,\tilde\xi]$ is explicitly given by
\[
K:=|i\tilde\xi_1+\tilde\xi_2| |i\check\xi_1+\check \xi_2||-i \check \xi_1+\check \xi_2|^{-1} \langle \tilde\sigma\rangle^{-1/2+2\ve}\langle \sigma\rangle^{-1/2-\ve}\langle \check \sigma\rangle^{-1/2-\ve}\langle |\tilde \xi| \rangle^{s} \langle |\xi|\rangle^{-s}\langle |\check \xi|\rangle^{-s}.
\]
A further simplification leads to
\be\label{KernelK}
K=|\tilde \xi|  \langle \tilde\sigma\rangle^{-1/2+2\ve}\langle \sigma\rangle^{-1/2-\ve}\langle \check\sigma\rangle^{-1/2-\ve}\langle |\tilde \xi| \rangle^{s} \langle |\xi|\rangle^{-s}\langle |\check \xi|\rangle^{-s}.
\ee
Now we use dyadic decompositions to split  $J$ into several pieces. We put
\[
J= \sum_{N,\tilde N,\check N} J_{N,\tilde N,\check N},
\]
where
\be\label{JNNN}
J_{N,\tilde N,\check N} := \int_{\R^6} K[\tau,\tilde\tau,\xi,\tilde\xi] \widehat{P_Nu}(\tau,\xi_1,\xi_2)\widehat{P_{\check N}v}(\check\tau,\check\xi_1,\check\xi_2)\widehat{P_{\tilde N}w}(\tilde\tau,\tilde\xi_1,\tilde\xi_2)d\xi_1d\tilde\xi_1d\xi_2d\tilde\xi_2 d\tau d\tilde\tau.
\ee

\noindent
\emph{Estimate for low-low to low frequencies.} This is the simplest case. Here we have $N\sim \tilde N \sim \check N$, where $N\sim 1$.  From \eqref{JNNN} we have to estimate the quantity
\[
\sum_{N\sim \tilde N \sim \check N \sim 1} J_{N,\tilde N,\check N}.
\]
Note that we also have $|\xi|\sim |\tilde \xi| \sim |\check \xi| \sim E^{1/2}$. Therefore, using \eqref{KernelK} we have
\bee
|K|  & \lesssim &   E^{(1-s)/2} \langle \tilde\sigma\rangle^{-1/2+2\ve}\langle\sigma\rangle^{-1/2-\ve}\langle \check\sigma\rangle^{-1/2-\ve}\\
& \lesssim &  E^{(1-s)/2} \langle\sigma\rangle^{-1/2-\ve}\langle \check\sigma\rangle^{-1/2-\ve}.
\eee
Therefore, from \eqref{JNNN}, Plancherel, and using Cauchy-Schwarz,
\[
|J_{N,\tilde N,\check N}| \lesssim  E^{(1-s)/2}  \norm{\mathcal F^{-1}[\langle\sigma\rangle^{-1/2-\ve}  \widehat{P_Nu}]   }_{L^4}\norm{\mathcal F^{-1}[\langle\check \sigma\rangle^{-1/2-\ve}   \widehat{P_{\check N}v} ] }_{L^4} \|P_{\tilde N}w\|_{L^2}.
\]
From \eqref{Smoothing1} and the definition of $X_E^{s,b}$ space norm, we obtain for $\ve>0$ small,
\bee
\norm{\mathcal F^{-1}[\langle\sigma\rangle^{-1/2-\ve}  \widehat{P_Nu}]   }_{L^4} & \lesssim &  E^{-1/32^+}  \norm{\mathcal F^{-1}[\langle\sigma\rangle^{-1/2-\ve}  \widehat{P_Nu}]   }_{X_E^{0,\frac7{16}+}} \\
& \lesssim &   E^{-1/32^+} \| \langle\sigma\rangle^{\frac7{16}^+} \langle\sigma\rangle^{-1/2-\ve}\widehat{P_Nu} \|_{L^2} \\
&  \lesssim &  E^{-1/32^+} \|P_Nu \|_{L^2}.
\eee
We conclude that
\[
\sum_{N\sim \tilde N \sim \check N \sim 1} J_{N,\tilde N,\check N} \lesssim  E^{(7^--8s)/16} \|u \|_{L^2}\|v \|_{L^2}\|w \|_{L^2}.
\]

\medskip

\noindent
\emph{Estimate for low-high to high frequencies.} The worst case here corresponds to $N\ll \tilde N$ and $\tilde N \sim \check N$, where $\tilde N \gg 1$.  From \eqref{JNNN} we have to estimate the quantity
\[
\sum_{N\ll \tilde N, \, \tilde N \sim \check N   \gg 1} J_{N,\tilde N,\check N}.
\]
Unfortunately, a crude estimate for $K$ in \eqref{KernelK} shows that it is not possible to counterbalance the term $|\tilde \xi| \sim E^{1/2} \tilde N$.  For this reason a dyadic decomposition on the modulation variables is necessary. Using the frequency localization operator $Q_{L}$, $Q_{\tilde L}$ and $Q_{\check L}$ on dyadic shells $\sigma \sim E^{3/2} L$, $\tilde \sigma \sim E^{3/2}\tilde L$ and so on,\footnote{The power $3/2$ of the energy in front of the modulation variable $\sigma$ is dictated by the time scaling.} we have
\[
J_{N,\tilde N,\check N} =\sum_{L,\tilde L,\check L} J_{N,\tilde N,\check N}^{L,\tilde L,\check L},
\]
where
\bea\label{JNNNLLL}
J_{N,\tilde N,\check N}^{L,\tilde L,\check L} &  := & \int_{\R^6} K[\tau,\tilde\tau,\xi,\tilde\xi] \mathcal F[P_NQ_L u](\tau,\xi_1,\xi_2) \times \nonu\\
& & \qquad \times \mathcal F[P_{\check N}Q_{\check L}v](\check\tau,\check\xi_1,\check\xi_2)\mathcal F[P_{\tilde N}Q_{\tilde L}w](\tilde\tau,\tilde\xi_1,\tilde\xi_2)\, d\xi_1d\tilde\xi_1d\xi_2d\tilde\xi_2 d\tau d\tilde\tau.
\eea
We readily have in the considered region
\[
|K[\tau,\tilde\tau,\xi,\tilde\xi]| \lesssim E^{-(s+7)/4} \tilde N N^{-s}  \tilde L^{-1/2+2\ve}L^{-1/2-\ve}\check L^{-1/2-\ve},
\]
and using Cauchy-Schwarz, the fact that
\[
\int_{\tau,\xi,\tilde \tau,\tilde \xi}\mathcal F[P_NQ_L u](\tau,\xi_1,\xi_2)\mathcal F[P_{\check N}Q_{\check L}v](\check\tau,\check\xi_1,\check\xi_2)
\]
represents a convolution in Fourier variables, and Plancherel, we get
\be\label{Aux_J}
|J_{N,\tilde N,\check N}^{L,\tilde L,\check L}| \lesssim  E^{-(s+7)/4} \tilde N  N^{-s}\tilde L^{-1/2+2\ve}L^{-1/2-\ve}\check L^{-1/2-\ve} \| P_NQ_L u ~ P_{\check N}Q_{\check L}v\|_{L^2}\|P_{\tilde N}Q_{\tilde L}w\|_{L^2}.
\ee
Now we use the following estimate (see \cite{MP} for a similar statement), valid under the assumptions that we work with low-high to high frequencies:
\be\label{Strichartz_Mod}
\| P_NQ_L u ~ P_{\check N}Q_{\check L}w \|_{L^2} \lesssim E^{5/4} N^{1/2} \check N^{-1} L^{1/2} \check L^{1/2}\| P_NQ_L u\|_{L^2} \| P_{\check N}Q_{\check L}w\|_{L^2}.
\ee
This estimate is proved several lines below. For now we assume the validity of this estimate and we continue with the estimation of \eqref{Aux_J}. Note that \eqref{Strichartz_Mod} allows to cancel out the bad frequency $\tilde N$ in \eqref{Aux_J}. We get
\[
|J_{N,\tilde N,\check N}^{L,\tilde L,\check L}| \lesssim  E^{-(2+s)/4} N^{1/2-s}\tilde L^{-1/2+2\ve}L^{-\ve}\check L^{-\ve}\| P_NQ_L u\|_{L^2} \| P_{\check N}Q_{\check L}v\|_{L^2}\|P_{\tilde N}Q_{\tilde L}w\|_{L^2}.
\]
Adding up on $N$ (recall that $s>\frac12$), $\tilde L$, $L$ and $\check L$ we obtain
\bee
\sum_{N\ll \tilde N, \,\tilde N \sim \check N \gg 1} J_{N,\tilde N,\check N} & \lesssim  & E^{-(2+s)/4} \|u\|_{L^2} \sum_{\tilde N\sim \check N}  \| P_{\check N}v\|_{L^2}\|P_{\tilde N}w\|_{L^2} \\
& \lesssim  & E^{-(2+s)/4} \|u\|_{L^2}\|v\|_{L^2}\|w\|_{L^2}.
\eee
Let us prove \eqref{Strichartz_Mod}. Following \cite{MP} (see some previous ideas in \cite{ST}, for the periodic KPI case), and using the Plancherel's identity, together with Young's inequality for convolutions, we have
\bea
\| P_NQ_L u ~ P_{\check N}Q_{\check L}w \|_{L^2} & \sim & \| \mathcal F[P_NQ_L u] \star \mathcal F[ P_{\check N}Q_{\check L}w] \|_{L^2} \nonu\\
& \lesssim & \sup_{(\tilde \tau,\tilde\xi)\in\R^3} (\meas A_E(\tilde\tau,\tilde\xi))^{1/2}~ \| P_NQ_L u\|_{L^2} \|P_{\check N}Q_{\check L}w\|_{L^2}, \label{Est_1}
\eea
where $A_E(\tilde\tau,\tilde\xi)$ is the set (see \eqref{hat_tilde})
\[
A_E(\tilde\tau,\tilde\xi):=\big\{(\tau,\xi) \in \R^3 \ : \ |\sigma| \sim E^{3/2} L, \; |\check \sigma| \sim  E^{3/2} \check L, \;  |\xi| \sim  E^{1/2} N,  \; |\check \xi| \sim  E^{1/2} \check N \big\}.
\]
The measure of this set can be estimated as follows:
\be\label{meas_A}
\meas A_E(\tilde\tau,\tilde\xi) \lesssim  E^{3/2}(L \land \check L)  \meas B_E(\tilde\tau,\tilde\xi),
\ee
where
\[
B_E(\tilde\tau,\tilde\xi) := \big\{ (\tau,\xi) \in \R^3 \ : \ |\tilde \sigma + H[\xi,\tilde \xi] | \lesssim  E^{3/2}(L\lor \check L), \;  |\xi| \sim  E^{1/2} N,  \; |\check \xi| \sim  E^{1/2} \check N \big\},
\]
and $H$ is the standard  resonance function (see \eqref{w0})
\[
H[\xi,\tilde \xi] := w(\tilde \xi, \overline{\tilde \xi}) - w(\xi, \overline \xi) -w(\tilde \xi -\xi,\overline{\tilde \xi} -\overline\xi).
\]
We use now the idea from \cite[Lemma 3.8]{MP}: we estimate the measure of $B_E$ by finding lower bounds on the derivatives of the function $\xi \mapsto  H[\xi, \tilde \xi]$ in the considered region.

\medskip

As in \cite{A}, we have to distinguish between two cases: $|\tilde \xi_1-\xi_1|\sim |\tilde \xi_2-\xi_2| \sim E^{1/2} \tilde N \sim E^{1/2} \check N$ and $|\tilde \xi_1 -\xi_1|\gg |\tilde \xi_2 -\xi_2| $ (the remaining case is identical). For the first case we have
\bee
\partial_{\xi_2} H[\xi,\tilde \xi] & =&  -2\partial_{\xi_2} \Bigg[  ( \xi_1^3 -3\xi_1\xi_2^2)\left( 1 - \frac{ 3E}{ \xi_1^2+\xi_2^2 } \right) \\
&& \qquad + ( (\tilde\xi_1-\xi_1)^3 -3(\tilde\xi_1-\xi_1)(\tilde\xi_2-\xi_2)^2)\left( 1 - \frac{ 3E}{ (\tilde\xi_1-\xi_1)^2+(\tilde\xi_2-\xi_2)^2 } \right)\Bigg] \\
& =& -12\Bigg[  -\xi_1\xi_2\left( 1 - \frac{ 3E}{ \xi_1^2+\xi_2^2 } \right)  +(\tilde\xi_1-\xi_1)(\tilde\xi_2-\xi_2)\left( 1 - \frac{ 3E}{ (\tilde\xi_1-\xi_1)^2+(\tilde\xi_2-\xi_2)^2 } \right) \\
& &\qquad    + \frac{ E\xi_1 \xi_2( \xi_1^2 -3\xi_2^2)}{( \xi_1^2+\xi_2^2)^2 }   - \frac{E(\tilde\xi_1-\xi_1)(\tilde\xi_2-\xi_2)  ( (\tilde\xi_1-\xi_1)^2 -3(\tilde\xi_2-\xi_2)^2)}{( (\tilde\xi_1-\xi_1)^2+(\tilde\xi_2-\xi_2)^2)^2 } \Bigg].
\eee
Using the estimate
\[
\abs{\frac{a b }{ a^2+ b^2 }} \leq \frac12,
\]
and similar other estimates for the fractional terms appearing from the fact that we work with nonzero energies, and valid for all $(a,b)\in \R^2$ (the limit at the origin is not well-defined, but the functions are always bounded), we get
\[
|\partial_{\xi_2} H[\xi,\tilde \xi]| \sim E \check N^2.
\]
In the second case, we have no problems since
\bee
\partial_{\xi_1} H[\xi,\tilde \xi] & =& -2\partial_{\xi_1} \Bigg[  ( \xi_1^3 -3\xi_1\xi_2^2)\left( 1 - \frac{ 3E}{ \xi_1^2+\xi_2^2 } \right) \\
&& \qquad + ( (\tilde\xi_1-\xi_1)^3 -3(\tilde\xi_1-\xi_1)(\tilde\xi_2-\xi_2)^2)\left( 1 - \frac{ 3E}{ (\tilde\xi_1-\xi_1)^2+(\tilde\xi_2-\xi_2)^2 } \right)\Bigg] \\
& =& -6 \Bigg[  ( \xi_1^2 - \xi_2^2)\left( 1 - \frac{ 3E}{ \xi_1^2+\xi_2^2 } \right) \\
&&\qquad  - ( (\tilde\xi_1-\xi_1)^2 - (\tilde\xi_2-\xi_2)^2) \left( 1 - \frac{ 3E}{ (\tilde\xi_1-\xi_1)^2+(\tilde\xi_2-\xi_2)^2 } \right) \\
& &\qquad    + \frac{ 2E\xi_1^2( \xi_1^2 -3\xi_2^2)}{( \xi_1^2+\xi_2^2)^2 }   - \frac{2E(\tilde\xi_1-\xi_1)^2  ( (\tilde\xi_1-\xi_1)^2 -3(\tilde\xi_2-\xi_2)^2)}{( (\tilde\xi_1-\xi_1)^2+(\tilde\xi_2-\xi_2)^2)^2 } \Bigg],
\eee
so that
\[
|\partial_{\xi_1} H[\xi,\tilde \xi]| \sim E \check N^2.
\]
We conclude (see \cite{MP} for example) that
\[
\meas B_E(\check\tau,\check\xi) \lesssim E N  \check N^{-2} (L\lor \check L).
\]
Finally, from \eqref{meas_A} and \eqref{Est_1} we conclude.
%\[
%|\tilde \xi|  \langle \tilde\sigma\rangle^{-1/2+2\ve}\langle \sigma\rangle^{-1/2-\ve}\langle \check\sigma\rangle^{-1/2-\ve}\langle |\tilde \xi| \rangle^{s} \langle |\xi|\rangle^{-s}\langle |\check \xi|\rangle^{-s} \lesssim |\tilde \xi|  \langle \tilde\sigma\rangle^{-1/2+2\ve}\langle \sigma\rangle^{-1/2-\ve}\langle \check\sigma\rangle^{-1/2-\ve}
%\]

\medskip

\noindent
\emph{Estimate for high-high to low $J_{HH\to L}$, and for high-high to high frequencies $J_{HH\to H}$.} This is the difficult part of the proof, because for obtaining \eqref{Bilinear} with $s>\frac12$ we do not have the corresponding Carbery-Kenig-Ziesler \cite{CKZ} result. Instead, we use the corresponding smoothing estimate Lemma \ref{asp_lin_estimate_lemma} which suffices for the case of positive energies.

\medskip

We prove the most difficult case, the one for  high-high to high frequencies (see below for a comment on the case high-high to low). Here we have $N\sim \tilde N \sim \check N$, where $N\gg 1$.  From \eqref{JNNN} we have to estimate the quantity
\be
\sum_{N\sim \tilde N \sim \check N \gg 1} J_{N,\tilde N,\check N}.
\ee
Note that we also have $|\xi|\sim |\tilde \xi| \sim |\check \xi| \sim E^{1/2}N$. Therefore, using \eqref{KernelK} we have
\bee
|K|  & \lesssim &  E^{-s/2}  |\xi| N^{-s}  \langle \tilde\sigma\rangle^{-1/2+2\ve}\langle\sigma\rangle^{-1/2-\ve}\langle \check\sigma\rangle^{-1/2-\ve}\\
& \lesssim &  E^{-s/2 +1/4^+}  N^{1/2^+-s}  |\xi|^{1/4^-}\langle\sigma\rangle^{-1/2-\ve}|\check\xi|^{1/4^-}\langle \check\sigma\rangle^{-1/2-\ve}.
\eee
Therefore, from \eqref{JNNN}
\bee
|J_{N,\tilde N,\check N}| & \lesssim & E^{-s/2 +1/4^+} N^{1/2^+-s} \norm{|\partial_z|^{1/4^-}\mathcal F^{-1}[\langle\sigma\rangle^{-1/2-\ve}  \widehat{P_Nu}]   }_{L^4} \times\\
& & \qquad \times \norm{|\partial_z|^{1/4^-}\mathcal F^{-1}[\langle\check \sigma\rangle^{-1/2-\ve}   \widehat{P_{\check N}v} ] }_{L^4} \|w\|_{L^2}.
\eee
From \eqref{Smoothing2} we get
\bee
|J_{N,\tilde N,\check N}| & \lesssim &E^{-s/2 +1/4^+}   N^{1/2^+-s} \norm{\mathcal F^{-1}[\langle\sigma\rangle^{-1/2-\ve}  \widehat{P_Nu}]   }_{X_E^{0,\frac12^-}}  \times \\
&& \qquad \times \norm{\mathcal F^{-1}[\langle\check \sigma\rangle^{-1/2-\ve}   \widehat{P_{\check N}v} ] }_{X_E^{0,\frac12^-}} \|w\|_{L^2} \\
& \lesssim &  E^{-s/2 + 1/4^+ - 0^+}  N^{1/2^+-s} \|P_N u\|_{L^2}\|P_{\check N} v\|_{L^2}\|w\|_{L^2}.
\eee
Adding on $N\sim \check N$, we conclude.

\medskip

Finally, some words about the case high-high to low frequencies. In this regime one has $N \sim \check N \gg \tilde N$. Note that we also have $|\xi|\sim |\check \xi| \sim E^{1/2}N$, and $ |\tilde \xi| \sim E^{1/2} \tilde N$.  Now it is enough to consider the following estimate: 
\[
\frac{ | \tilde \xi | \tilde N^s }{ N^s \check N^s } \lesssim \frac{ | \xi | }{ N^s }.
\]
Therefore, using \eqref{KernelK} we have
\bee
|K|  & \lesssim &  E^{-s/2}  |\xi| N^{-s}  \langle \tilde\sigma\rangle^{-1/2+2\ve}\langle\sigma\rangle^{-1/2-\ve}\langle \check\sigma\rangle^{-1/2-\ve}\\
& \lesssim &  E^{-s/2 +1/4^+}  N^{1/2^+-s}  |\xi|^{1/4^-}\langle\sigma\rangle^{-1/2-\ve}|\check\xi|^{1/4^-}\langle \check\sigma\rangle^{-1/2-\ve},
\eee
and the rest of the proof is similar to the previous case.
\end{proof}

\bigskip

\section{Local well-posedness}\label{Sect_9}
\medskip

In this section we prove our main result, Theorem \ref{LWP_pos}. Using the standard iteration scheme based on $X_E^{s,b}$ spaces, (see e.g. \cite{KM} for more details) we will show the following

\begin{thm}\label{Cauchy}
Fix $E>0$. The Cauchy problem for \eqref{NV} is locally well-posed in $H^s(\R^2)$ for $s>\frac12$. Moreover, the existence lifetime $T_E>0$ of a solution $v(t)$ with initial data $v_0$ satisfies
\be\label{Existence_E}
T_E\| v_0\|_{H^s}^\al  \gtrsim  E^{\al(8s-7^-)/16},
\ee
for some positive exponent $\al.$ Finally, one has for all $\eta \in [0,1]$ standard cut-off supported in the interval $[-2,2]$, $\eta \equiv 1$ on $[-1,1]$, and $s>\frac12$, and $T<T_E$,
\be\label{Global_local}
\| \eta(t/T) v(t) \|_{X_E^{s,\frac12+\ve}} \lesssim \| v_0\|_{H^s}.
\ee
\end{thm}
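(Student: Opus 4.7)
The plan is to run a standard Picard iteration on the Duhamel formulation of \eqref{NV} inside the Bourgain spaces $X_E^{s,1/2+\varepsilon}$ introduced in Section \ref{Sect_8}. Writing $w = -3\partial_{\bar z}^{-1}\partial_z v$, the linear symbol \eqref{w0} absorbs the $E$-dependent linear piece $-2E(\partial_z w+\partial_{\bar z}\bar w)$, so the genuine nonlinearity reduces to $N(v)=2\partial_z(vw)+2\partial_{\bar z}(v\bar w)$. Since the multiplier $(\xi_1-i\xi_2)/(\xi_1+i\xi_2)$ defining $w$ has modulus one, the bilinear estimate \eqref{Bilinear} applies verbatim to $\partial_z(vw)$ and, by taking complex conjugates, also to $\partial_{\bar z}(v\bar w)$.

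First I would record the linear estimates adapted to $X_E^{s,b}$. With $\eta\in C_0^\infty(\R)$ a time cutoff supported in $[-2,2]$ with $\eta\equiv 1$ on $[-1,1]$, standard arguments (see, e.g., \cite{G}) give, for $s\in\R$, $1/2<b\leq 1/2+\varepsilon$ and $0<T\leq 1$,
\begin{equation}\label{plan-linear}
\|\eta(t/T)U(t)v_0\|_{X_E^{s,b}}\lesssim \|v_0\|_{H^s},
\end{equation}
\begin{equation}\label{plan-duhamel}
\Bigl\|\eta(t/T)\int_0^t U(t-t')F(t')\,dt'\Bigr\|_{X_E^{s,1/2+\varepsilon}}\lesssim T^{\delta}\,\|F\|_{X_E^{s,-1/2+2\varepsilon}},
\end{equation}
for some $\delta=\delta(\varepsilon)>0$. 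Both estimates depend only on $\eta$ and $\varepsilon$; in particular, the implicit constants are independent of $E$, because the spatial Fourier multipliers defining $X_E^{s,b}$ only rescale the frequency and modulation variables, and the inequalities above are invariant under that rescaling.

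Next, combining \eqref{plan-linear}--\eqref{plan-duhamel} with the bilinear estimate \eqref{Bilinear} applied to both $\partial_z(vw)$ and $\partial_{\bar z}(v\bar w)$, the map
\[
\Phi(v)(t):=\eta(t/T)\Bigl[U(t)v_0+\int_0^t U(t-t')N(v(t'))\,dt'\Bigr]
\]
satisfies, on the ball $B_R=\{v\in X_E^{s,1/2+\varepsilon}:\|v\|_{X_E^{s,1/2+\varepsilon}}\leq R\}$,
\begin{equation*}
\|\Phi(v)\|_{X_E^{s,1/2+\varepsilon}}\leq C\|v_0\|_{H^s}+C\,T^{\delta}E^{(7^--8s)/16}R^2,
\end{equation*}
\begin{equation*}
\|\Phi(v)-\Phi(\tilde v)\|_{X_E^{s,1/2+\varepsilon}}\leq C\,T^{\delta}E^{(7^--8s)/16}R\,\|v-\tilde v\|_{X_E^{s,1/2+\varepsilon}}.
\end{equation*}
Choosing $R=2C\|v_0\|_{H^s}$ and then $T$ so that $4C^2 T^{\delta}E^{(7^--8s)/16}\|v_0\|_{H^s}\leq 1/2$ makes $\Phi$ a contraction on $B_R$. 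Its unique fixed point $v$ coincides on $[0,T]$ with the desired local solution; the usual persistence-of-regularity argument promotes it to a solution in $C([0,T];H^s)$ and yields continuous dependence on $v_0$. The smallness condition on $T$ rearranges precisely to
\[
T_E\,\|v_0\|_{H^s}^{1/\delta}\gtrsim E^{(8s-7^-)/(16\delta)},
\]
which is \eqref{Existence_E} with $\alpha=1/\delta$. The global-in-time bound \eqref{Global_local} follows from \eqref{plan-linear}, \eqref{plan-duhamel} applied to $v=\Phi(v)$ together with $R=2C\|v_0\|_{H^s}$.

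The only subtle point I anticipate is the careful bookkeeping of the $E$-powers. The spaces $X_E^{s,b}$ are defined with modulation weight $\langle\sigma\rangle$, while the natural dyadic partition uses $|\sigma|\sim E^{3/2}L$; one must verify that the $T^\delta$ gain in \eqref{plan-duhamel} does not hide a hidden positive power of $E$ that would spoil the bound \eqref{Existence_E}. This is handled by noting that the standard proof of \eqref{plan-duhamel} (Hölder in time against the $X^{s,b}$-characterization $\|F\|_{X_E^{s,b-1}}\sim\|\langle\sigma\rangle^{b-1}\widehat F\langle\xi\rangle^s\|_{L^2}$) only exploits the duality between modulation and time and is therefore insensitive to the parameter $E$. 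All $E$-dependence is thus concentrated in the bilinear estimate \eqref{Bilinear}, and the final exponent in \eqref{Existence_E} is dictated solely by $(7^--8s)/16$.
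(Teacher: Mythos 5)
Your proposal is correct and follows essentially the same route as the paper: a contraction argument on the cutoff Duhamel formulation in $X_E^{s,1/2+\ve}$, using the standard linear/Duhamel estimates with a $T^{\ve}$ (your $T^{\delta}$) gain and the bilinear estimate \eqref{Bilinear}, with the choice $T\sim\big(E^{(8s-7^-)/16}/\|v_0\|_{H^s}\big)^{1/\ve}$ giving \eqref{Existence_E} and the fixed-point bound giving \eqref{Global_local}. Your extra remarks on the conjugate term $\partial_{\bar z}(v\bar w)$ (unimodular multiplier) and on the $E$-independence of the Duhamel estimate are consistent with, and slightly more explicit than, the paper's sketch.
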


\begin{rem}
Note that in this result we give explicit dependence on the energy $E$, for further developments.
\end{rem}

\begin{rem}
It will be clear from the proof of Theorem \ref{Cauchy} that the local well-posedness does not depend on the sign of $E$; however, obtaining the corresponding bilinear estimates with the sufficient gain of derivatives crucially depends on the sign of $E$. For the case $E=0$, estimates can be handled using \cite{CKZ}, while the case $E<0$ requires only one change of variables in order to get a desired decay. Finally, the case $E>0$ requires a new change of variables and a detailed description of all possible cases arising in decay estimates.
\end{rem}

\begin{proof}[Proof of Theorem \ref{Cauchy}]
The proof is standard, we only check the main lines of the proof. Assume that $v_0\in H^s(\R^2)$. From \eqref{NV} we have the local Duhamel's formula for $t\in [0,1]$,
\[
\eta_T(t) v (t)=\eta_T(t) \mathcal T[v](t) := \eta_T(t)  U(t) v_0 + 2\eta_T(t)\int_0^t U(t-t') [\partial_z(vw)+\partial_{\bar z}(v\bar w)]dt',
\]
where $\eta =\eta(t) \in [0,1]$ is a smooth bump function with $\eta(t) =1$ for $t\in[-1,1]$, and $\eta(t) =0$ for $|t| \geq 2$, and $\eta_T(t):= \eta(t/T)$. From this identity, the standard linear estimates (see e.g. Theorem 10 of \cite{A}) and \eqref{Bilinear} we have, for any $s>\frac12$,
\bee
\| \eta_T v\|_{X^{s,1/2+\ve}_E} & \leq & \|v_0\|_{H^s} + C \|\eta_T\partial_z(vw)\|_{X^{s,-1/2+ \ve}_E} \\
& \leq& \|v_0\|_{H^s} +  CT^\ve \|\eta_T \partial_z(v w)\|_{X^{s,-1/2+ 2\ve}_E} \\
& \leq & \|v_0\|_{H^s} +  C\frac{T^\ve}{ E^{(8s-7^-)/16} } \|\eta_T v\|^2_{X^{s,1/2+ \ve}_E}
\eee
Now we fix any time $T\sim  \Big(\frac{ E^{(8s-7^-)/16}}{8C \|v_0\|_{H^s}} \Big)^{1/\ve}$, and the ball
\[
\mathcal B:= \Big\{ v \in X^{s,1/2+\ve}_E  \ : \   \| \eta_T v\|_{X^{s,1/2+\ve}_E} \leq  2\|v_0\|_{H^s}  \Big\}.
\]
For $v \in \mathcal B$, one has
\[
\| \eta_T \mathcal T [v]\|_{X^{s,1/2+\ve}_E} \leq   2\|v_0\|_{H^s}.
\]
The contraction property is proved in a similar fashion. The proof is complete.
\end{proof}

\bigskip

\section{Blow up in infinite time}\label{Sect_10}
\medskip

Suppose that $n\geq 0$ is a fixed integer. In this short section we prove Theorem \ref{Blow}. For this, and following the ideas in \cite{Chang}, we look for a solution of the form 
\[
Q_{n,0}(t,z,\bar z)= -8 \partial_z \partial_{\bar z} \log \big(1+ P_n(t,z)P_n(t,\overline z) \big) =-8 \partial_z \partial_{\bar z} \log \big(1+ |P_n(t,z)|^2 \big). 
\]
Here $P_n(t,z)$ is a polynomial of degree $n$ on $z$, with {\bf real-valued} coefficients depending on time, usually called a \emph{Gould-Hopper} polynomial. For our purposes, these polynomials\footnote{Here the factors differ because Chang's paper \cite{Chang} uses a $NV_0$ version with different constant coefficients.}  are defined in terms of the complex-valued Airy symbol, for $\la\in \R$,
\[
e^{\la z + 8\la^3 t} =: \sum_{n\geq 0} P_n(t,z) \frac{\la^n}{n!}, \qquad P_n(0,z) =z^n.
\] 
They can be explicitly recast as
\[
P_n(t,z) = n! \sum_{k=0}^{[n/3]} \frac{(8t)^k z^{n-3k}}{k! (n-3k)!}.
\]  
The Gould-Hopper polynomials also satisfy the properties
\be\label{Pol_eq}
(z+ 24 t\partial_z^2) P_{n-1}(t,z) = P_n(t,z), \qquad |P_n(t,z)| \to +\infty \quad \hbox{as} \quad |z|\to \infty, ~ n\geq 1,
\ee
and
\be\label{dPz}
\frac{dP_n}{dz}(t,z) = n P_{n-1}(t,z).
\ee
Using this fact, we have
\[
P_0 =1, \quad P_1 = z, \quad P_2= (z+ 24 t\partial_z^2)z = z^2,
\]
\be\label{P3P4}
P_3= (z+ 24 t\partial_z^2)z^2 = z^3 + 48 t,\quad P_4=  (z+ 24 t\partial_z^2)(z^3+48 t) = z^4+192 tz,
\ee
and so on. Note additionally that these polynomials satisfy the Airy equation
\be\label{Airy}
\partial_t P_n(t,z) = 8 \partial_z^3 P_n(t,z).
\ee
Using these facts, we will prove that $Q_{n,0}$ defines a solution to \eqref{NV} with $E=0$. A simple computation shows that (compare with \eqref{Q10} and \eqref{Q20})
\[
Q_{n,0}(t,z,\bar z) = -\frac{8|\partial_z P_n(t,z)|^2}{(1+|P_n(t,z)|^2)^2}.
\]
Since $P_n$ as degree $n$, and $n\geq 1$, $Q_{n,0}$ defines an $L^1$ function, since it decays like $|z|^{2(n-1)-4n} =|z|^{-2(1+n)}$. Additionally, from \eqref{NV} one has
\be\label{WQ}
\begin{aligned}
W_n(t,z,\bar z) & := -3\partial_{\bar z}^{-1}\partial_z Q_{n,0}(t,z,\bar z) \\
&  =24 \partial_z^2 \log \big(1+ |P_n(t,z)|^2 \big) \\
& =24 \bar P_n(t,z)  \frac{(P_n''(t,z) (1+|P_n(t,z)|^2 ) - P_n'^2(t,z) \bar P_n(t,z) )}{(1+|P_n(t,z)|^2)^2}.
\end{aligned}
\ee
It is not difficult to check that if $P_n$ is a  Gould-Hopper polynomial, then $Q_{n,0}(t,z,\bar z)$ solves $NV_0$, as a tedious but direct verification shows \cite{Chang}. Moreover, we have
\[
\int Q_{n,0}(t,z,\bar z) =-8n\pi.
\] 
Indeed, since $Q_{n,0}$ is solution to $NV_0$, it is enough to consider the case where $t=0$, so that $P_n(0,z) =z^n$. We have
\[
\begin{aligned}
\int Q_{n,0}(t,z,\bar z)  & = -8 \int \frac{ n^2 |z|^{2(n-1)} dz d\bar z}{(1+|z|^{2n})^2} \\
& =  -16\pi n^2 \int_0^\infty \frac{ r^{2n-1} dr}{(1+r^{2n})^2} \\
& = -8 n \pi  .
\end{aligned}
\]
Finally, note that $Q_{n,0} \to 0$ pointwise in space as $t\to +\infty$. Now, assume that $n\geq 3$ so that $P_n$ is truly depending on time.

\begin{lem}
If $n\geq 3$, then there exists a root $z_0(t)$ of $P_n(t,\cdot)$ which satisfies, for all $|t|$ large,
\[
|z_0(t)| \sim |t|^{1/3},
\]
with implicit constant independent of time. %Moreover, such a root is nondegenerate, in the sense that $|\partial_z P(t,z_0(t))| >0$ for all $|t|$ large. 
\end{lem}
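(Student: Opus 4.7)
The plan is to exploit an exact scaling symmetry of the Gould--Hopper polynomial $P_n(t,z)$. Starting from the explicit formula
\[
P_n(t,z) = n! \sum_{k=0}^{[n/3]} \frac{(8t)^k z^{n-3k}}{k!\,(n-3k)!},
\]
I would substitute $z = (8t)^{1/3} w$ (choosing any branch of the cube root of $8t$). A direct count of powers gives $z^{n-3k}(8t)^k = (8t)^{n/3} w^{n-3k}$, so
\[
P_n(t,z) = (8t)^{n/3}\, \widetilde P_n(w), \qquad \widetilde P_n(w) := n! \sum_{k=0}^{[n/3]} \frac{w^{n-3k}}{k!\,(n-3k)!},
\]
where $\widetilde P_n$ is a time-independent polynomial of degree exactly $n$.

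Consequently, the zeros of $P_n(t,\cdot)$ are precisely the points $z = (8t)^{1/3} w_0$ with $\widetilde P_n(w_0)=0$. Thus it remains to exhibit a \emph{nonzero} root $w_0$ of $\widetilde P_n$, for then $|z_0(t)| = (8|t|)^{1/3}|w_0| \sim |t|^{1/3}$ with an implicit constant depending only on $n$.

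The only thing to check is therefore that $\widetilde P_n$ has a nonzero complex root whenever $n\geq 3$. Since $\widetilde P_n$ has degree $n$, the fundamental theorem of algebra gives $n$ roots counted with multiplicity. A quick inspection of the lowest power of $w$ appearing in $\widetilde P_n$ distinguishes three cases modulo $3$: if $n=3m$, then the constant term is $n!/m!\neq 0$ and $0$ is not a root, giving $n\geq 3$ nonzero roots; if $n=3m+1$, then $w=0$ is a simple root and there remain $n-1\geq 2$ nonzero roots; if $n=3m+2$, then $w=0$ is a root of multiplicity $2$ and there remain $n-2\geq 1$ nonzero roots. In every case $\widetilde P_n$ has at least one nonzero root $w_0$, which completes the argument.

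The step that looks most delicate---locating the roots of a time-dependent polynomial of arbitrary degree---collapses entirely once the scaling observation is made, so there is essentially no obstacle; the only thing one must not forget is to separate the three congruence classes of $n$ modulo $3$, since the constant and linear terms of $\widetilde P_n$ vanish in two of them. One may verify consistency on the concrete examples in \eqref{P3P4}: $\widetilde P_3(w) = w^3+6$ (all three roots nonzero) and $\widetilde P_4(w) = w(w^3+24)$ (three nonzero roots), matching $P_3(t,z)=z^3+48t$ and $P_4(t,z)=z^4+192tz$.
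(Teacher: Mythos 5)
Your proposal is correct and follows essentially the same route as the paper: rescaling $z$ by a cube root of $t$ to reduce $P_n(t,\cdot)$ to a time-independent (Appell-type) polynomial, and then using the structure of $n$ modulo $3$ to produce a nonzero root, which is exactly the content of the paper's remark on the factorizations $\hat P_n(z)=z^j\tilde P_k(z^3)$. Your explicit check of the lowest-order coefficient (so that the vanishing order at $w=0$ is $0$, $1$ or $2$, leaving at least one nonzero root by the fundamental theorem of algebra) just spells out the detail the paper delegates to that remark.
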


\begin{proof}
This is just consequence of the fact that for $n\geq 3$ and $t\neq 0$, Gould-Hopper polynomials can be recast as
\[
P_n(t,z) = t^{n/3} \hat P_n(z/t^{1/3}),
\]
where $\hat P_n$ are Appell's polynomials \cite[eqn. (26)]{Chang}. Appell's polynomials have at least one nonzero root, a consequence of the remark below.
\end{proof}

\medskip

\begin{rem}
Assume that $n\geq 3$. Recall that each Appell's polynomial $\hat P_n(z)$ can be written as only one of the three following alternatives:
\ben
\item If $n=3k$, $k=1,2,\ldots$,  then $ \hat P_n(z)= \tilde P_k(z^3)$, with $\tilde P_k$ a nonzero polynomial of degree $k$ in the variable $u=z^3$;
\item If $n=3k+1$, $k=1,2,\ldots$,  then $ \hat P_n(z)= z \tilde P_k(z^3)$, with $\tilde P_k$ as above;
\item If finally $n=3k+2$, $k=1,2,\ldots$,  then $ \hat P_n(z)= z^2 \tilde P_k(z^3)$, with $\tilde P_k$ as above.
\een
\end{rem}

The following result states that any Gould-Hopper may probably have degenerate, nonzero roots, but at least one must have order less or equal than two.

\begin{lem}\label{degeneracy}
For $n\geq 3$, the polynomials $\hat P_n$ have at least one nonzero root of multiplicity at most two.
\end{lem}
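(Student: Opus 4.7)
The plan is to derive the third-order linear ODE satisfied by the Appell polynomials $\hat P_n$, and then use it to rule out nonzero roots of multiplicity three or more. Starting from the Airy equation \eqref{Airy} and the self-similar representation $P_n(t,z) = t^{n/3} \hat P_n(z t^{-1/3})$, a direct chain-rule computation yields, after cancelling the common factor $t^{n/3 - 1}$, the ODE
\begin{equation}\label{AiryODE_plan}
24\,\hat P_n'''(y) + y\,\hat P_n'(y) - n\,\hat P_n(y) = 0.
\end{equation}

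First I would verify that $\hat P_n$ admits at least one root in $\Com \setminus \{0\}$. From the explicit expansion of $P_n(t,z)$ already recorded in the excerpt one reads off
\[
\hat P_n(z) = n! \sum_{j=0}^{[n/3]} \frac{8^j\, z^{n - 3j}}{j!(n-3j)!},
\]
and in each of the three residue classes of the preceding remark the polynomial $\tilde P_k$ has positive degree $k \geq 1$ together with a nonzero ``constant'' term, realized by the monomial of lowest exponent (namely $j = [n/3]$, whose coefficient is manifestly nonzero). Hence $\tilde P_k$ has at least one root in $\Com \setminus \{0\}$, producing, via the substitution $u = z^3$, three nonzero roots of $\hat P_n$.

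The main step is then a contradiction argument. Suppose every nonzero root of $\hat P_n$ has multiplicity at least three, and let $z_0 \neq 0$ be such a root. Then $\hat P_n(z_0) = \hat P_n'(z_0) = \hat P_n''(z_0) = 0$, so evaluating \eqref{AiryODE_plan} at $y = z_0$ immediately gives $\hat P_n'''(z_0) = 0$. Differentiating \eqref{AiryODE_plan} $m$ times and using the Leibniz rule on the only non-constant-coefficient term $y\,\hat P_n'$ produces the identity
\begin{equation*}
24\,\hat P_n^{(m+3)}(y) + y\,\hat P_n^{(m+1)}(y) + (m - n)\,\hat P_n^{(m)}(y) = 0,
\end{equation*}
so that if $\hat P_n^{(k)}(z_0) = 0$ for all $k \leq m + 2$, then $\hat P_n^{(m+3)}(z_0) = 0$ as well. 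A straightforward induction on $m$ shows that every derivative of $\hat P_n$ vanishes at $z_0$, which forces $\hat P_n \equiv 0$, contradicting the fact that $\hat P_n$ is monic of degree $n \geq 3$. Therefore at least one nonzero root of $\hat P_n$ must have multiplicity at most two (in fact the argument shows that every nonzero root does).

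The main obstacle is essentially bookkeeping: both the derivation of \eqref{AiryODE_plan} and the recursive differentiation are mechanical, but the argument hinges on the coefficient of $\hat P_n'$ in \eqref{AiryODE_plan} being exactly $y$ --- this is what makes each successive derivative vanish at the nonzero point $z_0$ and closes the induction. The analogue at $z_0 = 0$ would require distinguishing the three residue classes, which is why the statement restricts to \emph{nonzero} roots.
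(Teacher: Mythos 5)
Your proof is correct, but it goes by a genuinely different route than the paper. The paper argues by downward induction on the index $n$: using the two recursions \eqref{Pol_eq} and \eqref{dPz} (in their Appell-scaled form), a nonzero root of multiplicity three for $\hat P_n$ is shown to be a root of multiplicity three for $\hat P_{n-1}$, and descending to $\hat P_3(z)=z^3+48$, which has three simple roots, gives the contradiction. You instead fix $n$, derive from \eqref{Airy} and the self-similar substitution the third-order ODE $24\,\hat P_n'''+y\,\hat P_n'-n\,\hat P_n=0$ (which indeed checks out on $\hat P_3=y^3+48$ and $\hat P_4=y^4+192y$), and then show by differentiating the ODE that a root of multiplicity three forces all Taylor coefficients of $\hat P_n$ at that point to vanish, so $\hat P_n\equiv 0$, contradicting that it is monic of degree $n$; your verification that a nonzero root exists (lowest monomial of $\tilde P_k$ has nonzero coefficient, then pass through $u=z^3$) supplies the existence half of the statement, just as the paper's preceding remark does. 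Your approach buys a slightly stronger conclusion -- \emph{every} nonzero root has multiplicity at most two, with no induction through the family -- at the cost of having to derive and trust the ODE, while the paper's argument uses only the algebraic recursions already recorded. One small remark on your closing comment: the induction does not actually hinge on the coefficient of $\hat P_n'$ being exactly $y$, nor on $z_0\neq 0$; any polynomial coefficient would still express $\hat P_n^{(m+3)}(z_0)$ through derivatives of order $\le m+2$, and the same argument would run at $z_0=0$. The restriction to nonzero roots in the statement is only needed because $0$ genuinely is a root (of multiplicity one or two) when $3\nmid n$, i.e.\ for the existence clause, not for your multiplicity bound.
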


\begin{proof}
Assume that every nonzero root  $\hat z_n$ of $\hat P_n$ satisfies
\[
\hat P_n(\hat z_n )=\hat P_n'(\hat z_n )=\hat P_n''(\hat z_n )=0. 
\]
Then from \eqref{dPz},
\[
\hat P_{n-1}(\hat z_n ) = \hat P_{n-1}'(\hat z_n )=\hat P_{n-2}(\hat z_n )=0.
\]
Using \eqref{Pol_eq},
\[
\hat z_n \hat P_{n-1}(\hat z_n) + 24 \hat P_{n-1}''(\hat z_n) = \hat P_n( \hat z_n),
\]
from which $\hat P_{n-1}''(z_n)=0$. Therefore, we have shown that
\[
\hat P_{n-1}(\hat z_n )=\hat P_{n-1}'(\hat z_n )=\hat P_{n-1}''(\hat z_n )=0. 
\]
Proceeding by induction, we find that 
\[
\hat P_{3}(\hat z_n )=\hat P_{3}'(\hat z_n )=\hat P_{3}''(\hat z_n )=0. 
\]
However, from \eqref{P3P4},
\[
\hat P_3 (z) = z^3 +48, 
\]
which has three different roots, a contradiction.
\end{proof}

With all the previous information it is not hard to check that the $L^2$-norm of  $Q_{n,0} $ has to diverge. In a neighborhood of such a root $z_0(t)$, one has 
\[
\int Q_{n,0}^2(t,z,\bar z)dzd\bar z  \geq \int_{B(z_0(t),1)} Q_{n,0}^2(t,z,\bar z)dzd\bar z.
\]
In what follows, we will estimate the terms 
\[
P_n(t, z), \quad \partial_z P_n(t,z),
\]
on the set $B(z_0(t),1)$. Indeed, for $z\in B(z_0(t),1)$, if we write $z=z_0(t) +w$, with $w\in B(0,1)$,
\[
\begin{aligned}
|P_n(t, z_0(t) + w )|^2 & = |P_n(t, z_0(t)) + \partial_z P_n(t, z_0(t) + \xi(t)) w |^2, \qquad (\xi(t) \in B(0,1))   \\
& = |\partial_z P_n(t, z_0(t) +\xi(t)) w |^2 \\
& \sim |P_{n-1}(t,z_0(t) + \xi(t))|^2 |w |^2\\
& = | t|^{\frac 23(n-1)} \abs{\hat P_{n-1}\Big(\frac{z_0(t) +\xi(t)}{t^{1/3}}\Big)}^2 |w|^2\\
& = | t|^{\frac 23(n-1)} \abs{\hat P_{n-1}\Big( \hat z_0 + \frac{\xi(t)}{t^{1/3}}\Big)}^2 |w|^2, \quad \hat z_0 := \frac{z_0(t)}{t^{1/3}}.
%\\
%& \lesssim |t|^{2(n-1)/3}|w|^{2},
\end{aligned}
\]
Now we bound each quantity according to the degree of degeneracy of $\hat z_0$. In the case where $\hat z_0$ is not a root for $P_n$, we get for $t$ large
\be\label{Case_1}
\begin{aligned}
|P_n(t, z_0(t) + w )|^2 & \sim  | t|^{\frac 23(n-1)} \abs{\hat P_{n-1}\Big( \hat z_0 + \frac{\xi(t)}{t^{1/3}}\Big)}^2 |w|^2\\
&  \lesssim |t|^{\frac 23(n-1)}|w|^{2}.
\end{aligned}
\ee
The second option is when $\hat z_0$ is degenerate, in the sense that $\hat P_{n-1}( \hat z_0 )=0$. In this case, from Lemma \ref{degeneracy}, and after choosing a correct  root $z_0(t)$, which is only of second order, we have
\[
\begin{aligned}
|P_n(t, z_0(t) + w )|^2 &  \sim  | t|^{\frac 23(n-1)} \abs{\hat P_{n-1}\Big( \hat z_0 + \frac{\xi(t)}{t^{1/3}}\Big)}^2 |w|^2 \\
&  = | t|^{\frac 23(n-2)} |w|^4 \abs{\tilde P_{n-2}\Big( \hat z_0 + \frac{\xi(t)}{t^{1/3}}\Big)}^2,
\end{aligned}
\]
with $\tilde P_{n-2}$ a polynomial of degree $(n-2)$ and such that $\tilde P_{n-2}(\hat z_0)\neq 0$. If $|t|$ is large enough, we will have for $w\in B(0,1)$,
\be\label{Case_2}
|P_n(t, z_0(t) + w )|^2 \lesssim  | t|^{\frac 23(n-2)} |w|^4.
\ee
Now we deal with the derivative term. We have
\[
\begin{aligned}
|\partial_z P_n(t, z_0(t) + w )|^2 & \sim  |P_{n-1}(t, z_0(t) + w )|^2 \\
& = | t|^{\frac 23(n-1)}  \abs{\hat P_{n-1}\Big(\frac{z_0(t) +w}{t^{1/3}}\Big)}^2\\
& = | t|^{\frac 23(n-1)}  \abs{\hat P_{n-1}\Big( \hat z_0 + \frac{w}{t^{1/3}}\Big)}^2,
\end{aligned}
\]
so that 
\be\label{Case_3}
|\partial_z P_n(t, z_0(t) + w )|^2  \gtrsim | t|^{\frac 23(n-1)} ,
\ee
in the case where the root $\hat z_0$ is not degenerate. The remaining case, where we have $\hat P_{n-1}( \hat z_0)=0$, goes as follows. Replicating the same computations as in the previous estimate for $|P_n(t, z_0(t) + w )|^2$ in \eqref{Case_2},
\[
\begin{aligned}
|\partial_z P_n(t, z_0(t) + w )|^2 & \sim  |P_{n-1}(t, z_0(t) + w )|^2 \\
& = | t|^{\frac 23(n-1)}  \abs{\hat P_{n-1}\Big(\frac{z_0(t) +w}{t^{1/3}}\Big)}^2\\
& = | t|^{\frac 23(n-1)}  \abs{\hat P_{n-1}\Big( \hat z_0 + \frac{w}{t^{1/3}}\Big)}^2\\
&= | t|^{\frac 23(n-2)} |w|^2\abs{\tilde P_{n-2}\Big( \hat z_0 + \frac{w}{t^{1/3}}\Big)}^2,
\end{aligned}
\]
with $\tilde P_{n-2}$ a polynomial of degree $(n-2)$ and such that $\tilde P_{n-2}(\hat z_0)\neq 0$. If $|t|$ is large enough, we will have for $w\in B(0,1)$,
\be\label{Case_4}
|\partial_z P_n(t, z_0(t) + w )|^2 \gtrsim | t|^{\frac 23(n-2)} |w|^2.
\ee
Now we conclude by considering two separate cases. Assume \eqref{Case_1} and \eqref{Case_3}. Using polar coordinates,
\[
\begin{aligned}
 \int_{B(z_0(t),1)} Q_{n,0}^2(t,z,\bar z)dzd\bar z  &  \sim   \int_{B(0,1)} \frac{ |\partial_z P_n(t,z_0(t) +w)|^4 dwd\bar w}{(1+|P_n(t,z_0(t) +w )|^2)^4}  \\
 &    \gtrsim  \int_0^1 \frac{   |t|^{\frac43(n-1)} r dr }{(1+  |t|^{\frac 23(n-1)}r^2)^4},
\end{aligned}
\]
with implicit constants independent of time. Consequently, 
\[
\int Q_{n,0}^2(t,z,\bar z)dzd\bar z  \gtrsim  |t|^{\frac23(n-1)},
\]
which proves the result in the case $n\geq 3$ for nondegenerate Gould-Hopper roots. 

\medskip

Let us consider the remaining case, where \eqref{Case_2} and \eqref{Case_4} hold. Performing the same change to polar coordinates as before,
\[
\begin{aligned}
 \int_{B(z_0(t),1)} Q_{n,0}^2(t,z,\bar z)dzd\bar z  &  \sim   \int_{B(0,1)} \frac{ |\partial_z P_n(t,z_0(t) +w)|^4 dwd\bar w}{(1+|P_n(t,z_0(t) +w )|^2)^4}  \\
 &    \gtrsim  \int_0^1 \frac{   |t|^{\frac43(n-2)} r^5 dr }{(1+ | t|^{\frac 23(n-2)} r^4)^4} \\
 & \sim |t|^{\frac13(n-2)} \int_0^\infty \frac{  r^5 dr }{(1+ r^4)^4},
\end{aligned}
\]
which proves the result as far as $|t|$ is large.

\bigskip
\bigskip

\end{document}